\title{Stability conditions and extremal contractions}
\date{}
\author{Yukinobu Toda}
\dedicatory{Dedicated to Professor Yujiro Kawamata on 
the occasion of his 60-th birthday}
\DeclareFontFamily{U}{rsfs}{%
\skewchar\font127}
\DeclareFontShape{U}{rsfs}{m}{n}{%
<-6>rsfs5<6-8.5>rsfs7<8.5->rsfs10}{}
\DeclareSymbolFont{rsfs}{U}{rsfs}{m}{n}
\DeclareRobustCommand*\rsfs{%
\@fontswitch\relax\mathrsfs}
\theoremstyle{plain}
\newtheorem{thm}{Theorem}[section]
\newtheorem{prop}[thm]{Proposition}
\newtheorem{lem}[thm]{Lemma}
\newtheorem{defi}[thm]{Definition}
\newtheorem{rmk}[thm]{Remark}
\newtheorem{cor}[thm]{Corollary}
\newtheorem{prop-defi}[thm]{Proposition-Definition}
\newtheorem{thm-defi}[thm]{Theorem-Definition}
\newtheorem{lem-defi}[thm]{Lemma-Definition}
\newtheorem{question}[thm]{Question}
\newtheorem{conj}[thm]{Conjecture}
\newdimen\argwidth
\def\db[#1\db]{
 \setbox0=\hbox{$#1$}\argwidth=\wd0
 \setbox0=\hbox{$\left[\box0\right]$}
  \advance\argwidth by -\wd0
 \left[\kern.3\argwidth\box0 \kern.3\argwidth\right]}
\newcommand{\aA}{\mathcal{A}}
\newcommand{\bB}{\mathcal{B}}
\newcommand{\cC}{\mathcal{C}}
\newcommand{\dD}{\mathcal{D}}
\newcommand{\eE}{\mathcal{E}}
\newcommand{\fF}{\mathcal{F}}
\newcommand{\gG}{\mathcal{G}}
\newcommand{\hH}{\mathcal{H}}
\newcommand{\mM}{\mathcal{M}}
\newcommand{\oO}{\mathcal{O}}
\newcommand{\pP}{\mathcal{P}}
\newcommand{\qQ}{\mathcal{Q}}
\newcommand{\sS}{\mathcal{S}}
\newcommand{\tT}{\mathcal{T}}
\newcommand{\uU}{\mathcal{U}}
\newcommand{\vV}{\mathcal{V}}
\newcommand{\lr}{\longrightarrow}
\newcommand{\Supp}{\mathop{\rm Supp}\nolimits}
\newcommand{\Hom}{\mathop{\rm Hom}\nolimits}
\newcommand{\dotimes}{\stackrel{\textbf{L}}{\otimes}}
\newcommand{\dR}{\mathbf{R}}
\newcommand{\dL}{\mathbf{L}}
\newcommand{\id}{\textrm{id}}
\newcommand{\ch}{\mathop{\rm ch}\nolimits}
\newcommand{\Ext}{\mathop{\rm Ext}\nolimits}
\newcommand{\Spec}{\mathop{\rm Spec}\nolimits}
\newcommand{\rank}{\mathop{\rm rank}\nolimits}
\newcommand{\Coh}{\mathop{\rm Coh}\nolimits}
\newcommand{\cneq}{\mathrel{\raise.095ex\hbox{:}\mkern-4.2mu=}}
\newcommand{\eqcn}{\mathrel{=\mkern-4.5mu\raise.095ex\hbox{:}}}
\newcommand{\Cok}{\mathop{\rm Cok}\nolimits}
\newcommand{\Stab}{\mathop{\rm Stab}\nolimits}
\newcommand{\PPer}{\mathop{\rm Per}\nolimits}
\newcommand{\oPPer}{\mathop{\rm ^{0}Per}\nolimits}
\newcommand{\iPPer}{\mathop{\rm ^{-1}Per}\nolimits}
\newcommand{\ppPPer}{\mathop{^{{p}}\rm{Per}}\nolimits}
\newcommand{\modu}{\mathop{\rm mod}\nolimits}
\newcommand{\End}{\mathop{\rm End}\nolimits}
\newcommand{\Imm}{\mathop{\rm Im}\nolimits}
\newcommand{\Ker}{\mathop{\rm Ker}\nolimits}
\newcommand{\Ree}{\mathop{\rm Re}\nolimits}
\begin{document}
\begin{abstract}
We show that any extremal contraction
from a smooth projective variety with dimension 
less than or equal to three appears 
as a moduli space of 
(semi)stable objects in the derived category
of coherent sheaves. 
\end{abstract}

\maketitle

\setcounter{tocdepth}{1}
\tableofcontents

\section{Introduction}
\subsection{Motivation}
Let $X$ be a smooth projective variety over 
$\mathbb{C}$. 
Recall that a Minimal Model Program (MMP)
is a sequence of
divisorial contractions or flips
\begin{align*}
X=X_0 \dashrightarrow X_1 \dashrightarrow X_2 \dashrightarrow
 \cdots \dashrightarrow X_{N}=X_{\rm{min}}
\end{align*}
such that $X_{\rm{min}}$ is either a minimal model
 (i.e. $K_{X_{\rm{min}}}$ is nef)
or has a Mori fiber space structure. 
In two dimensional case, the MMP is just contracting $(-1)$-curves. 
In three dimensional case, the MMP is completed in 1980's 
by allowing some mild singularities on each $X_i$. 
(cf.~\cite{KM}.)
In a higher dimensional case, the MMP is known to exist 
when Kodaira dimension of $X$ is equal to
the dimension of $X$.
(cf.~\cite{BCHM}.)

On the other hand, a surprising relationship between 
MMP and derived categories of coherent sheaves was 
found by Bondal and Orlov~\cite{B-O2}. 
They observed a phenomena that the derived category 
gets smaller by MMP, at least each $X_i$ is smooth 
and birational map $X_i \dashrightarrow X_{i+1}$ is a 
standard one. 
This result was generalized
by Bridgeland~\cite{Br1} for arbitrary 
three dimensional flops, and by Kawamata~\cite{Kawlog}
for toroidal cases. 
It is now an interesting research subject to 
see an interaction between MMP and derived category.

In this paper, we study
a relationship between MMP and derived category 
from the viewpoint of stability conditions
and moduli spaces.  
A relationship between MMP and stability 
conditions was first pointed out in~\cite{Thad}, 
in which flips appeared
as a variation of GIT stability. This relationship
was also applied for the study of 
wall-crossing of moduli spaces of sheaves~\cite{MW}. 
The derived category appeared in this context in~\cite{Br1}, 
in which three dimensional flops were constructed 
as moduli spaces of objects in the derived category. 
This result should be interpreted 
that three dimensional 
flops are obtained
as variations of 
stability conditions in the derived category,  
and we expect that any birational 
map which appears in MMP is always realized 
in this way. 

Now there is a notion of 
stability conditions on derived categories by Bridgeland~\cite{Brs1}, 
which provides a 
mathematical formulation of Douglas's $\Pi$-stability~\cite{Dou2}. 
We address the following question in this paper: 
\begin{question}\label{question}
Is each $X_i$ a moduli space of Bridgeland (semi)stable objects
in the derived category of $X$, 
and MMP is interpreted as wall-crossing 
under a variation of Bridgeland stability conditions?
\end{question}
If the answer to the above question is true, 
then we are able to analyze
the geometry of each $X_i$, especially 
the minimal model $X_{\rm{min}}$, from
a categorical data of $D^b \Coh(X)$. 
The purpose of this paper is to answer the above 
question for the first step of MMP
when $\dim X \le 3$, that is an 
 \textit{extremal contraction}. 
This is a birational morphism
\begin{align}\label{extremal}
f \colon X \to Y
\end{align}
such that $Y$ is a normal projective variety, 
$-K_X$ is $f$-ample and the relative Picard 
number of $f$ is equal to one. 
When $\dim X=2$, then $f$ is just a contraction of a 
$(-1)$-curve. When $\dim X=3$, 
$f$ contracts a divisor $D$ to a curve or point in $Y$, 
and it is classified by Mori~\cite{Mori}.

\subsection{Result for surfaces}
We first study the case of $\dim X=2$. 
Let us fix the notation: 
for a Bridgeland stability condition
$\sigma$ on $D^b \Coh(X)$, 
we denote by 
 $M^{\sigma}([\oO_x])$
the set of isomorphism classes of 
$\sigma$-semistable objects $E$
with phase one,
satisfying $\ch(E)=\ch(\oO_x)$ for $x\in X$. 
The following is the result for the surface case.  
\begin{thm}\label{thm:intro1}
\emph{{\bf (Theorem~\ref{thm:main1})}}
Let $X$ be a smooth projective surface and $f\colon X \to Y$
an extremal contraction. Then there is a
one parameter family of Bridgeland stability 
conditions $\{\sigma_t\}_{t \in (-1, 1)}$
on $D^b \Coh(X)$ 
satisfying the following: 
\begin{itemize}
\item If $t<0$, then $X$ is the fine moduli space of 
$\sigma_t$-stable objects in $M^{\sigma_t}([\oO_x])$. 
\item If $t=0$, then $Y$ is the coarse moduli space of 
$S$-equivalence classes\footnote{The notion of `\textit{S-equivalence}'
is a direct analogue used in the study of 
moduli of sheaves. See Definition~\ref{S-eq}.} 
 of objects in $M^{\sigma_t}([\oO_x])$. 
\item If $t>0$, then $Y$ is the fine moduli space of $\sigma_t$-stable
objects in $M^{\sigma_t}([\oO_x])$. 
\end{itemize}
 \end{thm}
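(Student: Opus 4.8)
The plan is to reduce everything to the blow-up geometry and then run a single wall-crossing for the point class $\ch(\oO_x)=(0,0,1)$. First I would record the geometry: since $\dim X=2$, $-K_X$ is $f$-ample and the relative Picard number is one, adjunction forces $C\cneq \Ex(f)$ to be a single $(-1)$-curve, $Y$ to be smooth, and $f$ to be the blow-up of the point $y_0=f(C)$. The computations $\ch(\oO_C)=(0,[C],\tfrac12)$ and $\ch(\oO_C(-1))=(0,[C],-\tfrac12)$, together with the fundamental short exact sequence
\[
0\to \oO_C(-1)\to \oO_C\to \oO_x\to 0,\qquad x\in C,
\]
and the rigidity computations $\Ext^{\bullet}(\oO_C,\oO_C)=\Ext^{\bullet}(\oO_C(-1),\oO_C(-1))=\mathbb{C}$, isolate $\oO_C$ and $\oO_C(-1)[1]$ as the only candidate Jordan--H\"older factors of a destabilized point lying on $C$; note $\ch(\oO_C)+\ch(\oO_C(-1)[1])=(0,0,1)=\ch(\oO_x)$.

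Next I would build the family. I would take central charges $Z_t$ of Bridgeland's surface type, with the ample/nef class $\omega_t$ moving along the contraction direction $f^{\ast}H_Y$ ($H_Y$ ample on $Y$), normalized so that $Z_t(\oO_x)=-1$ (phase one) for every $t$ and so that $\Imm Z_t(\oO_C)=\omega_t\cdot C$ is positive for $t<0$, zero at $t=0$, and negative for $t>0$. The crucial numerical observation is that $\Imm Z_t(\oO_C)$ and $\Imm Z_t(\oO_C(-1)[1])$ have opposite signs, so no single heart can support a genuine stability condition on both sides of $t=0$. Accordingly I would use the ordinary tilted heart of $\Coh(X)$ (with $\omega_t$ ample) for $t<0$ and the perverse heart $\iPPer(X/Y)$ for $t>0$, the two limiting to a common structure at the wall $t=0$ where $\omega_0=f^{\ast}H_Y$. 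One must then verify that $(Z_t,\text{heart})$ is a Bridgeland stability condition for each $t$ --- the stability-function positivity on the finitely many relevant generating/simple objects, together with the Harder--Narasimhan and support properties --- and that the family varies continuously through $t=0$.

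The wall-crossing itself is then geometrically transparent. For $x\notin C$ the object $\oO_x=\dL f^{\ast}\oO_{f(x)}$ stays $\sigma_t$-stable for all $t$, contributing $X\setminus C\cong Y\setminus\{y_0\}$ uniformly; all the content is concentrated on $C$. For $t<0$ the sub-object $\oO_C\hookrightarrow\oO_x$ satisfies $\phi(\oO_C)<1=\phi(\oO_x)$, so each $\oO_x$ is stable and $M^{\sigma_t}([\oO_x])=X$. At $t=0$ one has $\phi(\oO_C)=\phi(\oO_C(-1)[1])=1$, the exact sequence becomes a Jordan--H\"older filtration, and every $\oO_x$ with $x\in C$ becomes $S$-equivalent to the single polystable object $\oO_C\oplus\oO_C(-1)[1]$ attached to $y_0$. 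For $t>0$ the r\^oles of sub and quotient swap: the stable representative on the fibre is the opposite extension $P_{y_0}$ fitting in $0\to\oO_C(-1)[1]\to P_{y_0}\to\oO_C\to0$ with $\phi(\oO_C(-1)[1])<1$, namely the perverse point object $\dL f^{\ast}\oO_{y_0}$. The argument must show that these are the \emph{only} $\sigma_t$-semistable objects of class $(0,0,1)$ and that $t=0$ is the only wall in $(-1,1)$.

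Finally I would upgrade the set-theoretic bijections to moduli statements: for $t<0$ the universal family is $\oO_{\Delta}$ on $X\times X$, exhibiting $X$ as a fine moduli space; for $t>0$ the family $\{P_y\}_{y\in Y}$ built from the perverse point sheaves (the Fourier--Mukai object governed by $\dL f^{\ast}$) is flat over $Y$ and the objects have no nontrivial automorphisms, exhibiting $Y$ as a fine moduli space; at $t=0$ one obtains only corepresentability and hence $Y$ as the coarse moduli space of $S$-equivalence classes. I expect the main obstacle to be twofold: first, verifying that $(Z_t,\text{heart})$ is a genuine locally finite stability condition satisfying the support property \emph{across the non-ample wall}, where the ordinary heart degenerates and the perverse heart $\iPPer(X/Y)$ must be introduced and shown compatible with $Z_t$; and second, the complete classification of the semistable objects of class $(0,0,1)$ together with the scheme-theoretic (co)representability of the moduli functor, in particular the explicit construction of the universal object on the $Y$-side from the perverse point sheaves.
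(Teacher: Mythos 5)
Your geometric picture of the wall-crossing is the same as the paper's: the only interesting objects of class $\ch(\oO_x)$ are built from $\oO_C$ and $\oO_C(-1)[1]$, the two extensions give $\oO_x$ ($x\in C$) and $\dL f^{\ast}\oO_{f(C)}$, $S$-equivalence collapses the fibre at $t=0$, and the fine/coarse moduli statements follow from Inaba's space of complexes and the full faithfulness of $\dL f^{\ast}$. But there is a genuine gap in the part you defer: the construction of the stability conditions themselves. The paper constructs \emph{only} $\sigma_0=(Z_{f^{\ast}\omega},\bB_{f^{\ast}\omega})$ explicitly, where $\bB_{f^{\ast}\omega}$ is the tilt of $\iPPer(X/Y)$ at slope zero; the conditions $\sigma_t$ for $t\neq 0$ are produced abstractly by Bridgeland's deformation theorem, and the paper states explicitly that the hearts for $t>0$ are \emph{not} known explicitly. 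Your plan to exhibit a heart on the $t>0$ side (a tilt of $\iPPer(X/Y)$ against the class $f^{\ast}\omega+\varepsilon tC$) runs into an immediate problem: that class pairs negatively with $C$, so $\oO_C$ has negative imaginary central charge and the positivity axiom fails for any heart containing $\oO_C$ built by the naive slope-tilting; there is no evident torsion pair that fixes this, which is precisely why the paper avoids it.

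The missing idea is therefore the \emph{support property} of $\sigma_0$, which is what licenses the deformation and is the technical core of the paper's proof. Establishing it requires a Bogomolov--Gieseker inequality for $\mu_{f^{\ast}\omega}$-semistable objects of the perverse heart (controlling the torsion subsheaf supported on $C$ via $\chi(\oO_X,T)\ge 0$ and $C\cdot\ch_1(E)\ge 0$), then a weak BG inequality for $\sigma_0$-semistable objects, and finally the full inequality $\ch_1(E)^2-2\ch_0(E)\ch_2(E)+C_{\omega}(\ch_1(E)\cdot f^{\ast}\omega)^2/\omega^2\ge -1$ proved by a wall-crossing induction on $\ch_1(E)\cdot f^{\ast}\omega$ in the style of Bayer--Macr\`i--Toda. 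Even the positivity axiom for $\sigma_0$ itself (the claim that $F[1]$ with $\ch_0(F)>0$ and $\mu_{f^{\ast}\omega}(F)=0$ has negative real central charge) already needs the weak BG inequality for perverse sheaves, so ``checking positivity on the finitely many relevant simple objects'' is not sufficient: the heart is not of finite length and the rank-positive objects are the ones that require an actual estimate. Your proposal names these verifications as obstacles but supplies no mechanism for them, and they constitute the bulk of the proof.
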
 
The above result is based on the following 
expectation on the 
relationship between the space of 
stability conditions and the nef cone of $X$: 
let us consider the subset of 
the space of Bridgeland stability conditions 
$(Z, \aA)$ so that all the skyscraper sheaves $\oO_x$
are stable of phase one. (We call it a \textit{geometric chamber}.)
Then $\Imm Z$ restricted to 
the curve classes determines an ample class on 
$X$, by the argument of~\cite[Lemma~10.1]{Brs2}.
So the geometric chamber is closely related to the ample cone
of $X$. 
Now if we consider the boundary of the 
nef cone given by the pull-back of the ample cone 
of $Y$, we expect that there is a
corresponding boundary of the geometric chamber
in the space of stability conditions. 
If this is true, it would be interesting to investigate
the wall-crossing phenomena of the moduli spaces
of (semi)stable objects 
after crossing the above boundary (wall). 

Indeed, the stability conditions $\sigma_t$ for $t<0$
are contained in the geometric chamber 
constructed by Arcara and Bertram~\cite{AB}, 
and $\sigma_0$ lies at 
its boundary corresponding to the 
pull-back of the ample cone of $Y$
to $X$. 
The stability conditions $\sigma_t$ for $t>0$ are 
obtained by crossing the boundary at $\sigma_0$, 
although  
it is not clear how to describe 
them explicitly. 
Now Theorem~\ref{thm:intro0} 
answers Question~\ref{question} for the contraction $f\colon X \to Y$: 
it is 
realized by crossing 
the wall given by the boundary of the geometric chamber. 

More precisely, the
 one parameter family $\{\sigma_t\}_{t\in (-1, 1)}$ is constructed as 
follows:
we first construct $\sigma_0$ 
to be a pair
\begin{align*}
\sigma_{0}=(Z_{f^{\ast}\omega}, \bB_{f^{\ast}\omega})
\end{align*}
for an ample divisor $\omega$ on $Y$. 
The central charge $Z_{f^{\ast}\omega}$ is given by 
\begin{align}\label{central}
Z_{f^{\ast}\omega}(E)=-\int_{X} e^{-if^{\ast}\omega} \ch(E),
\end{align}
and the heart $\bB_{f^{\ast}\omega}$ is 
a tilting of the
perverse heart $\PPer(X/Y)$ in the sense of~\cite{Br1}.
The tilting here is similar to the one constructed
in~\cite{Brs2}, \cite{AB} applied for $\Coh(X)$.  
Then we deform $\sigma_0$
to a one parameter family $\{\sigma_{t}\}_{t\in (-1, 1)}$,
so that 
any object $\oO_x$ with $x\in X$ becomes 
$\sigma_t$-stable for $t<0$, and 
any 
object $\dL f^{\ast}\oO_y$ with $y\in Y$ 
becomes $\sigma_t$-stable for $t>0$. 

The most serious technical issue is to 
show that $\sigma_0$ satisfies the condition 
which guarantees the existence of the deformation, 
formulated as a \textit{support property}. 
The key ingredient to prove the support property 
is an analogue of Bogomolov-Gieseker (BG) 
inequality for certain semistable 
objects in the perverse heart $\PPer(X/Y)$.
By combining it with the techniques
developed in~\cite{Bay}, \cite{BMT}, 
we establish the BG inequality for 
$\sigma_0$-semistable objects in $\bB_{f^{\ast}\omega}$.
By using the above BG inequality, 
we are able to evaluate Chern characters of 
$\sigma_0$-semistable objects,
and prove the 
support property for $\sigma_0$. 

\subsection{Result for 3-folds I: Derived equivalence}
In the three dimensional case, 
the extremal contraction (\ref{extremal})
is classified into the following five types~\cite{Mori}: 

{\bf Type I:} $Y$ is smooth and $f$ is a blow up at a smooth 
curve. 

{\bf Type II:} $Y$ is smooth and $f$ is a blow-up 
at a point. 

{\bf Type III:} $Y$ has an ordinary double point, and 
$f$ is a blow-up at the singular point. 

{\bf Type IV:}
 $Y$ has an orbifold singularity $\mathbb{C}^3/(\mathbb{Z}/2\mathbb{Z})$
and $f$ is a blow-up at the singular point. 

{\bf Type V:} $Y$ has a $cA_2$-singularity and $f$ is a blow-up 
at the singular point.

We begin the study of three dimensional case 
with the construction of the perverse heart
and relating it to a sheaf 
of non-commutative algebras on $Y$. 
When $f(D)$ is a curve, 
Van den Bergh~\cite{MVB} shows that 
$X$ is derived equivalent to
a certain coherent $\oO_Y$-algebras
$\mathrsfs{A}$, and 
the perverse heart in~\cite{Br1} corresponds to 
the category of right $\mathrsfs{A}$-modules. 
When $f(D)$ is a point,
we show that 
the  
previous result~\cite{TU} 
on the local construction of the perverse 
heart and a derived equivalence 
with non-commutative algebras 
can be applied to our situation. 
By improving the argument of~\cite{TU}, we show the following:
\begin{thm}\label{thm:intro0}
\emph{{\bf (Theorem~\ref{thm:tilting})}}
Let $X$ be a smooth projective 3-fold and $f \colon X \to Y$
an extremal contraction. Then there is a vector bundle $\eE$
on $X$ and a derived equivalence
\begin{align}\label{derived:e}
\dR f_{\ast} \dR \hH om(\eE, \ast) \colon 
D^b \Coh(X) \stackrel{\sim}{\to} D^b \Coh(\mathrsfs{A}). 
\end{align}
Here $\mathrsfs{A} \cneq f_{\ast} \eE nd(\eE)$ and 
$\Coh(\mathrsfs{A})$ is the category of coherent right 
$\mathrsfs{A}$-modules on $Y$. 
\end{thm}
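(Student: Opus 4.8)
The plan is to produce, in each case, a vector bundle $\eE$ on $X$ that is a tilting generator relative to $f$, meaning that $\dR f_{\ast} \eE nd(\eE)$ is concentrated in degree zero and that $\eE$ generates $D^b \Coh(X)$. Granting this, the standard tilting formalism (as in \cite{MVB}, \cite{TU}) shows that $\mathrsfs{A} = f_{\ast} \eE nd(\eE)$ is a coherent sheaf of $\oO_Y$-algebras and that $\dR f_{\ast} \dR \hH om(\eE, \ast)$ is an equivalence onto $D^b \Coh(\mathrsfs{A})$, carrying the perverse heart $\PPer(X/Y)$ of \cite{Br1} onto $\Coh(\mathrsfs{A})$. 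Since $-K_X$ is $f$-ample and $X$ is a smooth $3$-fold, relative Kawamata--Viehweg vanishing gives $\dR f_{\ast} \oO_X = \oO_Y$ and $Y$ has rational singularities; this is the basic input used throughout. I would then split the argument according to $\dim f(D)$, following the classification of \cite{Mori}.

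\textbf{Case $\dim f(D) = 1$ (Type I).} Here the fibres of $f$ are at most one-dimensional, so this is precisely the setting of Van den Bergh's construction in \cite{MVB}. I would invoke it directly: from a line bundle that is relatively ample over $Y$ one forms a suitable universal extension killing the relevant higher direct image, and this produces a projective generator $\eE$ of $\PPer(X/Y)$. Both the degree-zero concentration of $\dR f_{\ast} \eE nd(\eE)$ and the identification of the heart with $\Coh(\mathrsfs{A})$ are then supplied by \cite{MVB}, so no new work is required in this case.

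\textbf{Case $\dim f(D) = 0$ (Types II--V).} Now the central fibre is the divisor $D$, which is a surface, so Van den Bergh's one-dimensionality hypothesis fails and I would instead build on \cite{TU}, which constructs a tilting generator out of an ample line bundle by iterated universal extensions. Concretely, fixing a relatively ample line bundle $L$ on $X$, one forms iterated universal extensions involving $\oO_X$ and the twists $L, L^{\otimes 2}, \dots, L^{\otimes N}$ for a suitable $N$, so as to kill all positive-degree relative self-extensions, arriving at a vector bundle $\eE$. Since $f$ is an isomorphism away from $D$, every cohomology group that must be controlled is supported on $D$, so the required degree-zero concentration of $\dR f_{\ast} \eE nd(\eE)$ becomes a statement about the twists $L^{\otimes k}|_D$ on the single surface $D$.

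The hard part is to promote the \emph{local} (affine-over-$Y$) statement of \cite{TU} to the projective variety $X$, while simultaneously handling the singular bases $Y$ of Types III--V; this is where the argument of \cite{TU} must be improved. Two points require care. First, one must verify the degree-zero concentration of $\dR f_{\ast} \eE nd(\eE)$ directly on the two-dimensional central fibre, which reduces to the vanishing of higher cohomology of the twists $L^{\otimes k}|_D$ on $D$; this is classical when $D \cong \mathbb{P}^2$ (Type II), but on the surfaces occurring in Types III--V one must use Kawamata--Viehweg-type vanishing adapted to their singularities, and this is the technical heart of the argument. Second, one must ensure that the resulting $\eE$ is a genuine vector bundle on the whole of $X$ which still generates $D^b \Coh(X)$, and that $\mathrsfs{A} = f_{\ast} \eE nd(\eE)$ is precisely the sheaf of algebras whose module category realizes the perverse heart $\PPer(X/Y)$. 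Once these are established, the equivalence (\ref{derived:e}) follows from the tilting formalism exactly as in the curve case.
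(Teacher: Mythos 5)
Your Type I case matches the paper (it simply quotes Van den Bergh), and your overall framework — build a relative tilting bundle, then invoke the formalism of \cite{MVB}, \cite{TU} — is the right one. But in the point case (Types II--V) your proposal rests on a step that fails. You reduce the degree-zero concentration of $\dR f_{\ast}\eE nd(\eE)$ to ``the vanishing of higher cohomology of the twists $L^{\otimes k}|_D$'' and propose to establish this by Kawamata--Viehweg-type vanishing adapted to the singularities of $D$. That cohomology does \emph{not} vanish: with $L=\oO_X(D)$ one has $H^2(D,\oO_D(2D))\cong \mathbb{C}^{l}$ with $l=1,3,1$ in types III, IV, V (e.g.\ $\oO_D(2D)\cong\oO_{\mathbb{P}^1\times\mathbb{P}^1}(-2,-2)$ in type III, $\cong\oO_{\mathbb{P}^2}(-4)$ in type IV, $\cong\oO_D(-2)=\omega_D$ on the singular quadric in type V). No vanishing theorem can deliver what you need; this nonvanishing is precisely why the naive bundle $\oO_X\oplus\oO_X(D)\oplus\oO_X(2D)$ is not a tilting generator and why the construction is nontrivial (the paper's Lemma~\ref{lem:OOS} identifies the obstruction as $S^{\oplus l}[-2]$ for the simple $\mathrsfs{A}_0$-module $S$ at $0\in Y$). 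Your phrase ``iterated universal extensions'' gestures at the right fix, but the proposal never says what the extension is an extension \emph{by}, and then asserts a vanishing that would make any extension unnecessary; as written the argument is internally inconsistent and the key construction is missing.

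What the paper actually does: starting from $\eE_0=\oO_X\oplus\oO_X(D)$ (which does satisfy the vanishing of Lemma~\ref{lem:note} and defines the perverse heart via \cite{TU} — note that the heart of \cite{Br1} you invoke is not even defined here, since the fibre $D$ is two-dimensional), it takes a partial free resolution of $S^{\oplus l}$ over $\mathrsfs{A}_0$ twisted by a sufficiently ample $H$ on $Y$, lifts the resulting map to $\Phi_0(\oO_X(2D))$ using the triangle of Lemma~\ref{lem:OOS} and the vanishing (\ref{Ext1}), and defines $\vV$ as a direct summand of the cone of the adjoint morphism (\ref{adj}); this yields the four-term sequence (\ref{desire}) replacing $\oO_X(2D)$ by a bundle that absorbs the $H^2$. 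One then checks that $\eE=\oO_X\oplus\oO_X(D)\oplus\vV$ restricts over an affine neighbourhood of $0$ to the projective generator of \cite{TU}, hence is a local projective generator of $\PPer(X/Y)$, and the equivalence follows from \cite[Proposition~3.3.1]{MVB}. Your second concern — globalizing from affine $Y$ to projective $Y$ — is real but is handled by the twist by $f^{\ast}H$, not by vanishing on $D$. To repair your proposal you would need to replace the claimed vanishing by an explicit construction of the correction term $\vV$ of this kind.
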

The construction of $\eE$ 
in Theorem~\ref{thm:intro0} is 
more concrete than the one in~\cite{TU}, 
and we can explicitly study it. 
We note that 
it is not difficult to construct
$\eE$
in types I and II: 
in type I, the result is contained in~\cite{MVB}.
In type II, $\eE$ is constructed by 
taking the direct sum of line bundles 
associated to the exceptional divisor $D\cong \mathbb{P}^2$, 
which restrict to a full 
exceptional collection on $D$.  
However it is not obvious to construct 
$\eE$ in other cases:
\begin{itemize}
\item In types III and IV, there is a full exceptional 
collection of line bundles
 on $D$, and their direct sum
extends to a
formal neighborhood of $D$.
However it
 does not necessary extend to the whole space $X$. 
\item In type V, 
$D$ is a singular quadric surface in $\mathbb{P}^3$, and 
there is no full exceptional collection
on it. 
In this case, the construction of $\eE$ is
new even in a formal neighborhood of $D$. 
\end{itemize}

Similarly to the surface case, 
there is a perverse heart $\PPer(X/Y)$
in $D^b \Coh(X)$ which 
corresponds to $\Coh(\mathrsfs{A})$ under the 
derived equivalence (\ref{derived:e}).
The information of simple objects in $\PPer(X/Y)$
is required for the study of stability conditions, 
and the above result enables us to describe 
them explicitly via the equivalence (\ref{derived:e}). 
In Proposition~\ref{prop:PDsim} and Proposition~\ref{prop:Persim}, 
we will give a complete description of these simple 
objects in $\PPer(X/Y)$. 

\subsection{Result for 3-folds II: Conjectural Bridgeland
stability conditions}
In three dimensional case, we wish
 to realize a story similar to Theorem~\ref{thm:intro1},
starting from $\PPer(X/Y)$
which corresponds to $\Coh(\mathrsfs{A})$
under the equivalence (\ref{derived:e}).  
However,
there is a serious issue in studying 
Bridgeland stability conditions on 3-folds: 
we don't even know whether there is a 
Bridgeland stability condition
on any projective 3-fold or not. 
In~\cite{BMT}, 
Bayer, Macri and the author 
constructed the heart of a bounded
 t-structure on any projective 
3-fold as a double tilting of $\Coh(X)$
which, together with a suitable central charge, 
conjecturally gives a point in the geometric 
chamber of the space of Bridgeland stability 
conditions. 
As an analogy of the work~\cite{BMT}, 
we construct a pair of the form
\begin{align}\label{pair:Y}
\sigma_{B, f^{\ast}\omega}=(Z_{B, f^{\ast}\omega}, \aA_{B, f^{\ast}\omega}).
\end{align}
Here the central charge $Z_{B, f^{\ast}\omega}$
is of the form (\ref{central}), 
replacing $\ch(E)$ by $e^{-B}\ch(E)$
where $B$ is a $\mathbb{Q}$-multiple of $D$. 
The heart $\aA_{B, f^{\ast}\omega}$ 
is constructed as a double tilting
of $\PPer(X/Y)$, similarly to 
the construction in~\cite{BMT}. 
We conjecture that the pair (\ref{pair:Y})
 gives a Bridgeland 
stability condition. 
In this case, 
the subcategory
\begin{align*}
\pP_{B, f^{\ast}\omega}(1) \cneq \{ E \in \aA_{B, f^{\ast}\omega} :
\Imm Z_{B, f^{\ast}\omega}(E)=0\}
\end{align*}
should be
the category of $\sigma_{B, f^{\ast}\omega}$-semistable 
objects of phase one. 
By denoting
$M^{\sigma_{B, f^{\ast}\omega}}([\oO_x])$
the set of isomorphism classes of $E \in \pP_{B, f^{\ast}\omega}(1)$
with $\ch(E)=\ch(\oO_x)$ for $x\in X$,
the result for 3-folds is formulated as follows: 
\begin{thm}\label{thm:intro2}
\emph{{\bf (Theorem~\ref{thm:3fold})  }}
Let $X$ be a smooth projective 3-fold and 
$f \colon X \to Y$ an extremal contraction.
Then 
there is a conjectural Bridgeland 
stability condition (\ref{pair:Y})
with
 $\pP_{B, f^{\ast}\omega}(1)$  
finite length\footnote{This means that 
$\pP_{B, f^{\ast}\omega}(1)$ is a noetherian and artinian 
abelian category.}
 abelian category,
such that
\begin{itemize}
\item If $f(D)$ is a curve, then $Y$ is 
one of the irreducible 
components of the coarse moduli space of 
$S$-equivalence classes of objects in 
$M^{\sigma_{B, f^{\ast}\omega}}([\oO_x])$. 
\item If $f(D)$ is a point, then 
$Y$ is the coarse moduli space of 
$S$-equivalence classes of objects in 
$M^{\sigma_{B, f^{\ast}\omega}}([\oO_x])$. 
\end{itemize}
\end{thm}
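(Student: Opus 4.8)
The plan is to transport the entire problem to the non-commutative side via the derived equivalence $\dR f_{\ast}\dR\hH om(\eE, \ast)\colon D^b\Coh(X)\simto D^b\Coh(\mathrsfs{A})$ of Theorem~\ref{thm:intro0}, under which the perverse heart $\PPer(X/Y)$ is identified with $\Coh(\mathrsfs{A})$. I would then run an argument parallel to the surface case (Theorem~\ref{thm:intro1}), using the double tilting of $\PPer(X/Y)$ modelled on \cite{BMT} to produce the pair $\sigma_{B, f^{\ast}\omega}=(Z_{B, f^{\ast}\omega}, \aA_{B, f^{\ast}\omega})$ together with its phase-one subcategory $\pP_{B, f^{\ast}\omega}(1)$. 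Since the construction only yields a \emph{conjectural} stability condition, the point is not to verify the support property but to extract enough structure to identify the moduli of phase-one objects.

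First I would establish that $\pP_{B, f^{\ast}\omega}(1)$ is a finite length abelian category. Since it is cut out by the vanishing $\Imm Z_{B, f^{\ast}\omega}=0$, its objects have Chern characters constrained to a codimension-one slice of the numerical lattice; combined with the boundedness built into the double-tilting construction, the real part of $Z_{B, f^{\ast}\omega}$ takes values in a discrete subset of $\mathbb{R}$ on this slice, which forces both descending and ascending chains to terminate. Thus every object admits a Jordan--H\"older filtration into simple objects, and the notion of $S$-equivalence is well defined.

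Next I would classify the simple objects of $\pP_{B, f^{\ast}\omega}(1)$ relevant to the class $\ch(\oO_x)$, using the explicit description of the simple objects of $\PPer(X/Y)$ from Proposition~\ref{prop:PDsim} and Proposition~\ref{prop:Persim}. Away from the exceptional locus, the skyscraper sheaf $\oO_y$ for $y\notin f(D)$ is a simple object of class $\ch(\oO_x)$, while over $f(D)$ the perverse point object $\dL f^{\ast}\oO_y$ decomposes into the finitely many simple perverse objects supported on the fibre. The key geometric input is that the assignment $y\mapsto \dL f^{\ast}\oO_y$ defines a flat family of phase-one semistable objects of class $\ch(\oO_x)$ parametrized by $Y$, hence a morphism from $Y$ to the coarse moduli space; I would show this morphism is injective on $S$-equivalence classes by matching Jordan--H\"older factors against the description of the simple perverse objects.

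Finally I would establish the dichotomy. When $f(D)$ is a point, the simple perverse objects over the exceptional fibre assemble into a single perverse point object $\dL f^{\ast}\oO_y$ for each $y$, and I would check that every $E\in\pP_{B, f^{\ast}\omega}(1)$ with $\ch(E)=\ch(\oO_x)$ is $S$-equivalent to some $\dL f^{\ast}\oO_y$; this yields a bijection and, via the coarse moduli property, the isomorphism with $Y$. When $f(D)$ is a curve, there are additional phase-one semistable objects supported on $D$ sharing the numerical class but with different Jordan--H\"older type, so the moduli space acquires further irreducible components, and the $\dL f^{\ast}$ family then identifies $Y$ with one distinguished component. I expect the main obstacle to be controlling \emph{all} phase-one semistable objects of class $\ch(\oO_x)$ together with their degenerations---particularly in type V, where $D$ is a singular quadric carrying no full exceptional collection, so the simple perverse objects and their extensions are the most delicate to pin down, and correspondingly the enumeration of the extra components in the curve case demands the most care.
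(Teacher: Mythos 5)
Your overall architecture (double tilting of $\PPer(X/Y)$, finite length of $\pP_{B,f^{\ast}\omega}(1)$, classification of simples, identification of $S$-equivalence classes with points of $Y$, extra components in the curve case) matches the paper, but two of your key steps do not go through as stated.

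First, your argument for finite length is essentially circular. You deduce termination of ascending and descending chains from discreteness of $\Ree Z_{B,f^{\ast}\omega}$ on the slice $\Imm Z_{B,f^{\ast}\omega}=0$; but for that argument one needs $\Ree Z_{B,f^{\ast}\omega}(E)<0$ for every non-zero $E\in\pP_{B,f^{\ast}\omega}(1)$ (so that the real parts of a chain form a monotone sequence in a discrete set), and this positivity is precisely the content of the unproven Conjecture~\ref{conj:BG} --- it is the reason the stability condition is only \emph{conjectural}. The paper therefore cannot, and does not, use this route: noetherianity of $\aA_{B,f^{\ast}\omega}$ (Proposition~\ref{prop:noether}) is proved by a hands-on argument involving the auxiliary torsion pair $(\tT^{\dag},\fF^{\dag})$ of Lemma~\ref{lem:torsion}, the duality functor $\mathbb{D}\Phi$ of Lemma~\ref{DPhi}, reflexive hulls and a projection to $\mathbb{P}^2$; the artinian property (Proposition~\ref{prop:Pfin}) is proved by reducing quotients to $\PPer_{0}(X/Y)$ and bounding lengths of the associated zero-dimensional sheaves $\Phi(F_i)$. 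You would need to supply arguments of this kind.

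Second, your identification of the moduli space rests on the family $y\mapsto \dL f^{\ast}\oO_y$ over $Y$. This fails in types III, IV and V: as the paper notes explicitly, $\dL f^{\ast}\oO_y$ is not an object of $D^b\Coh(X)$ for $y\in\mathrm{Sing}(Y)$ (the local resolution of $\oO_y$ is infinite), so there is no such family of phase-one objects and no induced morphism $Y\to$ moduli. The paper instead proves that $Y$ (resp.\ $\widetilde{Y}=Y\cup(f(D)\times f(D))$ in the curve case) \emph{corepresents} the moduli functor: the natural transformation towards $\Hom(-,Y)$ is induced by the pushforward $\dR(f\times\id_S)_{\ast}\qQ$, which is a flat family of skyscrapers on $Y$, and the universal property is verified using Proposition~\ref{prop:key}, whose representatives of $S$-equivalence classes are the objects $\oO_x$ (with $\oO_x\sim_S\oO_{x'}$ iff $f(x)=f(x')$), not perverse point objects. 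Relatedly, you omit the choice of $B=bD$ via Lemma~\ref{lem:key}, which is what makes $\ch_3^{B}$ strictly positive on $\PPer_0(X/Y)$ and is the engine behind Proposition~\ref{prop:key}~(ii).
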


The finite length property of $\pP_{B, f^{\ast}\omega}(1)$
is a necessary condition for the pair (\ref{pair:Y})
to give a Bridgeland stability condition, 
and required to define $S$-equivalence classes of objects 
in $\pP_{B, f^{\ast}\omega}(1)$. 
Also when $f(D)$ is a curve, the coarse moduli 
space contains another component $f(D) \times f(D)$, 
so $Y$ appears only as an irreducible component. 
(cf.~Remark~\ref{rmk:glue}.)

The pair (\ref{pair:Y})
can be shown to give a Bridgeland 
stability condition if a conjectural 
BG type inequality evaluating $\ch_3$, 
similar to the one conjectured in~\cite{BMT}, 
holds. (cf.~Conjecture~\ref{conj:stab}.)
If this is true, then   
Theorem~\ref{thm:intro2}
realizes $Y$ as a moduli space of Bridgeland semistable objects. 
The conjectural stability 
condition (\ref{pair:Y})
should be in a boundary of the
conjectural geometric chamber 
constructed in~\cite{BMT}. 
So the pair (\ref{pair:Y}) 
is an analogue of $\sigma_{0}$ in Theorem~\ref{thm:intro1}. 

In types I and II, similarly to the surface case, 
 the variety $Y$ is likely to 
be a fine moduli space, if we are able to deform 
$\sigma_{B, f^{\ast}\omega}$. 
However in other cases, 
 the variety $Y$ may not be 
regarded as 
a fine moduli space, even if 
we deform $\sigma_{B, f^{\ast}\omega}$. 
 This is due to the fact that
the object $\dL f^{\ast} \oO_y$
is not an object in $D^b \Coh(X)$ for $y\in \mathrm{Sing}(Y)$. 

In order to show that 
the pair (\ref{pair:Y})
satisfies the desired property, 
we use a description of 
simple objects in $\PPer(X/Y)$, and investigate
the $S$-equivalence classes of 
objects in $\pP_{B, f^{\ast}\omega}(1)$. 
The most difficult and interesting case is 
the type V case, and the
arguments will be focused in this case. 

\subsection{Relation to existing works}
There are several recent works relating 
Bridgeland stability conditions and MMP.
In~\cite{ABCH}, the MMP of the Hilbert scheme of 
points in $\mathbb{P}^2$ is realized
as a variation of Bridgeland stability conditions
on $\mathbb{P}^2$. 
In~\cite{BaMa2}, the moduli spaces of generic 
Bridgeland stability 
conditions on K3 surfaces are shown to be 
projective varieties, and they are related 
by flops under variations of Bridgeland stability 
conditions. The motivation of this paper is similar to 
these works, but we concentrate on 
moduli spaces of objects whose numerical class
is equal to that of a skyscraper sheaf.

The Bridgeland stability conditions on arbitrary surfaces 
are constructed in~\cite{AB}. 
In the works~\cite{MYY}, \cite{MYY2}, \cite{YY}, 
\cite{Maci}, \cite{MaciMe}, 
the structure of walls and wall-crossing phenomena 
with respect to these stability conditions are studied. 
Our construction of $\sigma_0$
is a generalization of the construction in~\cite{AB}, 
and similar to the one in~\cite{MYY} for 
the perverse heart on K3 surfaces. 
Its deformation $\sigma_t$ for $t>0$
provides a
new example of Bridgeland stability 
conditions on arbitrary non-minimal surfaces.  
It would be interesting to study the moduli 
spaces of $\sigma_t$-semistable objects with 
arbitrary numerical classes, and 
investigate their behavior under wall-crossing. 

In the 3-fold case, the construction of $\eE$
in Theorem~\ref{thm:intro2} seems to 
have an application of the minimal saturated triangulated
category 
associated to the singular variety. 
In type V contraction, the direct summand of 
$\eE$ tensored by a line bundle provides a
local tilting generator of the saturated triangulated
category $\dD_Y$ constructed in~\cite{KawDer}. 
(cf.~Remark~\ref{rmk:satu}.)
It would be interesting to study $\dD_Y$
in terms of our vector bundle $\eE$.

\subsection{Plan of the paper}
In Section~\ref{sec:back}, we 
recall some background on Bridgeland stability 
conditions. In Section~\ref{sec:surface}, 
we prove Theorem~\ref{thm:intro1}. 
In Section~\ref{sec:3fold}, 
we prove Theorem~\ref{thm:intro0}
and classify simple objects 
in the perverse heart. 
In Section~\ref{sec:conj},
we construct a conjectural 
Bridgeland stability condition and 
prove Theorem~\ref{thm:intro2}.

\subsection{Acknowledgement}
The author is greatful to Arend Bayer for 
valuable comments. 
The proof of Proposition~\ref{prop:support} is benefited
by the communication with Emanuele Macri. 
This work is supported by World Premier 
International Research Center Initiative
(WPI initiative), MEXT, Japan. This work is also supported by Grant-in Aid
for Scientific Research grant (22684002), 
and partly (S-19104002),
from the Ministry of Education, Culture,
Sports, Science and Technology, Japan.

\subsection{Notation and convention}
In this paper, all the varieties are defined over 
$\mathbb{C}$. 
For a triangulated (or abelian)
category $\dD$ and 
a set of objects $\sS \subset \dD$, 
we denote by $\langle \sS \rangle$ 
the smallest extension closed subcategory of 
$\dD$ which contains objects in $\sS$. 
For a sheaf of (not necessary commutative) 
$\oO_Y$-algebras $\mathrsfs{A}$ on a 
variety $Y$, we denote by 
$\Coh(\mathrsfs{A})$ the abelian category of 
coherent right $\mathrsfs{A}$-modules on $Y$. 
The subcategory $\Coh_{\le i}(\mathrsfs{A}) \subset \Coh(\mathrsfs{A})$
is the category of $E\in \Coh(\mathrsfs{A})$ with 
$\dim \Supp(E) \le 1$ as $\oO_Y$-module. 
For $E\in D^b \Coh(X)$, we denoted by 
$\hH^i(E)$ the $i$-th 
cohomology sheaf of the complex $E$.

\section{Background on stability conditions}\label{sec:back}
In this section, we review the theory of 
stability conditions by Bridgeland~\cite{Brs1}.
\subsection{Definitions}
Let $X$ be a smooth projective variety and $N(X)$ the 
numerical Grothendieck group of $X$. 
This is the quotient of the usual Grothendieck 
group $K(X)$ by the subgroup of $E\in K(X)$
with $\chi(E, F)=0$ for any $F\in K(X)$, 
where $\chi(E, F)$ is the Euler pairing
\begin{align*}
\chi(E, F) \cneq \sum_{i\in \mathbb{Z}}
(-1)^i \dim \Ext^i(E, F). 
\end{align*}
\begin{defi}\label{lem:pair2} \emph{(\cite{Brs1})}
A stability condition\footnote{A stability condition in 
Definition~\ref{lem:pair2}
 was called \textit{numerical} stability condition in~\cite{Brs1}.
We omit `numerical' since we do not deal with 
non-numerical stability conditions.} on $X$ is a pair 
\begin{align}\label{pair2}
(Z, \aA), \quad \aA \subset D^b \Coh(X),
\end{align}
where $Z \colon N(X) \to \mathbb{C}$
is a group homomorphism and $\aA$ is the heart of a 
bounded t-structure, 
such that the following conditions hold: 
\begin{itemize}
\item For any non-zero $E\in \aA$, we have 
\begin{align}\label{pro:1}
Z(E) \in \{ r\exp(i\pi \phi) : r>0, \phi \in (0, 1] \}.
\end{align}
\item (Harder-Narasimhan property)
For any $E\in \aA$, there is a filtration in $\aA$
\begin{align*}
0=E_0 \subset E_1 \subset \cdots \subset E_N
\end{align*}
such that each subquotient $F_i=E_i/E_{i-1}$ is 
$Z$-semistable with 
$\arg Z(F_i)> \arg Z(F_{i+1})$. 
\end{itemize}
\end{defi}
Here an object $E\in \aA$ is $Z$-\textit{(semi)stable} 
if for any subobject $0\neq F \subsetneq E$
we have 
\begin{align*}
\arg Z(F) <(\le) \arg Z(E). 
\end{align*}
The group homomorphism $Z$ is called a \textit{central charge}.
In this paper,
the central charge is always of the form $Z=Z_{B, \omega}$: 
\begin{align}\label{integral}
Z_{B, \omega}(E) &= -\int_{X} e^{-i\omega} \ch^B(E).
\end{align}
 Here  
$B$, $\omega$ are elements of $H^2(X, \mathbb{R})$
and 
\begin{align*}
\ch^B(E) \cneq e^{-B}\ch(E) \in H^{\ast}(X, \mathbb{R})
\end{align*} is the twisted 
Chern character.
If $B=0$, we just write $Z_{\omega} \cneq Z_{0, \omega}$. 
By setting $d=\dim X$, $Z_{B, \omega}(E)$ is written as 
\begin{align*}
\sum_{j \ge 0}\frac{(-1)^{j+1}}{(2j)!}\omega^{2j}\ch^B_{d-2j}(E)
 + \sqrt{-1} \left(	
\sum_{j\ge 0}\frac{(-1)^j}{(2j+1)!} \omega^{2j+1}\ch^B_{d-2j-1}(E)\right).
\end{align*}
\subsection{Slicing}
Given a stability condition $\sigma=(Z, \aA)$, 
we can construct 
subcategories $\pP(\phi) \subset D^b \Coh(X)$
for $\phi \in \mathbb{R}$ as follows: 
if $0<\phi \le 1$, 
then $\pP(\phi)$ is defined by 
\begin{align*}
\pP(\phi) \cneq \left\{ E \in \aA : \begin{array}{c}
E \mbox{ is } Z \mbox{-semistable with } \\
Z(E) \in \mathbb{R}_{>0} \exp(i\pi \phi) 
\end{array} \right\} \cup \{0\}. 
\end{align*}
Other $\pP(\phi)$ are defined by the rule 
\begin{align*}
\pP(\phi+1)=\pP(\phi)[1].
\end{align*} 
A family of subcategories $\{\pP(\phi) \}_{\phi \in \mathbb{R}}$
forms a \textit{slicing} in the sense of~\cite[Definition~3.3]{Brs1}.
As proved in~\cite[Proposition~5.3]{Brs1}, 
giving a stability condition is equivalent to giving a pair 
\begin{align}\label{stab:slice}
(Z, \{\pP(\phi)\}_{\phi \in \mathbb{R}}),
\end{align}
where $\{\pP(\phi)\}_{\phi \in \mathbb{R}}$
is a slicing, $Z \colon N(X) \to \mathbb{C}$ is a group homomorphism
satisfying a certain axiom. (cf.~\cite[Definition~1.1]{Brs1}.)

A non-zero object $E \in \pP(\phi)$ is called 
$\sigma$-\textit{semistable of phase}
$\phi$. 
Each subcategory $\pP(\phi) \subset D^b \Coh(X)$
is shown to be an abelian category, and a
simple object in $\pP(\phi)$ is called
$\sigma$-\textit{stable}.
For an interval $I\subset \mathbb{R}$, 
we set 
\begin{align*}
\pP(I) \cneq \langle \pP(\phi) : \phi \in I \rangle.
\end{align*}
A stability condition (\ref{stab:slice})
is called \textit{locally finite} if 
for any $\phi \in \mathbb{R}$, 
there is $\epsilon >0$ so that $\pP((\phi-\epsilon, \phi+\epsilon))$
is of finite length, i.e. 
it is a noetherian and artinian 
with respect to strict epimorphisms
and strict monomorphisms. (cf.~\cite[Definition~5.7]{Brs1}.) 
In particular each $\pP(\phi)$ is a finite length 
abelian category. 

In general if $\pP$ is a finite length 
abelian category, 
 any object $F \in \pP$
admits a Jordan-H$\rm{\ddot{o}}$lder filtration in $\pP$
\begin{align}\label{Jordan}
0=F_0 \subset F_1 \subset \cdots \subset F_N=F
\end{align}
so that each subquotient 
$F_i/F_{i-1}$ is simple. 
We set 
\begin{align}\label{gr}
\mathrm{gr}(F) \cneq \bigoplus_{i=1}^{N} F_i/F_{i-1} \in \pP. 
\end{align}
Although the filtration (\ref{Jordan}) is not 
unique, the object $\mathrm{gr}(F)$ is uniquely 
determined. 
\begin{defi}\label{S-eq}
Let $\pP$
be a finite length abelian category. 
Then $F, F' \in \pP$
is called $S$-equivalent if there 
is an isomorphism $\mathrm{gr}(F) \cong \mathrm{gr}(F')$. 
\end{defi}

\subsection{The space of stability conditions}
Let $\lVert \ast \rVert$ be a fixed
norm on $N(X)_{\mathbb{R}}$. 
It is useful (but often difficult to check)
to put the following additional 
condition on the pair (\ref{stab:slice}): 
\begin{defi}
A stability condition (\ref{stab:slice})
satisfies the support property if there 
is a positive constant $C>0$ such 
that the following inequality holds for any 
non-zero $E\in \cup_{\phi \in \mathbb{R}}\pP(\phi)$: 
\begin{align*}
\frac{\lVert E \rVert}{\lvert Z(E) \rvert} \le C. 
\end{align*}
\end{defi}

The space of stability conditions is
defined as follows: 
\begin{defi}\label{def:stabsp}
(i) We define $\Stab^{\dag}(X)$ to 
be the set of locally finite
 stability conditions on $D^b \Coh(X)$. 

(ii) 
We define $\Stab(X)$ to be the set of 
stability conditions 
on $D^b \Coh(X)$ satisfying the support property. 
\end{defi}

It is easy to see that the local finiteness of 
the stability condition follows from the support property, 
i.e. we have the inclusion
\begin{align*}
\Stab(X) \subset \Stab^{\dag}(X).
\end{align*}
As we will mention in Remark~\ref{rmk:locfin}, 
the set $\Stab(X)$ has the better property than $\Stab^{\dag}(X)$. 
The following is Bridgeland's main theorem.
\begin{thm}\label{thm:loc}\emph{(\cite[Theorem~1.2]{Brs1})}
There is a natural topology on $\Stab(X)$ such that 
the forgetting map 
\begin{align*}
\Stab(X) \to N(X)_{\mathbb{C}}^{\vee}
\end{align*}
sending $(Z, \aA)$ to $Z$
is a local homeomorphism. 
In particular, each connected component of $\Stab(X)$
is a complex manifold. 
\end{thm}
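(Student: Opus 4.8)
The plan is to prove the theorem by equipping $\Stab(X)$ with a natural (generalized) metric and showing that the forgetting map is, near each point, a bicontinuous bijection onto an open subset of the finite-dimensional complex vector space $N(X)^{\vee}_{\mathbb{C}}$. For a stability condition $\sigma=(Z, \{\pP(\phi)\})$ and a nonzero $E$, write $\phi^{+}_{\sigma}(E)$ and $\phi^{-}_{\sigma}(E)$ for the largest and smallest phases appearing in the Harder--Narasimhan filtration of $E$, and set $m_{\sigma}(E)\cneq \sum_i \lvert Z(A_i)\rvert$ over the HN factors $A_i$. I would define
\[
d(\sigma, \tau)\cneq \sup_{0\neq E}\left\{ \lvert \phi^{+}_{\sigma}(E)-\phi^{+}_{\tau}(E)\rvert, \ \lvert \phi^{-}_{\sigma}(E)-\phi^{-}_{\tau}(E)\rvert, \ \lvert \log (m_{\sigma}(E)/m_{\tau}(E))\rvert \right\}
\]
and take the induced topology on $\Stab(X)$. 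The first, routine steps are: (a) check that $d$ is a generalized metric; (b) check that $\sigma\mapsto Z$ is continuous, i.e. that $d(\sigma,\tau)$ small forces $\lVert Z_{\sigma} - Z_{\tau}\rVert$ small, which is immediate from the definitions of $m$ and of the extreme phases; and (c) prove local injectivity: if $d(\sigma, \tau)$ is below a fixed threshold and $Z_{\sigma} = Z_{\tau}$, then $\sigma=\tau$, since agreement of the central charge pins down the phases that closeness of the slicings has already confined to a narrow window.

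The heart of the argument is the deformation (lifting) step. Given $\sigma=(Z,\aA)\in \Stab(X)$ with support constant $C$, I would show that every $W\in N(X)^{\vee}_{\mathbb{C}}$ with $\lVert W-Z\rVert$ sufficiently small relative to $1/C$ is the central charge of a unique stability condition $\tau=(W,\qQ)$ close to $\sigma$. The construction proceeds slice by slice: fix a small $\epsilon_0>0$ and, for each $\phi$, regard $\pP((\phi-\epsilon_0, \phi+\epsilon_0))$ as a quasi-abelian category on which $W$ restricts to a stability function. The support property is exactly what makes this work: the uniform bound $\lVert E\rVert\le C\lvert Z(E)\rvert$ on $\sigma$-semistable objects guarantees that, for $\lVert W-Z\rVert$ small, $W(E)$ still lies in the correct half-plane for every $\sigma$-semistable $E$ with phase in the window, and moreover that the $W$-phase of such an $E$ differs from $\phi$ by a quantity tending to $0$ with $\lVert W-Z\rVert$, \emph{uniformly} in $E$. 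Applying the Harder--Narasimhan machinery for stability functions on quasi-abelian categories, together with the equivalence of data in \cite[Proposition~5.3]{Brs1}, produces a refined slicing; gluing over $\phi$ yields a slicing $\qQ$ with central charge $W$. One then verifies that $(W,\qQ)$ again satisfies the support property (with a slightly enlarged constant) and that $d(\sigma,\tau)<\epsilon_0$.

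Finally I would assemble these pieces. Local injectivity from step (c) shows that the lift $\tau$ produced above is the unique nearby stability condition with central charge $W$, so the lifting map $W\mapsto \tau$ is a well-defined inverse to $\sigma\mapsto Z$ on a neighbourhood of $Z$; its continuity follows because the construction is controlled quantitatively by $\lVert W-Z\rVert$. Hence $\sigma\mapsto Z$ restricts to a homeomorphism from a ball around $\sigma$ onto an open subset of $N(X)^{\vee}_{\mathbb{C}}$, proving it is a local homeomorphism, and since $N(X)^{\vee}_{\mathbb{C}}$ is a finite-dimensional complex vector space, each connected component of $\Stab(X)$ inherits the structure of a complex manifold. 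I expect the deformation step to be the main obstacle: the delicate point is proving that the perturbed stability function still enjoys the Harder--Narasimhan property on each quasi-abelian window and that the induced phase shifts are uniformly small. This uniformity is precisely where the support property is indispensable; under mere local finiteness (the weaker condition defining $\Stab^{\dag}(X)$) the phase shifts need not be controlled uniformly in $E$, and the forgetting map can fail to be a local homeomorphism, which is why the theorem is stated for $\Stab(X)$.
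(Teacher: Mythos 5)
This theorem is not proved in the paper at all: it is quoted verbatim from Bridgeland (\cite[Theorem~1.2]{Brs1}), so there is no internal argument to compare against. Your sketch is a faithful reconstruction of Bridgeland's original proof -- the generalized metric via $\phi^{\pm}$ and the mass function, local injectivity when central charges agree and the distance is below a threshold, and the key deformation theorem whose uniform hypothesis is exactly what the support property supplies -- and it correctly identifies both the main difficulty (the Harder--Narasimhan property for the perturbed stability function on the thin quasi-abelian windows, with uniformly small phase shifts) and the reason the statement is made for $\Stab(X)$ rather than $\Stab^{\dag}(X)$, in agreement with Remark~\ref{rmk:locfin}.
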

\begin{rmk}\label{rmk:locfin}
The set $\Stab^{\dag}(X)$ was 
considered in the original paper~\cite{Brs1}, and 
there is also a natural topology on $\Stab^{\dag}(X)$. 
However the forgetting map 
$\Stab^{\dag}(X) \to N(X)^{\vee}_{\mathbb{C}}$
may not be a local homeomorphism, but so 
on a certain linear subspace of $N(X)^{\vee}_{\mathbb{C}}$. 
The subspace $\Stab(X) \subset \Stab^{\dag}(X)$ is 
shown to be the union of connected components on which the 
forgetting map is a local homeomorphism. 
\end{rmk}
\subsection{Bogomolov-Gieseker inequality}
We use the following 
(generalized) Bogomolov-Gieseker 
inequality to 
show the axiom (\ref{pair2}) or support property.  
\begin{thm}\label{thm:BoGi} \emph{(\cite{Bog},~\cite{Gie}, \cite{Langer})}
Let $X$ be a smooth projective variety
of $\dim X=d \ge 2$, and 
$H_1, \cdots, H_{d-1}$ nef divisors on $X$
such that the 1-cycle $H_1 \cdots H_{d-1}$ is 
numerically nontrivial. 
Then for any torsion free
$(H_1, \cdots, H_{d-1})$-slope semistable sheaf $E$ on $X$, we have the 
inequality
\begin{align*}
(\ch_1(E)^2 -2\ch_0(E) \ch_2(E))H_2 \cdots H_{d-1} \ge 0. 
\end{align*}
\end{thm}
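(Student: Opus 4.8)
The plan is to reduce the $d$-dimensional statement to the classical surface case by a restriction argument, so let me first record the shape of the quantity to be controlled. Writing $\Delta(E) \cneq \ch_1(E)^2 - 2\ch_0(E)\ch_2(E) \in H^4(X, \mathbb{R})$ for the Bogomolov discriminant, the assertion is precisely that $\Delta(E) \cdot H_2 \cdots H_{d-1} \ge 0$. Since both $\Delta$ and the slope are insensitive to twisting $E$ by a line bundle, I may normalize as convenient. The key structural observation is that only $H_2, \ldots, H_{d-1}$ enter the conclusion, while $H_1$ enters only through the definition of slope; this is exactly the configuration needed to cut $X$ down by $H_2, \ldots, H_{d-1}$ and leave $H_1$ as the polarization governing semistability on the resulting surface.

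Assume first that $H_2, \ldots, H_{d-1}$ are very ample. First I would invoke the Mehta--Ramanathan (or Flenner) restriction theorem: for $m \gg 0$ and a general complete intersection surface $S = D_2 \cap \cdots \cap D_{d-1}$ with $D_j \in |m H_j|$, the restriction $E|_S$ is a torsion free $H_1|_S$-slope semistable sheaf on the smooth surface $S$. Restriction is compatible with Chern characters, $\ch(E|_S) = \ch(E)|_S$, so that $\Delta(E|_S)$, viewed as an intersection number on $S$, equals $m^{d-2}\,\Delta(E) \cdot H_2 \cdots H_{d-1}$. As $m^{d-2}>0$, the $d$-dimensional inequality is then equivalent to the two-dimensional inequality $\Delta(E|_S) \ge 0$ for the semistable sheaf $E|_S$ on $S$.

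For the surface case I would appeal to the classical Bogomolov--Gieseker inequality. The cleanest route in characteristic zero is to pass to the endomorphism sheaf $\mathcal{E}nd(E|_S) = E|_S \otimes (E|_S)^\vee$, which is again slope semistable of slope zero because, over $\mathbb{C}$, a tensor product of slope semistable sheaves is slope semistable. One computes $\ch_1(\mathcal{E}nd(E|_S)) = 0$ and $\ch_2(\mathcal{E}nd(E|_S)) = -\Delta(E|_S)$, whence Riemann--Roch gives $\chi(\mathcal{E}nd(E|_S)) = r^2\chi(\oO_S) - \Delta(E|_S)$. Bounding $h^0$ and, via Serre duality, $h^2$ of the slope-zero semistable sheaf $\mathcal{E}nd(E|_S)$ after passing to large tensor or symmetric powers forces $\Delta(E|_S) \ge 0$; this is exactly the content of~\cite{Bog},~\cite{Gie}, which I would cite rather than reprove.

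It remains to remove the very-ampleness assumption and allow the $H_j$ to be merely nef, and this is the step I expect to be the main obstacle. The difficulty is that slope semistability with respect to the mixed polarization $(H_1, \ldots, H_{d-1})$ is not in general preserved when the nef classes $H_j$ are perturbed to nearby ample classes, so one cannot simply approximate and pass to the limit in the restriction argument above. This is precisely where Langer's machinery~\cite{Langer} is needed: his effective restriction theorems and boundedness results are designed to handle arbitrary nef polarizations directly, and they yield the inequality in the stated generality. I would therefore invoke~\cite{Langer} to complete the nef case, treating the very ample reduction above as the motivating special case.
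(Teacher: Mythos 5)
There is nothing in the paper to compare your argument against: Theorem~\ref{thm:BoGi} is stated purely as a quoted background result, attributed to \cite{Bog}, \cite{Gie} and \cite{Langer}, and the paper gives no proof of it. Your sketch is a correct account of the standard route those references take: reduction to a surface by iterated Mehta--Ramanathan restriction along general members of $|mH_j|$ (which, as you note, leaves $H_1$ as the polarization on the resulting surface and rescales the discriminant pairing by $m^{d-2}>0$), then the classical Bogomolov argument on the surface via $\mathcal{E}nd(E|_S)$, whose Chern character computation $(r^2, 0, -\Delta)$ and the resulting Riemann--Roch count you state correctly. You are also right to flag the passage from ample to merely nef polarizations as the genuinely delicate step --- semistability is not stable under perturbing a nef polarization, so a naive limiting argument fails --- and delegating it to Langer's results is exactly what the attribution in the theorem statement intends. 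The only cosmetic caveat is that the mixed-polarization form of the restriction theorem should itself be cited rather than treated as the single-polarization statement, but this does not affect the soundness of the outline.
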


\section{Extremal contractions of projective surfaces}\label{sec:surface}
The goal of this section is to prove Theorem~\ref{thm:intro1}. 
Throughout this section, 
$X$ is a smooth projective surface
and $f$ is a birational morphism 
\begin{align*}
f \colon X \to Y
\end{align*}
which contacts a single $(-1)$-curve
$C \subset X$ to a point in $Y$. 
\subsection{Perverse t-structure on $D^b \Coh(X)$}
We recall the construction of 
the hearts of perverse t-structures 
\begin{align*}
\ppPPer(X/Y) \subset D^b \Coh(X)
\end{align*}
for $p \in \mathbb{Z}$, 
studied in~\cite{Br1}, \cite{MVB}. 
Let $\cC$ be the triangulated subcategory 
of $D^b \Coh(X)$, defined by 
\begin{align*}
\cC \cneq \{ E \in D^b \Coh(X) : 
\dR f_{\ast}E=0\}. 
\end{align*}
Since the dimensions of the 
 fibers of $f$ are
at most one dimensional, 
the standard 
t-structure on $D^b \Coh(X)$ induces 
a t-structure on $\cC$,
\begin{align*}
(\cC^{\le 0}, \cC^{\ge 0})
\end{align*}
with heart $\cC^0 =\cC \cap \Coh(X)$. 
(cf.~\cite[Lemma~3.1]{Br1}.)
\begin{defi}\emph{(\cite{Br1}, \cite{MVB})}
We define $\ppPPer(X/Y)$ to be 
\begin{align*}
\ppPPer(X/Y) \cneq \left\{ 
E \in D^b \Coh(X) : 
\begin{array}{c}
\dR f_{\ast} E \in \Coh(Y), \\
\Hom(\cC^{<p}, E)=\Hom(E, \cC^{>p})=0 
\end{array}
\right\}. 
\end{align*}
\end{defi}
In what follows, we mainly use the case $p=-1$, 
and denote 
\begin{align*}
\PPer(X/Y) \cneq \iPPer(X/Y).
\end{align*}
Also we denote 
by $\hH_{p}^{i}(\ast)$ the $i$-th cohomology 
functor with respect to the t-structure 
with heart $\PPer(X/Y)$. 

As proved in~\cite{Brs1}, \cite{MVB}, 
the subcategory $\PPer(X/Y)$ is the 
heart of a bounded t-structure on $D^b \Coh(X)$.
Indeed, Van den Bergh~\cite{MVB}
shows that 
the t-structure $\PPer(X/Y)$ is related
to a sheaf of non-commutative algebras on $Y$. 
Let us set
\begin{align*}
\eE \cneq \oO_X \oplus \oO_X(-C), \
\mathrsfs{A} \cneq f_{\ast} \eE nd(\eE).
\end{align*}
Note that $\mathrsfs{A}$ is a sheaf of 
non-commutative algebras on $Y$. 
\begin{thm}\emph{(\cite{MVB})}\label{thm:nc}
We have the derived equivalence
\begin{align*}
\Phi \cneq \dR f_{\ast}
\dR \hH om(\eE, \ast) :
D^b \Coh(X) \stackrel{\sim}{\to} D^b \Coh(\mathrsfs{A}),
\end{align*}
which restricts to an equivalence between 
$\PPer(X/Y)$ and $\Coh(\mathrsfs{A})$. 
\end{thm}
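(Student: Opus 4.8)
The plan is to recognize $\eE=\oO_X\oplus\oO_X(-C)$ as a relative tilting bundle for $f$ and then invoke the standard tilting formalism. Concretely, I would verify two conditions: (a) the relative $\Ext$-vanishing $R^{i}f_{\ast}\eE nd(\eE)=0$ for $i>0$, so that $\dR f_{\ast}\dR\hH om(\eE,\eE)$ is the sheaf of algebras $\mathrsfs{A}$ placed in degree $0$; and (b) that $\eE$ generates $D^b\Coh(X)$ relative to $f$, equivalently that $\Phi$ is conservative. Granting (a) and (b), the general tilting theorem produces the quasi-inverse $\Psi=(-)\dotimes_{\mathrsfs{A}}\eE$ and shows $\Phi$ is an exact equivalence. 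The final clause — that $\Phi$ restricts to $\PPer(X/Y)\simto\Coh(\mathrsfs{A})$ — I would obtain by matching t-structures, since the perverse heart was designed precisely by the conditions forcing $\Phi$ to land in the standard heart.

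\textbf{Step 1 (Ext-vanishing).} Since $\eE nd(\eE)\cong\oO_X^{\oplus 2}\oplus\oO_X(C)\oplus\oO_X(-C)$, it suffices to show $R^{i}f_{\ast}$ vanishes in positive degrees on each $\oO_X(kC)$, $k\in\{-1,0,1\}$. As $f$ is an isomorphism away from $y\in Y$ with fibers of dimension at most one, only $R^1f_{\ast}$ can be nonzero, and it is supported at $y$. By the theorem on formal functions its stalk is $\varprojlim_n H^1(C_n,\oO_X(kC)|_{C_n})$, whose graded pieces along the thickenings $C_n$ are $\oO_X(kC)|_C\otimes\Sym^m N^{\vee}_{C/X}$. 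Using $C\cong\mathbb{P}^1$ and $C^2=-1$, one has $N_{C/X}=\oO_{\mathbb{P}^1}(-1)$ and $\oO_X(kC)|_C=\oO_{\mathbb{P}^1}(-k)$, so each graded piece is $\oO_{\mathbb{P}^1}(m-k)$ with $m\ge 0$ and $k\le 1$, whence $m-k\ge -1$ and $H^1(\mathbb{P}^1,\oO(m-k))=0$. Therefore $R^{>0}f_{\ast}\eE nd(\eE)=0$ and $\dR f_{\ast}\dR\hH om(\eE,\eE)=\mathrsfs{A}$ in degree $0$. Comparing $\Hom_X(\eE,\eE[i])=H^i(Y,\dR f_{\ast}\eE nd(\eE))=H^i(Y,\mathrsfs{A})$ with $\Hom_{\Coh(\mathrsfs{A})}(\mathrsfs{A},\mathrsfs{A}[i])=H^i(Y,\mathrsfs{A})$, the functor $\Phi$ is fully faithful on the subcategory generated by $\eE$.

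\textbf{Step 2 (generation and equivalence).} Restricting to the exceptional fiber gives $\eE|_C=\oO_{\mathbb{P}^1}\oplus\oO_{\mathbb{P}^1}(1)$, a full strong exceptional collection on $\mathbb{P}^1$; together with $\eE|_{X\setminus C}\cong\oO_X^{\oplus 2}$ over the locus where $f$ is an isomorphism, this shows $\Phi$ is conservative: if $\Phi(F)=0$ then $F|_{X\setminus C}=0$, so $F$ is supported on $C$, and the fiberwise collection forces $F=0$. The full subcategory of $D^b\Coh(X)$ on which $\Phi$ is fully faithful is triangulated and thick, and by Step 1 it contains $\eE$; conservativity then forces it to be all of $D^b\Coh(X)$, so $\Phi$ is fully faithful. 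Since $\mathrsfs{A}=\Phi(\eE)$ is a projective generator of $\Coh(\mathrsfs{A})$, its image generates $D^b\Coh(\mathrsfs{A})$, giving essential surjectivity; hence $\Phi$ is an equivalence with quasi-inverse $(-)\dotimes_{\mathrsfs{A}}\eE$.

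\textbf{Step 3 (heart identification).} It remains to identify $\Phi^{-1}(\Coh(\mathrsfs{A}))$ with $\PPer(X/Y)$, i.e. to show $E\in\Phi^{-1}(\Coh(\mathrsfs{A}))$ iff $\dR f_{\ast}\dR\hH om(\eE,E)$ is concentrated in degree $0$. The summand $\oO_X\subset\eE$ contributes the condition $\dR f_{\ast}E\in\Coh(Y)$, while the vanishing of the negative and positive cohomologies against the remaining summands translates, via the structure of $\cC$ and adjunction for $\dL f^{\ast}$, into the orthogonality conditions $\Hom(\cC^{<p},E)=\Hom(E,\cC^{>p})=0$ with $p=-1$. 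These are exactly the defining conditions of $\PPer(X/Y)$, so $\Phi$ restricts to the asserted equivalence of hearts. I expect the main obstacle to be the relative generation of Step 2 combined with the precise t-structure matching of Step 3: the former requires controlling $F$ both on the fiber and on its thickenings, and the latter requires converting the perverse orthogonality conditions into the single requirement that $\Phi(E)$ be a sheaf, which is exactly where the choice $p=-1$ and the fine structure of $\cC$ enter.
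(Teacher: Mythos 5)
The paper gives no argument for this statement: it is quoted directly from Van den Bergh \cite{MVB}, whose proof runs through the formalism of local projective generators (Proposition~3.3.1 of that paper). Your reconstruction via the relative tilting formalism is the right strategy, and Step~1 is correct and complete: the formal-functions computation giving $R^{>0}f_{\ast}\oO_X(kC)=0$ for $k\le 1$ is exactly what makes $\eE$ relatively tilting, and it identifies $\dR f_{\ast}\dR\hH om(\eE,\eE)$ with $\mathrsfs{A}$ in degree zero.

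Steps~2 and~3, however, have genuine gaps. First, the conservativity claim ``the fiberwise collection forces $F=0$'' is not justified, and its naive degree-zero form is false: $\dR \Hom(\eE,\oO_C(-1))\cong\mathbb{C}[-1]$ is concentrated in degree $1$, so a nonzero sheaf supported on $C$ need not receive a map from $\eE$. The clean route is to note that $\Phi(F)=0$ already forces $\dR f_{\ast}F=0$ via the $\oO_X$-summand, hence $F\in\cC$; since $\cC$ is generated by the exceptional object $\oO_C(-1)$, which has no higher self-extensions on the surface, $F$ is a direct sum of shifts of $\oO_C(-1)$, each of which is detected by $\oO_X(-C)$. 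Second, and more seriously, the deduction ``fully faithful on a thick subcategory containing $\eE$, plus conservative, hence an equivalence'' is carried out with global objects for which the intermediate claims fail over a projective base: the thick subcategory generated by $\eE$ is a \emph{proper} admissible subcategory of $D^b\Coh(X)$ (it is generated by two exceptional line bundles, so its $K$-theory has rank two), and $\mathrsfs{A}$ is not a projective generator of $\Coh(\mathrsfs{A})$ globally. One must either bring in the twists $\eE\otimes f^{\ast}L$ for $L\in \Pic(Y)$ (for which the projection formula and the same $R^{>0}f_{\ast}$-vanishing still give full faithfulness) and run the unit/counit argument for the adjoint pair $\bigl((-)\dotimes_{\mathrsfs{A}}\eE,\ \Phi\bigr)$, or work affine-locally over $Y$ as \cite{MVB} does. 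Finally, Step~3 is asserted rather than proved: translating ``$\Phi(E)$ is a sheaf'' into the orthogonality conditions defining $\PPer(X/Y)$ is precisely the content of \cite[Lemma~3.2]{Br1} (quoted as Proposition~\ref{prop:collect}~(ii) above), and it is where the choice of perversity $p=-1$ and the vanishing of $\Ext^{>0}(\eE,S)$ for the simple objects $S$ of Proposition~\ref{prop:collect}~(i) actually get used; without that verification the heart identification has not been established.
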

We define the following subcategory of $\PPer(X/Y)$:
\begin{align}\label{per:inc}
\PPer_{\le i}(X/Y) \cneq 
\{ E \in \PPer(X/Y) : \Phi(E) \in \Coh_{\le i}(\mathrsfs{A})\},
\end{align}
and write $\PPer_{0}(X/Y) \cneq \PPer_{\le 0}(X/Y)$. 
\begin{rmk}
By the construction of $\eE$, 
an object $E\in \PPer(X/Y)$ is an object in 
$\PPer_{0}(X/Y)$ iff $\Supp(E)$ is zero dimensional 
outside $C$, and an object in $\PPer_{\le 1}(X/Y)$
iff $\Supp(E)$ is at most one dimensional. 
\end{rmk}
We collect some known results used in this paper.
\begin{prop}\label{prop:collect}
(i) The category $\PPer_{0}(X/Y)$ is a finite 
length abelian category with simple objects
given by 
\begin{align*}
\oO_C, \ \oO_C(-1)[1], \ \oO_x, \ x \in X \setminus C. 
\end{align*} 

(ii) An object $E \in D^b \Coh(X)$ is an object
in $\PPer(X/Y)$ if and only if 
$\hH^i(E)=0$ for $i\neq 0, -1$, 
$\Hom(\hH^{0}(E), \oO_C(-1))=0$ and 
\begin{align*}
R^1 f_{\ast}\hH^0(E)=f_{\ast} \hH^{-1}(E)=0. 
\end{align*}
\end{prop}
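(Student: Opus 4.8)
The plan is to treat part (ii) first and to read off part (i) from it together with Theorem~\ref{thm:nc}. Everything rests on one preliminary observation: since $C$ is a $(-1)$-curve, the heart $\cC^0 = \cC \cap \Coh(X)$ is \emph{semisimple} with the single simple object $\oO_C(-1)$. Indeed a coherent sheaf killed by $\dR f_{\ast}$ must be supported on $C$, and on $C \cong \mathbb{P}^1$ the only line bundle with $H^0 = H^1 = 0$ is $\oO_C(-1)$; moreover $\Ext^1_X(\oO_C(-1), \oO_C(-1)) = H^1(\oO_C) \oplus H^0(N_{C/X}) = 0$ because $N_{C/X} = \oO_C(-1)$. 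Hence $\cC \simeq D^b(\cC^0)$, every object of $\cC$ is a direct sum of shifts of $\oO_C(-1)$, and in particular $\cC^{<-1}$ is generated by the $\oO_C(-1)[j]$ with $j \ge 2$ while $\cC^{>-1}$ is generated by the $\oO_C(-1)[j]$ with $j \le 0$.

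For part (ii) I would identify $\PPer(X/Y)$ as the tilt of $\Coh(X)$ at the torsion pair $(\tT, \fF)$ with $\fF = \{F : f_{\ast}F = 0\}$ and $\tT = \{T : R^1 f_{\ast}T = 0,\ \Hom(T, \oO_C(-1)) = 0\}$, which is exactly Bridgeland's description specialised to the semisimple $\cC^0$ above. That $(\tT,\fF)$ is a torsion pair is the one place where the semisimplicity pays off: if $\phi \colon T \to F$ is nonzero with $T \in \tT$, $F \in \fF$, then its image $I$ satisfies $f_{\ast}I = 0$ (as a subsheaf of $F$) and $R^1 f_{\ast} I = 0$ (as a quotient of $T$, using $R^2 f_{\ast} = 0$), so $\dR f_{\ast} I = 0$ forces $I \in \cC^0$, i.e. $I \cong \oO_C(-1)^{\oplus m}$; but then $\Hom(T, \oO_C(-1)) \ne 0$ unless $m=0$, contradicting $T \in \tT$. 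The remaining point is that the defining conditions of $\PPer(X/Y)$ cut out precisely this tilt: rewriting the two orthogonalities as $\Ext^{\le 0}(E, \oO_C(-1)) = 0$ and $\Ext^{\le -2}(\oO_C(-1), E) = 0$ and feeding them into the hyper-$\Ext$ spectral sequences with $E_2$-terms $\Ext^p(\hH^{-q}(E), \oO_C(-1))$ and $\Ext^p(\oO_C(-1), \hH^q(E))$ — which vanish outside $p \in \{0,1,2\}$ since $\oO_C(-1)$ has one-dimensional support — forces $\hH^i(E) = 0$ for $i \ne 0,-1$ and $\Hom(\hH^0(E), \oO_C(-1)) = 0$; finally $\dR f_{\ast} E \in \Coh(Y)$, via the Leray spectral sequence $R^p f_{\ast}\hH^q(E) \Rightarrow R^{p+q} f_{\ast}E$ in the amplitude $[-1,0]$, yields $f_{\ast}\hH^{-1}(E) = 0$ (degree $-1$) and $R^1 f_{\ast}\hH^0(E) = 0$ (degree $1$). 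The converse inclusion is a direct check that the four cohomological conditions imply the two Hom-vanishings.

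Given Theorem~\ref{thm:nc}, part (i) is then short: the equivalence $\Phi$ restricts to $\PPer_0(X/Y) \simto \Coh_{\le 0}(\mathrsfs{A})$, and the latter is finite length because each fibre $\mathrsfs{A} \otimes k(y)$ is a finite-dimensional $\mathbb{C}$-algebra, so a module of zero-dimensional support has finite length. Its simple objects are the simple $\mathrsfs{A}$-modules supported at closed points $y$: over $y \ne f(C)$ the algebra is Morita trivial with a single simple, which pulls back to $\oO_x$, while over $y_0 = f(C)$ the two idempotents coming from the summands of $\eE = \oO_X \oplus \oO_X(-C)$ give two simples. To match the latter I would compute, using $\oO_X(C)|_C = \oO_C(-1)$, that $\Phi(\oO_C) = \dR f_{\ast}(\oO_C \oplus \oO_C(-1)) = \oO_{y_0}$ and $\Phi(\oO_C(-1)[1]) = \dR f_{\ast}(\oO_C(-1) \oplus \oO_C(-2))[1] = \oO_{y_0}$, landing at the two distinct vertices; hence $\oO_C$ and $\oO_C(-1)[1]$ are exactly the simples at $y_0$. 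The main obstacle is the bookkeeping in part (ii)—extracting the precise amplitude $[-1,0]$ and the three sheaf-level conditions from the abstract orthogonality, keeping the two spectral sequences straight—whereas once Theorem~\ref{thm:nc} is in hand the identification of simples in (i) reduces to the short computation of $\Phi$ above.
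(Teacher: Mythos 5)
Your argument is essentially correct, but it takes a genuinely different route from the paper: the paper disposes of this proposition in two lines, citing \cite[Proposition~3.5.8]{MVB} for (i) and \cite[Lemma~3.2]{Br1} for (ii), whereas you reprove both statements from scratch. What your approach buys is transparency in the special case at hand: the single $(-1)$-curve makes $\cC^0$ semisimple with unique simple object $\oO_C(-1)$ (because $\Ext^1_X(\oO_C(-1),\oO_C(-1))=H^1(\oO_C)\oplus H^0(N_{C/X})=0$), and once that is in place your translation of the two orthogonality conditions into $\Ext^{\le 0}(E,\oO_C(-1))=0$ and $\Ext^{\le -2}(\oO_C(-1),E)=0$, the two hyper-$\Ext$ spectral sequences forcing the amplitude $[-1,0]$, and the Leray spectral sequence extracting $f_{\ast}\hH^{-1}(E)=0$ and $R^1f_{\ast}\hH^0(E)=0$ all check out; this recovers exactly Bridgeland's characterization without invoking it. Your computation in (i) of $\Phi(\oO_C)=\oO_{y_0}$ and $\Phi(\oO_C(-1)[1])=\oO_{y_0}$ (using $\oO_X(C)|_C=\oO_C(-1)$), together with the observation that the two idempotents of $\mathrsfs{A}$ at $y_0$ are primitive and non-conjugate, correctly identifies the three families of simples. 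Note that the torsion-pair axiom you do verify ($\Hom(\tT,\fF)=0$) is not actually needed for (ii) as stated, and the axiom you do not verify (existence of the decomposition $0\to T\to E\to F\to 0$) is only needed to show $\PPer(X/Y)$ is a heart, which the paper asserts separately; so no harm done there.

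The one genuinely thin spot is the load-bearing claim that every object of $\cC^0$ is a direct sum of copies of $\oO_C(-1)$. Your justification — ``on $C\cong\mathbb{P}^1$ the only line bundle with $H^0=H^1=0$ is $\oO_C(-1)$'' — only treats $\oO_C$-modules (indeed only line bundles on the reduced curve), whereas an object of $\cC^0$ is a priori a sheaf supported on an infinitesimal thickening of $C$. One must first show that $\cC^0$ is the extension closure of $\oO_C(-1)$ (e.g.\ by filtering by powers of $I_C$ and using the left/right exactness of $f_{\ast}$ and $R^1f_{\ast}$ to reduce to $\oO_C$-modules), and only then does the vanishing of $\Ext^1$ give semisimplicity. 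This generation statement is precisely the content of the cited lemmas of Bridgeland and Van den Bergh, so the fact is true and standard, but as written your proof assumes it rather than proves it; the same claim is silently used again when you argue that a nonzero top or bottom cohomology sheaf in $\cC^0$ admits a nonzero map to or from $\oO_C(-1)$.
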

\begin{proof}
The result of (i) follows from~\cite[Proposition~3.5.8]{MVB}.
The result of (ii) follows from~\cite[Lemma~3.2]{Br1}.
\end{proof}

\subsection{Slope stability on $\PPer(X/Y)$}
Let $\omega$ be an ample divisor on $Y$. 
Similarly to the usual slope 
function on coherent sheaves, 
we define the slope function $\mu_{f^{\ast}\omega}$
 on $\PPer(X/Y)$
as follows: 
if $E \in \PPer(X/Y)$ satisfies $\ch_0(E)>0$, we set 
\begin{align*}
\mu_{f^{\ast}\omega}(E) \cneq \frac{\ch_1(E) \cdot f^{\ast}\omega}{\ch_0(E)}.
\end{align*}
Otherwise we set $\mu_{f^{\ast}\omega}(E)=\infty$. 
Similarly to the usual slope stability, 
the slope $\mu_{f^{\ast}\omega}$ satisfies the 
weak seesaw property, and 
defines the weak stability condition on 
$\PPer(X/Y)$ as follows:
\begin{defi}\label{defi:slope}
An object $E\in \PPer(X/Y)$ is 
$\mu_{f^{\ast}\omega}$-(semi)stable 
if for any exact sequence $0 \to F \to E \to G \to 0$
in $\PPer(X/Y)$, we have the inequality
\begin{align*}
\mu_{f^{\ast}\omega}(F) <(\le) \mu_{f^{\ast}\omega}(G). 
\end{align*}
\end{defi}
Similarly to the usual slope stability, 
we have the following:
\begin{lem}\label{lem:HN}
Any object in $\PPer(X/Y)$ admits a 
Harder-Narasimhan filtration 
with respect to $\mu_{f^{\ast}\omega}$-stability.
\end{lem}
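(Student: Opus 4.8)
The plan is to reproduce the classical Mumford-type construction of the Harder--Narasimhan filtration, using that $\PPer(X/Y)$ shares the two structural features that make this construction work for $\Coh(X)$: it is noetherian, and the invariants $\ch_0$ and $\ch_1(\ast)\cdot f^\ast\omega$ entering $\mu_{f^\ast\omega}$ take values in a suitably discrete set. Noetherianity is immediate from Theorem~\ref{thm:nc}: under the equivalence $\Phi$ the category $\PPer(X/Y)$ is identified with $\Coh(\mathrsfs{A})$, and coherent right modules over the finite $\oO_Y$-algebra $\mathrsfs{A}$ form a noetherian abelian category. Discreteness holds because $\ch_0(E)\in\mathbb{Z}$ and $\ch_1(E)\cdot f^\ast\omega$ ranges over a fixed finitely generated subgroup of $\mathbb{R}$.

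First I would isolate the objects of slope $\infty$, namely those $E$ with $\ch_0(E)\le 0$; these play the role of torsion sheaves and will occupy the top of the filtration. I would check that they form a subcategory of $\PPer(X/Y)$ closed under subobjects, quotients and extensions (using the weak seesaw property together with the long exact sequence of ordinary cohomology sheaves attached to a short exact sequence in $\PPer(X/Y)$, as described in Proposition~\ref{prop:collect}(ii)), so that every $E$ has a largest subobject $T$ with $\ch_0(T)\le 0$ while $\ch_0(E/T)>0$. This reduces the problem to objects with $\ch_0>0$.

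For such an $E$, I would set $\mu_{\max}(E)\cneq\sup\{\mu_{f^\ast\omega}(F): 0\neq F\subseteq E\}$ and produce the maximal destabilizing subobject exactly as in the sheaf case: once $\mu_{\max}(E)$ is shown to be finite, discreteness of the invariants forces the supremum to be attained, and noetherianity lets one choose a largest subobject $E_1\subseteq E$ realizing it; the weak seesaw property then shows that $E_1$ is $\mu_{f^\ast\omega}$-semistable and is contained in every subobject of the same slope. Iterating on $E/E_1$ produces a chain $0=E_0\subset E_1\subset\cdots$ whose successive quotients have strictly decreasing slope, and since $\PPer(X/Y)$ is noetherian this ascending chain of subobjects of $E$ terminates, giving a finite filtration.

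The main obstacle is the boundedness statement $\mu_{\max}(E)<\infty$, i.e. that the slopes of subobjects of a fixed $E$ cannot be arbitrarily large. In $\Coh(X)$ this is Grothendieck's boundedness lemma; here I would either transport the question through $\Phi$ and argue for coherent $\mathrsfs{A}$-modules, or control $\ch_1(F)\cdot f^\ast\omega$ directly via the cohomology-sheaf description of Proposition~\ref{prop:collect}(ii), bounding the degrees of the relevant subsheaves of $\hH^0(E)$ and $\hH^{-1}(E)$ with the help of the Bogomolov--Gieseker inequality of Theorem~\ref{thm:BoGi}. Establishing this bound in the perverse setting, where a perverse subobject need not arise from a genuine subsheaf, is the one step that genuinely requires work beyond citing the classical argument.
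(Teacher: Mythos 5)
Your plan is correct in outline, but it follows a genuinely different route from the paper. You reconstruct the classical Mumford--Grothendieck argument: split off the maximal subobject of slope $\infty$, prove $\mu_{\max}(E)<\infty$, extract a maximal destabilizing subobject, and iterate using noetherianity. The paper instead invokes the general existence criterion of \cite[Proposition~2.12]{Tcurve1} (a weak-stability version of Bridgeland's criterion): for a noetherian heart it suffices to rule out an infinite descending chain $E_1\supset E_2\supset\cdots$ with $\mu_{f^{\ast}\omega}(E_{i+1})>\mu_{f^{\ast}\omega}(E_i/E_{i+1})$. That is then dispatched in two lines: $\ch_0$ is a non-negative integer on $\PPer(X/Y)$ (as $\hH^{-1}$ of a perverse sheaf is supported on $C$), so along such a chain $\ch_0(E_i)$ is eventually constant, whence $\mu_{f^{\ast}\omega}(E_i/E_{i+1})=\infty$, contradicting the strict inequality. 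The decisive difference is that the step you correctly single out as the hard one --- the boundedness $\mu_{\max}(E)<\infty$ --- is simply not needed in the paper's argument. Your route can nevertheless be completed, and more cheaply than you suggest: since $f^{\ast}\omega$ is pulled back from $Y$ and kills everything supported on $C$, for a perverse subobject $F\subset E$ one has $\ch_1(F)\cdot f^{\ast}\omega=\ch_1(F')\cdot f^{\ast}\omega$ where $F'\subset \hH^0(E)$ is the image of $\hH^0(F)$, and by the projection formula this degree is computed from the subsheaf $f_{\ast}F'\subset f_{\ast}\hH^0(E)$ on $(Y,\omega)$, where Grothendieck's boundedness lemma applies directly; no appeal to Theorem~\ref{thm:BoGi} is required. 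What the paper's criterion-based approach buys is exactly the elimination of this boundedness input, which is why its proof is four lines long.
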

\begin{proof}
By Theorem~\ref{thm:nc}, the 
category $\PPer(X/Y)$ is noetherian. 
Therefore it is enough to check that 
there is no infinite sequence
\begin{align*}
E_1 \supset E_2 \supset \cdots \supset E_{i} \supset \cdots
\end{align*}
such that $\mu_{f^{\ast}\omega}(E_{i+1})> \mu_{f^{\ast}\omega}(E_i/E_{i+1})$
for all $i$. (cf.~\cite[Proposition~2.12]{Tcurve1}.)
Suppose that there is such a sequence. 
Since $\ch_0(\ast)$ is non-negative on $\PPer(X/Y)$, 
we may assume $\ch_0(E_i)$ is constant. 
But then $\mu_{f^{\ast}\omega}(E_i/E_{i+1})=\infty$, 
which is a contradiction. 
\end{proof}

\subsection{Bogomolov-Gieseker inequality for perverse 
coherent sheaves}
We are going to construct a Bridgeland 
stability condition 
on $D^b \Coh(X)$ by using the 
 tilting of $\PPer(X/Y)$. 
For this purpose, 
we need to evaluate 
Chern classes of $\mu_{f^{\ast}\omega}$-semistable 
perverse coherent sheaves. 
First we show the following lemma: 
\begin{lem}\label{lem:Cch}
Let $\aA \subset D^b \Coh(X)$ be the heart 
of a bounded t-structure with 
$\oO_C, \oO_C(-1)[1] \in \aA$. 
For $E\in \aA$, suppose that 
$\Hom(\oO_C, E)=0$. Then we have 
$C \cdot \ch_1(E) \ge 0$. 
\end{lem}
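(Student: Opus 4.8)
The plan is to reinterpret $C\cdot\ch_1(E)$ as an Euler form and read off its sign from the homological hypotheses. First I would record the numerical identity
\begin{align*}
\chi(\oO_C, E) = -\,C\cdot \ch_1(E),
\end{align*}
valid for every $E\in D^b\Coh(X)$. This follows from Hirzebruch--Riemann--Roch: writing $\ch(\oO_C)=(0, C, -\frac12 C^2)$ and expanding $\chi(\oO_C, E)=\int_X \ch(\oO_C)^{\vee}\ch(E)\td(X)$, the terms linear in $\ch_1(E)$ produce $-C\cdot\ch_1(E)$, while the terms proportional to $\ch_0(E)$ combine into a multiple of $C^2 - C\cdot K_X$. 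The essential point is that this combination vanishes, since for the $(-1)$-curve $C$ one has $C^2=-1$ and, by adjunction, $C\cdot K_X=-1$. Hence it suffices to prove $\chi(\oO_C, E)\le 0$.

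To extract the sign I would compute $\chi(\oO_C, E)=\sum_i(-1)^i\dim\Ext^i(\oO_C, E)$ and show that only the $i=1$ term survives. The groups $\Ext^i(\oO_C, E)$ with $i<0$ vanish because $\oO_C$ and $E$ both lie in the heart $\aA$, and the $i=0$ term is killed by the standing hypothesis $\Hom(\oO_C, E)=0$. The crux is the higher vanishing $\Ext^i(\oO_C, E)=0$ for $i\ge 2$, and this is precisely where the second generator $\oO_C(-1)[1]$ is needed.

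For the higher vanishing I would apply Serre duality on the surface $X$,
\begin{align*}
\Ext^i(\oO_C, E)\cong \Ext^{2-i}(E, \oO_C\otimes \omega_X)^{\vee}.
\end{align*}
By adjunction $\omega_X|_C\cong \oO_C(-1)$, so $\oO_C\otimes\omega_X\cong \oO_C(-1)$ and therefore
\begin{align*}
\Ext^{2-i}(E, \oO_C(-1)) = \Hom\bigl(E, (\oO_C(-1)[1])[\,1-i\,]\bigr).
\end{align*}
Since $E$ and $\oO_C(-1)[1]$ are both objects of the heart $\aA$, this group vanishes as soon as $1-i<0$, that is for all $i\ge 2$. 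Combining the three cases gives $\chi(\oO_C, E)=-\dim\Ext^1(\oO_C, E)\le 0$, and the identity of the first step yields $C\cdot\ch_1(E)=\dim\Ext^1(\oO_C, E)\ge 0$.

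I expect the only delicate points to be bookkeeping: verifying that the $\ch_0(E)$-contributions in the Riemann--Roch computation cancel exactly for a $(-1)$-curve, and recognizing that the Serre-dual sheaf $\oO_C\otimes\omega_X$ is exactly $\oO_C(-1)$, the de-shift of the second assumed generator of $\aA$. Once these two facts are in place the argument is formal: the statement reduces to the observation that, inside the heart $\aA$, the Euler pairing $\chi(\oO_C, E)$ can only receive a nonpositive contribution, coming from $\Ext^1$.
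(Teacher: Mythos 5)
Your proposal is correct and follows essentially the same route as the paper's proof: Serre duality together with the assumption $\oO_C(-1)[1]\in\aA$ kills $\Ext^{\ge 2}(\oO_C,E)$, the heart condition and the hypothesis kill $\Hom^{\le 0}(\oO_C,E)$, and Riemann--Roch identifies $\chi(\oO_C,E)$ with $-C\cdot\ch_1(E)=-\dim\Ext^1(\oO_C,E)\le 0$. The only difference is that you spell out the Riemann--Roch cancellation ($C^2=C\cdot K_X=-1$) explicitly, which the paper leaves implicit.
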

\begin{proof}
By the Serre duality, we have 
\begin{align*}
\Ext^i(\oO_C, E) &\cong \Hom^{1-i}(E, \oO_C(-1)[1])^{\vee} \\
&\cong 0,
\end{align*}
for $i\ge 2$. 
Here the last isomorphism follows
from that both of $E$ and $\oO_C(-1)[1]$ are objects in 
the heart $\aA$. Also by the assumption, 
we have 
\begin{align*}
\Hom^{\le 0}(\oO_C, E)=0. 
\end{align*}
By the above vanishing and 
the Riemann-Roch theorem, 
we have 
\begin{align*}
-C \cdot \ch_1(E) &= \chi(\oO_C, E) \\
&= -\dim \Ext^1(\oO_C, E).
\end{align*}
Therefore $C \cdot \ch_1(E) \ge 0$ holds. 
\end{proof}
Another lemma we use is the following:
\begin{lem}\label{lem:per0}
For $T \in \PPer_{0}(X/Y)$,
 we have the inequality
\begin{align*}
\ch_2(T) \ge \frac{1}{2} C\cdot \ch_1(T).
\end{align*}
\end{lem}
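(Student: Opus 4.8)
The plan is to reduce the inequality to a finite verification on simple objects, exploiting that $\PPer_0(X/Y)$ is a finite length abelian category by Proposition~\ref{prop:collect}(i). Introduce the functional $\delta(E) \cneq \ch_2(E) - \tfrac{1}{2}\, C \cdot \ch_1(E)$ on $\PPer_0(X/Y)$. Since $\ch$ factors through the numerical Grothendieck group and is additive on distinguished triangles, both $\ch_1$ and $\ch_2$ are additive on short exact sequences in any heart of a bounded t-structure; hence $\delta$ is additive. Thus, choosing a Jordan--H\"older filtration of $T$ with simple subquotients $S_1, \dots, S_N$, we get $\delta(T) = \sum_i \delta(S_i)$, and it suffices to show $\delta(S) \ge 0$ for each of the three types of simple object.

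By Proposition~\ref{prop:collect}(i) these simple objects are $\oO_x$ for $x \in X \setminus C$, $\oO_C$, and $\oO_C(-1)[1]$, so the next step is to compute $\delta$ on each. For $\oO_x$ one has $\ch(\oO_x) = (0,0,1)$, giving $\delta(\oO_x) = 1$. For $\oO_C$, the sequence $0 \to \oO_X(-C) \to \oO_X \to \oO_C \to 0$ yields $\ch(\oO_C) = 1 - e^{-C} = C - \tfrac{1}{2} C^2$; using the $(-1)$-curve hypothesis $C^2 = -1$ this gives $\ch_1(\oO_C) = C$, $\ch_2(\oO_C) = \tfrac{1}{2}$, so $\delta(\oO_C) = \tfrac{1}{2} - \tfrac{1}{2} C^2 = 1$. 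For $\oO_C(-1)[1]$, twisting the degree $-1$ line bundle on $C \cong \mathbb{P}^1$ lowers $\ch_2$ by one point class, so $\ch(\oO_C(-1)) = (0, C, -\tfrac{1}{2})$, and the shift negates this to give $\ch(\oO_C(-1)[1]) = (0, -C, \tfrac{1}{2})$, whence $\delta(\oO_C(-1)[1]) = \tfrac{1}{2} - \tfrac{1}{2}\, C \cdot (-C) = 0$.

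Combining these, every simple factor contributes $\delta(S_i) \ge 0$, so $\delta(T) = \sum_i \delta(S_i) \ge 0$, which is exactly the asserted inequality. The delicate point, and the step I expect to require the most care, is the object $\oO_C(-1)[1]$: here $\delta$ vanishes identically, so the bound is sharp and there is no slack to absorb a sign or normalization error. In particular the argument genuinely relies on $C^2 = -1$ --- with a more negative self-intersection the contribution of $\oO_C(-1)[1]$ would turn negative --- so the whole proof hinges on getting this one Chern-character computation exactly right.
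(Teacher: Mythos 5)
Your proof is correct, but it takes a genuinely different route from the paper. You reduce the inequality to a finite check on the simple objects $\oO_x$ ($x\in X\setminus C$), $\oO_C$, $\oO_C(-1)[1]$ of the finite length category $\PPer_0(X/Y)$, using additivity of $\delta(E)=\ch_2(E)-\tfrac12 C\cdot\ch_1(E)$ along a Jordan--H\"older filtration; your three Chern character computations (including the sharp case $\delta(\oO_C(-1)[1])=0$) are all accurate. The paper instead observes that $\dR f_{\ast}T$ is a zero-dimensional sheaf on $Y$, so $\chi(\oO_X,T)=\chi(\oO_Y,\dR f_{\ast}T)\ge 0$, and then identifies $\chi(\oO_X,T)$ with $\ch_2(T)-\tfrac12 C\cdot\ch_1(T)$ by Riemann--Roch (using $K_X\cdot C=C^2=-1$ and that $\ch_1(T)$ is a multiple of $C$). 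The paper's argument is shorter and more conceptual --- it exhibits $\delta(T)$ as the length of $f_{\ast}T$, so positivity is automatic --- and it does not logically depend on the explicit classification of simple objects, whereas your argument leans on Proposition~\ref{prop:collect}(i) but in return makes completely transparent where the inequality is an equality, namely exactly when all Jordan--H\"older factors of $T$ are $\oO_C(-1)[1]$.
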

\begin{proof}
By the definition of $\PPer_{0}(X/Y)$, the 
object $\dR f_{\ast}T$ is a zero dimensional sheaf on $Y$. 
Therefore we have 
\begin{align*}
\chi(\oO_X, T) = \chi(\oO_Y, \dR f_{\ast}T)  \ge 0. 
\end{align*}
On the other hand, the Riemann-Roch theorem implies 
\begin{align*}
\chi(\oO_X, T)= \ch_2(T)-\frac{1}{2}C \cdot \ch_1(T). 
\end{align*}
Therefore the desired inequality holds. 
\end{proof}
The following is the 
Bogomolov-Gieseker inequality 
for perverse coherent sheaves. 
\begin{prop}\label{prop:BGper}
For any $\mu_{f^{\ast}\omega}$-semistable 
$E\in \PPer(X/Y)$ with $\ch_0(E)>0$, we have the 
inequality
\begin{align}\label{BG:per}
\ch_1(E)^2 -2\ch_0(E) \ch_2(E) \ge 0. 
\end{align}
\end{prop}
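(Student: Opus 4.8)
\emph{Overview and Step 1 (reduction to $\mu_{f^{\ast}\omega}$-stable objects).} The plan is to bootstrap from the classical Bogomolov--Gieseker inequality (Theorem~\ref{thm:BoGi}) for the nef divisor $f^{\ast}\omega$, transferring it to the perverse setting. The essential geometric input is that $f$ contracts $C$ to a point, so by the projection formula $f^{\ast}\omega\cdot C=\omega\cdot f_{\ast}C=0$; thus $f^{\ast}\omega$ is nef with $(f^{\ast}\omega)^2=\omega^2>0$, and every class supported on $C$ is orthogonal to $f^{\ast}\omega$. Write $\Delta(E)\cneq\ch_1(E)^2-2\ch_0(E)\ch_2(E)$ for the left hand side of (\ref{BG:per}), and let $Q$ be the symmetric bilinear form associated to the quadratic form $\Delta$. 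Since $\ch_0(E)>0$ the slope $\mu_{f^{\ast}\omega}(E)$ is finite, and using that $\PPer(X/Y)$ is noetherian (Theorem~\ref{thm:nc}) one filters $E$ by $\mu_{f^{\ast}\omega}$-stable objects of the same finite slope, all of positive rank. For an extension $0\to A\to E\to B\to 0$ of such objects, the class $v\cneq\ch_0(B)\ch(A)-\ch_0(A)\ch(B)$ has $\ch_0(v)=0$ and $\ch_1(v)\cdot f^{\ast}\omega=0$; the Hodge index theorem (applicable since $f^{\ast}\omega$ is nef and big) gives $\Delta(v)=\ch_1(v)^2\le0$, and expanding $\Delta(v)$ shows $Q(\ch(A),\ch(B))\ge0$ once $\Delta(A),\Delta(B)\ge0$. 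Hence $\Delta(E)=\Delta(A)+\Delta(B)+2Q(\ch(A),\ch(B))\ge0$, so an induction along the filtration reduces us to $E$ being $\mu_{f^{\ast}\omega}$-stable.

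\emph{Step 2 (a stable object is a sheaf, and $\Hom(\oO_C,E)=0$).} By Proposition~\ref{prop:collect}(ii), $E$ fits into an exact sequence $0\to\hH^{-1}(E)[1]\to E\to\hH^0(E)\to0$ in $\PPer(X/Y)$ with $\hH^{-1}(E)$ supported on $C$; thus $\hH^{-1}(E)[1]$ has $\ch_0=0$, and if nonzero it is a perverse subobject of slope $\infty>\mu_{f^{\ast}\omega}(E)$, contradicting stability. So $E=\hH^0(E)$ is an honest coherent sheaf. Likewise $\oO_C$ is a simple object of $\PPer_{0}(X/Y)$ of slope $\infty$ (Proposition~\ref{prop:collect}(i)), so any nonzero map $\oO_C\to E$ would exhibit a slope $\infty$ perverse subobject; hence $\Hom(\oO_C,E)=0$, and Lemma~\ref{lem:Cch} gives $C\cdot\ch_1(E)\ge0$.

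\emph{Step 3 (pass to a torsion free slope semistable sheaf).} This is the main obstacle: the stability hypothesis lives in the perverse heart, while Theorem~\ref{thm:BoGi} requires an ordinary slope semistable sheaf. For any subsheaf $E'\subseteq E$ the quotient $E/E'$ is a sheaf, so $\hH_{p}^{i}(E/E')=0$ for $i<0$, and the long exact sequence of perverse cohomology applied to $E'\hookrightarrow E$ shows that $\hH_{p}^{0}(E')$ is a perverse subobject of $E$. Since $\hH_{p}^{1}(E')$ is supported on $C$, we get $\ch_0(\hH_{p}^{0}(E'))=\ch_0(E')$ and $\ch_1(\hH_{p}^{0}(E'))\equiv\ch_1(E')$ modulo $[C]$, hence $\mu_{f^{\ast}\omega}(\hH_{p}^{0}(E'))=\mu_{f^{\ast}\omega}(E')$ by the orthogonality $C\cdot f^{\ast}\omega=0$; stability then forces $\mu_{f^{\ast}\omega}(E')\le\mu_{f^{\ast}\omega}(E)$. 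Applied to the torsion subsheaf $N\subseteq E$, which has $\ch_0=0$, this forces $\hH_{p}^{0}(N)=0$, so $N$ is supported on $C$ with $f_{\ast}N=0$ and $N[1]\in\PPer_{0}(X/Y)$. Applied to preimages of subsheaves of $E/N$, it shows the torsion free sheaf $E/N$ is $\mu_{f^{\ast}\omega}$-slope semistable, so Theorem~\ref{thm:BoGi} yields $\Delta(E/N)\ge0$.

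\emph{Step 4 (assembling the inequality).} Writing $\ch_1(N)=s[C]$ with $s\ge0$ and $r=\ch_0(E)$, the relation $\ch(E)=\ch(E/N)+\ch(N)$ together with $C^2=-1$ gives
\[
\Delta(E)=\Delta(E/N)+2s\,(C\cdot\ch_1(E))+s^2-2r\,\ch_2(N).
\]
Here $\Delta(E/N)\ge0$ by Step 3 and $C\cdot\ch_1(E)\ge0$ by Step 2, while Lemma~\ref{lem:per0} applied to $N[1]\in\PPer_{0}(X/Y)$ gives $\ch_2(N)=-\ch_2(N[1])\le-\tfrac12\,C\cdot\ch_1(N)=-s/2$, so that $-2r\,\ch_2(N)\ge rs\ge0$. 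Every term on the right is non-negative, hence $\Delta(E)\ge0$, which is the desired inequality (\ref{BG:per}).
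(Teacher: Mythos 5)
Your proof is correct and follows essentially the same route as the paper: kill $\hH^{-1}(E)$ using semistability, split off the torsion $N$ supported on $C$ with $N[1]\in\PPer(X/Y)$, apply the classical Bogomolov--Gieseker inequality to the slope-semistable torsion-free quotient, and control the correction terms via Lemma~\ref{lem:Cch} and Lemma~\ref{lem:per0}; your final grouping of $\ch_1(E)^2-2\ch_0(E)\ch_2(E)$ into manifestly non-negative terms is in fact slightly cleaner than the paper's. The only quibbles are that Step~1 is redundant (a nonzero perverse subobject with $\ch_0=0$ already violates semistability, which is all that Steps~2--3 actually use) and that the intermediate expression $-\tfrac12\,C\cdot\ch_1(N)$ should read $\tfrac12\,C\cdot\ch_1(N[1])$, though the conclusion $\ch_2(N)\le -s/2$ is correct.
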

\begin{proof}
Since $E \in \PPer(X/Y)$ is $\mu_{f^{\ast}\omega}$-semistable
and 
$\hH^{-1}(E)[1]$ is a subobject of $E$ in $\PPer(X/Y)$
supported on $C$ by Proposition~\ref{prop:collect} (ii), 
we must have $\hH^{-1}(E)=0$. 
Hence $E$ is a sheaf on $X$ but it may have a torsion. 
Let us take an exact sequence of sheaves
\begin{align}\label{TEF}
0 \to T \to E \to F \to 0,
\end{align}
where $T$ is a torsion sheaf and $F$ is a torsion free sheaf. 
The above exact sequence
and Proposition~\ref{prop:collect} (ii) 
 easily imply that $F \in \PPer(X/Y)$. 
We have the exact sequence
in $\PPer(X/Y)$
\begin{align*}
0 \to \hH_p^0(T) \to E \to F \to \hH_p^1(T) \to 0,
\end{align*}
and $\hH^i_p(T)=0$ for $i\neq 0, 1$. 
By the $\mu_{f^{\ast}\omega}$-stability 
of $E$, we have $\hH_p^0(T)=0$ hence 
$T[1] \in \PPer(X/Y)$ follows. 
This also implies that $f_{\ast}T =0$, 
hence $T$ is supported on $C$. 

Since sheaves supported on $C$ do not affect 
$\mu_{f^{\ast}\omega}(\ast)$, it follows 
that $F$ is a $\mu_{f^{\ast}\omega}$-semistable 
sheaf in the category of coherent sheaves. 
Therefore Theorem~\ref{thm:BoGi} 
implies
\begin{align}\label{ineq:F}
\ch_1(F)^2 -2\ch_0(F) \ch_2(F) \ge 0. 
\end{align}

Let us write 
\begin{align*}
\ch(T)=(0, aC, \ch_2(T)) \in H^0(X) \oplus H^2(X) \oplus H^4(X)
\end{align*}
for $a \in \mathbb{Z}_{\ge 0}$. 
By Lemma~\ref{lem:per0}, we have $\ch_2(T) \le -a/2$. 
Also Lemma~\ref{lem:Cch} implies 
\begin{align}\label{ineq:chE}
C \cdot \ch_1(E)= C \cdot \ch_1(F) -a \ge 0. 
\end{align} 
By combining these inequalities, we have
\begin{align*}
&\ch_1(E)^2 -2\ch_0(E) \ch_2(E) \\
&= (\ch_1(F)+aC)^2 -2\ch_0(F) (\ch_2(F) + \ch_2(T)) \\
&= \ch_1(F)^2 -2\ch_0(F) \ch_2(F) +2aC \cdot \ch_1(F) -a^2 -2\ch_0(F) \ch_2(T) \\&\ge a\left( \ch_0(F) + 2C \cdot \ch_1(F) -a  \right) \\
&\ge 0. 
\end{align*}
Here we have used (\ref{ineq:F}), (\ref{ineq:chE}) and $\ch_2(T) \le -a/2$
in the third inequality. 
Hence the inequality (\ref{BG:per}) is proved. 
\end{proof}
\begin{rmk}\label{rmk:weak}
By the Hodge index theorem, the inequality 
(\ref{BG:per})
also implies 
\begin{align}\label{weak}
(\ch_1(E) \cdot f^{\ast}\omega)^2 \ge 2\omega^2 \ch_0(E) \ch_2(E). 
\end{align}
However the weaker inequality (\ref{weak}) is much easier to prove: 
it immediately follows from the exact sequence (\ref{TEF})
and the inequality (\ref{ineq:F}), 
together with $\ch_2(T) \le -a/2 \le 0$. 
The weaker inequality (\ref{weak}) will be used in 
the construction of a stability condition, while the stronger
 one (\ref{BG:per}) will be required to show the support property. 
\end{rmk}

\subsection{Construction of $\sigma_{0}$}\label{subsec:const}
In this subsection, we construct a stability 
condition $\sigma_0$
on $D^b \Coh(X)$ via tilting of $\PPer(X/Y)$. 
Let $(\tT_{f^{\ast}\omega}, \fF_{f^{\ast}\omega})$
 be the pair of subcategories of $\PPer(X/Y)$
defined by 
\begin{align*}
\tT_{f^{\ast}\omega} &\cneq \langle E : E \mbox{ is }\mu_{f^{\ast}\omega}
\mbox{-semistable with } \mu_{f^{\ast}\omega}(E)>0 \rangle, \\
\fF_{f^{\ast}\omega} &\cneq \langle E : E \mbox{ is }\mu_{f^{\ast}\omega}
\mbox{-semistable with } \mu_{f^{\ast}\omega}(E) \le 0 \rangle. 
\end{align*}
The pair of subcategories $(\tT_{f^{\ast}\omega}, \fF_{f^{\ast}\omega})$
 forms a torsion pair~\cite{HRS}
on $\PPer(X/Y)$
by Lemma~\ref{lem:HN}.
The associate tilting is defined in the following way: 
\begin{defi}
We define $\bB_{f^{\ast}\omega} \subset D^b \Coh(X)$ to be 
\begin{align*}
\bB_{f^{\ast}\omega} \cneq \langle \fF_{f^{\ast}\omega}[1], 
\tT_{f^{\ast}\omega} \rangle. 
\end{align*}
\end{defi}
As for the central charge, 
we take $Z_{f^{\ast}\omega}$
in the notation of (\ref{integral}). 
It is written as 
\begin{align*}
Z_{f^{\ast}\omega}(E) 
= \left(-\ch_2(E) + \frac{\omega^2}{2} \ch_0(E)  \right) 
+ \sqrt{-1} \ch_1(E) \cdot f^{\ast}\omega. 
\end{align*}
We show the following lemma: 
\begin{lem}
The pair $(Z_{f^{\ast}\omega}, \bB_{f^{\ast}\omega})$
determines a stability condition on $D^b \Coh(X)$.
\end{lem}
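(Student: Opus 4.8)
The plan is to verify the two defining axioms of Definition~\ref{lem:pair2} for the pair $(Z_{f^{\ast}\omega}, \bB_{f^{\ast}\omega})$: the positivity property (\ref{pro:1}) and the Harder--Narasimhan property. That $\bB_{f^{\ast}\omega}$ is the heart of a bounded t-structure is automatic: by Lemma~\ref{lem:HN} the pair $(\tT_{f^{\ast}\omega}, \fF_{f^{\ast}\omega})$ is a torsion pair on the heart $\PPer(X/Y)$, so its tilt $\bB_{f^{\ast}\omega} = \langle \fF_{f^{\ast}\omega}[1], \tT_{f^{\ast}\omega}\rangle$ is again the heart of a bounded t-structure by~\cite{HRS}. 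Concretely, every $E \in \bB_{f^{\ast}\omega}$ fits into a short exact sequence $0 \to \hH_p^{-1}(E)[1] \to E \to \hH_p^{0}(E) \to 0$ with $\hH_p^{-1}(E) \in \fF_{f^{\ast}\omega}$ and $\hH_p^{0}(E) \in \tT_{f^{\ast}\omega}$.

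For the positivity, recall $\Imm Z_{f^{\ast}\omega}(E) = \ch_1(E)\cdot f^{\ast}\omega$ and $\Ree Z_{f^{\ast}\omega}(E) = -\ch_2(E) + \tfrac{\omega^2}{2}\ch_0(E)$. The target region in (\ref{pro:1}), namely $\{z : \Imm z > 0\}\cup\{z : \Imm z = 0,\ \Ree z < 0\}\cup\{0\}$, is closed under addition, so using the short exact sequence above together with the Harder--Narasimhan factors from Lemma~\ref{lem:HN} it suffices to check (\ref{pro:1}) on $\mu_{f^{\ast}\omega}$-semistable $T \in \tT_{f^{\ast}\omega}$ with $\mu_{f^{\ast}\omega}(T) > 0$ and on shifts $F[1]$ of $\mu_{f^{\ast}\omega}$-semistable $F \in \fF_{f^{\ast}\omega}$ with $\mu_{f^{\ast}\omega}(F) \le 0$. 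Since $\ch_0(\ast) \ge 0$ on $\PPer(X/Y)$ and $\ch_0 = 0$ forces $\mu_{f^{\ast}\omega} = \infty$, every nonzero $F \in \fF_{f^{\ast}\omega}$ has $\ch_0(F) > 0$; hence if $\mu_{f^{\ast}\omega}(F) < 0$ then $\Imm Z_{f^{\ast}\omega}(F[1]) = -\ch_1(F)\cdot f^{\ast}\omega > 0$, and if $\ch_0(T) > 0$ with $\mu_{f^{\ast}\omega}(T) > 0$ then $\Imm Z_{f^{\ast}\omega}(T) > 0$. In both generic cases the value lies in the open upper half plane.

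The delicate part is the phase-one boundary $\Imm Z_{f^{\ast}\omega} = 0$, where one must produce the strict inequality $\Ree < 0$. For $F[1]$ with $\mu_{f^{\ast}\omega}(F) = 0$ (so $\ch_0(F) > 0$ and $\ch_1(F)\cdot f^{\ast}\omega = 0$), the weak Bogomolov--Gieseker inequality (\ref{weak}) of Remark~\ref{rmk:weak} gives $2\omega^2\ch_0(F)\ch_2(F) \le (\ch_1(F)\cdot f^{\ast}\omega)^2 = 0$, so $\ch_2(F) \le 0$ and $\Ree Z_{f^{\ast}\omega}(F[1]) = \ch_2(F) - \tfrac{\omega^2}{2}\ch_0(F) < 0$. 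For $T \in \tT_{f^{\ast}\omega}$ with $\ch_0(T) = 0$ I would first show $\ch_1(T)\cdot f^{\ast}\omega \ge 0$: by Proposition~\ref{prop:collect}(ii) the sheaf $\hH^{-1}(T)$ satisfies $f_{\ast}\hH^{-1}(T) = 0$, hence is supported on $C$ and contributes nothing to $\ch_1(T)\cdot f^{\ast}\omega$ since $C\cdot f^{\ast}\omega = 0$, while $\hH^0(T)$ is then a torsion sheaf with effective first Chern class, giving $\ch_1(T)\cdot f^{\ast}\omega \ge 0$ because $f^{\ast}\omega$ is nef. When this vanishes, $T$ is supported on the contracted locus, i.e.\ $T \in \PPer_{0}(X/Y)$, and is an iterated extension of the simple objects $\oO_C$, $\oO_C(-1)[1]$, $\oO_x$ ($x \in X \setminus C$) of Proposition~\ref{prop:collect}(i); a direct Riemann--Roch computation gives $\Ree Z_{f^{\ast}\omega} < 0$ on each of these, hence on $T$.

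It remains to establish the Harder--Narasimhan property for $Z_{f^{\ast}\omega}$ on $\bB_{f^{\ast}\omega}$, which I would deduce from Bridgeland's criterion for the existence of Harder--Narasimhan filtrations~\cite{Brs1}. The key inputs are that $\omega$ is an integral divisor, so that $\Imm Z_{f^{\ast}\omega}(E) = \ch_1(E)\cdot f^{\ast}\omega$ is integer-valued and, by the positivity just proved, bounded below by $0$ on $\bB_{f^{\ast}\omega}$, while $\Ree Z_{f^{\ast}\omega}$ is half-integer-valued and is controlled against $\ch_0$ on the locus $\{\Imm Z_{f^{\ast}\omega} = 0\}$ through the weak inequality (\ref{weak}). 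I expect this HN step to be the main obstacle of the lemma: one must rule out infinite sequences of strict sub- or quotient objects of strictly increasing or decreasing phase, which amounts to controlling the noetherian behaviour of the tilted heart $\bB_{f^{\ast}\omega}$ rather than that of $\PPer(X/Y)$. The integrality and boundedness of $\Imm Z_{f^{\ast}\omega}$, combined with the discreteness of $\Ree Z_{f^{\ast}\omega}$ on the phase-one locus, reduce this to a finiteness statement handled exactly as in the surface constructions of~\cite{Brs2},~\cite{AB}. By contrast, the boundary analysis in the positivity step is the conceptual heart of the argument, resting squarely on the already-established weak Bogomolov--Gieseker inequality of Remark~\ref{rmk:weak}.
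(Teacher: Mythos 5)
Your proposal is correct and follows essentially the same route as the paper: reduce positivity to the extension-generators of the boundary locus $\Imm Z_{f^{\ast}\omega}=0$, handle the rank-positive case via the weak Bogomolov--Gieseker inequality of Remark~\ref{rmk:weak} and the torsion case via the simple objects of $\PPer_{0}(X/Y)$ from Proposition~\ref{prop:collect}(i), and defer the Harder--Narasimhan property to the noetherianity of the tilted heart together with discreteness of the image of $Z_{f^{\ast}\omega}$, exactly as the paper does by pointing to Lemma~\ref{B:HN} and \cite[Proposition~7.1]{Brs2}.
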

\begin{proof}
We first check the property (\ref{pro:1}). 
Let us take a non-zero object $E \in \bB_{f^{\ast}\omega}$. 
By the construction of $\bB_{f^{\ast}\omega}$, 
we have $\Imm Z_{f^{\ast}\omega}(E) \ge 0$. 
Suppose that $\Imm Z_{f^{\ast}\omega}(E)=0$. 
Then we have 
\begin{align}\label{Imm=0}
E \in \left\langle F[1], T : \begin{array}{c}
F \in \PPer(X/Y) 
\mbox{ is } \mu_{f^{\ast}\omega} \mbox{-semistable with } \\
\mu_{f^{\ast}\omega}(F)=0, \ T \in \PPer_{0}(X/Y).
\end{array} \right\rangle. 
\end{align}
It is enough to check (\ref{pro:1}) 
for $E=F[1]$ where $F$ is a
$\mu_{f^{\ast}\omega}$-semistable 
object in $\PPer(X/Y)$ or $E \in \PPer_{0}(X/Y)$. 
In the former case, 
the inequality in Remark~\ref{rmk:weak}
implies $\ch_2(F) \le 0$. 
Combined with $\ch_0(F)>0$, we have 
$Z_{f^{\ast}\omega}(F[1]) \in \mathbb{R}_{<0}$. 
In the latter case, noting 
Proposition~\ref{prop:collect} (i), 
the property (\ref{pro:1}) follows from 
\begin{align*}
Z_{f^{\ast}\omega}(\oO_C)=Z_{f^{\ast}\omega}(\oO_C(-1)[1])=-\frac{1}{2},
\end{align*}
and $Z_{f^{\ast}\omega}(\oO_x)=-1$ for $x\in X \setminus C$. 

As for the Harder-Narasimhan property, we 
will give a 3-fold version of a similar statement in Lemma~\ref{B:HN}. 
The same proof is applied. 
\end{proof}
Below we fix an ample divisor $\omega$ on $Y$ and set 
\begin{align}\label{sigma0}
\sigma_0 \cneq (Z_{f^{\ast}\omega}, \bB_{f^{\ast}\omega}).
\end{align}
Note that, at this moment, 
$\sigma_0$ is just an element of $\Stab^{\dag}(X)$
\footnote{The locally finiteness of $\sigma_0$
is obvious since 
the image of $Z_{f^{\ast}\omega}$ is a discrete subgroup.}
in the notation of Definition~\ref{def:stabsp}. 
The following result will be required 
in order to deform $\sigma_0$:
\begin{prop}\label{prop:support}
We have $\sigma_0 \in \Stab(X)$, i.e. 
$\sigma_0$ satisfies the support property. 
\end{prop}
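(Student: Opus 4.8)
The plan is to verify the support property through the standard criterion of a single quadratic form. Concretely, I would produce a quadratic form $Q$ on $N(X)_{\mathbb{R}}$ with two properties: (a) $Q$ is negative definite on $\Ker Z_{f^{\ast}\omega}$, and (b) $Q(E)\ge 0$ for every $\sigma_0$-semistable object $E$. Granting (a) and (b), the support property is formal: on the compact slice $\{\lVert v\rVert=1,\ Q(v)\ge 0\}$ the kernel of $Z_{f^{\ast}\omega}$ is avoided by (a), so $\lvert Z_{f^{\ast}\omega}\rvert$ is bounded below by a positive constant there, and rescaling yields the desired bound $\lVert E\rVert/\lvert Z_{f^{\ast}\omega}(E)\rvert\le C$. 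Since $Q$ will be even, it is invariant under shifts, so in checking (b) it suffices to treat $\sigma_0$-semistable objects $E\in\bB_{f^{\ast}\omega}$ of phase in $(0,1]$.

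For the easy condition (a), I would start from the naive Bogomolov discriminant $\Delta(E)\cneq \ch_1(E)^2-2\ch_0(E)\ch_2(E)$. On $\Ker Z_{f^{\ast}\omega}$ one has $\ch_1(E)\cdot f^{\ast}\omega=0$ and $\ch_2(E)=\tfrac{\omega^2}{2}\ch_0(E)$, so $\Delta$ restricts there to $\ch_1(E)^2-\omega^2\ch_0(E)^2$. Because $(f^{\ast}\omega)^2=\omega^2>0$, the Hodge index theorem makes the intersection form negative definite on $(f^{\ast}\omega)^{\perp}$, whence $\ch_1(E)^2\le 0$ with equality only for $\ch_1(E)=0$; thus $\Delta<0$ on $\Ker Z_{f^{\ast}\omega}\setminus\{0\}$. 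The difficulty is that $\Delta$ itself fails condition (b): the $\sigma_0$-stable objects $\oO_C$ and $\oO_C(-1)[1]$ of phase one, which are simple in $\PPer_{0}(X/Y)$ by Proposition~\ref{prop:collect}, have $\ch_1^2=C^2=-1$ and $\ch_0=0$, hence $\Delta=-1<0$. I would therefore take $Q\cneq \Delta+\lambda\,\lvert Z_{f^{\ast}\omega}\rvert^2$ for a large constant $\lambda$. This correction vanishes on $\Ker Z_{f^{\ast}\omega}$, so (a) is preserved; and since $\lvert Z_{f^{\ast}\omega}(\oO_C)\rvert^2=\tfrac14$, choosing $\lambda\ge 4$ already repairs (b) on these exceptional objects.

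The heart of the argument is then to prove, for all remaining $\sigma_0$-semistable $E$, an inequality strong enough to give $Q(E)\ge 0$; the cleanest target is the Bogomolov--Gieseker inequality $\Delta(E)\ge 0$ itself, after which $Q\ge 0$ is automatic. This is where I would import the perverse inequality of Proposition~\ref{prop:BGper} together with the tilting technique of~\cite{Bay},~\cite{BMT}. For $E$ of phase strictly less than one, so $\Imm Z_{f^{\ast}\omega}(E)=\ch_1(E)\cdot f^{\ast}\omega>0$, I would pass to the perverse cohomologies $\hH^{-1}_p(E)$, $\hH^{0}_p(E)\in\PPer(X/Y)$ and their Harder--Narasimhan factors for $\mu_{f^{\ast}\omega}$-stability; Proposition~\ref{prop:BGper} bounds the discriminant of each factor of positive rank, and $\sigma_0$-semistability constrains their slopes so that the cross terms, controlled again by the Hodge index theorem, cannot drive $\Delta(E)$ negative. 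For $E$ of phase exactly one I would use the explicit description~(\ref{Imm=0}): $E$ is built by extensions from $\mu_{f^{\ast}\omega}$-semistable $F\in\PPer(X/Y)$ with $\mu_{f^{\ast}\omega}(F)=0$, to which Proposition~\ref{prop:BGper} and Remark~\ref{rmk:weak} apply, and from the simple objects of $\PPer_{0}(X/Y)$, where the term $\lambda\lvert Z_{f^{\ast}\omega}\rvert^2$ absorbs the negative contributions of the $\oO_C$-type factors.

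I expect the main obstacle to be exactly this last point: making the estimate uniform over the whole phase-one category, where objects supported on the exceptional curve $C$ carry negative discriminant and can occur with arbitrary multiplicity among Jordan--H\"older factors. Controlling these contributions --- equivalently, pinning down the correct modified discriminant and establishing the Bogomolov--Gieseker inequality for $\sigma_0$-semistable objects rather than merely for $\mu_{f^{\ast}\omega}$-semistable perverse sheaves --- is the technical crux, and is precisely where the combination of Proposition~\ref{prop:BGper} with the methods of~\cite{Bay} and~\cite{BMT} becomes essential.
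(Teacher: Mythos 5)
Your overall packaging is sound and close in spirit to the paper's: the quadratic form $Q=\Delta+\lambda\lvert Z_{f^{\ast}\omega}\rvert^{2}$ with (a) negative definiteness on $\Ker Z_{f^{\ast}\omega}$ (which you verify correctly via the Hodge index theorem) and (b) nonnegativity on semistable objects is essentially a repackaging of the norm estimate carried out in Subsection~\ref{subsec:support}, and your reductions to stable objects and to phases in $(0,1]$ are fine. The gap is in (b), which is the entire content of the proposition. First, your ``cleanest target'' $\Delta(E)\ge 0$ for all $\sigma_0$-semistable $E$ is not available: for a pure one-dimensional sheaf $E$ supported on the strict transform of a curve $\ell$ through $f(C)$ one has $\ch_0(E)=0$ and $\Delta(E)=\ch_1(E)^2=[\ell]^2-1$, which is negative when $[\ell]^2=0$, and such objects can be $\sigma_0$-semistable of phase $<1$. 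This is precisely why the paper's Theorem~\ref{thm:sBG} only asserts the corrected inequality $\Delta(E)+C_{\omega}(\ch_1(E)\cdot f^{\ast}\omega)^2/\omega^2\ge 0$, with Lemma~\ref{lem:ineq:per1} supplying exactly the control needed for the one-dimensional case.

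Second, and more seriously, your proposed mechanism for phase $<1$ --- decompose into $\hH^{-1}_p(E)$, $\hH^{0}_p(E)$, take their $\mu_{f^{\ast}\omega}$-Harder--Narasimhan factors, apply Proposition~\ref{prop:BGper} to each, and control cross terms by Hodge index --- does not go through: $\sigma_0$-semistability of $E$ constrains only the slopes of subobjects and quotients in $\bB_{f^{\ast}\omega}$, not the $\mu_{f^{\ast}\omega}$-slopes of all HN factors of its perverse cohomologies, so the cross terms in $\Delta$ are not controlled. The paper's route is genuinely different and constitutes the technical core: it first proves the weaker inequality $(\ch_1(E)\cdot f^{\ast}\omega)^2\ge 2\omega^2\ch_0(E)\ch_2(E)$ (Proposition~\ref{prop:wBGin}) using only a single destabilizing quotient; this yields continuity and a wall-and-chamber structure along the ray $s\mapsto(Z_{sf^{\ast}\omega},\bB_{f^{\ast}\omega})$ (Proposition~\ref{prop:cont}); and the full inequality (\ref{sBGin}) is then proved by induction on $\ch_1(E)\cdot f^{\ast}\omega$, with base case Lemma~\ref{lem:small} and large-volume input Lemma~\ref{lem:sgg}, following \cite[Theorem~7.3.1]{BMT}. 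You defer precisely this induction to ``the methods of \cite{Bay}, \cite{BMT}'', while locating the crux instead at the phase-one objects supported on $C$ --- but those are the easy part (a finite list of simple objects, absorbed by taking $\lambda\ge 4$). As it stands the proposal sets up the right criterion but does not prove the inequality that makes it work.
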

The proof of Proposition~\ref{prop:support}
is postponed until Subsection~\ref{subsec:support}.

\subsection{One parameter family of stability conditions}
By Proposition~\ref{prop:support} and Theorem~\ref{thm:loc}, 
a small deformation of $Z_{f^{\ast}\omega}$ 
uniquely lifts to a small deformation of $\sigma_0$. 
Therefore for $0<\varepsilon \ll 1$, 
 there is a unique continuous map
\begin{align*}
 \sigma \colon (-1, 1)  \to \Stab(X)
\end{align*}
such that $\sigma_t \cneq \sigma(t)$ satisfies 
the following: 
$\sigma_0$ coincides with the stability condition (\ref{sigma0}), 
and other $\sigma_t$ is written as
\begin{align*}
\sigma_t=(Z_{f^{\ast}\omega +\varepsilon tC}, \pP_t),
\end{align*}
where $\pP_t=\{\pP_t(\phi)\}_{\phi \in \mathbb{R}}$ is a slicing 
on $D^b \Coh(X)$. Let 
$M^{\sigma_t}([\oO_x])$ be the set of isomorphism 
classes of objects $E \in \pP_t(1)$ with 
$\ch(E)=\ch(\oO_x)$ for $x\in X$. We have 
the following proposition:
\begin{prop}\label{prop:wall}
By taking $\varepsilon$ smaller if necessary, we have the following:

(i) If $t<0$, we have $M^{\sigma_t}([\oO_x])=\{ \oO_x : x\in X\}$. 

(ii) If $t>0$, we have $M^{\sigma_t}([\oO_x])=\{ \dL f^{\ast} \oO_y : 
y\in Y\}$.

(iii) If $t=0$, we have 
\begin{align}\label{M:wall}
M^{\sigma_0}([\oO_x])=
\{ \oO_x, \dL f^{\ast}\oO_0, \oO_C \oplus \oO_C(-1)[1] : x\in X\}.
\end{align} 
Here $0=f(C) \in Y$. 
\end{prop}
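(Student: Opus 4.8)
The plan is to reduce the whole statement to the behaviour of the two stable objects $\oO_C$ and $\oO_C(-1)[1]$ and to a single wall-crossing at $t=0$ caused by the swap of their phases. Since $\ch(\oO_x)=(0,0,1)$, one has $Z_{f^{\ast}\omega+\varepsilon tC}(E)=-1$ for every $E$ of this class and every $t$, so such an $E$ lies on the ray $\mathbb{R}_{<0}$ and belongs to $\pP_t(1)$ exactly when it is $\sigma_t$-semistable. First I would record the relevant central charges. Using $C\cdot f^{\ast}\omega=0$ and $C^2=-1$, a direct computation gives
\[
Z_{f^{\ast}\omega+\varepsilon tC}(\oO_C)=-\tfrac12-\sqrt{-1}\,\varepsilon t,\qquad
Z_{f^{\ast}\omega+\varepsilon tC}(\oO_C(-1)[1])=-\tfrac12+\sqrt{-1}\,\varepsilon t.
\]
Thus $\oO_C$ and $\oO_C(-1)[1]$ have equal phase $1$ at $t=0$, while for $t<0$ the phase of $\oO_C$ is $<1$ and that of $\oO_C(-1)[1]$ is $>1$, the inequalities being reversed for $t>0$. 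Both objects are simple in the finite length category $\PPer_{0}(X/Y)$, hence $\sigma_0$-stable, and I would check that they stay $\sigma_t$-stable for $0<\varepsilon\ll1$ and all $t$, since their individual classes admit no wall near $t=0$.

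The heart of the argument is the case $t=0$, which I would treat first. If $E\in\pP_0(1)$ has $\ch(E)=(0,0,1)$, then $\ch_0(E)=0$. Every simple object of $\pP_0(1)$ is either of the form $F[1]$ with $F$ of strictly positive rank, or lies in $\PPer_{0}(X/Y)$, and all ranks are $\le0$; the vanishing $\ch_0(E)=0$ therefore forbids any $F[1]$ factor and forces $E\in\PPer_{0}(X/Y)$. By Proposition~\ref{prop:collect}~(i) the Jordan--H\"older factors of $E$ are among $\oO_C$, $\oO_C(-1)[1]$ and $\oO_x$ with $x\notin C$; matching $\ch_1(E)=0$, together with $\ch_1(\oO_C)=C=-\ch_1(\oO_C(-1)[1])$, leaves exactly two possibilities: either $E=\oO_x$ for $x\notin C$, or $E$ is an extension of $\oO_C$ and $\oO_C(-1)[1]$ with one factor of each. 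In the latter case I would compute, by Serre duality and adjunction on $C\cong\mathbb{P}^1$, that $\Ext^1(\oO_C(-1)[1],\oO_C)\cong H^0(\oO_C(1))$ is two dimensional while $\Ext^1(\oO_C,\oO_C(-1)[1])\cong\mathbb{C}$. The non-split extensions of the first type are identified with $\{\oO_x:x\in C\}$ via the triangle $\oO_C(-1)\to\oO_C\to\oO_x$, the unique non-split extension of the second type with $\dL f^{\ast}\oO_0$ (using $\dR f_{\ast}\dL f^{\ast}\oO_0=\oO_0$), and adding the split object gives the list (\ref{M:wall}), proving (iii).

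For $t\neq0$ I would pass from this classification by the phase criterion. Granting that every $\sigma_t$-semistable $E$ of class $(0,0,1)$ again lies in $\PPer_{0}(X/Y)$ (addressed below), its possible shapes are precisely those found above, and it remains to decide which are $\sigma_t$-semistable. An object $\oO_x$ with $x\notin C$ has no subobject supported on $C$ and stays stable for all $t$. For an extension, phase-$1$ semistability holds iff the subobject has phase $\le1$: for $t<0$ this selects the extensions $0\to\oO_C\to E\to\oO_C(-1)[1]\to0$, i.e.\ $\{\oO_x:x\in C\}$, and destabilises both $\dL f^{\ast}\oO_0$ and the split object, which now contain the subobject $\oO_C(-1)[1]$ of phase $>1$; for $t>0$ the roles are exchanged, $\dL f^{\ast}\oO_0$ survives while the $\oO_x$ with $x\in C$ are destabilised by $\oO_C$. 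The vanishings $\Hom(\oO_C(-1)[1],\oO_x)=0$ for $x\in C$ (resp.\ $\Hom(\oO_C,\dL f^{\ast}\oO_0)=0$) show that the surviving extensions have no destabilising subobject of higher phase, hence are genuinely stable. Combined with $\dL f^{\ast}\oO_y=\oO_x$ for $y\neq0$, this yields (i) and (ii).

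The main obstacle is the completeness statement off the wall: proving that no exotic $\sigma_t$-semistable object of class $(0,0,1)$ appears for $t\neq0$, equivalently that such an $E$ must again lie in $\PPer_{0}(X/Y)$. At $t=0$ this is the clean rank argument above, but for $t\neq0$ the heart $\pP_t$ is only an abstract deformation of $\bB_{f^{\ast}\omega}$ and the rank dichotomy is no longer available verbatim. I would therefore either invoke a near-the-wall semicontinuity --- every $\sigma_t$-semistable object of phase $1$ with this class is $\sigma_0$-semistable, by continuity of $Z$ and the support property of Proposition~\ref{prop:support} --- to reduce to the $t=0$ list, or argue directly that the established $\sigma_t$-stability of $\oO_C$ and $\oO_C(-1)[1]$ rules out any other Jordan--H\"older constituent. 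Controlling this reduction, rather than the extension bookkeeping, is where the argument needs the most care.
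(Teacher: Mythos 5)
Your proposal is correct and follows essentially the same route as the paper: part (iii) is proved first by using $\Imm Z_{f^{\ast}\omega}(E)=0$ and $\ch(E)=\ch(\oO_x)$ to force $E\in\PPer_{0}(X/Y)$, then classifying the extensions of $\oO_C$ and $\oO_C(-1)[1]$, and parts (i),(ii) follow from the closedness of the semistability locus (the inclusion $M^{\sigma_t}([\oO_x])\subset M^{\sigma_0}([\oO_x])$ guaranteed by the support property and the wall-and-chamber structure) together with the comparison of the phases $\phi_t,\psi_t$ of $\oO_C$ and $\oO_C(-1)[1]$. The reduction you flag as the delicate point is resolved exactly by the first of your two proposed options, which is what the paper does.
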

\begin{proof}
We first show (iii). Let us take 
$E\in \bB_{f^{\ast}\omega}$ with 
$\ch(E)=\ch(\oO_x)$. 
Since $\Imm Z_{f^{\ast}\omega}(E)=0$, 
$E$ is contained in the RHS of (\ref{Imm=0}), 
hence the condition $\ch(E)=\ch(\oO_x)$
implies $E \in \PPer_{0}(X/Y)$.
By Proposition~\ref{prop:collect} (i), we have the following 
possibilities: $E \cong \oO_x$ for $x\in X \setminus C$
and it is $\sigma_0$-stable, or 
$E$ is not $\sigma_0$-stable with 
\begin{align*}
\mathrm{gr}(E) \cong \oO_C \oplus \oO_C(-1)[1].
\end{align*} 
Here $\mathrm{gr}(E)$ is defined by (\ref{gr}). 
In the latter case, if $E$ is not isomorphic 
to $\oO_C \oplus \oO_C(-1)[1]$, 
$E$ fits into one of the following non-split 
exact sequences in $\PPer_{0}(X/Y)$: 
\begin{align}
\label{one:1}
&0 \to \oO_C \to E \to \oO_C(-1)[1] \to 0, \\
\label{one:2}
&0 \to \oO_C(-1)[1] \to E \to \oO_C \to 0. 
\end{align}
In the case of (\ref{one:1}), 
we have $E \cong \oO_x$ for $x\in C$. 
In the case of (\ref{one:2}), we have 
$E \cong \dL f^{\ast} \oO_0$. 
Indeed, it is easy to 
check that there is a distinguished triangle
\begin{align}\label{one:3}
\oO_C(-1)[1] \to \dL f^{\ast} \oO_0 \to \oO_C,
\end{align}
hence $E \cong \dL f^{\ast}\oO_0$
follows since $\Ext^1_{X}(\oO_C, \oO_C(-1)[1])=\mathbb{C}$. 

Next we show (i) and (ii). 
The support property of $\sigma_0$ 
ensures the existence of a wall and chamber structure 
on $\Stab(X)$ near $\sigma_0$, i.e. 
the set of semistable objects $E$ with $\ch(E)=\ch(\oO_x)$
is constant on a chamber. Therefore, by choosing $\varepsilon$
smaller if necessary, we may assume that 
$M^{\sigma_t}([\oO_x])$ is constant 
for $t\in (-1, 0)$ and $t\in (0, 1)$. 
Since the set of points in $\Stab(X)$ in 
which a given object is semistable is closed, 
we have the inclusion $M^{\sigma_t}([\oO_x]) \subset M^{\sigma_0}([\oO_x])$.
It is enough to check $\sigma_t$-stability for 
the objects in the RHS of (\ref{M:wall}).  

Since $\oO_x$ for $x\in X\setminus C$, $\oO_C$ and 
$\oO_C(-1)[1]$ are stable in $\sigma_0$, they 
are also stable in $\sigma_t$ for any $t\in (-1, 1)$. 
Let us take real numbers 
$\phi_{t}, \psi_t$ 
so that the following holds:
\begin{align*}
\oO_C \in \pP_t(\phi_t), \quad \oO_C(-1)[1] \in \pP_t(\psi_t).
\end{align*}
We have $\phi_t<\psi_t$ for $t<0$ and $\phi_t>\psi_t$ for $t>0$. 
Therefore, by looking at the exact sequence (\ref{one:1})
for $E =\oO_x$ with $x\in C$, we see that 
 $E$ is $\sigma_t$-stable for $t<0$ but not 
$\sigma_t$-semistable for $t>0$. 
Similarly by the sequence (\ref{one:3}), for $E=\dL f^{\ast} \oO_0$, 
we see that $E$ is $\sigma_t$-stable for $t>0$
but not $\sigma_t$-semistable for $t<0$. 
Finally, it is obvious that 
$\oO_C \oplus \oO_C(-1)[1]$ is not 
$\sigma_t$-semistable for $t\neq 0$. 
Therefore (i) and (ii) are proved. 
\end{proof}
\begin{rmk}
The proof of the above proposition 
shows that $M_t^{\sigma_t}([\oO_x])$ consists
of $\sigma_t$-stable objects when $t\neq 0$. 
\end{rmk}
We now give a proof of Theorem~\ref{thm:intro1}. 
\begin{thm}\label{thm:main1}
We have the following: 

(i) If $t<0$, then $X$ is the fine moduli 
space of $\sigma_t$-stable objects in $M^{\sigma_t}([\oO_x])$. 

(ii) If $t=0$, then $Y$ is the 
coarse moduli space of $S$-equivalence
classes of objects in $M^{\sigma_0}([\oO_x])$. 

(iii) If $t>0$, then $Y$ is the fine moduli 
space of $\sigma_t$-stable objects in $M^{\sigma_t}([\oO_x])$.
\end{thm}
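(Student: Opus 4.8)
The plan is to leverage Proposition~\ref{prop:wall}, which has already identified the objects in $M^{\sigma_t}([\oO_x])$ for each sign of $t$, and upgrade these set-theoretic identifications to statements about (fine/coarse) moduli spaces. For part (i), when $t<0$, Proposition~\ref{prop:wall}(i) tells us that every $\sigma_t$-stable object with $\ch=\ch(\oO_x)$ is a skyscraper sheaf $\oO_x$ for some $x\in X$. To show $X$ is the \emph{fine} moduli space, I would construct a universal family: the structure sheaf $\oO_{\Delta}$ of the diagonal $\Delta \subset X\times X$ serves as a family of skyscraper sheaves flat over the second factor, and I would verify that for any scheme $S$ and any $S$-flat family of objects in $M^{\sigma_t}([\oO_x])$, the family is (locally) pulled back from $\oO_\Delta$ up to a line bundle twist. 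The stability of the $\oO_x$ forces $\Hom(\oO_x,\oO_x)=\mathbb{C}$ (simplicity), which gives the requisite rigidity for the fine moduli property. Thus the natural morphism from $X$ to the moduli functor is an isomorphism.

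For part (iii), when $t>0$, Proposition~\ref{prop:wall}(ii) identifies the stable objects as $\dL f^{\ast}\oO_y$ for $y\in Y$. Here the key points are that $\dL f^{\ast}\oO_y$ is a genuine object of $D^b\Coh(X)$ for every $y\in Y$ (since $Y$ has only the contraction of a single $(-1)$-curve, $Y$ is smooth, so $f^\ast$ is well-behaved), and that the assignment $y\mapsto \dL f^{\ast}\oO_y$ is injective with each such object being $\sigma_t$-stable hence simple. To establish fineness I would exhibit $\dL f^{\ast}\oO_{\Delta_Y}$, or rather the object $\dL(f\times\id_Y)^{\ast}\oO_{\Delta_Y}$ on $X\times Y$ pulled back from the diagonal of $Y$, as a universal family flat over $Y$, and again check the universal property using simplicity of the fibers. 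The identification $Y \cong M^{\sigma_t}$ then follows.

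For part (ii), when $t=0$, the situation is more delicate because the objects are only $\sigma_0$-semistable and we must pass to $S$-equivalence classes. By Proposition~\ref{prop:wall}(iii), the semistable objects are $\oO_x$ ($x\in X$), $\dL f^{\ast}\oO_0$, and the polystable object $\oO_C\oplus\oO_C(-1)[1]$, together with the non-split extensions sitting in the sequences~(\ref{one:1}) and~(\ref{one:2}). The crucial observation is that all objects supported on $C$, namely the $\oO_x$ with $x\in C$, the object $\dL f^{\ast}\oO_0$, and any extension of $\oO_C$ by $\oO_C(-1)[1]$, share the same graded object $\gr=\oO_C\oplus\oO_C(-1)[1]$, hence lie in a single $S$-equivalence class. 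I would therefore set up a bijection between $S$-equivalence classes and the points of $Y$: points $x\in X\setminus C$ correspond to $\oO_x$ and map to $f(x)\in Y\setminus\{0\}$, while the entire fiber over $0=f(C)$ collapses to the single $S$-equivalence class with graded object $\oO_C\oplus\oO_C(-1)[1]$. This exactly matches the contraction $f$, so the map $M^{\sigma_0}([\oO_x])/(S\text{-equiv})\to Y$ is a bijection on points; I would then verify it is a morphism of schemes realizing $Y$ as the coarse moduli space by checking the universal property against families.

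The main obstacle I expect is part (ii), specifically the verification that $Y$ (with its given scheme structure as the contraction) is genuinely the \emph{coarse} moduli space rather than merely being in set-theoretic bijection with $S$-equivalence classes. One must produce an actual morphism from the moduli functor to $Y$ and show it is universal among maps to schemes; this requires understanding how families of $\sigma_0$-semistable objects degenerate across the locus lying over $0$, and controlling the scheme structure near the collapsed fiber. The derived equivalence of Theorem~\ref{thm:nc} with $\Coh(\mathrsfs{A})$ should help here: families of objects in $\pP_0(1)\subset\PPer_0(X/Y)$ correspond to families of $\mathrsfs{A}$-modules, and the coarse moduli of semistable $\mathrsfs{A}$-modules of the relevant numerical type can be compared with $Y$ via $\dR f_\ast$, whose target already carries the structure of $Y$. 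The remaining technical care is to match the GIT/coarse-moduli scheme structure with that of $Y$ and to confirm that no extra components arise in this surface case (in contrast to the $3$-fold case of Theorem~\ref{thm:intro2}).
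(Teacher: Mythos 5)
Your strategy is sound and correctly identifies the essential inputs (the classification of $M^{\sigma_t}([\oO_x])$ from Proposition~\ref{prop:wall}, the collapse of everything supported on $C$ into the single $S$-equivalence class of $\oO_C\oplus\oO_C(-1)[1]$, and the role of $\dR f_{\ast}$ in producing the classifying map to $Y$), but for (i) and (iii) you take a genuinely different route from the paper. You propose to exhibit explicit universal families ($\oO_{\Delta}$ on $X\times X$, resp.\ $\dL(f\times\id_Y)^{\ast}\oO_{\Delta_Y}$ on $X\times Y$) and verify the representability of the moduli functor directly; the paper instead invokes Inaba's algebraic space $\mM$ of objects $E$ with $\Ext^{<0}(E,E)=0$ and $\Hom(E,E)=\mathbb{C}$, takes the open locus $\mM^{\sigma_t}([\oO_x])$ of stable objects, and shows the natural morphism $Y\to\mM^{\sigma_t}([\oO_x])$ (resp.\ $X\to\mM^{\sigma_t}([\oO_x])$) is an isomorphism by checking bijectivity on $\mathbb{C}$-points (Proposition~\ref{prop:wall}) together with an isomorphism on tangent spaces, the latter coming from full faithfulness of $\dL f^{\ast}$. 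The paper's device buys you exactly the step you leave unproved: that an arbitrary $S$-flat family with the prescribed fibers is classified by a morphism to $X$ or $Y$. Your route is workable for (i) (a flat family of skyscrapers is finite of length one over $S$, so its support gives the classifying map), but for (iii) the fiber over $0\in Y$ is a genuine two-term complex and recovering the classifying morphism from the family requires pushing forward by $f\times\id_S$ and then arguing the family is recovered by $\dL(f\times\id_S)^{\ast}$ — which is in effect the same full-faithfulness input the paper uses, so you should make that explicit rather than asserting flatness of the proposed kernel. For (ii) your outline matches the paper's in substance; the paper's actual mechanism (carried out in Theorem~\ref{thm:3fold}) is to define $F_Y$ by sending a family $\qQ$ to the flat family of skyscrapers $\dR(f\times\id_S)_{\ast}\qQ$ on $Y\times S$, and to verify universality by applying any competing $F_Z$ to the tautological family $\{\oO_x\}_{x\in X}$, observing that $S$-equivalent objects map to the same point so $C$ is contracted, and descending through $f$ using $f_{\ast}\oO_X=\oO_Y$; your proposed detour through GIT for $\mathrsfs{A}$-modules is not needed and is not what the paper does.
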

\begin{proof}
Let $\mM$ be the algebraic 
space 
which parameterizes
objects $E\in D^b \Coh(X)$
satisfying
\begin{align*}
\Ext^{<0}(E, E)=0, \ \Hom(E, E)=\mathbb{C},
\end{align*}
constructed by Inaba~\cite{Inaba}.
We have the open sub algebraic space
$\mM^{\sigma_t}([\oO_x]) \subset \mM$
whose closed points correspond to 
$\sigma_t$-stable objects in $M^{\sigma_t}([\oO_x])$. 
If $t>0$, Proposition~\ref{prop:wall} (ii) implies 
that there is a morphism 
\begin{align}\label{mor:Y}
Y \to \mM^{\sigma_t}([\oO_x])
\end{align}
sending $y \in Y$ to the point corresponding to 
 $\dL f^{\ast} \oO_y$, which is 
bijective on $\mathbb{C}$-valued points. 
Also since we have the 
fully faithful functor 
\begin{align*}
\dL f^{\ast} \colon D^b \Coh(Y) \to D^b \Coh(X)
\end{align*}
the morphism (\ref{mor:Y})  
induces an isomorphism on the tangent space. 
Therefore (\ref{mor:Y}) is an isomorphism, and (iii) is proved. 
The proof of (i) is similar by considering the morphism 
\begin{align*}
X \to \mM^{\sigma_t}([\oO_x])
\end{align*} 
for $t<0$, sending $x \in X$ to $\oO_x$. 

In order to prove (ii), we need to show that
$Y$ corepresents the functor of families of 
objects in $M^{\sigma_0}([\oO_x])$. 
In Theorem~\ref{thm:3fold}, we will discuss 
a similar statement for 3-folds. Noting that 
$\oO_x$ for $x\in C$ and 
$\dL f^{\ast} \oO_0$ are $S$-equivalent to 
$\oO_C \oplus \oO_C(-1)[1]$, the same argument 
as in Theorem~\ref{thm:3fold} is applied. 
\end{proof}

\begin{rmk}
If $t<0$, then any skyscraper sheaf $\oO_x$ for
$x\in X$ is $\sigma_t$-stable of phase one. 
By using the proof of~\cite[Lemma~10.1]{Brs2}, 
we can show that $\sigma_t$ for $t<0$  
coincides with a stability condition constructed in~\cite{AB}. 
\end{rmk}

\subsection{Bogomolov-Gieseker inequality for $\sigma_0$-semistable objects}
The rest of this section is devoted to 
proving Proposition~\ref{prop:support}. 
The key ingredient is to prove Bogomolov-Gieseker type 
inequality for $\sigma_0$-semistable objects
in $\bB_{f^{\ast}\omega}$. 
The desired inequality is proved by a somewhat 
tricky argument: we first prove a version of 
BG inequality, evaluating
$(\ch_1(\ast) \cdot f^{\ast}\omega)^2$.
 It is not enough to 
show the support property, but 
ensures the `support property in 
one direction'. 
By this property, we are able to apply 
wall-crossing argument with respect to 
the central charges $Z_{sf^{\ast}\omega}$
for $s\in \mathbb{R}_{>0}$. 
 Then BG inequality evaluating $(\ch_1(\ast))^2$
is proved by the induction on $\ch_1(E) \cdot f^{\ast}\omega$
as in~\cite[Theorem~7.3.1]{BMT}. 

The following is the BG type inequality 
evaluating $(\ch_1(\ast) \cdot f^{\ast}\omega)^2$.
\begin{prop}\label{prop:wBGin}
For any $\sigma_0$-semistable object 
$E \in \bB_{f^{\ast}\omega}$, we have the inequality
\begin{align}\label{wBGin}
(\ch_1(E) \cdot f^{\ast}\omega)^2 \ge 2\omega^2 \ch_0(E) \ch_2(E). 
\end{align}
\end{prop}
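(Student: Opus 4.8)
The plan is to upgrade the weak Bogomolov--Gieseker inequality (\ref{weak}) of Remark~\ref{rmk:weak}, which is already available for every $\mu_{f^{\ast}\omega}$-semistable object of $\PPer(X/Y)$ of positive rank, to all $\sigma_0$-semistable objects of the tilted heart $\bB_{f^{\ast}\omega}$. The organizing device is to treat
\[
\overline{\Delta}(E) \cneq (\ch_1(E) \cdot f^{\ast}\omega)^2 - 2\omega^2 \ch_0(E) \ch_2(E)
\]
as a quadratic form in the triple $(\ch_0(E), \ch_1(E)\cdot f^{\ast}\omega, \ch_2(E))$. Two features of this form drive the argument: it has Lorentzian signature $(2,1)$, and on $\Ker Z_{f^{\ast}\omega}$, i.e. on classes with $\ch_1(E)\cdot f^{\ast}\omega =0$ and $\ch_2(E) = \tfrac{1}{2}\omega^2 \ch_0(E)$, it takes the value $-(\omega^2\ch_0)^2 \le 0$. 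Thus $\overline{\Delta}$ is negative on $\Ker Z_{f^{\ast}\omega}$, and this negativity is exactly what allows non-negativity to be stable under forming extensions whose central charges point in a common direction.

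First I would treat the boundary case $\Imm Z_{f^{\ast}\omega}(E)=0$, that is $\ch_1(E)\cdot f^{\ast}\omega =0$, where (\ref{wBGin}) reduces to $\ch_0(E)\ch_2(E)\le 0$. By the description (\ref{Imm=0}) such an $E$ is an iterated extension of objects $F[1]$, with $F\in\PPer(X/Y)$ a $\mu_{f^{\ast}\omega}$-semistable perverse sheaf of slope zero, and of objects of $\PPer_{0}(X/Y)$. Remark~\ref{rmk:weak} gives $\ch_2(F)\le 0$, so each $F[1]$ has $\ch_0\le 0$ and $\ch_2\ge 0$, while a direct check using Proposition~\ref{prop:collect}~(i) shows every object of $\PPer_{0}(X/Y)$ has $\ch_0=0$ and $\ch_2\ge 0$. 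Since $\ch_0$ and $\ch_2$ are additive, $E$ then satisfies $\ch_0(E)\le 0$ and $\ch_2(E)\ge 0$, giving the claim. In the language of the quadratic form this is the statement that here the relevant region degenerates to the convex quadrant $\{\ch_0\le 0,\ \ch_2\ge 0\}$.

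For the main case $\ch_1(E)\cdot f^{\ast}\omega>0$, I would use the perverse cohomology exact sequence
\[
0 \to \hH_p^{-1}(E)[1] \to E \to \hH_p^{0}(E) \to 0
\]
in $\bB_{f^{\ast}\omega}$, with $A\cneq\hH_p^{-1}(E)\in\fF_{f^{\ast}\omega}$ and $B\cneq\hH_p^{0}(E)\in\tT_{f^{\ast}\omega}$, and then refine $A$ and $B$ by their $\mu_{f^{\ast}\omega}$-Harder--Narasimhan filtrations (Lemma~\ref{lem:HN}) into $\mu_{f^{\ast}\omega}$-semistable perverse sheaves. Each such factor $G$ satisfies $\overline{\Delta}(G)\ge 0$, by Remark~\ref{rmk:weak} for the positive rank factors, and trivially for the rank zero factors supported in the fibres, where $\ch_1\cdot f^{\ast}\omega=0$. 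Crucially $A$ need not vanish, and neither $A$ nor $B$ is $\mu_{f^{\ast}\omega}$-semistable in general; one then uses the $\sigma_0$-semistability of $E$ to bound the slopes of the extreme Harder--Narasimhan factors of $A$ and $B$ against the phase of $E$, and feeds these slope inequalities, together with $\overline{\Delta}(G)\ge 0$ on the factors, into the identity expressing $\overline{\Delta}(E)$ through the factors to conclude $\overline{\Delta}(E)\ge 0$.

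The main obstacle is precisely this last step. Because $\overline{\Delta}$ is quadratic rather than additive, it is not true in general that $\overline{\Delta}(A)\ge 0$ or $\overline{\Delta}(B)\ge 0$: for two slope-semistable pieces of distinct $\mu_{f^{\ast}\omega}$-slopes $\mu'$ and $\mu''$, the discriminant of their sum acquires a negative cross term proportional to $-\ch_0'\ch_0''(\mu'-\mu'')^2$. The role of $\sigma_0$-semistability of $E$ is to constrain these slopes --- through the requirement that no sub- or quotient object raise or lower the phase --- so that the negative cross terms are dominated by the non-negative contributions of the individual factors. Conceptually this is the statement that the non-negativity locus of $\overline{\Delta}$, inside $\{\Imm Z_{f^{\ast}\omega}\ge 0\}$ and along a fixed phase ray, is a convex wedge, a consequence of the signature $(2,1)$ together with the negativity of $\overline{\Delta}$ on $\Ker Z_{f^{\ast}\omega}$. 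Carrying out the phase and slope bookkeeping, and in particular accounting for the shift $[1]$ on the $A$-side so that $\hH_p^{-1}(E)[1]$ lands on the correct branch, is the delicate part of the proof.
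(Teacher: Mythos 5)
Your reduction to the boundary case $\Imm Z_{f^{\ast}\omega}(E)=0$ is fine (and your computation that objects of $\PPer_{0}(X/Y)$ have $\ch_0=0$, $\ch_2\ge 0$ is correct), but the main case is not actually proved: you set up the decomposition of $E$ into the $\mu_{f^{\ast}\omega}$-HN factors of $\hH_p^{-1}(E)$ and $\hH_p^{0}(E)$, observe that the quadratic form $\overline{\Delta}$ is not additive and acquires negative cross terms, and then state that ``carrying out the phase and slope bookkeeping \ldots is the delicate part of the proof'' without carrying it out. That step is the entire content of the proposition, and the mechanism you invoke to justify it does not apply here: the convexity of the non-negativity locus of a signature-$(2,1)$ form which is negative on $\Ker Z$ gives closure under extensions only for classes whose central charges lie on a \emph{common ray}, whereas the $\mu_{f^{\ast}\omega}$-HN factors of $\hH_p^{-1}(E)$ and $\hH_p^{0}(E)$ have unaligned values of $Z_{f^{\ast}\omega}$ (that is exactly why the cross terms appear). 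Making the factors phase-aligned would require passing to $\sigma_0$-Jordan--H\"older factors, for which the inequality is what you are trying to prove.

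The paper's argument avoids this entirely and never assembles $\overline{\Delta}(E)$ from pieces. After reducing (as you may) to $\ch_0(E)\ch_2(E)>0$, say both positive, one uses only a \emph{single} extremal factor: the minimal-slope $\mu_{f^{\ast}\omega}$-HN quotient $F$ of $\hH_p^{0}(E)$, which receives a surjection from $E$ in $\bB_{f^{\ast}\omega}$. The inequalities $0<\ch_0(E)\le \ch_0(\hH_p^0(E))$ and $0\le \Imm Z_{f^{\ast}\omega}(\hH_p^0(E))\le \Imm Z_{f^{\ast}\omega}(E)$ give $\mu_{f^{\ast}\omega}(F)\le\mu_{f^{\ast}\omega}(E)$, i.e.\ $\ch_0(E)/(\ch_1(E)\cdot f^{\ast}\omega)\le \ch_0(F)/(\ch_1(F)\cdot f^{\ast}\omega)$; the $\sigma_0$-semistability applied to $E\twoheadrightarrow F$ gives, after combining with the slope inequality, $\ch_2(E)/(\ch_1(E)\cdot f^{\ast}\omega)\le \ch_2(F)/(\ch_1(F)\cdot f^{\ast}\omega)$. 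Multiplying these two ratio bounds (all terms positive) and applying the weak BG inequality of Remark~\ref{rmk:weak} to the single $\mu$-semistable object $F$ yields $\ch_0(E)\ch_2(E)/(\ch_1(E)\cdot f^{\ast}\omega)^2\le 1/(2\omega^2)$, which is (\ref{wBGin}); the case $\ch_0(E),\ch_2(E)<0$ is symmetric, using a maximal-slope $\mu$-semistable subobject of $\hH_p^{-1}(E)$. You should either reproduce an argument of this kind or supply the missing computation in your own scheme; as written, the proof is incomplete at its decisive point.
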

\begin{proof}
The argument is borrowed from the proof of
the support property for the local projective plane in~\cite[Lemma~4.5]{BaMa},
although a similar inequality is not stated explicitly there. 

Let $E \in \bB_{f^{\ast}\omega}$
be a $\sigma_0$-semistable object. 
Since we may assume
 $\ch_0(E) \ch_2(E) >0$, 
we have the two possibilities: 
\begin{align*}
\ch_0(E), \ch_2(E)>0 \quad \mbox{ or } \quad \ch_0(E), \ch_2(E)<0.
\end{align*} 
We first prove the inequality (\ref{wBGin}) 
in the case $\ch_0(E), \ch_2(E)>0$.
There is an exact sequence in $\bB_{f^{\ast}\omega}$
\begin{align*}
0 \to \hH_p^{-1}(E)[1] \to E \to \hH_p^0(E) \to 0. 
\end{align*}
By the assumption, we have the two inequalities
\begin{align*}
&0<\ch_0(E) \le \ch_0(\hH_p^{0}(E)), \\ 
&0 \le \Imm Z_{f^{\ast}\omega}(\hH_p^{0}(E)) \le \Imm Z_{f^{\ast}\omega}(E).
\end{align*}
The above inequalities immediately imply 
$\mu_{f^{\ast}\omega}(E) \ge \mu_{f^{\ast}\omega}(\hH_p^{0}(E))$. 
On the other hand, there is a 
surjection $\hH_p^0(E) \twoheadrightarrow F$ in
$\PPer(X/Y)$ 
such that $F$ is $\mu_{f^{\ast}\omega}$-semistable with $\ch_0(F)>0$
and 
\begin{align}\label{ineq:mu}
0<\mu_{f^{\ast}\omega}(F) \le \mu_{f^{\ast}\omega}(\hH^{0}_p(E))
\le \mu_{f^{\ast}\omega}(E).
\end{align}
We have the composition 
\begin{align*}
E \twoheadrightarrow \hH_p^{0}(E) \to F,
\end{align*} which is 
surjective in $\bB_{f^{\ast}\omega}$.
Therefore the $\sigma_0$-stability of 
$E$ yields $\arg Z_{f^{\ast}\omega}(E) \le \arg Z_{f^{\ast}\omega}(F)$, or 
equivalently 
\begin{align*}
\frac{-\ch_2(E) + \omega^2 \ch_0(E)/2}{\ch_1(E) \cdot f^{\ast}\omega}
\ge \frac{-\ch_2(F) + \omega^2 \ch_0(F)/2}{\ch_1(F) \cdot f^{\ast}\omega}.
\end{align*}
Combined with (\ref{ineq:mu})
and the assumption $\ch_2(E)>0$, 
we obtain the inequality
\begin{align}\label{ineq:ch2}
0< \frac{\ch_2(E)}{\ch_1(E) \cdot f^{\ast}\omega} \le 
\frac{\ch_2(F)}{\ch_1(F) \cdot f^{\ast}\omega}.
\end{align} 
Then the inequality (\ref{wBGin}) follows from 
\begin{align*}
\frac{1}{2\omega^2} &\ge \frac{\ch_0(F)}{\ch_1(F) \cdot f^{\ast}\omega} \cdot 
\frac{\ch_2(F)}{\ch_1(F) \cdot f^{\ast}\omega} \\
&\ge \frac{\ch_0(E)}{\ch_1(E) \cdot f^{\ast}\omega} \cdot 
\frac{\ch_2(E)}{\ch_1(E) \cdot f^{\ast}\omega}.
\end{align*}
Here the first inequality follows from Remark~\ref{rmk:weak}, 
and the second inequality follows from (\ref{ineq:mu}) and (\ref{ineq:ch2}). 

The proof for the case of $\ch_0(E), \ch_2(E)<0$
is similar, by replacing the quotient $\hH_p^{0}(E) \twoheadrightarrow F$
by a $\mu_{f^{\ast}\omega}$-semistable
subobject $G \subset \hH_p^{-1}(E)$ with 
$\mu_{f^{\ast}\omega}(G) \ge \mu_{f^{\ast}\omega}(\hH_p^{-1}(E))$. 
\end{proof}
For $s\in \mathbb{R}_{>0}$, 
note that $\bB_{sf^{\ast}\omega}=\bB_{f^{\ast}\omega}$. 
Let us consider the element 
\begin{align*}
\gamma_s \cneq (Z_{sf^{\ast}\omega}, \bB_{f^{\ast}\omega})
\in \Stab^{\dag}(X). 
\end{align*}
Using the inequality (\ref{wBGin}), we show the following 
proposition. 
\begin{prop}\label{prop:cont}
The map $\gamma \colon s \mapsto \gamma_s$ is 
continuous. Moreover for each $E \in D^b \Coh(X)$, 
 there is a wall and chamber structure 
on the image of $\gamma$ 
so that the Harder-Narasimhan filtration of $E$
is constant on each chamber. 
\end{prop}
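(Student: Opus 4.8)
The plan is to produce one quadratic form that governs every member of the family $\{\gamma_s\}_{s>0}$ at once, and then to feed it into the standard deformation and wall-crossing formalism. Write $\Lambda$ for the image of the homomorphism
\[
N(X) \to \mathbb{R}^3, \quad E \mapsto (\ch_0(E),\, \ch_1(E)\cdot f^{\ast}\omega,\, \ch_2(E)) \eqcn (r,d,c),
\]
through which all the central charges factor, since $Z_{sf^{\ast}\omega}(E) = (s^2\omega^2 r/2 - c) + \sqrt{-1}\, s d$. First I would upgrade Proposition~\ref{prop:wBGin} from $\sigma_0=\gamma_1$ to the whole ray: the inequality (\ref{wBGin}) holds for every $\gamma_s$-semistable $E\in\bB_{f^{\ast}\omega}$, by the identical proof. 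The parameter $s$ enters only through $\arg Z_{sf^{\ast}\omega}$, and once the common factor $s$ is cleared the decisive input — the scale-invariant inequality of Remark~\ref{rmk:weak} applied to the $\mu_{f^{\ast}\omega}$-semistable quotient $F$ — is independent of $s$, so the $s$-dependence cancels and one again arrives at $Q(E)\ge 0$, where $Q(r,d,c) \cneq d^2 - 2\omega^2 rc$.

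Next I would observe that $(Z_{sf^{\ast}\omega}, Q)$ is a support-property pair on $\Lambda$, uniformly in $s$. Indeed $Q\ge 0$ on all $\gamma_s$-semistable objects by the previous step, whereas on $\Ker(Z_{sf^{\ast}\omega}\colon \Lambda_{\mathbb{R}}\to\mathbb{C})$, where $d=0$ and $c = s^2\omega^2 r/2$, one has $Q = -s^2\omega^4 r^2 < 0$ off the origin; thus $Q$ is negative definite on the kernel for every $s$. The form $Q$ is degenerate on $N(X)$ itself — it detects $\ch_1$ only through $\ch_1\cdot f^{\ast}\omega$ — so $\gamma_s$ lies in $\Stab^{\dag}(X)$ but typically not in $\Stab(X)$. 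This is exactly the situation of Remark~\ref{rmk:locfin}: the forgetting map is a local homeomorphism onto the linear subspace $\Hom(\Lambda,\mathbb{C})\subset N(X)^{\vee}_{\mathbb{C}}$, cf.~\cite{BaMa}.

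With this in hand both assertions follow along familiar lines. For continuity, the heart $\bB_{f^{\ast}\omega}$ is common to all $\gamma_s$, because $\Imm Z_{sf^{\ast}\omega}=s\,d$ only rescales the imaginary part by the positive constant $s$ and so property (\ref{pro:1}) persists verbatim for every $s>0$; since $s\mapsto Z_{sf^{\ast}\omega}\in\Hom(\Lambda,\mathbb{C})$ is real-analytic, the local homeomorphism of the previous paragraph produces a unique continuous lift, which coincides with $\gamma_s$ as the lift with constant heart, so $\gamma$ is continuous. For the wall-and-chamber structure, I would fix $E\in D^b\Coh(X)$, reduce to $E\in\bB_{f^{\ast}\omega}$ by passing to shifts of its perverse cohomologies, and note that a wall at $s_0$ arises from a $\gamma_{s_0}$-semistable subobject $A\hookrightarrow E$ whose class $w=(r',d',c')$ shares the phase of $v=\ch(E)$; the parallelism condition $\Ree Z(v)\,\Imm Z(w)=\Imm Z(v)\,\Ree Z(w)$ has the single positive solution $s^2 = 2(cd'-c'd)/\bigl(\omega^2(rd'-r'd)\bigr)$ per class. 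The uniform support property then bounds the competing classes: $0\le d'\le d$ from the imaginary part, together with $Q(w)\ge 0$, $Q(v-w)\ge 0$ and $Q(v)$ fixed, confines $w$ to a finite set on any compact subinterval of $(0,\infty)$, exactly as in~\cite[Theorem~7.3.1]{BMT}. Hence the walls are locally finite and the Harder-Narasimhan filtration is constant on each chamber.

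The step I expect to be the main obstacle is this last boundedness of destabilizing classes. Because $Q$ is degenerate on $N(X)$, controlling $\ch_1$ only in the direction of $f^{\ast}\omega$, the conditions $Q(w),Q(v-w)\ge 0$ restrict the reduced triples $(r',d',c')$ but not the full $\ch_1(A)$; I must therefore carry out the finiteness argument at the level of the lattice $\Lambda$ and verify that two subobjects with the same $\Lambda$-class define the same wall, so that only the $\Lambda$-class is relevant to local finiteness. A secondary point to check is the boundedness of the relevant families of semistable objects — needed to invoke Inaba's moduli spaces and the openness of stability in the later application — which again rests on the uniform inequality $Q\ge0$ established in the first step.
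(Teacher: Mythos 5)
Your proposal is correct and follows essentially the same route as the paper: the key input in both is Proposition~\ref{prop:wBGin} (extended, as you note is necessary, to $\gamma_s$-semistable objects for all $s$), which the paper uses to verify Bridgeland's deformation criterion directly --- bounding $\lvert Z_{sf^{\ast}\omega}(E)-Z_{f^{\ast}\omega}(E)\rvert/\lvert Z_{f^{\ast}\omega}(E)\rvert$ via the elementary fact $\inf\{(-y+\omega^2/2)^2+x^2 : x^2\ge 2\omega^2 y\}>0$ --- and then obtains the walls by the argument of \cite[Proposition~9.3]{Brs2} with \cite[Lemma~5.1]{Brs2} replaced by Proposition~\ref{prop:wBGin}. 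Your repackaging of the same inequality as a quadratic form $Q$ that is nonnegative on semistable classes and negative definite on $\ker Z_{sf^{\ast}\omega}$ inside the quotient lattice $\Lambda$ is an equivalent formulation of the paper's ``support property in one direction,'' and the caveats you flag (degeneracy of $Q$ on $N(X)$, walls depending only on the $\Lambda$-class) are exactly the points that make working on $\Lambda$ rather than $N(X)$ the right setting.
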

\begin{proof}
As for the first statement, by~\cite[Proposition~6.3, Theorem~7.1]{Brs1},
it is enough to show the following:
for any $s \in \mathbb{R}_{>0}$, 
there is a positive constant $C$ such that 
\begin{align}\label{const:C}
\frac{\lvert
Z_{s f^{\ast}\omega}(E) -Z_{f^{\ast}\omega}(E) \rvert}{\lvert Z_{f^{\ast}\omega}(E) \rvert} \le C 
\end{align}
for any $\sigma_0(=\gamma_1)$-semistable object $E \in \bB_{f^{\ast}\omega}$.
If $\ch_0(E)=0$, then the LHS of (\ref{const:C}) 
is less than or equal to $\lvert s-1 \rvert$.  
Hence we may assume that $\ch_0(E) \neq 0$, and 
set 
\begin{align*}
x= \frac{\ch_1(E) \cdot f^{\ast}\omega}{\ch_0(E)}, \quad 
y= \frac{\ch_2(E)}{\ch_0(E)}. 
\end{align*}
Then the LHS of (\ref{const:C}) is written as 
\begin{align*}
\lvert s-1 \rvert 
\sqrt{ \frac{(s+1)^2/4 +x^2}{(-y + \omega^2/2)^2 + x^2}}. 
\end{align*}
By the inequality (\ref{wBGin}), we have 
$x^2 \ge 2\omega^2 y$. 
Hence the bound (\ref{const:C}) is obtained by 
the following elementary fact: 
\begin{align*}
\mathrm{inf} \{ (-y+ \omega^2/2)^2 + x^2 : (x, y) \in \mathbb{R}^2, 
x^2 \ge 2\omega^2 y  \} >0. 
\end{align*}
The latter statement follows from the same argument 
describing the wall in~\cite[Proposition~9.3]{Brs2}, 
after replacing~\cite[Lemma~5.1]{Brs2}
by Proposition~\ref{prop:wBGin}. 
\end{proof}
The following lemma is required to show 
another version of BG type inequality.  
\begin{lem}\label{lem:ineq:per1}
There is a constant $C_{\omega} >0$, 
which depends only on the class 
$[\omega] \in \mathbb{P}(H^2(Y)_{\mathbb{R}})$, such that 
for any $E \in \PPer_{\le 1}(X/Y)$ with 
$\Hom(\PPer_{0}(X/Y), E)=0$, we have the inequality
\begin{align}\notag
\ch_1(E)^2 \omega^2  +C_{\omega}(\ch_1(E) \cdot f^{\ast}\omega)^2 \ge 0. 
\end{align}
\end{lem}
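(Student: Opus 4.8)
The plan is to translate the asserted inequality into a statement about the single divisor class $D \cneq \ch_1(E)$ on $X$, and to control $D$ using the positivity coming from the contraction together with the Hodge index theorem on $Y$.

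First I would record the structural consequences of the hypotheses. Since $E \in \PPer_{\le 1}(X/Y)$ we have $\ch_0(E)=0$ (the support is at most one-dimensional). The vanishing $\Hom(\PPer_{0}(X/Y),E)=0$ forces $E$ to be an honest pure one-dimensional sheaf: by Proposition~\ref{prop:collect}~(ii) there is an exact sequence $0\to\hH^{-1}(E)[1]\to E\to\hH^0(E)\to 0$ in $\PPer(X/Y)$ with $\hH^{-1}(E)$ supported on $C$, so $\hH^{-1}(E)[1]\in\PPer_{0}(X/Y)$; being a subobject of $E$, the hypothesis gives $\hH^{-1}(E)=0$, and the same hypothesis (applied to $\oO_x$) rules out zero-dimensional subsheaves. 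Hence $E\in\Coh(X)$ is pure of dimension one and $D$ is effective. Because $f$ is the blow-up of the smooth surface $Y$ at the point $0=f(C)$, we have the orthogonal decomposition $\NS(X)_{\mathbb{R}}=f^{\ast}\NS(Y)_{\mathbb{R}}\oplus\mathbb{R}C$ with $C^2=-1$ and $f^{\ast}\omega\cdot C=0$, so I can write $D=f^{\ast}\Gamma-cC$ with $\Gamma\cneq f_{\ast}D$ and $c\cneq C\cdot D$. Then $\Gamma$ is effective on $Y$, and Lemma~\ref{lem:Cch} applied to $\oO_C\in\PPer_{0}(X/Y)$ gives $c\ge 0$.

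Next I would compute $D^2=\Gamma^2-c^2$ and $D\cdot f^{\ast}\omega=\Gamma\cdot\omega$, and split $\Gamma=s\omega+\Gamma'$ on $Y$ with $\Gamma'\cdot\omega=0$ and $s=(\Gamma\cdot\omega)/\omega^2$. The Hodge index theorem gives $(\Gamma')^2\le 0$ and $\Gamma^2\omega^2=(\Gamma\cdot\omega)^2+(\Gamma')^2\omega^2$, so the desired inequality is equivalent to
\begin{align*}
(1+C_{\omega})(\Gamma\cdot\omega)^2+(\Gamma')^2\omega^2-c^2\omega^2\ge 0.
\end{align*}
Thus it suffices to bound both $-(\Gamma')^2$ and $c^2$ by a multiple of $(\Gamma\cdot\omega)^2$. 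For the first term: since $\omega$ is ample, the cone $\overline{\NE}(Y)$ is pointed, so the slice $\{\Gamma\cdot\omega=1\}\cap\overline{\NE}(Y)$ is compact, and on it the continuous function $-(\Gamma')^2$ attains a maximum $K_1$; by homogeneity $-(\Gamma')^2\le K_1(\Gamma\cdot\omega)^2$ for all effective $\Gamma$.

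The \emph{main obstacle} is the bound on $c$, which is where I must use that $E$ is a genuine effective sheaf and not merely a class. Writing $D=\tilde D+kC$ with $\tilde D$ effective and free of $C$-components and $k\ge 0$, one has $c=C\cdot\tilde D-k\le C\cdot\tilde D$, and $C\cdot\tilde D=\mathrm{mult}_0\Gamma$, the multiplicity at $0$ of the effective curve $f_{\ast}\tilde D=\Gamma$ (as $\tilde D$ is the strict transform of $\Gamma$). Since the multiplicity of an effective curve at a point is bounded by its degree against a very ample class, there is a constant $K_2$ with $c\le K_2(\Gamma\cdot\omega)$, hence $c^2\le K_2^2(\Gamma\cdot\omega)^2$. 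Combining the two bounds and choosing $C_{\omega}\cneq (K_1+K_2^2)\omega^2$, the displayed expression is $\ge(\Gamma\cdot\omega)^2\ge 0$, which proves the inequality. Finally, since the whole inequality scales by $\lambda^2$ under $\omega\mapsto\lambda\omega$, the chosen $C_{\omega}$ works for every representative of the ray, so it depends only on $[\omega]\in\mathbb{P}(H^2(Y)_{\mathbb{R}})$, as required.
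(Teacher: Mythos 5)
Your proof is correct, and its skeleton coincides with the paper's: both decompose $\ch_1(E)$ into a pullback class from $Y$ plus a multiple of $C$, use Lemma~\ref{lem:Cch} to fix the sign of the $C$-coefficient, use that $\hH^{-1}(E)=0$ so that $\ch_1(E)$ is an effective one-cycle class, and then control the pullback part by the Hodge index theorem applied to the effective class $f_{\ast}\ch_1(E)$ on $Y$ (your compactness-of-the-cone-slice bound for $-(\Gamma')^2$ is exactly the paper's inequality $\alpha^2\omega^2+C_{\omega}'(\alpha\cdot\omega)^2\ge 0$). The one place you genuinely diverge is the bound on the coefficient $c=C\cdot\ch_1(E)$: the paper simply pairs the effective class $\ch_1(E)$ with an ample divisor of the form $f^{\ast}\omega-a_{\omega}\sqrt{\omega^2}\,C$, which in one line gives $0\le c\le(\Gamma\cdot\omega)/(a_{\omega}\sqrt{\omega^2})$; you instead write $\ch_1(E)=\tilde D+kC$ with $\tilde D$ the strict transform of $\Gamma=f_{\ast}\ch_1(E)$ and bound $c\le C\cdot\tilde D=\mathrm{mult}_0\Gamma$ by the degree of $\Gamma$ against a very ample class. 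Both arguments rest on the same effectivity input; the paper's is shorter and avoids the strict-transform and multiplicity-versus-degree facts, while yours is more explicitly geometric and makes visible what the coefficient $c$ actually measures. Note also that the purity of $E$ you establish is not needed (effectivity of the support cycle of a one-dimensional sheaf suffices), so that step can be dropped.
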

\begin{proof}
Let us write 
\begin{align*}
\ch_1(E)=f^{\ast}\alpha + k[C], 
\end{align*} for some 
$k\in \mathbb{Z}$ and $\alpha \in H^2(Y)$. 
Since $\ch_1(E) \cdot C \ge 0$
by Lemma~\ref{lem:Cch}, we have $k\le 0$. 
Next let us take $a_{\omega}>0$ so that 
$f^{\ast}\omega -a_{\omega}\sqrt{\omega^2}C$ is ample. 
Note that we can take $a_{\omega}$ so that it depends 
only on the class $[\omega] \in \mathbb{P}(H^2(Y)_{\mathbb{R}})$. 
By Proposition~\ref{prop:collect} (ii)
and the assumption, we must have $\hH^{-1}(E)=0$. 
Therefore $E$ is a one dimensional sheaf, 
hence $\ch_1(E) \cdot (f^{\ast}\omega -a_{\omega}\sqrt{\omega^2}C) \ge 0$. 
This implies 
\begin{align*}
-\frac{\alpha \cdot \omega}{a_{\omega}\sqrt{\omega^2}} \le k \le 0. 
\end{align*}
On the other hand, since $\alpha$ is an effective 
class on $Y$, there is $C_{\omega}'>0$ which 
only depends on the class $[\omega] \in \mathbb{P}(H^2(Y)_{\mathbb{R}})$
such that (cf.~the proof of~\cite[Corollary~7.3.1]{BMT})
\begin{align*}
\alpha^2 \omega^2  + (\alpha \cdot \omega)^2 C_{\omega}' \ge 0. 
\end{align*}
By combining the above inequalities, we obtain 
\begin{align*}
\ch_1(E)^2 &= \alpha^2 -k^2 \\
& \ge -\left( C_{\omega}' + \frac{1}{a_{\omega}^2}
  \right) \frac{(\alpha \cdot \omega)^2}{\omega^2}. 
\end{align*}
Hence $C_{\omega} \cneq C_{\omega}' + 1/a_{\omega}^2$
satisfies the desired property. 
\end{proof}
Let $\dD$ be the set of 
isomorphism classes of objects $E \in \bB_{f^{\ast}\omega}$
satisfying one of the following: 
\begin{itemize}
\item $E \in \PPer(X/Y)$, $\ch_0(E)>0$
 and it is $\mu_{f^{\ast}\omega}$-semistable. 
\item $E \in \PPer_{\le 1}(X/Y)$ and it 
satisfies $\Hom(\PPer_{0}(X/Y), E)=0$. 
\item $E$ fits into an exact sequence in $\bB_{f^{\ast}\omega}$, 
\begin{align}\label{FET}
0 \to F[1] \to E \to T \to 0
\end{align}
where $F \in \PPer(X/Y)$ is $\mu_{f^{\ast}\omega}$-semistable, 
$T \in \PPer_{0}(X/Y)$, and $E$ satisfies 
$\Hom(\PPer_{0}(X/Y), E)=0$. 
\end{itemize}
\begin{lem}\label{lem:sgg}
Suppose that an object $E \in \bB_{f^{\ast}\omega}$ 
satisfies $\Imm Z_{f^{\ast}\omega}(E)>0$ and 
$Z_{sf^{\ast}\omega}$-semistable for $s \gg 0$. 
Then we have $E \in \dD$. 
\end{lem}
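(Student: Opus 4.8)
The plan is to read off the structure of $E$ from its canonical perverse-cohomology sequence in $\bB_{f^{\ast}\omega}$,
\begin{align*}
0 \to \hH_p^{-1}(E)[1] \to E \to \hH_p^0(E) \to 0,
\end{align*}
combined with the behaviour of the phase $\phi_s(\ast)\cneq\frac{1}{\pi}\arg Z_{sf^{\ast}\omega}(\ast)$ as $s\to\infty$. Write $F\cneq \hH_p^{-1}(E)\in\fF_{f^{\ast}\omega}$ and $G\cneq\hH_p^0(E)\in\tT_{f^{\ast}\omega}$; note any nonzero object of $\fF_{f^{\ast}\omega}$ has positive rank. The first step is to record the leading asymptotics of $\arg Z_{sf^{\ast}\omega}$, organised by the sign of $\ch_0$: an object of positive rank satisfies $\arg Z_{sf^{\ast}\omega}=\frac{2\mu_{f^{\ast}\omega}}{s\omega^2}+O(s^{-3})$, so $\phi_s\to 0$; an object of rank zero with $\ch_1\cdot f^{\ast}\omega>0$ has $\phi_s\to 1/2$; an object of negative rank has $\phi_s\to 1$, again at a rate governed by its slope; and by Proposition~\ref{prop:collect}~(i) every nonzero object of $\PPer_{0}(X/Y)$ has $\phi_s=1$ for all $s$. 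The qualitative consequence I will use repeatedly is that, among positive-rank (resp. negative-rank) objects, a strictly larger $\mu_{f^{\ast}\omega}$ forces a strictly larger phase for $s\gg 0$.

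Using these asymptotics and the $\gamma_s$-semistability of $E$, I would first prove the dichotomy: either $F=0$, or $F\neq 0$ and $G\in\PPer_{0}(X/Y)$. Indeed $F[1]$ is a subobject and $G$ a quotient of $E$ in $\bB_{f^{\ast}\omega}$, so semistability gives $\phi_s(F[1])\le\phi_s(E)\le\phi_s(G)$. If $F\neq 0$ then $\ch_0(F)>0$ and $\phi_s(F[1])\to 1$, which is incompatible with $\phi_s(G)\to 0$ when $\ch_0(G)>0$, and with $\phi_s(G)\to 1/2$ when $G\in\PPer_{\le 1}(X/Y)\setminus\PPer_{0}(X/Y)$. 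Hence $F\neq 0$ forces $\ch_0(G)=0$ and $\ch_1(G)\cdot f^{\ast}\omega=0$; since $G$ is then a genuine sheaf supported on $C$, Proposition~\ref{prop:collect} places it in $\PPer_{0}(X/Y)$.

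Next I would settle the two surviving regimes. If $F=0$ then $E=G\in\tT_{f^{\ast}\omega}\subset\PPer(X/Y)$; for $\ch_0(E)>0$ I claim $E$ is $\mu_{f^{\ast}\omega}$-semistable (Case~1 of $\dD$), while for $\ch_0(E)=0$ one gets $E\in\PPer_{\le 1}(X/Y)$ (Case~2). If $F\neq 0$ we are in Case~3 with $T=G$, and it remains to prove $F$ is $\mu_{f^{\ast}\omega}$-semistable. Both semistability claims I prove by contradiction in the same manner: assuming $G$ (resp. $F$) is not $\mu_{f^{\ast}\omega}$-semistable, its minimal-slope Harder--Narasimhan quotient (resp. maximal-slope subobject) stays in $\tT_{f^{\ast}\omega}$ (resp. $\fF_{f^{\ast}\omega}$), so the relevant perverse short exact sequence remains exact after tilting and yields a genuine $\bB_{f^{\ast}\omega}$-quotient (resp. subobject); the slope/phase correspondence of the first step then makes it a strict $\gamma_s$-destabiliser for $s\gg 0$. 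In the negative-rank case one uses that $E$ and $F[1]$ share the limiting slope $\mu_{f^{\ast}\omega}(F)$, since the $\PPer_{0}(X/Y)$-part $G$ contributes nothing to $\ch_0$ or to $\ch_1\cdot f^{\ast}\omega$. Finally the condition $\Hom(\PPer_{0}(X/Y),E)=0$ needed in Cases~2 and~3 is automatic: objects of $\PPer_{0}(X/Y)$ are $\gamma_s$-semistable of phase $1$, whereas $\Imm Z_{f^{\ast}\omega}(E)>0$ gives $\phi_s(E)<1$, so every such morphism vanishes.

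The main obstacle I anticipate is the bookkeeping between the perverse heart and its tilt: a slope-destabilising sub or quotient computed in $\PPer(X/Y)$ need not survive as a sub or quotient in $\bB_{f^{\ast}\omega}$, and one must verify it lands in the correct half of the torsion pair $(\tT_{f^{\ast}\omega},\fF_{f^{\ast}\omega})$ before it can be used against $\gamma_s$-semistability. A secondary point is to make the asymptotic phase inequalities strict rather than merely limiting; here the finiteness of the Harder--Narasimhan slopes of a fixed object controls the $O(s^{-3})$ corrections, while the weak inequality (\ref{wBGin}) and the wall-and-chamber continuity of Proposition~\ref{prop:cont} keep the phases bounded away from the degenerate regimes throughout the deformation.
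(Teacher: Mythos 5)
Your argument is correct and is essentially the paper's own proof: the paper simply cites \cite[Proposition~14.2]{Brs2} and \cite[Lemma~4.2]{Bay} "after replacing $\Coh(X)$ by $\PPer(X/Y)$", and what you have written out is exactly that large-volume-limit analysis — the asymptotics of $\arg Z_{sf^{\ast}\omega}$ sorted by the sign of $\ch_0$, the dichotomy on $\hH_p^{-1}(E)$, the Harder--Narasimhan trick to turn a slope-destabiliser in $\PPer(X/Y)$ into a genuine sub/quotient in $\bB_{f^{\ast}\omega}$, and the phase-one argument for $\Hom(\PPer_0(X/Y),E)=0$. The only cosmetic slip is the phrase "$G$ is then a genuine sheaf supported on $C$" (a priori $\hH^{-1}(G)$ need not vanish), but the conclusion $G\in\PPer_{0}(X/Y)$ follows anyway from $\ch_0(G)=\ch_1(G)\cdot f^{\ast}\omega=0$ and Proposition~\ref{prop:collect}.
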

\begin{proof}
The proof is given by the same argument as 
in~\cite[Proposition~14.2]{Brs2}, \cite[Lemma~4.2]{Bay}, 
after replacing $\Coh(X)$ by $\PPer(X/Y)$. 
We omit the detail. 
\end{proof}
Let $c \in \mathbb{Z}_{>0}$ be
\begin{align*}
c \cneq \mathrm{min} \{ \ch_1(E) \cdot f^{\ast}\omega >0 : E \in \bB_{f^{\ast}\omega}\}. 
\end{align*}
\begin{lem}\label{lem:small}
For an object $E \in \bB_{f^{\ast}\omega}$, suppose that 
$\ch_1(E) \cdot f^{\ast}\omega=c$. Then $E \in \dD$. 
\end{lem}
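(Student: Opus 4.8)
The plan is to use the minimality of $c$ as a rigidity constraint, fed through the perverse cohomology of $E$. Since $\Imm Z_{f^{\ast}\omega}(\ast)=\ch_1(\ast)\cdot f^{\ast}\omega$ is nonnegative on $\bB_{f^{\ast}\omega}$ and additive on short exact sequences, while $c$ is the smallest positive value it attains there, any exact sequence $0\to A\to E\to B\to 0$ in $\bB_{f^{\ast}\omega}$ must have $\Imm Z_{f^{\ast}\omega}(A)=0$ or $\Imm Z_{f^{\ast}\omega}(B)=0$: otherwise both terms contribute at least $c$ and their sum exceeds $c$. First I would apply this to the canonical sequence $0\to \hH_p^{-1}(E)[1]\to E\to \hH_p^0(E)\to 0$, where $\hH_p^{-1}(E)\in\fF_{f^{\ast}\omega}$ and $\hH_p^0(E)\in\tT_{f^{\ast}\omega}$, so that exactly one of the two terms carries the full imaginary part $c$ and the other has vanishing imaginary part.

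Next I would identify the vanishing term using the description (\ref{Imm=0}). An object of $\tT_{f^{\ast}\omega}$ has $\Imm Z_{f^{\ast}\omega}=0$ exactly when it lies in $\PPer_{0}(X/Y)$, because any $\mu_{f^{\ast}\omega}$-semistable factor of positive rank contributes strictly positively; dually, $\hH_p^{-1}(E)[1]$ has vanishing imaginary part exactly when $\hH_p^{-1}(E)$ is $\mu_{f^{\ast}\omega}$-semistable of slope $0$. This produces two cases. If $\hH_p^0(E)\in\PPer_{0}(X/Y)$, then the sequence above exhibits $E$ as a type-3 extension $0\to \hH_p^{-1}(E)[1]\to E\to \hH_p^0(E)\to 0$ once $\hH_p^{-1}(E)$ is shown $\mu_{f^{\ast}\omega}$-semistable. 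If instead $\hH_p^{-1}(E)$ is slope $0$ and $\hH_p^0(E)$ carries $c$, then after reducing to $\hH_p^{-1}(E)=0$ the object $E=\hH_p^0(E)$ should be a type-1 object when $\ch_0(E)>0$, or a type-2 object in $\PPer_{\le 1}(X/Y)$ when $\ch_0(E)=0$.

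To establish the $\mu_{f^{\ast}\omega}$-semistability of the piece carrying $c$, I would run its Harder--Narasimhan filtration (Lemma~\ref{lem:HN}) and invoke the same rigidity: each HN factor of positive rank contributes at least $c$, so at most one can occur, and the remaining factors have rank zero, forcing them into $\PPer_{0}(X/Y)$. The hard part will be precisely the bookkeeping of these $\PPer_{0}(X/Y)$-factors: I must show that they are absorbed into the prescribed normalization, so that after accounting for the maximal $\PPer_{0}(X/Y)$-subobject one obtains $\Hom(\PPer_{0}(X/Y),E)=0$ as demanded in types 2 and 3, while the surviving rank-positive factor is genuinely $\mu_{f^{\ast}\omega}$-semistable. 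The most delicate case is the type-3 extension with $\hH_p^{-1}(E)\neq 0$, where I would combine the membership $\hH_p^{-1}(E)\in\fF_{f^{\ast}\omega}$ and the vanishing of its imaginary part to force $\mu_{f^{\ast}\omega}$-semistability of $\hH_p^{-1}(E)$, and use the simplicity of the generators $\oO_C$, $\oO_C(-1)[1]$, $\oO_x$ of $\PPer_{0}(X/Y)$ from Proposition~\ref{prop:collect}(i) to verify the vanishing $\Hom(\PPer_{0}(X/Y),E)=0$.
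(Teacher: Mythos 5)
Your skeleton --- additivity and non-negativity of $\Imm Z_{f^{\ast}\omega}=\ch_1(\ast)\cdot f^{\ast}\omega$ on $\bB_{f^{\ast}\omega}$, minimality of $c$, the sequence $0\to\hH_p^{-1}(E)[1]\to E\to\hH_p^{0}(E)\to 0$, and HN filtrations of the two pieces --- is exactly the argument the paper intends (its own ``proof'' is a one-line citation of \cite[Lemma~7.2.2]{BMT} with $\Coh(X)$ replaced by $\PPer(X/Y)$). But the two points you flag as ``the hard part'' are genuine gaps, and neither can be closed from the numerical hypothesis alone. First, the vanishing $\Hom(\PPer_{0}(X/Y),E)=0$ required in the second and third defining conditions of $\dD$ does not follow from $\ch_1(E)\cdot f^{\ast}\omega=c$: take $E=\oO_X(-f^{\ast}L)[1]\oplus\oO_x$ with $L$ a divisor class on $Y$ realizing $L\cdot\omega=c$ and $x\in X\setminus C$. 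This lies in $\bB_{f^{\ast}\omega}$, has $\ch_1(E)\cdot f^{\ast}\omega=c$, and receives a nonzero map from $\oO_x\in\PPer_{0}(X/Y)$; since the exact sequence in the third condition forces $F\cong\hH_p^{-1}(E)$, no alternative decomposition helps, and $E\notin\dD$. Second, your claim that after removing the one HN factor carrying $c$ ``the remaining factors have rank zero, forcing them into $\PPer_0(X/Y)$'' is correct for $\hH_p^{0}(E)\in\tT_{f^{\ast}\omega}$ but false for $\hH_p^{-1}(E)\in\fF_{f^{\ast}\omega}$: every HN factor of an object of $\fF_{f^{\ast}\omega}$ has positive rank, so the remaining factors there are positive-rank pieces of slope exactly $0$, and they genuinely obstruct the $\mu_{f^{\ast}\omega}$-semistability of $\hH_p^{-1}(E)$ demanded by condition three. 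The object $E=(\oO_X\oplus\oO_X(-f^{\ast}L))[1]$ is a second counterexample of this kind.

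Both failures are cured by a hypothesis that is implicitly present where the lemma is applied (the base case of Theorem~\ref{thm:sBG}), namely that $E$ is $Z_{sf^{\ast}\omega}$-semistable for some $s>0$. Indeed, a nonzero map from a simple object of $\PPer_{0}(X/Y)$, or a subobject $G_0[1]$ with $G_0$ $\mu_{f^{\ast}\omega}$-semistable of slope zero, produces a nonzero subobject of $E$ with $\Imm Z_{sf^{\ast}\omega}=0$ and negative real part (the latter by Remark~\ref{rmk:weak} and Proposition~\ref{prop:collect}), hence of phase one; since $\Imm Z_{sf^{\ast}\omega}(E)=sc>0$ forces $E$ to have phase strictly less than one, this destabilizes $E$. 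So the proof you should write starts from a $Z_{sf^{\ast}\omega}$-semistable $E$, uses semistability to kill all morphisms from $\PPer_{0}(X/Y)$ and the slope-zero part of $\hH_p^{-1}(E)$, and only then runs your minimality argument to sort $E$ into one of the three classes. Without that semistability input your proposed bookkeeping cannot succeed, because the statement it is trying to prove fails for the objects above.
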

\begin{proof}
The same argument of~\cite[Lemma~7.2.2]{BMT}
is applied, after replacing $\Coh(X)$ by $\PPer(X/Y)$. 
Again we omit the detail. 
\end{proof}
Combining the above results, we prove the 
BG type inequality evaluating $(\ch_1(\ast))^2$.
\begin{thm}\label{thm:sBG}
There is a constant $C_{\omega}>0$, which depends only 
on the class $[\omega] \in \mathbb{P}(H^2(Y)_{\mathbb{R}})$, such that 
the following holds: 
for any $\sigma_0$-semistable object $E \in \bB_{f^{\ast}\omega}$
with $\ch_1(E) \cdot f^{\ast}\omega >0$, we have the  
inequality
\begin{align}\label{sBGin}
\ch_1(E)^2 -2\ch_0(E) \ch_2(E) 
 +C_{\omega}\frac{(\ch_1(E) \cdot f^{\ast}\omega)^2}{\omega^2} \ge 0. 
\end{align}
\end{thm}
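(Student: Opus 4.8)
The plan is to follow the inductive wall-crossing scheme of \cite[Theorem~7.3.1]{BMT}. Rather than arguing only about $\sigma_0=\gamma_1$-semistable objects, I would prove the apparently stronger statement that (\ref{sBGin}) holds for \emph{every} $\gamma_s$-semistable object $E\in\bB_{f^{\ast}\omega}$ with $\ch_1(E)\cdot f^{\ast}\omega>0$ and every $s\in\mathbb{R}_{>0}$; specializing to $s=1$ gives the theorem. It is convenient to regard
\begin{align*}
Q'(v) \cneq v_1^2 -2v_0 v_2 + C_{\omega}\frac{(v_1\cdot f^{\ast}\omega)^2}{\omega^2}
\end{align*}
as a quadratic form on the numerical classes $v=(v_0,v_1,v_2)$, and to write $Q'(E)\cneq Q'(\ch(E))$, so that the goal becomes $Q'(E)\ge 0$. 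The induction is on the discrete positive quantity $n\cneq \ch_1(E)\cdot f^{\ast}\omega$, whose minimal value is $c$.

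The first task is to verify $Q'(E)\ge 0$ for all $E\in\dD$; this covers both the base case and one branch of the inductive step. If $E\in\PPer(X/Y)$ is $\mu_{f^{\ast}\omega}$-semistable with $\ch_0(E)>0$, then Proposition~\ref{prop:BGper} gives $\ch_1(E)^2-2\ch_0(E)\ch_2(E)\ge 0$, and since the remaining term of $Q'$ is nonnegative we are done. If $E\in\PPer_{\le 1}(X/Y)$ with $\Hom(\PPer_{0}(X/Y),E)=0$, then $\ch_0(E)=0$, and Lemma~\ref{lem:ineq:per1} is exactly $Q'(E)\ge 0$ after dividing by $\omega^2$; this is where the constant $C_{\omega}$ is pinned down. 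For the extension type (\ref{FET}), $\ch(E)=\ch(T)-\ch(F)$ with $\ch_0(F)>0$, and $Q'(E)\ge 0$ follows from a direct computation combining Proposition~\ref{prop:BGper} applied to $F$, the estimate $\ch_2(T)\ge \tfrac12 C\cdot\ch_1(T)$ of Lemma~\ref{lem:per0}, and the sign $C\cdot\ch_1(E)\ge 0$ furnished by Lemma~\ref{lem:Cch}. The base case $n=c$ is then immediate from Lemma~\ref{lem:small}, which places such $E$ in $\dD$.

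For the inductive step, suppose $E$ is $\gamma_s$-semistable with $n>c$. By Proposition~\ref{prop:cont} the walls for $E$ along the ray $\{\gamma_{s'}\}_{s'\ge s}$ are locally finite. If $E$ stays $\gamma_{s'}$-semistable for all $s'\ge s$, then for $s'\gg 0$ Lemma~\ref{lem:sgg} (applicable since $\Imm Z_{f^{\ast}\omega}(E)>0$) shows $E\in\dD$, and the previous paragraph applies. Otherwise $E$ is strictly $\gamma_{s_0}$-semistable at some wall $s_0\ge s$, where it acquires a Jordan--H\"older filtration into $\gamma_{s_0}$-semistable factors $E_1,\dots,E_m$ of a common phase $\phi<1$. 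As $\phi<1$ and the $E_i$ share this phase, each satisfies $0<\ch_1(E_i)\cdot f^{\ast}\omega<n$, so the inductive hypothesis yields $Q'(E_i)\ge 0$.

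It remains to combine these into $Q'(E)\ge 0$, which is the heart of the matter. The essential structural input is that $Q'$ is negative semidefinite on $\Ker Z_{s_0 f^{\ast}\omega}$: a class $v$ there has $v_1\cdot f^{\ast}\omega=0$ and $v_2=\tfrac12 s_0^2\omega^2 v_0$, so $Q'(v)=v_1^2 - s_0^2\omega^2 v_0^2$, and since $(f^{\ast}\omega)^2=\omega^2>0$ the Hodge index theorem gives $v_1^2\le 0$ on $(f^{\ast}\omega)^{\perp}$, whence $Q'(v)\le 0$. Writing $B$ for the bilinear form polarizing $Q'$ and $r_i\cneq \lvert Z_{s_0 f^{\ast}\omega}(E_i)\rvert$, the common-phase condition means $r_j\ch(E_i)-r_i\ch(E_j)\in\Ker Z_{s_0 f^{\ast}\omega}$; expanding $Q'$ of this class, using $Q'\le 0$ on the kernel and the arithmetic--geometric mean inequality, gives $B(\ch(E_i),\ch(E_j))\ge \sqrt{Q'(E_i)Q'(E_j)}\ge 0$. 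Summing over the filtration,
\begin{align*}
Q'(E)=\sum_{i,j}B(\ch(E_i),\ch(E_j))\ge 0.
\end{align*}
I expect the main obstacle to be exactly this reduction: identifying the correct quadratic form $Q'$ and proving its negativity on $\Ker Z_{s_0 f^{\ast}\omega}$ via the Hodge index theorem, together with the somewhat fiddly verification of the extension type (\ref{FET}) inside $\dD$. Once the negativity on the kernel is established, the wall-crossing induction closes formally.
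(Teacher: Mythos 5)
Your proposal is correct and follows essentially the same route as the paper: induction on $\ch_1(E)\cdot f^{\ast}\omega$ along the ray of central charges $Z_{sf^{\ast}\omega}$, with the base case and the ``no further wall'' branch handled by membership in $\dD$ (via Lemma~\ref{lem:small}, Lemma~\ref{lem:sgg}, Proposition~\ref{prop:BGper}, Lemma~\ref{lem:ineq:per1}, and the modified computation for the extension type (\ref{FET})), and the wall-crossing step closed by the quadratic-form argument of \cite[Theorem~7.3.1]{BMT}. The only difference is cosmetic: you work with the full Jordan--H\"older factors at the wall and spell out the negative semidefiniteness of $Q'$ on $\Ker Z_{s_0 f^{\ast}\omega}$ via the Hodge index theorem, whereas the paper uses a two-term destabilizing sequence and cites that computation from \cite{BMT} without reproducing it.
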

\begin{proof}
The 
proof proceeds as in the proof of~\cite[Theorem~7.3.1]{BMT},
so we just give an outline of the proof. 
We show that the inequality (\ref{sBGin})
holds for any $Z_{sf^{\ast}\omega}$-semistable 
object $E \in \bB_{f^{\ast}\omega}$
with $\ch_1(E) \cdot f^{\ast}\omega>0$
and $s\in \mathbb{R}_{>0}$,
by the induction on $\ch_1(E) \cdot f^{\ast}\omega$. 

The first step is the case that
$\ch_1(E) \cdot f^{\ast}\omega =c$. 
In this case, 
we have $E\in \dD$ by Lemma~\ref{lem:small}. 
Hence the inequality (\ref{sBGin})
follows from Proposition~\ref{prop:BGper}, Lemma~\ref{lem:ineq:per1}, 
except the case
that $E$ fits into an exact sequence (\ref{FET}). 
In the last case, the inequality (\ref{BG:per})
can be proved along with the same argument of 
Proposition~\ref{prop:BGper}. 
Indeed it is enough to replace the sequence (\ref{TEF}) 
by (\ref{FET}) and the same computation works. 

Suppose that (\ref{sBGin})
holds for any $Z_{s'f^{\ast}\omega}$-semistable 
object $F\in \bB_{f^{\ast}\omega}$
with $s'>0$ and 
$0<\ch_1(F) \cdot f^{\ast}\omega < \ch_1(E) \cdot f^{\ast}\omega$.
We consider the following set: 
\begin{align*}
\sS \cneq  \{ s'' \ge s : 
E \mbox{ is } Z_{s''f^{\ast}\omega} \mbox{-semistable } \},
\end{align*}
and set $s_0 \cneq \mathrm{sup}\{ s'' \in \sS \}$. 
If $s_0 =\infty$, we have $E\in \dD$ by 
Lemma~\ref{lem:sgg}, so the inequality (\ref{sBGin}) holds. 
Otherwise, by the result of Proposition~\ref{prop:cont}, 
the set $\sS$ is a closed subset of $\mathbb{R}_{>0}$.
By the existence of wall and chamber structure 
in Proposition~\ref{prop:cont}, 
we have the following: 
there is $0 < \epsilon \ll 1$
and an exact sequence 
in $\bB_{f^{\ast}\omega}$
\begin{align*}
0 \to E_1 \to E \to E_2 \to 0
\end{align*}
such that 
\begin{align}\label{Z:eq}
\arg Z_{s_0 f^{\ast}\omega}(E_1) &= \arg Z_{s_0 f^{\ast}\omega}(E_2), \\
\label{Z:ineq}
\arg Z_{(s_0 + \epsilon) f^{\ast}\omega}(E_1) &>
\arg Z_{(s_0 + \epsilon) f^{\ast}\omega}(E_2).
\end{align}
Note that $E_i$ are $Z_{s_0 f^{\ast}\omega}$-semistable 
with $0<\ch_1(E_i) \cdot f^{\ast}\omega < \ch_1(E) \cdot f^{\ast}\omega$. 
Hence by the assumption of the induction, the 
objects $E_i$ satisfy the inequality (\ref{sBGin}). 
Together with the equality (\ref{Z:eq})
and the inequality (\ref{Z:ineq}), 
we can apply the exactly same computation 
in~\cite[Theorem~7.3.1]{BMT}, 
and show that $E$ satisfies (\ref{sBGin}).  
\end{proof}
\begin{cor}
Let $C_{\omega}>0$ be as in Theorem~\ref{thm:sBG}. 
Then for any $\sigma_0$-stable object $E\in \bB_{f^{\ast}\omega}$, 
we have the inequality
\begin{align}\label{sBGin2}
\ch_1(E)^2 -2\ch_0(E) \ch_2(E)
+C_{\omega}\frac{(\ch_1(E) \cdot f^{\ast}\omega)^2}{\omega^2} 
\ge -1.
\end{align}
\end{cor}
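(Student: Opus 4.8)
The plan is to reduce to Theorem~\ref{thm:sBG} whenever possible and to treat the phase-one objects by hand. Since $E \in \bB_{f^{\ast}\omega}$, we automatically have $\ch_1(E)\cdot f^{\ast}\omega = \Imm Z_{f^{\ast}\omega}(E) \ge 0$, so there are only two cases to consider. If $\ch_1(E)\cdot f^{\ast}\omega > 0$, then since a $\sigma_0$-stable object is in particular $\sigma_0$-semistable, Theorem~\ref{thm:sBG} applies verbatim and yields the stronger bound (\ref{sBGin}), which is $\ge 0 \ge -1$. Thus the entire content of the corollary lies in the case $\ch_1(E)\cdot f^{\ast}\omega = 0$, i.e. $E$ is a $\sigma_0$-stable object of phase one.

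To handle this case I would first classify the $\sigma_0$-stable objects $E$ with $\Imm Z_{f^{\ast}\omega}(E)=0$. Using the perverse cohomology exact sequence $0 \to \hH_p^{-1}(E)[1] \to E \to \hH_p^{0}(E) \to 0$ in $\bB_{f^{\ast}\omega}$, as in the proof of Proposition~\ref{prop:wBGin}, with $\hH_p^{-1}(E)\in \fF_{f^{\ast}\omega}$ and $\hH_p^{0}(E)\in \tT_{f^{\ast}\omega}$, I would observe that $\Imm Z_{f^{\ast}\omega}$ is nonnegative on both outer terms and sums to zero, hence vanishes on each. This forces $\hH_p^{0}(E)\in \PPer_{0}(X/Y)$ and, if nonzero, $F \cneq \hH_p^{-1}(E)$ to be $\mu_{f^{\ast}\omega}$-semistable of slope zero with $\ch_0(F)>0$. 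Both $F[1]$ and $\hH_p^{0}(E)$ then lie in the phase-one subcategory of $\sigma_0$: for $F[1]$ one uses Remark~\ref{rmk:weak} to see $\ch_2(F)\le 0$, so $Z_{f^{\ast}\omega}(F[1])\in \mathbb{R}_{<0}$. Hence the exact sequence realizes $F[1]$ and $\hH_p^{0}(E)$ as an honest subobject and quotient of $E$ in the phase-one category, and simplicity of the $\sigma_0$-stable object $E$ forces either $E\cong \hH_p^{0}(E)$, a simple object of $\PPer_{0}(X/Y)$, or $E\cong F[1]$.

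Finally I would verify (\ref{sBGin2}) in each subcase. When $E\cong F[1]$, the term $C_{\omega}(\ch_1(E)\cdot f^{\ast}\omega)^2/\omega^2$ vanishes and, since $\ch(F[1])=-\ch(F)$, one has $\ch_1(E)^2-2\ch_0(E)\ch_2(E)=\ch_1(F)^2-2\ch_0(F)\ch_2(F)\ge 0$ by Proposition~\ref{prop:BGper}, giving $\ge 0\ge -1$. When $E$ is a simple object of $\PPer_{0}(X/Y)$, Proposition~\ref{prop:collect}(i) lists the three possibilities $\oO_x$ with $x\in X\setminus C$, $\oO_C$, and $\oO_C(-1)[1]$; here $\ch_0=0$ and $C\cdot f^{\ast}\omega=0$, so the left-hand side of (\ref{sBGin2}) reduces to $\ch_1^2=0$, $C^2=-1$, and $C^2=-1$ respectively, each $\ge -1$. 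This also shows that $-1$ is the optimal constant, realized precisely by $\oO_C$ and $\oO_C(-1)[1]$.

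The main obstacle is the classification step of the second paragraph: one must check carefully that the two perverse pieces $F[1]$ and $\hH_p^{0}(E)$ are genuine sub- and quotient objects in the phase-one category (which requires knowing that each has $Z_{f^{\ast}\omega}$ landing in $\mathbb{R}_{<0}$), so that the stability of $E$ can legitimately be invoked to discard one of them. Once this structural point is established, the numerical verification in the two remaining subcases is routine.
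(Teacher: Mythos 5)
Your proposal is correct and takes essentially the same route as the paper: reduce via Theorem~\ref{thm:sBG} to the case $\ch_1(E)\cdot f^{\ast}\omega=0$, use $\sigma_0$-stability together with the description (\ref{Imm=0}) to conclude that $E$ is either $F[1]$ for a $\mu_{f^{\ast}\omega}$-semistable $F$ of slope zero (handled by Proposition~\ref{prop:BGper}) or a simple object of $\PPer_0(X/Y)$ (checked directly against the list in Proposition~\ref{prop:collect}(i)). Your second paragraph merely supplies, correctly, the details of the classification step that the paper's proof leaves implicit.
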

\begin{proof}
By Theorem~\ref{thm:sBG}, we may assume 
$\ch_1(E) \cdot f^{\ast}\omega=0$. 
In this case, $E$ is contained in the
RHS of (\ref{Imm=0}). 
Since $E$ is $\sigma_0$-stable, 
either $E=F[1]$ for a $\mu_{f^{\ast}\omega}$-semistable 
sheaf $F$ with $\mu_{f^{\ast}\omega}(F)=0$ or 
$E$ is a simple object in $\PPer_{0}(X/Y)$. 
In the former case, the inequality (\ref{sBGin2}) 
follows from Proposition~\ref{prop:BGper}. 
In the latter case, the inequality (\ref{sBGin2})
follows since it is satisfied for
simple objects in $\PPer_{0}(X/Y)$
described in Proposition~\ref{prop:collect} (i). 
\end{proof}
\begin{rmk}
The equality is achieved in (\ref{sBGin2})
if and 
only if $E$ is isomorphic to 
$\oO_C$ or $\oO_C(-1)[1]$. 
\end{rmk}
\subsection{Proof of Proposition~\ref{prop:support}}\label{subsec:support}
\begin{proof}
We first fix the norm $\lVert \ast \rVert$
on $N(X)_{\mathbb{R}}$. 
Let us embed $N(X)_{\mathbb{R}}$ into 
$H^{\ast}(X, \mathbb{R})$ via the Chern 
character map.
For an element $(r, \beta, n) \in H^0(X) \oplus H^2(X)
\oplus H^4(X)$, we can write 
\begin{align*}
\beta=\beta_{+} + \beta_{-}, \quad 
\beta_{+} \in \mathbb{R}[f^{\ast}\omega], \quad
\beta_{-} \in (f^{\ast}\omega)^{\perp}. 
\end{align*}
Note that $\beta_{-}^2 \le 0$ by 
the Hodge index theorem. 
We set 
\begin{align*}
\lVert (r, \beta, n) \rVert \cneq 
\mathrm{max}\{ \lvert r \rvert, 
\lvert n \rvert, 
\lvert \beta^{+} \cdot f^{\ast}\omega \rvert, 
 \sqrt{-\beta_{-}^2}  \}. 
\end{align*}
Let us bound $\lVert E \rVert / \lvert Z_{f^{\ast}\omega}(E) \rvert$
for $\sigma_0$-semistable objects $E\in \bB_{f^{\ast}\omega}$. 
By the triangle inequality, it is enough 
to bound it for $\sigma_0$-stable objects. 
If we write $\ch(E)=(r, \beta_{+} + \beta_{-}, n)$ as above, 
then $\lVert E \rVert / \lvert Z_{f^{\ast}\omega}(E) \rvert$
coincides with 
\begin{align*}
\frac{\mathrm{max}\{ \lvert r \rvert, 
\lvert n \rvert, 
\lvert \beta^{+} \cdot f^{\ast}\omega \rvert, 
 \sqrt{-\beta_{-}^2} \}}{\sqrt{\left( -n+ r\omega^2/2 \right)^2
+(\beta_{+} \cdot f^{\ast}\omega)^2}}. 
\end{align*}
By the same argument of Proposition~\ref{prop:cont},  
the values 
\begin{align*}
\frac{\rvert r \rvert}{\lvert Z_{f^{\ast}\omega}(E) \rvert}, \
\frac{\rvert n \rvert}{\lvert Z_{f^{\ast}\omega}(E) \rvert}, \ 
\frac{\rvert \beta^{+} \cdot f^{\ast}\omega\rvert}{\lvert
Z_{f^{\ast}\omega}(E) \rvert}
\end{align*}
can be shown to be bounded. 
It remains to show the boundedness of 
$\rvert \sqrt{-\beta_{-}^2}  \rvert/\lvert Z_{f^{\ast}\omega}(E) \rvert$.

In the above notation, the inequality (\ref{sBGin2})
is written as
\begin{align*}
\beta_{+}^2 + \beta_{-}^2 -2rn + 
\frac{C_{\omega}(\beta_{+} \cdot f^{\ast}\omega)^2}{\omega^2} \ge -1.
\end{align*}
Noting $\beta_{+}^2 \omega^2 = (\beta_{+} \cdot f^{\ast}\omega)^2$, 
we have
\begin{align}\label{RHS}
\frac{-\beta_{-}^2}{\lvert Z_{f^{\ast}\omega}(E) \rvert^2}
\le \frac{1+(C_{\omega}+1)\beta_{+}^2 -2rn}{\left(-n + r\omega^2/2 \right)^2 +
\beta_{+}^2 \omega^2}. 
\end{align}
In order to obtain the upper bound of the RHS, 
we may assume $rn<0$. 
By setting $u= -n/r >0$, the RHS of (\ref{RHS})
is shown 
to be bounded above by the fact
\begin{align*}
\mathrm{sup} \left\{ 
\frac{u}{\left( u+ \omega^2/2 \right)^2} : u>0  \right\} < \infty. 
\end{align*}
\end{proof}

\section{Extremal contractions of projective 3-folds}\label{sec:3fold}
In what follows, $X$ is a smooth projective 3-fold 
and $f$ is a birational extremal contraction
\begin{align}\label{extremal2}
f \colon X \to Y,
\end{align}
i.e. $Y$ is a normal projective variety, 
$-K_X$ is $f$-ample and the relative 
Picard number of $f$ is equal to one. 
In this section, we 
construct the perverse heart associated to 
$f$, and 
establish a derived equivalence between 
$X$ and a certain sheaf of non-commutative 
algebras on $Y$. 

\subsection{Classification}\label{subsec:class}
By~\cite{Mori}, the extremal contraction (\ref{extremal2})
is classified into five types: 

{\bf Type I:} $Y$ is smooth and $f$ is a blow up at a smooth 
curve. 

{\bf Type II:} $Y$ is smooth and $f$ is a blow-up 
at a point. 

{\bf Type III:} $Y$ has an ordinary double point, and 
$f$ is a blow-up at the singular point. 

{\bf Type IV:}
 $Y$ has an orbifold singularity $\mathbb{C}^3/(\mathbb{Z}/2\mathbb{Z})$
and $f$ is a blow-up at the singular point. 

{\bf Type V:} $Y$ has a $cA_2$-singularity and $f$ is a blow-up 
at the singular point. 

We denote by $D\subset X$ the exceptional divisor. 
When $f(D)$ is a point $0\in Y$, 
(i.e. $f$ is not type I,)
we denote by $\widehat{\oO}_{Y, 0}$ the completion of 
$\oO_{Y, 0}$ at $0$. 
In types III, IV and V, we have 
\begin{align*}
\widehat{\oO}_{Y, 0} \cong 
\left\{ \begin{array}{ll}
\mathbb{C}\db[x, y, z, w \db]/(xy + zw) & \mbox{ in type III } \\
\mathbb{C}\db[x, y, z\db]^{(2)} & \mbox{ in type IV } \\
\mathbb{C}\db[x, y, z, w \db]/(xy + z^2 + w^3) & \mbox{ in type V. }
\end{array}
\right.
\end{align*}
and 
\begin{align*}
(D, \oO_D(D)) \cong \left\{
\begin{array}{ll}
(\mathbb{P}^1 \times \mathbb{P}^1, 
\oO_{\mathbb{P}^1 \times \mathbb{P}^1}(-1, -1)) & \mbox{ in type III } \\
(\mathbb{P}^2, \oO_{\mathbb{P}^2}(-2)) & \mbox{ in type IV } \\
(D \subset \mathbb{P}^3, \oO_{D}(-1)) & \mbox{ in type V }
\end{array}
\right. 
\end{align*}
Here $\mathbb{C}\db[x, y, z\db]^{(2)}$
is the subring of invariants of 
$\mathbb{C}\db[x, y, z \db]$ for the involution 
$(x, y, z) \mapsto (-x, -y, -z)$, 
and $D \subset \mathbb{P}^3$ in type V
is the normal singular quadric surface.

\subsection{Perverse t-structure for extremal contractions}
\label{subsec:two}
In this subsection, we construct a
perverse t-structure following~\cite{TU}. 
We set 
\begin{align*}
\eE_0 \cneq \oO_X \oplus \oO_X(D). 
\end{align*}
First we note the following lemma: 
\begin{lem}\label{lem:note}
(i) The line bundle $\oO_X(-D)$ is $f$-ample
and $f$-globally generated. 

(ii) The vector bundles
$\eE_0, \eE_0|_{D}$ satisfy 
the following for $i>0$: 
\begin{align*}
&\hH^i ( \dR f_{\ast} \hH om(\eE_0, \eE_0) ) \cong 0, \\
&\hH^i (\dR f|_{D \ast} \hH om(\eE_0|_D, \eE_0|_D)) \cong 0. 
\end{align*}
\end{lem}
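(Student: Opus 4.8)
The plan is to treat the two assertions separately, reducing everything to the explicit geometry of the five types recorded in Section~\ref{subsec:class}.

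For part (i), I would exploit the fact that in each type $f$ is literally the blow-up of an ideal sheaf $\iI \subset \oO_Y$: one takes $\iI = \iI_Z$ for the smooth center $Z$ in type I, and $\iI = \mathfrak{m}_0$, the maximal ideal of the point $0 = f(D)$, in types II--V. Thus $X = \Proj_Y(\bigoplus_{n \ge 0} \iI^n)$ with $\oO_X(-D) = \iI \cdot \oO_X$ equal to the relative tautological bundle $\oO_X(1)$, so that $\oO_X(-D)$ is $f$-ample directly from the Proj construction; one checks this matches the data for $(D, \oO_D(D))$ of Section~\ref{subsec:class} (e.g. $\oO_X(-D)|_D$ is $\oO(1,1)$, $\oO(2)$, resp.\ $\oO_D(1)$ in types III, IV, V). For $f$-global generation I would argue that the counit $f^{\ast} f_{\ast}\oO_X(-D) \to \oO_X(-D)$ is surjective: the structural surjection $f^{\ast}\iI \twoheadrightarrow \oO_X(-D)$ factors through $f^{\ast} f_{\ast}\oO_X(-D)$ by adjunction, forcing the counit to be onto.

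For part (ii), the first step is the decomposition
\[
\hH om(\eE_0, \eE_0) \cong \oO_X^{\oplus 2} \oplus \oO_X(D) \oplus \oO_X(-D),
\]
which reduces the claim to the vanishing of $R^i f_{\ast}$ of each summand for $i > 0$. I would obtain $R^i f_{\ast}\oO_X = 0$ from relative Kawamata--Viehweg vanishing applied to $\oO_X = K_X \otimes \oO_X(-K_X)$, using that $-K_X$ is $f$-ample; likewise $R^i f_{\ast}\oO_X(-D) = 0$ follows by writing $\oO_X(-D) = K_X \otimes \oO_X(-K_X - D)$ and noting that $-K_X - D$ is $f$-ample, being a sum of the two $f$-ample classes $-K_X$ and $-D$ from part (i). Relative vanishing is not directly available for $\oO_X(D)$, so there I would instead use the sequence $0 \to \oO_X \to \oO_X(D) \to \oO_D(D) \to 0$ together with $R^i f_{\ast}\oO_X = 0$ to identify $R^i f_{\ast}\oO_X(D) \cong R^i f_{\ast}\oO_D(D)$ for $i \ge 1$, reducing to a computation on $D$.

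That same analysis on $D$ handles the second vanishing, since $\hH om(\eE_0|_D, \eE_0|_D) \cong \oO_D^{\oplus 2} \oplus \oO_D(D) \oplus \oO_D(-D)$, and one must show that $R^i (f|_D)_{\ast}$ kills $\oO_D(kD)$ for $k \in \{-1, 0, 1\}$ and $i > 0$. In type I, $f|_D$ is a $\mathbb{P}^1$-bundle over $Z$ and these are the relative bundles $\oO$, $\oO(1)$, $\oO(-1)$, whose higher direct images all vanish. In types II--V, $f|_D$ contracts $D$ to a point, so the statement becomes the vanishing of $H^{>0}(D, \oO_D(kD))$, which I would read off the explicit $(D, \oO_D(D))$: on $\mathbb{P}^2$ and $\mathbb{P}^1 \times \mathbb{P}^1$ these are standard. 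The hard part will be type V, where $D \subset \mathbb{P}^3$ is a singular quadric and $\oO_D(D) = \oO_D(-1)$; there I would compute $H^{\bullet}(D, \oO_D(k))$ by twisting the ideal sequence $0 \to \oO_{\mathbb{P}^3}(-2) \to \oO_{\mathbb{P}^3} \to \oO_D \to 0$ by $\oO_{\mathbb{P}^3}(k)$ and extracting the vanishing from $H^{\bullet}(\mathbb{P}^3, \oO(k))$ and $H^{\bullet}(\mathbb{P}^3, \oO(k-2))$ for $k = -1, 0, 1$. This singular-surface computation, rather than any conceptual difficulty, is the main technical point.
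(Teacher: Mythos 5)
Your proposal is correct and follows exactly the route the paper indicates: its entire proof of this lemma is the one sentence ``Both of (i) and (ii) are easily proved by the classification in Subsection~3.1, or using vanishing theorem,'' and your argument is precisely that classification-by-type check combined with relative Kawamata--Viehweg vanishing, with the details (including the type V computation on the singular quadric) filled in correctly.
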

\begin{proof}
Both of (i) and (ii) are easily proved 
by the classification in Subsection~\ref{subsec:class}, 
or using vanishing theorem. 
\end{proof}
The result of (ii) in particular implies 
\begin{align*}
R^2 f_{\ast} \oO_X(D)=R^2 f|_{D\ast}\oO_D(D)=0. 
\end{align*}
Hence we are in the situation 
where the result of~\cite{TU} is applied for 
$f$ and $f|_{D}$.
In particular, we can define the perverse t-structures 
on $D^b \Coh(X)$ and $D^b \Coh(D)$. 
Below we recall their constructions. 

Let $\mathrsfs{A}_0 \cneq f_{\ast} \eE nd(\eE_0)$
be the sheaf of non-commutative algebras on $Y$, 
and $\Phi_0$ be the functor 
\begin{align*}
\Phi_0 \cneq \dR f_{\ast} \dR \hH om(\eE_0, \ast) 
\colon D^b \Coh(X) \to D^b \Coh(\mathrsfs{A}_0).
\end{align*}
The above functor is 
an equivalence when $f$ is type I by~\cite{MVB}, 
since the dimension of the fiber of $f$ is at most one 
dimensional in this case. 
However $\Phi_0$ is not an equivalence in other cases. Indeed, 
the following subcategory is non-trivial:
\begin{align*}
\cC \cneq \{ E\in D^b \Coh(X) : \Phi_0(E) \cong 0\}.
\end{align*}
The key result of~\cite[Theorem~6.1]{TU} is that 
the standard t-structure on $D^b \Coh(X)$ induces a 
t-structure on $\cC$. 
Let 
\begin{align*}
(\cC^{\le 0}, \cC^{\ge 0})
\end{align*} be the induced 
t-structure with heart $\cC^0 \cneq \cC \cap \Coh(X)$. 
The heart $\ppPPer(X/Y)$
\footnote{This was denoted by $\ppPPer(X/\mathrsfs{A}_0)$ in~\cite{TU}.} 
is defined as follows: 
\begin{defi}
We define $\ppPPer(X/Y)$ to be
\begin{align*}
\ppPPer(X/Y) 
\cneq \left\{ 
E \in D^b \Coh(X) : 
\begin{array}{c}
\Phi_0(E) \in \Coh(\mathrsfs{A}_0), \\
\Hom(\cC^{<p}, E)=\Hom(E, \cC^{>p})=0 
\end{array}
\right\}. 
\end{align*}
\end{defi}
As in~\cite{TU}, the category 
$\ppPPer(X/Y)$ is the heart of a bounded t-structure 
on $D^b \Coh(X)$. 
In the 3-fold case, we use 
the perversity $p=0$\footnote{Here we 
use the different perversity from the 
surface case.}.
By abuse of notation, we set
\begin{align*}
\PPer(X/Y) \cneq \oPPer(X/Y). 
\end{align*}
Again we denote by 
$\hH_p^{i}(\ast)$ the $i$-th cohomology 
functor
with respect to the t-structure with 
heart $\PPer(X/Y)$. 

By replacing $\eE_0$, $\mathrsfs{A}_0$
 by $\eE_{0}|_{D}$, $\mathrsfs{A}_{D, 0} \cneq f|_{D\ast}\eE nd(\eE|_{D})$
respectively,   
we can define the similar 
functor
\begin{align*}
\Phi_{D, 0}=\dR f|_{D\ast} \dR \hH om(\eE|_{D}, \ast)
 \colon 
D^b \Coh(D) \to D^b \Coh(\mathrsfs{A}_{D, 0})
\end{align*}
and a similar
subcategory
\begin{align*}
\cC_{D} \cneq \{ E \in D^b \Coh(D) : 
\Phi_{D, 0}(E) \cong 0 \},
\end{align*}
such that $\cC_{D}^{0} \cneq \cC_{D} \cap \Coh(D)$ 
is the heart of a bounded t-structure on $\cC_{D}$. 
The perverse heart is also defined similarly, 
\begin{align*}
\PPer(D) 
\cneq \left\{ 
E \in D^b \Coh(D) : 
\begin{array}{c}
\Phi_{D, 0}(E) \in \Coh(\mathrsfs{A}_{D, 0}), \\
\Hom(\cC^{<0}_{D}, E)=\Hom(E, \cC^{>0}_{D})=0 
\end{array}
\right\}. 
\end{align*}
\begin{rmk}
When $f(D)$ is a point, 
then $\mathrsfs{A}_{D, 0}$ is the path 
algebra of a quiver
with two vertices and $\dim H^0(D, \oO_D(-D))$-arrows
between them in one direction.  
\end{rmk}

\subsection{Derived equivalence}
The purpose here is to 
give an analogue of Theorem~\ref{thm:nc}
for our threefold situation. 
Suppose that $f(D)$ is a point 
$0=f(D) \in Y$. 
If we denote by 
$m_0 \subset \oO_Y$ the sheaf of defining 
ideal of $0 \in Y$, 
then the sheaf of algebras $\mathrsfs{A}_0$
is written 
as a matrix
\begin{align}\label{A:matrix}
\mathrsfs{A}_0= \left( \begin{array}{cc}
\oO_Y & m_0  \\
\oO_Y & \oO_Y
\end{array}\right),
\end{align}
since $f_{\ast}\oO_X(D) \cong \oO_Y$
and $f_{\ast}\oO_X(-D)=m_0$. 
Let $S \in \Coh(\mathrsfs{A}_0)$ be the object, 
which is isomorphic to $\oO_Y/m_0$ as a 
$\oO_Y$-module, and 
the right $\mathrsfs{A}_0$-action is given by 
\begin{align*}
x \cdot \left(\begin{array}{cc}
a & b \\
c & d
\end{array}  \right) =x \cdot a. 
\end{align*}
We have the following lemma: 
\begin{lem}\label{lem:OOS}
Suppose that $f(D)$ is a point. Then 
there is a distinguished triangle in $D^b \Coh(\mathrsfs{A}_0)$,
\begin{align}\label{dist:S}
\Phi_0(\oO_X(D)) \to \Phi_0(\oO_X(2D)) \to S^{\oplus l}[-2]. 
\end{align}
Here $l \cneq \dim H^2(D, \oO_D(2D))$. 
\end{lem}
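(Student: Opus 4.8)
The plan is to exhibit the asserted triangle as the image under $\Phi_0$ of a geometric short exact sequence on $X$, and then to compute its third term explicitly. First I would twist the structure sequence of the divisor $D\subset X$ by $\oO_X(2D)$, obtaining
\[
0 \to \oO_X(D) \to \oO_X(2D) \to \oO_D(2D) \to 0,
\]
and regard it as a distinguished triangle in $D^b\Coh(X)$. Applying the exact functor $\Phi_0$ produces the distinguished triangle
\[
\Phi_0(\oO_X(D)) \to \Phi_0(\oO_X(2D)) \to \Phi_0(\oO_D(2D)) \to \Phi_0(\oO_X(D))[1],
\]
so the statement is reduced to the identification $\Phi_0(\oO_D(2D)) \cong S^{\oplus l}[-2]$ in $D^b\Coh(\mathrsfs{A}_0)$.

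To compute $\Phi_0(\oO_D(2D))$, let $i\colon D\hookrightarrow X$ denote the inclusion. Since $\eE_0$ is locally free, the projection formula gives $\dR \hH om_X(\eE_0, i_\ast \oO_D(2D)) \cong i_\ast \hH om_D(\eE_0|_D, \oO_D(2D))$, and the splitting $\eE_0|_D \cong \oO_D \oplus \oO_D(D)$ identifies this with $i_\ast(\oO_D(2D) \oplus \oO_D(D))$, the two summands corresponding to the $\oO_X$- and $\oO_X(D)$-factors of $\eE_0$. As $f(D)$ is the point $0$, the functor $\dR f_\ast$ restricted to sheaves on $D$ is $\dR\Gamma(D,-)$ valued in skyscrapers at $0$, so $\Phi_0(\oO_D(2D))$ is governed by the groups $H^\bullet(D,\oO_D(2D))$ and $H^\bullet(D,\oO_D(D))$. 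Using the classification of Subsection~\ref{subsec:class}, I would check in each of types III, IV and V that $H^\bullet(D,\oO_D(D))=0$ and that $H^i(D,\oO_D(2D))=0$ for $i<2$ while $\dim H^2(D,\oO_D(2D))=l$: for types III and IV this follows from K\"unneth and Serre duality on $\mathbb{P}^1\times\mathbb{P}^1$ and $\mathbb{P}^2$, and for type V from the long exact sequence attached to $0\to\oO_{\mathbb{P}^3}(n-2)\to\oO_{\mathbb{P}^3}(n)\to\oO_D(n)\to 0$ for the singular quadric $D\subset\mathbb{P}^3$. Hence $\Phi_0(\oO_D(2D))$ is concentrated in cohomological degree $2$, has dimension $l$, and lies entirely in the $\oO_X$-component.

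The remaining and most delicate point is to pin down the right $\mathrsfs{A}_0$-module structure, namely that this degree-$2$ module is $S^{\oplus l}$ and not some other length-$l$ module supported at $0$. For this I would use that every sheaf supported on $D$ is annihilated by $m_0$: a section $g\in m_0$ satisfies $(f^\ast g)|_D = g(f|_D) = g(0) = 0$. In the matrix description (\ref{A:matrix}) this forces both off-diagonal entries of $\mathrsfs{A}_0$ to act by zero, while the diagonal acts through $\oO_Y/m_0 = \mathbb{C}$; since the module is supported only in the first component, this is precisely the action defining $S$, yielding $\Phi_0(\oO_D(2D)) \cong S^{\oplus l}[-2]$ and hence the desired triangle. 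The main obstacle is this module-theoretic identification, together with the bookkeeping of the type V cohomology on the singular quadric; the reduction via the structure sequence and the remaining cohomology computations are routine.
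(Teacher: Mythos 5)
Your proof follows exactly the paper's route: the paper also applies $\Phi_0$ to the twisted structure sequence $0 \to \oO_X(D) \to \oO_X(2D) \to \oO_D(2D) \to 0$ and then simply asserts the isomorphism $\Phi_0(\oO_D(2D)) \cong S^{\oplus l}[-2]$. Your additional verification of that isomorphism (the splitting $\eE_0|_D \cong \oO_D \oplus \oO_D(D)$, the case-by-case vanishing of $H^\bullet(D,\oO_D(D))$ and concentration of $H^\bullet(D,\oO_D(2D))$ in degree $2$, and the identification of the right $\mathrsfs{A}_0$-module structure via annihilation by $m_0$) is correct and fills in the detail the paper omits.
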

\begin{proof}
By the exact sequence of sheaves
\begin{align*}
0 \to \oO_X(D) \to \oO_X(2D) \to \oO_D(2D) \to 0.
\end{align*}
we have the distinguished triangle
\begin{align*}
\Phi_0(\oO_X(D)) \to \Phi_0(\oO_X(2D)) \to \Phi_0(\oO_D(2D)). 
\end{align*}
On the other hand, we have 
the isomorphism $\Phi_0(\oO_D(2D)) \cong S^{\oplus l}[-2]$. 
\end{proof}
If $f$ is either type III, IV or V, 
we have $H^2(D, \oO_D(2D)) \neq 0$. In this case, 
the above lemma implies that the vector bundle
\begin{align*}
\oO_X \oplus \oO_X(D) \oplus \oO_X(2D)
\end{align*}
does not satisfy the property similar to 
Lemma~\ref{lem:note} (ii). 
In order to give a derived equivalence 
as in Theorem~\ref{thm:intro0}, 
a modification of $\oO_X(2D)$ 
is required.  
By improving the argument of~\cite{TU}, we have 
the following result: 
\begin{thm}\label{thm:tilting}
Let $X$ be a smooth projective 3-fold,
$f \colon X \to Y$ an extremal contraction
and $D\subset X$ the exceptional divisor. 
Then there is a vector bundle $\vV$ on 
$X$ satisfying 
\begin{itemize}
\item If $f(D)$ is a curve, then $\vV =0$. 
\item If $f(D)$ is a point, then $\vV$ fits into 
an exact sequence of sheaves
\begin{align}\label{desire}
0 \to \oO_X(2D) \to
 \vV \to \oO_X(D-2f^{\ast}H)^{\oplus el}  
\to \oO_X(-f^{\ast}H)^{\oplus l} \to 0,
\end{align}
where $H$ is a sufficiently ample divisor on $Y$
and $e$, $l$ are given by 
\begin{align*}
e= \dim H^0(X, \oO_X(f^{\ast}H-D)), \quad 
l=\dim H^2(D, \oO_D(2D)), 
\end{align*}
\end{itemize}  
 such that, 
setting $\eE=\oO_X \oplus \oO_X(D) \oplus \vV$ and
 $\mathrsfs{A}=f_{\ast}\eE nd(\eE)$, 
we have the equivalence of derived categories
\begin{align}\label{ext:der}
\Phi \cneq
\dR f_{\ast} \dR \hH om(\eE, \ast) \colon 
D^b \Coh(X) \stackrel{\sim}{\to} D^b \Coh(\mathrsfs{A}),
\end{align}
which restricts to an equivalence between 
$\PPer(X/Y)$ and $\Coh(\mathrsfs{A})$. 
\end{thm}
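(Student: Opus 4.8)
The plan is to realize $\eE$ as a tilting bundle relative to $f$: once we know that $\eE$ is locally free, that $R^{>0}f_{\ast}\eE nd(\eE)=0$, and that $\eE$ generates $D^b\Coh(X)$ over $Y$, the relative tilting machinery of Van den Bergh used in \cite{MVB}, \cite{TU} shows that $\Phi=\dR f_{\ast}\dR\HOM(\eE,\ast)$ is an equivalence onto $D^b\Coh(\mathrsfs{A})$ and matches the perverse heart with $\Coh(\mathrsfs{A})$. All three conditions are local on $Y$, and since $f$ is an isomorphism away from $0=f(D)$, the whole problem localizes to the formal neighborhood of $0$, where the geometry of $D$ and $\oO_D(D)$ is the explicit model recorded in Subsection~\ref{subsec:class}. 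When $f(D)$ is a curve I would take $\vV=0$ and $\eE=\oO_X\oplus\oO_X(D)$; the fibres of $f$ are then at most one dimensional and the statement is \cite{MVB} (the analogue of Theorem~\ref{thm:nc}). So I would reduce at once to the case that $f(D)=0$ is a point.

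For the point case the first step is to read off, from Lemma~\ref{lem:OOS}, why the naive choice $\oO_X\oplus\oO_X(D)\oplus\oO_X(2D)$ fails: the object $\Phi_0(\oO_X(2D))$ is not a projective $\mathrsfs{A}_0$-module, and its failure is exactly the summand $S^{\oplus l}[-2]$ with $l=\dim H^2(D,\oO_D(2D))$. The role of $\vV$ is to be a locally free replacement of $\oO_X(2D)$ whose image under $\Phi$ is projective. I would construct it as follows. Choose $H$ on $Y$ ample enough that $\oO_X(f^{\ast}H-D)$ is globally generated, which is possible because $\oO_X(-D)$ is $f$-ample and $f$-globally generated by Lemma~\ref{lem:note}(i). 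Put $e=\dim H^0(X,\oO_X(f^{\ast}H-D))$; then the evaluation map $\oO_X(D-2f^{\ast}H)^{\oplus e}\to\oO_X(-f^{\ast}H)$, taken in $l$ copies, is a surjection $\psi$ with locally free kernel $\kK$, and $\vV$ is defined by an extension
\begin{align*}
0\to\oO_X(2D)\to\vV\to\kK\to 0
\end{align*}
whose class is chosen so that, under $\Phi$, the defect $S^{\oplus l}[-2]$ is resolved. Being an extension of the bundle $\kK$ by a line bundle, $\vV$ is automatically locally free and fits into (\ref{desire}); the twisting bundles $\oO_X(D-2f^{\ast}H)$ and $\oO_X(-f^{\ast}H)$ are the twists by $f^{\ast}\oO_Y(-H)$ of the projective generators $\Phi_0(\oO_X(D))$ and $\Phi_0(\oO_X)$, so that (\ref{desire}) is the geometric incarnation of a projective resolution of $S^{\oplus l}$.

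With $\vV$ in hand the bulk of the work is the two tilting conditions. The contribution of the $\oO_X\oplus\oO_X(D)$ part is controlled by Lemma~\ref{lem:note}, so the new content is the vanishing of $R^{>0}f_{\ast}$ of $\vV^{\vee}\otimes\vV$, of $\vV\otimes\oO_X(-D)$, of $\vV$, and of their duals. Here I would use the crucial simplification that $f^{\ast}H|_D\cong\oO_D$, since $D$ is contracted to the point $0$; restricting (\ref{desire}) to $D$ then reduces every such computation to the cohomology of the line bundles on $D$ listed in the classification, and the extension defining $\vV$ is arranged precisely so that the offending class in $H^2(D,\oO_D(2D))$ is cancelled. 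Generation of $\eE$ over $Y$ can likewise be checked on the fibre over $0$ via the restriction to $D$. Finally one shows $\vV\in\PPer(X/Y)$ and that $\eE=\oO_X\oplus\oO_X(D)\oplus\vV$ is a projective generator of the abelian category $\PPer(X/Y)$, the summand $\vV$ supplying exactly the projective that was missing from $\eE_0$; this gives t-exactness and the identification $\Phi(\PPer(X/Y))=\Coh(\mathrsfs{A})$.

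I expect the main obstacle to be the type V case. There $D$ is a singular quadric surface in $\mathbb{P}^3$ admitting no full exceptional collection, so none of the generation and vanishing shortcuts available for $\mathbb{P}^2$ or $\mathbb{P}^1\times\mathbb{P}^1$ apply; the cohomological computations on $D$ and the verification that $\vV$ is genuinely a tilting generator have to be carried out directly, and the construction is new even in the formal neighborhood of $D$. A secondary subtlety is the compatibility between the perverse t-structure, which is defined through $\eE_0$ and $\Phi_0$ via the subcategory $\cC$, and the enlarged object $\eE$: one must confirm that passing from $\eE_0$ to $\eE$ leaves the heart $\PPer(X/Y)$ unchanged while upgrading the non-equivalence $\Phi_0$ to an honest equivalence $\Phi$.
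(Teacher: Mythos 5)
Your overall architecture matches the paper's: reduce to the point case via \cite{MVB}, read off from Lemma~\ref{lem:OOS} that the defect of $\oO_X\oplus\oO_X(D)\oplus\oO_X(2D)$ is exactly $S^{\oplus l}[-2]$, and replace $\oO_X(2D)$ by an extension $\vV$ of the kernel bundle $\kK$ of $\oO_X(D-2f^{\ast}H)^{\oplus el}\to\oO_X(-f^{\ast}H)^{\oplus l}$ by $\oO_X(2D)$ realizing a projective resolution of $S^{\oplus l}$ geometrically. But the two places where the actual content lies are asserted rather than proved. First, the phrase ``whose class is chosen so that, under $\Phi$, the defect $S^{\oplus l}[-2]$ is resolved'' is the crux: one must show that a suitable class in $\Ext^1_X(\kK,\oO_X(2D))$ exists and does the job. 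The paper manufactures it on the non-commutative side: it resolves $S$ by the projective $\mathrsfs{A}_0$-modules in (\ref{right}), tensors by $\oO_Y(-H)$, and uses the vanishing (\ref{Ext1}) --- a second role of the ampleness of $H$ that your proposal does not mention --- to lift the resulting map to $S^{\oplus l}[-2]$ through the triangle (\ref{dist:S}) to a morphism landing in $\Phi_0(\oO_X(2D))$; the left adjoint $\Psi_0$ then transports this to the morphism (\ref{adj}) on $X$, whose cone is $\vV'$, and $\vV$ is split off after a further use of the ampleness of $H$. Without this lifting argument the extension you posit is not known to exist.

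Second, your verification that $\eE$ is a tilting bundle is a genuinely different, and as stated insufficient, route. Restricting (\ref{desire}) to $D$ does not compute $R^{>0}f_{\ast}$ of $\eE nd(\eE)$: by the formal function theorem one needs control over all infinitesimal neighborhoods $nD$, not just $D$ (compare the argument of Lemma~\ref{lem:rest}), and in type V the required vanishings on thickenings of the singular quadric are exactly the hard computations you defer. The paper avoids them entirely: over an affine $U\ni 0$ it identifies $\eE_0|_{f^{-1}(U)}\oplus\vV'|_{f^{-1}(U)}$ with the projective generator of $\PPer(f^{-1}(U)/U)$ already constructed in~\cite{TU}, via the stupid filtration of the free resolution $\pP^{\bullet}$ and \cite[Equation~(13)]{TU}, and then invokes \cite[Proposition~3.3.1]{MVB} to conclude that $\eE$ is a local projective generator, that $\Phi$ is an equivalence, and that it matches $\PPer(X/Y)$ with $\Coh(\mathrsfs{A})$. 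You would need either to reproduce that identification or to carry out the formal-neighborhood cohomology in all five types; as written, both steps are gaps.
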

\begin{proof}
If $f(D)$ is a curve, the result follows from~\cite{MVB}. 
Below we assume $f(D)$ is a point $0\in Y$. 

Let $H$ be a sufficiently ample divisor
on $Y$ so that $m_0(H)$ is globally generated, 
and satisfies
\begin{align}\label{Ext1}
\Ext^{j}_{\mathrsfs{A}_0}(\mathrsfs{A}_0, \oO_Y(iH) \otimes_{\oO_Y} 
\mathrsfs{A}_0) 
&\cong 
H^{j}(Y, \oO_Y(iH) \otimes_{\oO_Y} \mathrsfs{A}_0) \\
\notag
&\cong 0
\end{align}
for any $i\ge 1$ and $j\ge 1$. 
Let us take a basis $x_1, \cdots, x_e$, 
\begin{align}\label{ext:sec}
x_i \in \Gamma(X, m_0(H)) = \Gamma(X, \oO_X(f^{\ast}H-D)). 
\end{align}
We have the right exact sequence in $\Coh(\mathrsfs{A}_0)$
\begin{align}\label{right}
\mathrsfs{A}_0 \oplus 
(\oO_Y(-H) \otimes_{\oO_Y}\mathrsfs{A}_0^{\oplus e})
\to \mathrsfs{A}_0 \to S \to 0.
\end{align}
Here the middle morphism takes $1 \in \mathrsfs{A}_0$ to $1 \in S$, 
and the left one
is given by the $(e+1)$-matrices:
\begin{align}\notag
\left( \begin{array}{cc}
0 & 0 \\
0 & 1
\end{array}  \right), \quad 
\left( \begin{array}{cc}
0 & x_i \\
0 & 0
\end{array} \right), \quad 1\le i\le e,
\end{align}
in the expression (\ref{A:matrix}).
Below if we write $(\fF \to \gG)$, this always 
means the two term complex with $\fF$ located in 
degree one. 
By taking the $l$-direct sum
of the resolution (\ref{right}) tensored by 
$\oO_Y(-H)$, 
and combining the distinguished triangle (\ref{dist:S}),
we obtain the morphisms 
\begin{align}\notag
\oO_Y(-H) \otimes_{\oO_Y}
\left(
\mathrsfs{A}_0 \oplus  
(\oO_Y(-H) \otimes_{\oO_Y}\mathrsfs{A}_0^{\oplus e}) 
\to \mathrsfs{A}_0 \right)^{\oplus l} \\ 
\notag
\stackrel{\psi}{\to} S^{\oplus l}[-2] \to \Phi_0(\oO_X(D))[1].
\end{align}
Since $\Phi_0(\oO_X(D))$ is a direct summand of $\mathrsfs{A}_0$, 
the condition (\ref{Ext1}) 
implies that the 
composition of the above morphisms is zero. 
Hence $\psi$ lifts to a
morphism  
\begin{align}\label{rest}
\oO_Y(-H) \otimes_{\oO_Y}
\left(
\mathrsfs{A}_0 \oplus  
(\oO_Y(-H) \otimes_{\oO_Y}\mathrsfs{A}_0^{\oplus e}) 
\to \mathrsfs{A}_0 \right)^{\oplus l} \to \Phi_0(\oO_X(2D)).
\end{align}
Note that the functor 
 $\Psi_0 \colon D^b \Coh(\mathrsfs{A}_0) \to D^{-}\Coh(X)$
given by  
\begin{align*}
\Psi_0(\mM) \cneq \mM \dotimes_{\mathrsfs{A}_0} \eE_0
\end{align*}
is a left adjoint of $\Phi_0$. 
Taking the adjunction of (\ref{rest}), we obtain the morphism 
\begin{align}\label{adj}
\left(
\eE_0 \oplus 
\eE_0^{\oplus e}(-f^{\ast}H)
\stackrel{\phi}{\to} \eE_{0} \right)^{\oplus l}(-f^{\ast}H) \to \oO_X(2D).
\end{align}
Since $m_0(H)$ is globally generated and 
$-D$ if $f$-globally generated, 
the morphism $\phi$ is surjective and $\Ker(\phi)$ is 
given by 
\begin{align*}
\oO_X\oplus \oO_X(-f^{\ast}H)^{\oplus e} 
\oplus \Ker(\oO_X(D-f^{\ast}H)^{\oplus e} 
\stackrel{\psi'}{\twoheadrightarrow} \oO_X).
\end{align*}
Here the morphism $\psi'$ is given by the sections (\ref{ext:sec}).  
Let $\vV'$ be the cone of the morphism (\ref{adj}). 
It fits into the exact sequence of sheaves
\begin{align*}
0 \to \oO_X(2D) \to \vV' \to \Ker(\phi)^{\oplus l}(-f^{\ast}H) \to 0. 
\end{align*}
If we take $H$ sufficiently ample, 
the vector bundle 
$\vV'$ splits into 
\begin{align*}
\vV' \cong \vV \oplus \oO_X(-f^{\ast}H)^{\oplus l}
 \oplus \oO_X(-2f^{\ast}H)^{\oplus el},
\end{align*}
where $\vV$ is a vector bundle on $X$ which fits into a desired 
exact sequence (\ref{desire}). 

We claim that $\eE=\eE_0 \oplus \vV$
gives the desired derived equivalence (\ref{ext:der}).
Let $0 \in U\subset Y$ be an
affine open neighborhood. 
Then by the construction and Lemma~\ref{lem:OOS}, we can take a 
free left $\mathrsfs{A}_{0}|_{U}$-resolution 
$\pP^{\bullet}$ of $\Phi_0(\oO_X(2D))|_{U}$,
so that its stupid filtration 
$\sigma_{\ge 1}\pP^{\bullet}$ together with the
canonical morphism
\begin{align*}
\sigma_{\ge 1}\pP^{\bullet} \to \pP^{\bullet} \stackrel{\sim}{\to}
 \Phi_0(\oO_X(2D))|_{U}
\end{align*}
is identified with (\ref{rest}) restricted to $U$. 
Hence the vector bundle $\vV'|_{f^{-1}(U)}$ fits into 
the distinguished triangle 
\begin{align*}
\Psi_{0}(\sigma_{\ge 1}\pP^{\bullet}) \to \oO_{X}(2D)|_{f^{-1}(U)} \to 
\vV'|_{f^{-1}(U)}. 
\end{align*}
By~\cite[Equation~(13)]{TU}, this implies that the vector bundle 
\begin{align*}
\eE_0|_{f^{-1}(U)} \oplus \vV'|_{f^{-1}(U)}
\end{align*}
on $f^{-1}(U)$ is nothing but a 
projective generator of $\PPer(f^{-1}(U)/U)$
constructed in~\cite[Subsection~4.4]{TU}. 
Since $\vV'|_{f^{-1}(U)}$ is a direct sum of 
$\vV|_{f^{-1}(U)}$ and $\oO_{f^{-1}(U)}^{\oplus el+l}$, 
the vector bundle $\eE|_{f^{-1}(U)}$ is also a
projective generator of $\PPer(f^{-1}(U)/U)$.  
Noting that $f$ is an isomorphism outside $0\in Y$, 
this implies that $\eE$ is a local projective generator 
of $\PPer(X/Y)$ in the sense of~\cite[Proposition~3.3.1]{MVB}.
Therefore the equivalence (\ref{ext:der}) follows 
from~\cite[Proposition~3.3.1]{MVB}.
\end{proof}

Similarly to (\ref{per:inc}), we define the 
following subcategory
\begin{align}\label{per:inc2}
\PPer_{\le i}(X/Y) \cneq \{ E \in \PPer(X/Y) : 
\Phi(E) \in \Coh_{\le i}(\mathrsfs{A})\},
\end{align}
and write $\PPer_{0}(X/Y) \cneq \PPer_{\le 0}(X/Y)$. 
\begin{rmk}\label{rmk:support}
By the construction of $\eE$,
it is easy to see that $E\in \PPer(X/Y)$
is an object in $\PPer_{0}(X/Y)$ iff
$\Supp(E)$ is contained in a finite number of 
fibers of $f$, 
 an object in $\PPer_{\le 1}(X/Y)$
iff $\Supp(E)$ is at most one dimensional 
outside $D$, and an object in $\PPer_{\le 2}(X/Y)$
iff $\Supp(E)$ is at most two dimensional. 
\end{rmk}

If $f(D)$ is a point $0\in Y$,  
we set
\begin{align*}
\widehat{Y} \cneq \Spec \widehat{\oO}_{Y, 0}, \ 
\widehat{X} \cneq X\times_{Y} \widehat{Y}. 
\end{align*}
The categories $\PPer(\widehat{X}/\widehat{Y})$, 
$\PPer_{0}(\widehat{X}/\widehat{Y})$ are 
similarly defined. 
The following
local version is also proved in a similar way. 
\begin{cor}\label{cor:cor1}
Suppose that $f(D)$ is a point $0 \in Y$. 
Then there is a vector 
bundle $\widehat{\vV}$ on $\widehat{X}$ which fits into 
an exact sequence of sheaves
\begin{align}\label{desire2}
0 \to \oO_{\widehat{X}}(2D) \to \widehat{\vV} 
\to \oO_{\widehat{X}}(D)^{\oplus kl} \to \oO_{\widehat{X}}^{\oplus l} \to 0
\end{align}
where $k$ is given by 
$k\cneq \dim H^0(D, \oO_D(-D))$
such that, setting $\widehat{\eE} =\oO_{\widehat{X}} \oplus 
\oO_{\widehat{X}}(D) \oplus \widehat{\vV}$ and 
$\widehat{A} = \End(\widehat{\eE})$, 
we have the equivalence of derived categories
\begin{align*}
\widehat{\Phi} \cneq 
\dR \Hom(\widehat{\eE}, \ast) \colon 
D^b \Coh(\widehat{X}) \stackrel{\sim}{\to}
D^b \modu (\widehat{A})
\end{align*}
which restricts to an equivalence between 
$\PPer(\widehat{X}/\widehat{Y})$ and $\modu (\widehat{A})$. 
\end{cor}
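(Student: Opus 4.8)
The plan is to repeat the construction of Theorem~\ref{thm:tilting} over the complete local base, where two features make the argument cleaner. Since $\widehat{Y}=\Spec\widehat{\oO}_{Y,0}$ is the spectrum of a complete local ring, it is affine with trivial Picard group, so every pull-back twist $\oO_{\widehat{X}}(f^{\ast}H)$ from the global proof trivializes and the auxiliary ample divisor $H$ disappears; moreover the vanishing that played the role of (\ref{Ext1}) is automatic, because higher cohomology of coherent sheaves on the affine $\widehat{Y}$ vanishes and $\widehat{A}_0\cneq \End(\widehat{\eE}_0)$, $\widehat{\eE}_0\cneq \oO_{\widehat{X}}\oplus\oO_{\widehat{X}}(D)$, is free over itself. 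Here $\widehat{A}_0=\begin{pmatrix}\widehat{\oO}_{Y,0} & \widehat{m}_0 \\ \widehat{\oO}_{Y,0} & \widehat{\oO}_{Y,0}\end{pmatrix}$, and I write $\widehat{\Phi}_0=\dR\Hom(\widehat{\eE}_0,\ast)$ with left adjoint $\widehat{\Psi}_0=(\ast)\dotimes_{\widehat{A}_0}\widehat{\eE}_0$. The essential difference from the global case is that over the complete local ring $\widehat{m}_0$ admits a \emph{minimal} generating set, of cardinality $k=\dim H^0(D,\oO_D(-D))$ by Nakayama's lemma and cohomology-and-base-change, so that the number $e$ of global sections used globally is replaced by $k$.

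First I would record the local form of Lemma~\ref{lem:OOS}: applying $\widehat{\Phi}_0$ to $0\to\oO_{\widehat{X}}(D)\to\oO_{\widehat{X}}(2D)\to\oO_D(2D)\to 0$ produces a triangle $\widehat{\Phi}_0(\oO_{\widehat{X}}(D))\to\widehat{\Phi}_0(\oO_{\widehat{X}}(2D))\to S^{\oplus l}[-2]$ with $l=\dim H^2(D,\oO_D(2D))$. Choosing $x_1,\dots,x_k\in\Gamma(\widehat{X},\oO_{\widehat{X}}(-D))=\widehat{m}_0$ minimally generating $\widehat{m}_0$ (possible since $-D$ is $f$-globally generated, Lemma~\ref{lem:note}~(i)), I obtain the free right $\widehat{A}_0$-resolution of $S$ that is the local avatar of (\ref{right}), now with the $\oO_Y(-H)$-twist deleted and $e$ replaced by $k$. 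Taking its $l$-fold sum, composing with the above triangle, and using the vanishing to kill the obstruction exactly as in the passage leading to (\ref{rest}), I get a morphism whose $\widehat{\Psi}_0$-adjunction, as in (\ref{adj}), is a map $(\widehat{\eE}_0\oplus\widehat{\eE}_0^{\oplus k}\stackrel{\phi}{\to}\widehat{\eE}_0)^{\oplus l}\to\oO_{\widehat{X}}(2D)$. Since $-D$ is $f$-globally generated, $\phi$ is surjective with locally free kernel $\oO_{\widehat{X}}\oplus\oO_{\widehat{X}}^{\oplus k}\oplus \Ker(\oO_{\widehat{X}}(D)^{\oplus k}\twoheadrightarrow\oO_{\widehat{X}})$, so the cone $\widehat{\vV}'$ is a vector bundle extension of $\Ker(\phi)^{\oplus l}$ by $\oO_{\widehat{X}}(2D)$.

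It then remains to split off the free summands and to identify the result. The global splitting used Serre vanishing for ample $H$; here it is ensured instead by $\Ext^1(\oO_{\widehat{X}},\oO_{\widehat{X}}(2D))\cong H^1(\widehat{X},\oO_{\widehat{X}}(2D))=\Gamma(\widehat{Y},R^1f_{\ast}\oO_{\widehat{X}}(2D))=0$, where the last vanishing reduces, using $0\to\oO_{\widehat{X}}(D)\to\oO_{\widehat{X}}(2D)\to\oO_D(2D)\to 0$ together with $R^1f_{\ast}\oO_{\widehat{X}}(D)=0$ (Lemma~\ref{lem:note}~(ii)), to $H^1(D,\oO_D(2D))=0$, a fact one checks case by case from the classification of Subsection~\ref{subsec:class}. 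Discarding the free part $\oO_{\widehat{X}}^{\oplus l(k+1)}$ leaves the vector bundle $\widehat{\vV}$ fitting into the four-term sequence (\ref{desire2}). Finally, exactly as at the end of the proof of Theorem~\ref{thm:tilting}, comparison with \cite[Equation~(13)]{TU} shows that $\widehat{\eE}=\widehat{\eE}_0\oplus\widehat{\vV}$ is, up to free summands, the projective generator of $\PPer(\widehat{X}/\widehat{Y})$ of \cite[Subsection~4.4]{TU}, whence \cite[Proposition~3.3.1]{MVB} yields the equivalence $\widehat{\Phi}$ and its restriction to $\PPer(\widehat{X}/\widehat{Y})\simeq\modu(\widehat{A})$. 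I expect the main obstacle to be precisely this last identification: confirming that the economical generating set $x_1,\dots,x_k$ of $\widehat{m}_0$ still yields a local projective generator in the sense of \cite{MVB}, \cite{TU}, and in particular that the splitting really produces the minimal bundle recorded in (\ref{desire2}).
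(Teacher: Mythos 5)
Your proposal is correct and follows essentially the same route as the paper: the paper's own proof of this corollary is literally the one-line instruction to rerun the proof of Theorem~\ref{thm:tilting} with the sections (\ref{ext:sec}) replaced by the minimal generators of $m_0 \subset \widehat{\oO}_{Y,0}$ (noting $k=\dim m_0/m_0^2$), which is exactly what you carry out. You have simply supplied the details the paper omits — the trivialization of the $H$-twists over the affine complete local base, the automatic analogue of (\ref{Ext1}), and the splitting via $H^1(D,\oO_D(2D))=0$ — and these are all accurate.
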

\begin{proof}
We note that $k=\dim m_0/m_0^2$. 
The proof is obtained by modifying the 
proof of Theorem~\ref{thm:tilting}
by replacing the sections (\ref{ext:sec}) 
by 
the minimal generators of $m_0 \subset \widehat{\oO}_{Y, 0}$. 
\end{proof}
We also have the version for the exceptional locus. 

\begin{cor}\label{cor:cor2}
In the situation of Corollary~\ref{cor:cor1},
setting
$\vV_{D} \cneq \widehat{\vV}|_{D}$,
$\eE_D \cneq \oO_D \oplus \oO_D(D) \oplus \vV_{D}$
and $A_D \cneq \End(\eE_D)$, 
we have the derived equivalence 
\begin{align*}
\Phi_{D} \cneq 
\dR \Hom(\eE_D, \ast) \colon D^b \Coh(D) \stackrel{\sim}{\to}
D^b \modu (A_D)
\end{align*}
which restricts to an equivalence between $\PPer(D)$
and $\modu (A_D)$.
Furthermore $\vV_{D}$
fits into the universal extension
\begin{align}\label{give:ext}
0 \to \oO_D(2D) \to \vV_D \to 
\Omega_{\mathbb{P}^{k-1}}^{\oplus l}|_{D} \to 0. 
\end{align}
Here $D$ is embedded into $\mathbb{P}^{k-1}$ by the 
linear system $| \oO_D(-D) |$.
\end{cor}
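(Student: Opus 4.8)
The plan is to run the construction of Theorem~\ref{thm:tilting} and Corollary~\ref{cor:cor1} one dimension down, replacing the contraction $f\colon \widehat X\to \widehat Y$ by its restriction $f|_D\colon D\to \{0\}$, and then to read off the shape of $\vV_D$ by restricting the four term sequence (\ref{desire2}) to $D$. Observe first that, by definition, $\eE_D=\widehat\eE|_D$ and $\vV_D=\widehat\vV|_D$. Since $f(D)$ is a point, $f|_D$ is a contraction to a point, and the perverse heart $\PPer(D)$ together with its non-commutative model $\mathrsfs{A}_{D,0}=\End(\eE_0|_D)$ are exactly those introduced in Subsection~\ref{subsec:two}; here $\mathrsfs{A}_{D,0}$ is the path algebra of the quiver with two vertices and $k=\dim H^0(D,\oO_D(-D))$ arrows, and $A_D$ is its modification. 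The only obstruction to $\oO_D\oplus\oO_D(D)\oplus\oO_D(2D)$ being a tilting bundle is the non-vanishing of $H^2(D,\oO_D(2D))$, of dimension $l$, exactly as in Lemma~\ref{lem:OOS} (the analogue of the triangle (\ref{dist:S}) for $f|_D$ produces a class $S_D^{\oplus l}[-2]$, where $S_D$ is the simple $A_D$-module). Thus the same modification of $\oO_D(2D)$ is forced.

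First I would establish the universal extension (\ref{give:ext}). All four terms of (\ref{desire2}) are vector bundles, so the sequence breaks into two short exact sequences of vector bundles, and tensoring with $\oO_D$ is exact; hence $\vV_D=\widehat\vV|_D$ fits into
\begin{align*}
0\to \oO_D(2D)\to \vV_D\to \kK|_D\to 0,\qquad
0\to \kK|_D\to \oO_D(D)^{\oplus kl}\to \oO_D^{\oplus l}\to 0,
\end{align*}
where $\kK=\Ker(\oO_{\widehat X}(D)^{\oplus kl}\to \oO_{\widehat X}^{\oplus l})$. By the construction of (\ref{right}), the map $\oO_D(D)^{\oplus k}\to \oO_D$ is multiplication by the restrictions of the chosen minimal generators of $m_0$; since $k=\dim m_0/m_0^2=\dim H^0(D,\oO_D(-D))$, these restrictions form a basis of $H^0(D,\oO_D(-D))$ and therefore define the embedding $D\hookrightarrow \mathbb{P}^{k-1}$. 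Under $\oO_{\mathbb{P}^{k-1}}(-1)|_D\cong \oO_D(D)$ this is precisely the Euler map restricted to $D$, whose kernel is $\Omega_{\mathbb{P}^{k-1}}|_D$. Taking the $l$-fold sum gives $\kK|_D\cong \Omega_{\mathbb{P}^{k-1}}^{\oplus l}|_D$, which yields (\ref{give:ext}).

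For the derived equivalence I would verify that $\eE_D=\oO_D\oplus\oO_D(D)\oplus\vV_D$ is a projective generator of $\PPer(D)$ and then invoke~\cite[Proposition~3.3.1]{MVB}. Concretely, one repeats the argument of Theorem~\ref{thm:tilting} for $f|_D$: the universal extension (\ref{give:ext}) is designed so that $\vV_D$ lifts the resolution of $S_D$ and kills the obstruction class $S_D^{\oplus l}[-2]$, and comparison with~\cite[Subsection~4.4]{TU} identifies $\eE_D$ with the projective generator built there. Because the construction on $\widehat X$ restricts compatibly to $f^{-1}(U)$ and then to $D$, the bundle $\widehat\vV|_D$ coincides with the one produced directly on $D$, so this comparison applies verbatim, and~\cite[Proposition~3.3.1]{MVB} gives the equivalence $\Phi_D$ restricting to $\PPer(D)\simeq \modu(A_D)$.

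The step I expect to be the main obstacle is this last one: confirming that $\eE_D$ is a genuine projective generator of $\PPer(D)$ — that the modification (\ref{give:ext}) removes all higher self-extensions and that $\eE_D$ generates the whole heart rather than a proper subcategory — which is where the $\Ext$-vanishing of the~\cite{TU} setup and the restriction-compatibility with the global construction on $\widehat X$ must be checked with care. By contrast, the Euler-sequence computation delivering (\ref{give:ext}) is routine once the restricted generators are identified with the coordinate sections of the embedding.
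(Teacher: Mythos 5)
Your proposal follows essentially the same route as the paper: the sequence (\ref{give:ext}) is obtained by restricting (\ref{desire2}) to $D$ (with the kernel identified, as you do, via the Euler sequence of the embedding $D\hookrightarrow\mathbb{P}^{k-1}$ determined by the restricted generators of $m_0$), and the equivalence comes from recognizing $\eE_D$ as the tilting generator of~\cite{TU} and invoking~\cite[Proposition~3.3.1]{MVB}. The only point you leave slightly implicit is the \emph{universality} of the extension (\ref{give:ext}) — the paper deduces it from $\Ext^1_D(\Omega_{\mathbb{P}^{k-1}}|_D,\oO_D(2D))\cong H^2(D,\oO_D(2D))$ together with the fact that (\ref{desire2}) was built precisely to cancel the obstruction class $S^{\oplus l}[-2]$ of Lemma~\ref{lem:OOS}, which is exactly what your second paragraph gestures at.
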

\begin{proof}
By the construction of $\widehat{\eE}$ in
Corollary~\ref{cor:cor1},
the vector bundle $\eE_{D}$ coincides with the 
construction of the tilting generator on $D^b \Coh(D)$
given in~\cite{TU}. The exact 
sequence (\ref{give:ext}) is obtained by restricting the sequence 
(\ref{desire2})
to $D$. 
Noting that 
\begin{align*}
\Ext_{D}^1(\Omega_{\mathbb{P}^{k-1}}|_{D}, \oO_D(2D)) 
\cong H^2(D, \oO_D(2D)),
\end{align*}
the universality of (\ref{give:ext})
obviously follows from the 
construction of the exact sequence (\ref{desire2}).  
\end{proof}
\begin{rmk}
The numbers $(k, l)$ are 
$(3, 0)$ in type II, $(4, 1)$ in types III, V
and $(6, 3)$ in type IV.
\end{rmk}

\begin{rmk}\label{rmk:satu}
The result of Theorem~\ref{thm:tilting}
may be applied 
to the study of minimal saturated triangulated
category of 
singular varieties. Suppose that $f$ is a
type V extremal contraction. In~\cite{KawDer}, Kawamata
proposed that the subcategory
\begin{align*}
\dD_{Y} \cneq \{ E\in D^b \Coh(X) : \dR \Hom(E, \oO_D(D))=0\}
\end{align*}
is the
minimal\footnote{Kawamata informed the author that
he later proved the minimality of $\dD_Y$.} saturated triangulated
subcategory of $D^b \Coh(X)$
which contains $f^{\ast} \mathrm{Perf}(Y)$.
By our construction of the vector bundle $\vV$, 
we have 
\begin{align*}
\oO_X \oplus \vV(-D) \in \dD_{Y},
\end{align*}
 and it is a local tilting generator of $\dD_Y$
in the sense of~\cite[Proposition~3.3.1]{MVB}. 
Hence we have the equivalence
\begin{align*}
\dR f_{\ast}
\dR \hH om(\oO_X \oplus \vV(-D), \ast) \colon 
\dD_{Y} \stackrel{\sim}{\to} D^b \Coh (\mathrsfs{B})
\end{align*}
where $\mathrsfs{B} \cneq f_{\ast} \eE nd(\oO_X \oplus \vV(-D))$. 
It would be interesting to study the category $\dD_Y$
in terms of the sheaf of non-commutative algebras $\mathrsfs{B}$. 
\end{rmk}

\subsection{Derived category of the exceptional locus}
The rest of this section is devoted to 
giving a complete 
description of simple objects in $\PPer_{0}(X/Y)$
via Theorem~\ref{thm:tilting}. 
The strategy is as follows:
in this subsection, we 
describe simple objects in $\PPer(D)$
using
Corollary~\ref{cor:cor2}.
Then in the next subsection, we show that simple objects 
in $\PPer_{0}(X/Y)$ are 
either $\oO_x$ for $x\notin D$ or 
a pushforward of some simple object in $\PPer(D)$. 

Below we use the notation in Corollary~\ref{cor:cor2}. 
First we note the following:
\begin{lem}\label{ortho}
We have $\vV_{D} \in \cC_{D}^{0}$, i.e. 
\begin{align*}
\dR \Hom_{D}(\oO_D, \vV_D)= \dR \Hom_{D}(\oO_D(D), \vV_D)=0. 
\end{align*}
\end{lem}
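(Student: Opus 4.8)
The plan is to unwind the statement using that $f(D)$ is a point. In that case $f|_D$ is the structure map to a point, so $\Phi_{D,0}$ computes $\dR\Hom_D(\oO_D\oplus\oO_D(D),-)$, and proving $\vV_D\in\cC_D^0$ is exactly the pair of vanishings
\[
\dR\Gamma(D,\vV_D)=0,\qquad \dR\Gamma(D,\vV_D(-D))=0.
\]
I would compute both from the universal extension (\ref{give:ext}) together with the Euler sequence of $\mathbb{P}^{k-1}$ restricted to $D$. Since $D$ is embedded in $\mathbb{P}^{k-1}$ by the complete linear system $|\oO_D(-D)|$, the bundle $\oO_D(-D)$ is the hyperplane bundle, and restricting the Euler sequence gives
\[
0 \to \Omega_{\mathbb{P}^{k-1}}|_D \to \oO_D(D)^{\oplus k} \to \oO_D \to 0.
\]

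Next I would assemble the line-bundle cohomology on $D$ needed to run the long exact sequences, case by case along the classification of Subsection~\ref{subsec:class} (types III, IV, V). The inputs are: $H^\bullet(D,\oO_D(D))=0$ and $H^1(D,\oO_D)=H^2(D,\oO_D)=0$ (all three $D$ satisfy this); that $H^\bullet(D,\oO_D(2D))$ is concentrated in degree two with dimension $l=\dim H^2(D,\oO_D(2D))$; and that the evaluation map $H^0(\oO_D)^{\oplus k}\to H^0(\oO_D(-D))$ is an isomorphism, the $k$ coordinates restricting to a basis of $H^0(\oO_D(-D))$ by linear normality of the embedding. In the singular type V case these are obtained from the sequences $0\to\oO_{\mathbb{P}^3}(\ast-2)\to\oO_{\mathbb{P}^3}(\ast)\to\oO_D(\ast)\to 0$.

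Feeding this into the restricted Euler sequence and its twist by $\oO_D(-D)$ yields $H^\bullet(\Omega_{\mathbb{P}^{k-1}}|_D)\cong H^{\bullet-1}(\oO_D)$, so it is $\mathbb{C}$ in degree one and zero in degrees zero and two, whereas $H^\bullet(\Omega_{\mathbb{P}^{k-1}}|_D(-D))=0$ entirely (the would-be degree-one contribution dies because the $H^0$-evaluation map is an isomorphism). The vanishing $\dR\Gamma(D,\vV_D(-D))=0$ is then immediate: twisting (\ref{give:ext}) by $\oO_D(-D)$ exhibits $\vV_D(-D)$ as an extension of $\Omega_{\mathbb{P}^{k-1}}^{\oplus l}|_D(-D)$ by $\oO_D(D)$, both with vanishing cohomology.

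The main obstacle is $\dR\Gamma(D,\vV_D)=0$. The long exact sequence of (\ref{give:ext}) gives $H^0(\vV_D)\cong H^0(\Omega^{\oplus l}_{\mathbb{P}^{k-1}}|_D)=0$ and reduces the vanishing of $H^1$ and $H^2$ to showing that the connecting homomorphism
\[
\delta\colon H^1(\Omega_{\mathbb{P}^{k-1}}^{\oplus l}|_D)\to H^2(D,\oO_D(2D))
\]
is an isomorphism, both sides being $l$-dimensional by the previous step. This is precisely where the \emph{universality} of (\ref{give:ext}) enters: $\delta$ is Yoneda composition with the extension class, and that class corresponds to the identity under the isomorphism $\Ext^1_D(\Omega_{\mathbb{P}^{k-1}}|_D,\oO_D(2D))\cong H^2(D,\oO_D(2D))$ of Corollary~\ref{cor:cor2}, which is itself composition with the generator of $H^1(\Omega_{\mathbb{P}^{k-1}}|_D)=\mathbb{C}$. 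Tracking this carefully so that $\delta$ is identified with the canonical isomorphism, rather than merely shown to be \emph{some} isomorphism, is the delicate point I would need to verify; once it is in hand, the two remaining vanishings follow formally and the proof is complete.
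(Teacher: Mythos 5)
Your proposal is correct and follows the same route as the paper, whose entire proof is the single sentence that the result ``obviously follows from the exact sequence (\ref{give:ext})''; you have simply carried out that computation in full, and you correctly identify the one genuinely non-obvious input, namely that the universality of (\ref{give:ext}) is what forces the connecting map $H^1(\Omega_{\mathbb{P}^{k-1}}^{\oplus l}|_D)\to H^2(D,\oO_D(2D))$ to be an isomorphism (a split extension would fail the lemma). For the last delicate point you flag, note that it suffices that the $l$ components of the universal class form a basis of $\Ext^1_D(\Omega_{\mathbb{P}^{k-1}}|_D,\oO_D(2D))$ and that Yoneda composition with the generator of $H^1(\Omega_{\mathbb{P}^{k-1}}|_D)$ (the Euler class) is an isomorphism onto $H^2(D,\oO_D(2D))$ because $\Ext^{i}_D(\oO_D(D),\oO_D(2D))=H^i(D,\oO_D(D))=0$; one does not need to match $\delta$ with any particular canonical identification, only to see it is bijective.
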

\begin{proof}
The result obviously follows from the exact 
sequence (\ref{give:ext}). 
\end{proof}
\begin{rmk}
A similar result is not true for 
the vector bundle $\vV$ on $X$. 
Indeed we have $\Hom(\oO_X, \vV) \neq 0$. 
\end{rmk}
Next we investigate the structure of $\vV_D$. 
\begin{lem}\label{V:orth}
Suppose that $f(D)$ is a point. Then the vector bundle
$\vV_D$ on $D$ is described as follows:
\begin{itemize}
\item If $f$ is type II, then $\vV_D =\oO_{\mathbb{P}^2}(-2)$. 
\item 
If $f$ is type III, then we have 
\begin{align*}
\vV_D \cong \oO_{\mathbb{P}^1 \times \mathbb{P}^1}(-1, -2)^{\oplus 2}
\oplus \oO_{\mathbb{P}^1 \times \mathbb{P}^1}(-2, -1)^{\oplus 2}. 
\end{align*}
\item If $f$ is type IV, then $\vV_{D} \cong \Omega_{\mathbb{P}^2}^{\oplus 8}(-1)$. 
\item 
If $f$ is type V, then $\vV_{D} \cong \uU(-1)^{\oplus 2}$, where 
$\uU$ is the non-split extension
\begin{align}\label{non-split}
0 \to \oO_D(-C) \to \uU \to \oO_D(-C) \to 0
\end{align}
and $C \subset D$ is a ruling. 
\end{itemize}
\end{lem}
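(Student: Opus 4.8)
The plan is to start from the universal extension (\ref{give:ext}) of Corollary~\ref{cor:cor2}, which determines $\vV_D$ up to isomorphism once we know the embedding $D\hookrightarrow\mathbb{P}^{k-1}$ given by $|\oO_D(-D)|$ and the integers $(k,l)$. Thus in each type I would first read off $D$, $\oO_D(D)$ and the embedding from the classification of Subsection~\ref{subsec:class}, then compute $\Omega_{\mathbb{P}^{k-1}}|_D$, and finally resolve
\begin{align*}
0 \to \oO_D(2D) \to \vV_D \to \Omega_{\mathbb{P}^{k-1}}^{\oplus l}|_D \to 0,
\end{align*}
matching the outcome against the asserted bundle. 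The type II case is immediate: there $(k,l)=(3,0)$, so the sequence collapses and $\vV_D\cong\oO_D(2D)=\oO_{\mathbb{P}^2}(-2)$, using $D\cong\mathbb{P}^2$ and $\oO_D(D)=\oO_{\mathbb{P}^2}(-1)$.

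In every type the identity $\oO_{\mathbb{P}^{k-1}}(-1)|_D=\oO_D(D)$ turns the Euler sequence into the restricted Euler sequence
\begin{align*}
0 \to \Omega_{\mathbb{P}^{k-1}}|_D \to \oO_D(D)^{\oplus k} \to \oO_D \to 0,
\end{align*}
which computes $\Omega_{\mathbb{P}^{k-1}}|_D$. For type III, $D=\mathbb{P}^1\times\mathbb{P}^1$ is a quadric hypersurface in $\mathbb{P}^3$ ($\oO_D(D)=\oO(-1,-1)$), so one may alternatively use the conormal sequence with $N^\vee_{D/\mathbb{P}^3}=\oO_D(2D)$ and $\Omega_D=\oO(-2,0)\oplus\oO(0,-2)$; for type IV, $D=\mathbb{P}^2$ is the Veronese surface in $\mathbb{P}^5$ ($\oO_D(D)=\oO_{\mathbb{P}^2}(-2)$). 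I would then resolve the universal extension and identify $\vV_D$ by computing the cohomology of twists: via the K\"unneth formula on $\mathbb{P}^1\times\mathbb{P}^1$ together with restriction to the two rulings (pinning down the splitting type $\oO(-1,-2)^{\oplus 2}\oplus\oO(-2,-1)^{\oplus 2}$), and for type IV via the Euler sequence on $\mathbb{P}^2$ and the standard cohomology of twists of $\Omega_{\mathbb{P}^2}$ (yielding $\Omega_{\mathbb{P}^2}^{\oplus 8}(-1)$). Ranks and first Chern classes give guiding consistency checks, and the universal extension property (which reflects that $\eE_D$ is a tilting bundle) selects the correct isomorphism class.

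The type V case is the main obstacle, precisely because $D$ is the normal singular quadric cone in $\mathbb{P}^3$, with vertex $v$ the image of the singular point. Away from $v$ the surface $D$ is smooth and the computation parallels type III, so all the difficulty concentrates at $v$. The ruling $C$ is a non-Cartier Weil divisor through $v$ (only $2C$ is Cartier), so $\oO_D(-C)$ is reflexive but not locally free at $v$; the first task is therefore to make sense of the non-split extension (\ref{non-split}) and to prove that $\uU$ is nevertheless a genuine vector bundle. I would do this by exhibiting the extension class as a generator of $\Ext^1_D(\oO_D(-C),\oO_D(-C))$ and checking at $v$ that the two reflexive sheaves glue to a free module — most transparently by pulling back to the minimal resolution $\mathbb{F}_2\to D$ of the $A_1$-singularity, on which $C$ and the rulings become honest curves. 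Since the Euler sequence is locally free, the restricted Euler sequence still computes $\Omega_{\mathbb{P}^3}|_D$ as a rank-three bundle, and with $(k,l)=(4,1)$ I would resolve the universal extension to obtain $\vV_D\cong\uU(-1)^{\oplus 2}$. I expect the cleanest route is to carry out this identification on the formal model $\widehat{Y}=\Spec\widehat{\oO}_{Y,0}$ (with $\widehat{\oO}_{Y,0}$ the $cA_2$-singularity of Subsection~\ref{subsec:class}), using the explicit sequence (\ref{desire2}) restricted to $D$, where the minimal generators of $m_0$ make every map concrete. The delicate point to verify is exactly that a bundle built from non-locally-free reflexive sheaves arises as the universal extension, which is where the singularity of $D$ genuinely enters.
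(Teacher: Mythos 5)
Your treatment of types II, III and IV is sound and close in spirit to the paper's (which disposes of these cases in one line by combining Lemma~\ref{ortho} with the standard exceptional collections on $\mathbb{P}^2$ and $\mathbb{P}^1\times\mathbb{P}^1$; your route through the restricted Euler sequence and the universal extension (\ref{give:ext}) is a more computational but equally valid way to pin down the same bundles).

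The gap is in type V, and it sits exactly where you locate the difficulty without resolving it. Saying you would ``resolve the universal extension to obtain $\vV_D\cong\uU(-1)^{\oplus 2}$'' is not an argument: $\uU(-1)^{\oplus 2}$ is not presented to you as an extension of $\Omega_{\mathbb{P}^3}|_{D}$ by $\oO_D(-2)$, there is no a priori map between it and the middle term of (\ref{give:ext}), and the fact that $\Ext^1_D(\Omega_{\mathbb{P}^3}|_D,\oO_D(2D))$ is one-dimensional only tells you the extension is unique, not what its middle term is. Some mechanism is needed to compare the two rank-four bundles, and that comparison is the entire content of the type V case. The paper explicitly remarks that a direct identification ``may be possible'' but instead argues indirectly inside the abelian category $\cC_D^0$: it first computes $\Ext^i_D(\oO_D(-C),\oO_D(-C))=\mathbb{C}$ for all $i\ge 0$ from the $2$-periodic matrix-factorization resolution of $\oO_D(-C)$ (which simultaneously yields that the non-split extension $\uU$ is locally free of rank two --- your proposed detour through $\mathbb{F}_2$ is vaguer and does not obviously descend to a statement about freeness of the stalk at the vertex); it then shows every non-zero object of $\cC_D^0$ is torsion free and that $\oO_D(-3C)$ is the unique rank-one simple object of $\cC_D^0$; finally it computes $\Hom(\uU(-1),\vV_D)\cong\mathbb{C}^4$ and $\Hom(\oO_D(-3C),\vV_D)\cong\mathbb{C}^2$ from (\ref{give:ext}) and (\ref{non-split}), produces an injection $\uU(-1)\hookrightarrow\vV_D$ whose cokernel is forced to be $\uU(-1)$ again, and splits the resulting sequence using $\Ext^{\ge 1}_D(\oO_D(-C),\uU)=0$. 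Without some substitute for this Hom-counting step (or an explicit surjection $\uU(-1)^{\oplus 2}\twoheadrightarrow\Omega_{\mathbb{P}^3}|_D$ with kernel $\oO_D(-2)$ and non-zero extension class), your outline does not establish the type V isomorphism.
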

\begin{proof}
The statement is obvious for type II.
As for types III and IV, the statement 
easily follows from Lemma~\ref{ortho} and using 
exceptional collections on $\mathbb{P}^2$ and 
$\mathbb{P}^1 \times \mathbb{P}^1$. 
We focus on the case of type V, which is 
a less obvious case. It may be possible to construct
an isomorphism $\uU(-1)^{\oplus 2} \cong \vV_D$ directly, but 
here
we give an 
indirect proof using the abelian 
category $\cC_{D}^{0}$ given in Subsection~\ref{subsec:two}. 

Let $D$ be the singular 
quadric defined by 
\begin{align*}
D=\{ xy+z^2=0 \} \subset \mathbb{P}^3
\end{align*}
where $[x:y:z:w]$ is the homogeneous coordinate of 
$\mathbb{P}^3$. Let $C\subset D$ be a ruling, 
defined by 
\begin{align*}
C=\{y=z=0\} \subset D. 
\end{align*}
It is easy to check that 
\begin{align}\label{Ext:D}
\Ext_{D}^i(\oO_D(-C), \oO_D(-C))=\mathbb{C}, \quad i\ge 0, 
\end{align}
using the resolution
\begin{align*}
\cdots \to \oO_D(-2)^{\oplus 2} \to \oO_D(-1)^{\oplus 2}
 \stackrel{\left(y, z \right)}{\lr} \oO_D(-C) \to 0. 
\end{align*}
Here the morphisms between rank two vector bundles
 are given by the matrix
\begin{align*}
\left(\begin{array}{cc}
-z & x \\
y & z
\end{array}   \right). 
\end{align*} 
It follows that there is a unique extension (\ref{non-split}) 
up to isomorphism, and $\uU$ is a vector bundle of rank two on $D$. 
Moreover, using (\ref{non-split}) 
and (\ref{Ext:D}), we can check that 
\begin{align}\label{ext:OU}
\Ext_{D}^i(\oO_D(-C), \uU)=0, \quad i \ge 1. 
\end{align}

Next we consider the abelian category $\cC_{D}^{0}$. 
We first note that any non-zero object $E \in \cC_{D}^{0}$ is 
a torsion free sheaf. Indeed
$E\in \cC_{D}^{0}$ implies that 
\begin{align*}
H^0(D, E)=H^0(D, E|_{H}(H))=0,
\end{align*}
where 
$H \in \lvert\oO_D(1) \rvert$ is a general element.
The above vanishing shows
 that $E$ is torsion free. 
By this fact, it immediately follows that the object
\begin{align*}
\oO_D(-3C) \in \cC_{D}^{0}
\end{align*}
 is a simple object in $\cC_{D}^{0}$. 
Conversely it is easy to check that 
an object $E \in \cC_{D}^{0}$
with $\rank(E)=1$ should be isomorphic to $\oO_D(-3C)$. 

By computing $\Hom$-groups using exact sequences (\ref{give:ext}), 
(\ref{non-split}),  
it is easy to see that 
\begin{align*}
\Hom(\uU(-1), \vV_{D}) \cong \mathbb{C}^4, \ \Hom(\oO_D(-3C), \vV_{D})
\cong \mathbb{C}^2.
\end{align*}
This implies that there is a non-trivial morphism 
\begin{align}\label{UV}
\uU(-1) \to \vV_{D}
\end{align}
such that the composition 
\begin{align*}
\oO_D(-3C) \hookrightarrow \uU(-1) \to \vV_{D}
\end{align*}
is non-zero. 
Also note that $\vV_{D} \in \cC_{D}^{0}$
by Lemma~\ref{ortho}. 
Since $\oO_D(-3C)$ is 
the unique simple objects of $\cC_{D}^0$ with rank one, 
and the sequence (\ref{non-split}) is non-split, 
the morphism (\ref{UV}) should be injective. 

Let us take the exact sequence in $\cC_{D}^{0}$
\begin{align}\label{UVF}
0 \to \uU(-1) \to \vV_{D} \to F \to 0. 
\end{align}
By the above sequence and using (\ref{ext:OU}),
we see that
\begin{align*}
\Hom(\uU(-1), F) \cong \mathbb{C}^2, \ 
\Hom(\oO_D(-3C), F) \cong \mathbb{C}. 
\end{align*} 
By the same argument as above, 
we have an injection 
$\uU(-1) \hookrightarrow F$, 
which must be an isomorphism 
since $\cC_{D}^{0}$ does not contain 
non-zero 
torsion sheaves. 
Therefore $\uU(-1) \cong F$,
and $\vV_{D} \cong \uU(-1)^{\oplus 2}$ follows
since (\ref{UVF}) splits by (\ref{ext:OU}). 
\end{proof}

Using the above lemma, we 
determine the simple objects in $\PPer(D)$. 
\begin{prop}\label{prop:PDsim}
Suppose that $f(D)$ is a point. Then the 
abelian category $\PPer(D)$ is described 
by simple objects as follows: 
\begin{itemize}
\item If $f$ is type II, we have 
\begin{align*}
\PPer(D)= \langle \oO_{\mathbb{P}^2}(-3)[2], \Omega_{\mathbb{P}^2}(-1)[1], 
\oO_{\mathbb{P}^2}(-2) \rangle. 
\end{align*}
\item If $f$ is type III, we have 
\begin{align*}
\PPer(D)= \langle \oO(-2, -2)[2], S_3(-1, -1)[1], \oO(-1, -2), \oO(-2, -1) 
\rangle. 
\end{align*}
Here $S_3$ is the kernel of the universal morphism 
\begin{align*}
0 \to S_3 \to \oO(-1, 0)^{\oplus 2} \oplus \oO(0, -1)^{\oplus 2} \to \oO \to 0. \end{align*}
\item If $f$ is type IV, we have 
\begin{align*}
\PPer(D)= \langle \oO_{\mathbb{P}^2}(-3)[2], 
S_4(-1)[1], \Omega_{\mathbb{P}^2}(-1) \rangle. 
\end{align*}
Here $S_4$ is the kernel of the universal morphism
\begin{align*}
0 \to S_4 \to \Omega_{\mathbb{P}^2}^{\oplus 3} 
\to \oO_{\mathbb{P}^2}(-1) \to 0. 
\end{align*}
\item If $f$ is type V, we have
\begin{align*}
\PPer(D)= \langle \oO_D(-2)[2], S_5(-1)[1], \oO_D(-3C) \rangle. 
\end{align*}
Here $C \subset D$ is a ruling and $S_5$ is a rank 
three vector bundle on $D$ which fits into an 
exact sequence
\begin{align}\label{SUO}
0 \to S_5 \to \uU^{\oplus 2} \to \oO_D \to 0,
\end{align}
where $\uU$ is defined by (\ref{non-split}). 
\end{itemize}
\end{prop}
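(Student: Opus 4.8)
The plan is to run the entire statement through the tilting equivalence $\Phi_{D}\colon\PPer(D)\simto\modu(A_D)$ of Corollary~\ref{cor:cor2}, under which the simple objects of $\PPer(D)$ correspond exactly to the simple $A_D$-modules; it then remains to compute the preimages of the latter. Write $E_{0},\dots,E_{r}$ for the pairwise non-isomorphic indecomposable summands of $\eE_D=\oO_D\oplus\oO_D(D)\oplus\vV_D$, which by Lemma~\ref{V:orth} are $(\oO_D,\oO_D(D),\vV_D)$ in types II, IV, V and $(\oO_D,\oO_D(D),\oO(-1,-2),\oO(-2,-1))$ in type III; these are the vertices of the quiver of $A_D$. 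Since $\eE_D$ is tilting, $\Phi_D(E_j)=\Hom(\eE_D,E_j)$ is the indecomposable projective $P_j$, and the standard duality between projectives and simples gives $\dR\Hom_{A_D}(P_j,S_i)=\delta_{ij}\mathbb{C}$ in degree $0$. Because the $P_j$ generate $D^b\modu(A_D)$ and detect cohomology, an object $T\in D^b\Coh(D)$ satisfies $\Phi_D(T)\cong S_i$ if and only if
\begin{align*}
\dR\Hom_D(E_j,T)\cong \delta_{ij}\,\mathbb{C}\quad\text{concentrated in degree }0,\qquad 0\le j\le r.
\end{align*}
Thus the proposition reduces to exhibiting, for each $i$, an object $T_i$ obeying this orthonormality; any such $T_i$ is then automatically a simple object of $\PPer(D)$.

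For types II, III and IV the surface $D$ is $\mathbb{P}^2$ or $\mathbb{P}^1\times\mathbb{P}^1$, and $(E_0,\dots,E_r)$ is a twist of a standard full exceptional collection. The listed objects are the corresponding dual collection, with the shifts $[2],[1],[0]$ forced by $\dim D=2$: the first simple is $\omega_D[2]$, equal to $\oO_{\mathbb{P}^2}(-3)[2]$ resp.\ $\oO(-2,-2)[2]$, while $S_3$ and $S_4$ are introduced precisely as kernels of the evaluation morphisms in order to realize the remaining dual classes. I would verify the displayed orthonormality directly from the Euler sequence, the defining sequences of $S_3,S_4$, Serre duality, and Bott vanishing / K\"unneth on $\mathbb{P}^2$ and $\mathbb{P}^1\times\mathbb{P}^1$. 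These computations are routine.

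The substantive case is type V, where $D=\{xy+z^2=0\}\subset\mathbb{P}^3$ is the singular quadric, so there is no full exceptional collection and Serre duality must be applied in the Gorenstein form $\omega_D\cong\oO_D(-2)$. The distinct summands are $\oO_D,\ \oO_D(D)=\oO_D(-1),\ \uU(-1)$, with $\uU$ the non-split self-extension~(\ref{non-split}) of $\oO_D(-C)$. I would take the candidate simples to be $\oO_D(-2)[2]$ (dual to $\oO_D$, via $H^\bullet(\oO_D(-2))=\mathbb{C}$ in degree $2$ and $H^\bullet(\oO_D(-1))=0$), $\oO_D(-3C)$ (dual to $\uU(-1)$), and $S_5(-1)[1]$ with $S_5$ the kernel of the evaluation map $\uU^{\oplus2}\twoheadrightarrow\oO_D$ in~(\ref{SUO}) (dual to $\oO_D(-1)$). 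A useful shortcut is that $\oO_D(-3C)\in\cC_D^{0}$, which by definition of $\cC_D^{0}$ already supplies the vanishing of $\dR\Hom_D(\oO_D,-)$ and $\dR\Hom_D(\oO_D(-1),-)$ against it, so only the single diagonal group $\dR\Hom_D(\uU(-1),\oO_D(-3C))\cong\mathbb{C}$ need be checked; that $\oO_D(-3C)$ is the \emph{unique} rank-one simple of $\cC_D^{0}$, established in Lemma~\ref{V:orth}, fixes its role. The remaining groups I would compute from the length-two resolution of $\oO_D(-C)$ in the proof of Lemma~\ref{V:orth}, the extension~(\ref{non-split}), the vanishings~(\ref{Ext:D}) and~(\ref{ext:OU}), the sequence~(\ref{SUO}), and Serre duality on $D$.

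The main obstacle will be precisely this type V verification. The difficulty is twofold: first, $\oO_D(-C)$ and $\oO_D(-3C)$ are not restrictions of line bundles on $\mathbb{P}^3$, so their cohomology is accessible only through the resolution of Lemma~\ref{V:orth}, and the nontrivial extension class defining $\uU$ must be tracked throughout; second, one must show that the evaluation map $\uU^{\oplus2}\to\oO_D$ is surjective with locally free kernel $S_5$ of rank three, and then confirm that the off-diagonal groups $\dR\Hom_D(\oO_D,S_5(-1)[1])$ and $\dR\Hom_D(\uU(-1),S_5(-1)[1])$ vanish while the diagonal one is $\mathbb{C}$. Once these cohomological inputs are in hand, the orthonormality criterion of the first paragraph immediately identifies the listed objects as the simples of $\PPer(D)$ in all four types, completing the proof.
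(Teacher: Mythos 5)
Your overall strategy coincides with the paper's: Toda also passes through the tilting equivalence, replacing $\eE_D$ by the basic bundle $\eE_D^{\dag}=\oO_D\oplus\oO_D(-1)\oplus\vV_D^{\dag}$ (one copy of each indecomposable summand), observes that $A_D^{\dag}=\End(\eE_D^{\dag})$ is a finite dimensional algebra whose simples are indexed by the vertices of its quiver, and identifies the objects of $\PPer(D)$ mapping to them. For the two ``outer'' simples in type V his argument is exactly your orthonormality check in disguise ($\dim_{\mathbb{C}}\Phi_D^{\dag}(\oO_D(-2)[2])=\dim_{\mathbb{C}}\Phi_D^{\dag}(\oO_D(-3C))=1$). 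Where you genuinely diverge is the middle simple: the paper does not guess $S_5$ and verify orthogonality, but \emph{constructs} the simple module at the middle vertex via its projective presentation, producing an exact sequence $\uU(-1)^{\oplus 2}\xrightarrow{(\phi_1,\phi_2)}\oO_D(-1)\to S_5(-1)[1]$ in $\PPer(D)$ where $\phi_1,\phi_2$ are a basis of $\Hom_D(\uU(-1),\oO_D(-1))\cong(\mathbb{C}[T]/T^2)^{\oplus 2}$ \emph{as a module over} $\End(\uU(-1))\cong\mathbb{C}[T]/T^2$, and then proves by a contradiction argument (no maps from the middle simple to $\Phi_D^{\dag}(\oO_x)$ in the quiver picture) that $S_5$ is actually a sheaf, whence the sequence (\ref{SUO}). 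The construction route buys you the existence and sheaf-ness of $S_5$ for free; your verification route buys a cleaner uniform statement across the four types.

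The one point in your plan that would bite you is the phrase ``the evaluation map $\uU^{\oplus 2}\twoheadrightarrow\oO_D$.'' Since $\Hom_D(\uU,\oO_D)\cong\mathbb{C}^4$, the honest evaluation map has source $\uU^{\oplus 4}$; to get $\uU^{\oplus 2}$ you must choose two morphisms forming a basis of $\Hom_D(\uU,\oO_D)$ as a free rank-two $\mathbb{C}[T]/T^2$-module. For an arbitrary pair of linearly independent morphisms (e.g.\ $\phi$ and $\phi\circ T$) the induced map $\End(\uU)^{\oplus 2}\to\Hom_D(\uU,\oO_D)$ fails to be an isomorphism, and then the required vanishing $\dR\Hom_D(\uU(-1),S_5(-1)[1])=0$ is false, so your candidate is not the simple object. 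You would also still owe a proof that the chosen map is surjective with locally free kernel of rank three; this is precisely what the paper's ``$S_5$ is a sheaf'' argument supplies and is not automatic from the orthonormality framework. Both points are fixable, but they are where the real content of the type V case lives.
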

\begin{proof}
Let $\vV_{D}^{\dag}$ be
\begin{align*}
\vV_{D}^{\dag} = \left\{ \begin{array}{ll}
\oO_{\mathbb{P}^2}(-2) & \mbox{in Type II}, \\
\oO_{}(-1, -2) 
\oplus \oO_{}(-2, -1) & 
\mbox{in Type III}, \\
\Omega_{\mathbb{P}^2}(-1) & \mbox{in Type IV}, \\
\uU(-1) & \mbox{in Type V}. 
\end{array}
\right. 
\end{align*}
By Corollary~\ref{cor:cor2} and
Lemma~\ref{V:orth},
setting $\eE_{D}^{\dag}=\oO_D \oplus \oO_D(-1) \oplus \vV_D^{\dag}$
and $A_{D}^{\dag}=\End(\eE_{D}^{\dag})$,  
we have the derived equivalence
\begin{align*}
\Phi_{D}^{\dag} = 
\dR \Hom(\eE_{D}^{\dag}, \ast) \colon 
D^b \Coh(D) \stackrel{\sim}{\to} D^b \modu (A_D^{\dag}),
\end{align*}
which restricts to an equivalence between 
$\PPer(D)$ and $\modu (A_{D}^{\dag})$. 
The algebra $A_{D}^{\dag}$ is a finite dimensional 
$\mathbb{C}$-algebra, and there is a finite 
number of simple objects in $\modu (A_{D}^{\dag})$. 
Therefore it is enough to find objects in 
$\PPer(D)$ which are sent to simple objects in $\modu (A_{D}^{\dag})$
after applying $\Phi_{D}^{\dag}$. 

We only discuss the type V case. 
The other cases are similarly discussed. 
By the sequence (\ref{non-split}), 
we have an isomorphism of $\mathbb{C}$-algebras
\begin{align*}
\End(\uU(-1)) \cong \mathbb{C}[T]/T^2. 
\end{align*}
Also noting Lemma~\ref{ortho}, we see that
the algebra $A_{D}^{\dag}$ is a path algebra of a quiver 
of the form
\begin{align}\label{quiver}
\xymatrix@1{
\bullet 
\ar@/^/[r] \ar@/^2pt/[r]
\ar@/_/[r] \ar@/_2pt/[r]
& \bullet 
\ar@/^/[r] \ar@/^2pt/[r]
\ar@/_/[r] \ar@/_2pt/[r]
& \bullet \ar@(dl,dr)_{T}
}
\end{align}
with relations $T^2=0$ and others which we omit. 
Therefore there are only three 
simple objects in 
$\modu (A_{D}^{\dag})$ corresponding to 
the three vertices. 

We have 
\begin{align*}
\dim_{\mathbb{C}} \Phi_{D}^{\dag}(\oO_D(-2)[2])
= \dim_{\mathbb{C}}\Phi_{D}^{\dag}(\oO_D(-3C))=1.
\end{align*}
Hence $\oO_D(-2)[2]$ and $\oO_D(-3C)$ 
are simple objects in $\PPer(D)$, and they 
correspond to the left vertex, right vertex 
of the quiver (\ref{quiver}) respectively. 
In order to describe the simple object 
corresponding to the middle vertex, 
we note that
\begin{align*}
&\dR \Hom_D(\oO_D, \oO_D(-1))=0, \\ 
&\dR \Hom_D(\oO_D(-1), \oO_D(-1))=\mathbb{C}, \\ 
&\dR \Hom_D(\uU(-1), \oO_D(-1))= \mathbb{C}^4. 
\end{align*}
In particular, we have $\oO_D(-1) \in \PPer(D)$. 

The $T$-action on $\Hom_D(\uU(-1), \oO_D(-1))$
is given by the composition
\begin{align*}
\Hom_D(\uU(-1), \oO_D(-1)) \twoheadrightarrow 
&\Hom_{D}(\oO_D(-3C), \oO_D(-1)) \\
& \cong \mathbb{C}^{\oplus 2}
\hookrightarrow\Hom_D(\uU(-1), \oO_D(-1)),
\end{align*}
induced by the exact sequence (\ref{non-split}). 
Therefore we have an isomorphism as 
$\End(\uU(-1))$-modules
\begin{align*}
\Hom_{D}(\uU(-1), \oO_D(-1)) 
\cong \left( \mathbb{C}[T]/T^2 \right)^{\oplus 2},
\end{align*}
which is 
isomorphic to $\Phi_{D}^{\dag}(\uU(-1)^{\oplus 2})$ 
as $A_D^{\dag}$-modules. 
If $S_5(-1)[1] \in \PPer(D)$ is the simple object 
corresponding to the middle vertex, 
the above argument implies the 
existence of an exact sequence in $\PPer(D)$
\begin{align*}
\uU(-1)^{\oplus 2} 
\stackrel{(\phi_1, \phi_2)}{\to} \oO_D(-1) \to S_5(-1)[1]. 
\end{align*}
Here $\phi_1$, $\phi_2$ form a basis of 
$\Hom_{D}(\uU(-1), \oO_D(-1))$
as $\mathbb{C}[T]/T^2$-module. 

We show that $S_5$ must be a sheaf. 
Suppose that $S_5$ is not a sheaf, or equivalently
$\hH^0(S_5(-1)[1])$ is non-zero. 
By the construction of $\phi_i$, the composition 
\begin{align*}
\oO_D(-3C) \hookrightarrow \uU(-1) \stackrel{\phi_i}{\to} \oO_D(-1)
\end{align*}
is non-zero. 
Therefore we have 
\begin{align*}
\Cok(\phi_i) \cong \Cok(\phi_i' \colon \oO_D(-3C) \to \oO_C(-1))
\end{align*}
for some non-zero morphism $\phi_i'$. 
This implies that $\Cok(\phi_i)$ is zero dimensional, 
hence $\hH^0(S_5(-1)[1])$
is also a zero dimensional sheaf.
On the other hand, 
by the quiver description (\ref{quiver})
of $\modu(A_{D}^{\dag})$, it is easy 
to see that there 
is no morphism from 
$\Phi_{D}^{\dag}(S_5(-1)[1])$
to $\Phi_{D}^{\dag}(\oO_x)$
for any $x\in D$. 
This is a contradiction since
the natural morphism
\begin{align*}
S_5(-1)[1] \to \hH^0(S_5(-1)[1])
\end{align*} 
also becomes a zero map. 
Therefore $S_5$ is a sheaf 
(indeed a vector bundle) on $D$ which 
fits into an exact sequence (\ref{SUO}). 
\end{proof}
\begin{rmk}
The above analysis of the derived 
category of the singular quadric $D \subset \mathbb{P}^3$
shows that there is a semi orthogonal decomposition
\begin{align*}
D^b \Coh(D) =\langle \oO_D, \oO_D(-1), D^b \modu \left(
\mathbb{C}[T]/T^2 \right) \rangle. 
\end{align*}
This fact seems to be not seen in literatures. 
\end{rmk}

\subsection{Simple objects in $\PPer_{0}(X/Y)$}
In this subsection, we determine the 
simple objects in $\PPer_{0}(X/Y)$, 
where $\PPer_{0}(X/Y)$ is 
defined in (\ref{per:inc2}). 
We first recall the result by Van den Bergh~\cite{MVB}. 
\begin{prop}\label{fib:one}\emph{(\cite{MVB})}
Suppose that $f(D)$ is a curve. 
Then $\PPer_{0}(X/Y)$ is 
described by simple objects as 
\begin{align}\label{PL}
\PPer_{0}(X/Y)=\langle \oO_x, 
\oO_{L_y}(-2)[1], \oO_{L_y}(-1) : 
x\in X \setminus D, y\in f(D)\}.
\end{align}
Here $L_y\cneq f^{-1}(y) \cong \mathbb{P}^1$. 
\end{prop}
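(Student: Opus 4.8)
The plan is to transport the statement across the derived equivalence of Theorem~\ref{thm:tilting}. Since $f(D)$ is a curve we are in type I, so $\vV = 0$ and $\eE = \oO_X \oplus \oO_X(D)$, and by the definition~(\ref{per:inc2}) the functor $\Phi$ restricts to an equivalence $\PPer_{0}(X/Y) \simto \Coh_{\le 0}(\mathrsfs{A})$. The target, consisting of right $\mathrsfs{A}$-modules with zero-dimensional support, is a finite length abelian category, so $\PPer_{0}(X/Y)$ is the extension closure of its simple objects. Thus it suffices to show that the simple objects are exactly $\oO_x$ for $x\in X\setminus D$, together with $\oO_{L_y}(-1)$ and $\oO_{L_y}(-2)[1]$ for $y\in f(D)$. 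A simple object of $\Coh_{\le 0}(\mathrsfs{A})$ is supported at a single closed point $y\in Y$, so I would stratify the argument according to whether $y$ lies on $Z \cneq f(D)$.

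For $y\notin Z$ the morphism $f$ is an isomorphism over a neighborhood of $y$, hence $\mathrsfs{A}$ is Morita equivalent to $\oO_Y$ there and the fiber algebra $\mathrsfs{A}\otimes \kappa(y)$ has a unique simple module, which corresponds under $\Phi$ to $\oO_x$ with $x=f^{-1}(y)$. This accounts for the objects $\oO_x$ in the list.

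The essential case is $y\in Z$, where $L_y = f^{-1}(y)\cong \mathbb{P}^1$ and $\oO_X(D)|_{L_y}\cong \oO_{L_y}(-1)$, the relative tautological bundle of the blow-up. Here I would first recall from~\cite{MVB} that near such a $y$ the algebra $\mathrsfs{A}$ is Morita equivalent to the path algebra of a quiver with two vertices, so that $\mathrsfs{A}\otimes\kappa(y)$ has exactly two simple modules. To identify them I would compute $\Phi$ on the two candidate objects by applying $\dR \hH om(\eE,\ast)$ and then $\dR f_{\ast}$, using $H^{\bullet}(\mathbb{P}^1,\oO(-1))=0$, $H^{\bullet}(\mathbb{P}^1,\oO)=\mathbb{C}$ and $H^{\bullet}(\mathbb{P}^1,\oO(-2))=\mathbb{C}[-1]$. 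One finds that both $\Phi(\oO_{L_y}(-1))$ and $\Phi(\oO_{L_y}(-2)[1])$ are one-dimensional $\mathrsfs{A}$-modules supported at $y$, concentrated at the two different vertices coming from the summands $\oO_X$ and $\oO_X(D)$ of $\eE$; in particular both candidate objects lie in $\PPer_{0}(X/Y)$ and map to the two distinct simple modules, so they are precisely the two simple objects over $y$.

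Combining the two cases produces exactly one simple object of $\Coh_{\le 0}(\mathrsfs{A})$ for every closed point $y\notin Z$ and exactly two for every $y\in Z$, which matches the complete list of simple modules; by the finite length property this yields the description~(\ref{PL}). The only non-formal input is the count of simple modules of the fiber algebra over $y\in Z$, which I expect to be the main obstacle and which is supplied by Van den Bergh's local structure theory~\cite{MVB}; once $\oO_X(D)|_{L_y}\cong \oO_{L_y}(-1)$ is in hand, the remaining cohomological identifications via $\Phi$ are routine.
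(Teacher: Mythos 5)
Your argument is correct, and it is essentially the argument the paper relies on: the paper gives no proof of this proposition but simply recalls it from Van den Bergh's work, where it is established exactly by your route (the tilting equivalence $\Phi$ with $\eE=\oO_X\oplus\oO_X(D)$, identifying $\PPer_0(X/Y)$ with zero-dimensional $\mathrsfs{A}$-modules and counting the simples over each point of $Y$ via the two indecomposable summands of $\eE$). Your cohomology computations placing $\Phi(\oO_{L_y}(-1))$ and $\Phi(\oO_{L_y}(-2)[1])$ at the two vertices over $y\in f(D)$ are the right verification, matching the perversity-$0$ convention used in the 3-fold case.
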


Below we assume that $f(D)$ is a point. 
We need the following
property of $\PPer(X/Y)$, which 
will be also used in the next section:
\begin{prop}\label{prop:property}
Suppose that $f(D)$ is a point. 
For $E\in \PPer(X/Y)$, we have 

(i) $\hH^i(E)=0$ for $i\notin \{-2, -1, 0\}$.

(ii) $\hH^{-1}(E)$, $\hH^{-2}(E)$ are supported on $D$. 

(iii) $\hH^{0}(E), \hH^{-2}(E)[2] \in \PPer(X/Y)$.
\end{prop}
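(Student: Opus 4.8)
The plan is to work through the derived equivalence $\Phi$ of Theorem~\ref{thm:tilting}, using the characterisation $E\in\PPer(X/Y)\iff\Phi(E)\in\Coh(\mathrsfs{A})$ and the quasi-inverse $\Psi$ of $\Phi$, given on modules by $M\mapsto M\dotimes_{\mathrsfs{A}}\eE$, rather than arguing from the perpendicularity conditions directly. I would first settle (ii): since $\Phi(E)\in\Coh(\mathrsfs{A})$ forces $\dR f_\ast E$ to be a sheaf, and $f$ is an isomorphism over $Y\setminus\{0\}$, the restriction $E|_{X\setminus D}$ agrees with $(\dR f_\ast E)|_{Y\setminus\{0\}}$ and is therefore a sheaf; hence $\hH^i(E)$ is supported on $D$ for every $i\neq 0$, which gives (ii) and will be reused below.

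For (i), the upper bound is immediate: writing $E\cong\Psi(\Phi(E))=\Phi(E)\dotimes_{\mathrsfs{A}}\eE$ with $\Phi(E)$ a sheaf placed in degree $0$, the derived tensor product is right-bounded, so $E\in D^{\le 0}\Coh(X)$ and $\hH^i(E)=0$ for $i>0$. The lower bound $\hH^i(E)=0$ for $i<-2$ is the substantive point; it amounts to the vanishing $\TOR_j^{\mathrsfs{A}}(M,\eE)=0$ for all $j\ge 3$ and $M\in\Coh(\mathrsfs{A})$, i.e. to $\eE$ having Tor-dimension at most $2$ as a left $\mathrsfs{A}$-module, which encodes the geometric fact that the fibres of $f$ have dimension at most two. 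As this is local over $0\in Y$, I would establish it by passing to $\widehat{X}\to\widehat{Y}$ and invoking the explicit presentation of $\widehat{\eE}$ in Corollary~\ref{cor:cor1}, equivalently by reducing to the two-dimensional exceptional divisor $D$ through Corollary~\ref{cor:cor2}.

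Granting the amplitude $[-2,0]$, part (iii) follows formally from the same characterisation together with the spectral sequence $R^pf_\ast(\eE^\vee\otimes\hH^q(E))\Rightarrow\hH^{p+q}(\Phi(E))$, in which $p\in\{0,1,2\}$. Since $\hH^{>0}(E)=0$ and $\Phi(E)$ is concentrated in degree $0$, inspecting the top corner gives $R^1f_\ast(\eE^\vee\otimes\hH^0(E))=R^2f_\ast(\eE^\vee\otimes\hH^0(E))=0$, so $\Phi(\hH^0(E))$ is a sheaf and $\hH^0(E)\in\PPer(X/Y)$. Dually, using $\hH^{<-2}(E)=0$ and inspecting the bottom corner yields $f_\ast(\eE^\vee\otimes\hH^{-2}(E))=R^1f_\ast(\eE^\vee\otimes\hH^{-2}(E))=0$, so $\Phi(\hH^{-2}(E)[2])$ is a sheaf and $\hH^{-2}(E)[2]\in\PPer(X/Y)$.

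The main obstacle is thus the lower cohomological bound in (i): controlling the Tor-dimension of the tilting bundle $\eE$ over $\mathrsfs{A}$ by the fibre dimension, which is exactly where the hypothesis that $f(D)$ is a point (so that the fibre $D$ is two-dimensional) enters. Once this is secured, parts (ii) and (iii) are formal consequences of the equivalence $\Phi$ and the spectral sequence for $\dR f_\ast$, as indicated above.
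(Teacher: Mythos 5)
Your treatment of (ii), of the upper bound $\hH^{>0}(E)=0$, and of (iii) is sound and consistent with the paper: the paper likewise obtains (iii) from the Leray spectral sequence $R^pf_{\ast}(\eE_0^{\vee}\otimes\hH^q(E))\Rightarrow \hH^{p+q}(\Phi_0(E))$ once the amplitude $[-2,0]$ is established, and your observation that $E\simeq\Phi(E)\dotimes_{\mathrsfs{A}}\eE$ with $\Phi(E)$ a sheaf kills positive cohomology is a legitimate shortcut for the upper bound.

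The gap is the lower bound $\hH^{<-2}(E)=0$, which you correctly single out as the substantive point but do not prove. Recasting it as the statement that $\eE$ has Tor-dimension at most $2$ as a left $\mathrsfs{A}$-module is an exact reformulation rather than a reduction: $\TOR_j^{\mathrsfs{A}}(M,\eE)=\hH^{-j}(\Psi(M))$, so the two claims are identical, and neither of your two pointers closes it. The sequence (\ref{desire2}) is a presentation of the sheaf $\widehat{\vV}$ by line bundles on $\widehat{X}$, not a projective resolution of $\widehat{\eE}$ over $\widehat{A}$, and no flat-dimension bound falls out of it. Reducing to $D$ via Corollary~\ref{cor:cor2} would presumably mean computing the amplitude of $\widehat{\Psi}(S)$ for the simple modules $S$ supported at $0$ and concluding by d\'evissage; but identifying those simples with pushforwards of simples of $\PPer(D)$ is precisely Proposition~\ref{prop:Persim}, whose proof in the paper invokes the present proposition, so this route is circular unless you re-derive that classification independently. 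The paper's actual argument here is a different, genuinely geometric one: it restricts $E$ to a general member $H\in\lvert\oO_X(-D)\rvert$, where the fibres of $f|_{H}$ are at most one-dimensional so the spectral sequence for $\dR f_{\ast}(\hH^q(E)|_{H}(-D))$ degenerates; deduces $\dR f_{\ast}\hH^q(E)\simeq \dR f_{\ast}(\hH^q(E)(-D))$ for $q\notin\{-2,-1,0\}$; shows this common value vanishes using $R^2f_{\ast}\hH^q(E)=0$ for $q\le -3$, $q\ge 0$ (which rests on the duality argument of the proof of \cite[Theorem~6.1]{TU}); and then concludes $\hH^q(E)\in\cC^{0}$, so that the orthogonality $\Hom(\cC^{<0},E)=\Hom(E,\cC^{>0})=0$ in the definition of $\PPer(X/Y)$ forces $\hH^q(E)=0$. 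Some argument of this kind is required; as written, your proof has a hole at exactly its load-bearing step.
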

\begin{proof}
The statement is local on $Y$, so we may 
assume that $Y$ is affine. Let us take a general 
member $H \in \lvert \oO_X(-D) \rvert$. 
We have the distinguished triangle
\begin{align*}
\dR f_{\ast}E \to \dR f_{\ast} \left(E(-D)\right) \to \dR f_{\ast} 
\left( E|_{H}(-D) \right).
\end{align*}
Since $\Phi_0(E) \in \Coh(\mathrsfs{A}_0)$, the left
two objects are coherent sheaves on $Y$. 
Therefore
\begin{align*}
R^i f_{\ast} \left( E|_{H}(-D) \right)=0, \quad i\neq -1, 0.
\end{align*} 
On the other hand, we have the spectral sequence
\begin{align*}
E_{2}^{p, q}=R^p f_{\ast} \left( \hH^q(E)|_{H}(-D) \right) \Rightarrow 
R^{p+q}f_{\ast} \left( E|_{H}(-D) \right)
\end{align*}
which degenerates since the fibers of $f|_{H}$ are 
at most one dimensional. (cf.~\cite[Lemma~3.1]{Br1}.)
Therefore we have
\begin{align}\label{one:dim}
R^p f_{\ast}\left(\hH^{q}(E)|_{H}(-D)\right)=0
\end{align}
if $p+q \neq -1, 0$, or equivalently
\begin{align*}
(p, q) \neq (0, 0), (0, -1), (1, -1), (1, -2).
\end{align*}
Hence for $q\notin \{-2, -1, 0\}$, we have the isomorphism
\begin{align}\label{isom:HE}
\dR f_{\ast} \hH^q(E) \stackrel{\sim}{\to}
\dR f_{\ast}\left(\hH^q(E)(-D)\right). 
\end{align}
We show that (\ref{isom:HE}) vanishes for 
$q\notin \{-2, -1, 0\}$. 
Let us consider the spectral sequence
\begin{align}\label{spe:f}
E_{2}^{p, q}=
R^p f_{\ast} \hH^q(E) \Rightarrow R^{p+q}f_{\ast}E. 
\end{align}
Since $\dR f_{\ast}E \in \Coh(Y)$, 
the morphism
\begin{align}\label{mor:spe}
f_{\ast} \hH^q(E) \to R^2 f_{\ast}\hH^{q-1}(E)
\end{align}
is an isomorphism   
for $q\notin \{-1, 0\}$, injective for $q=-1$ and surjective for $q=0$. 
Also we have 
\begin{align*}
R^1 f_{\ast}\hH^q(E)=0, \quad q\neq -1.
\end{align*}
Hence if we have
\begin{align}\label{if:have}
R^2 f_{\ast}\hH^q(E)=0, \quad q\le -3, q\ge 0, 
\end{align}
then the vanishing of (\ref{isom:HE}) for $q\notin \{-2, -1, 0\}$
follows.  
The condition (\ref{if:have}) follows from
the vanishing (\ref{one:dim})
for $p=1$, $q\le -3$, $q\ge 0$ and 
the duality argument given in the proof of~\cite[Theorem~6.1]{TU}.

By the vanishing of (\ref{isom:HE})
for $q\notin \{-2, -1, 0\}$, we have
\begin{align*}
\hH^{q}(E) \in \cC^{0}, \quad q\notin \{-2, -1, 0\}.
\end{align*}
Therefore we must have $\hH^q(E)=0$ for $q\notin \{-2, -1, 0\}$
by the definition of $\PPer(X/Y)$. So (i) is proved. 
The result of (ii) follows from that (\ref{mor:spe}) is injective 
for $q=-2, -1$. 

Now since (i) is proved, 
the spectral sequence (\ref{spe:f})
and a similar one for $\hH^0(E)(-D)$ shows 
\begin{align*}
R^i f_{\ast}\hH^0(E)=R^i f_{\ast}\left(\hH^0(E)(-D)\right)=0, 
\quad i\ge 1.
\end{align*}
Hence we have $\Phi_0(\hH^0(E)) \in \Coh(\mathrsfs{A}_0)$. 
Since we have 
\begin{align*}
\Hom(\cC^{<0}, \hH^0(E))=\Hom(\cC^{>0}, \hH^0(E))=0,
\end{align*}
it follows that $\hH^0(E) \in \PPer(X/Y)$. 
A similar argument also shows $\hH^{-2}(E)[2] \in \PPer(X/Y)$. 
\end{proof}

Using the notation of Corollary~\ref{cor:cor1}
and Corollary~\ref{cor:cor2}, we show the following:  
\begin{lem}\label{lem:rest}
Suppose that $f(D)$ is a point. Then 
the restriction morphism of algebras
\begin{align}\label{mor:rest}
\End(\widehat{\eE}, \widehat{\eE}) \to \End(\eE_{D}, \eE_{D})
\end{align}
is surjective. In particular, the inclusion 
$i \colon D \hookrightarrow X$ induces a fully faithful 
functor
\begin{align}\label{FF}
i_{\ast} \colon \PPer(D) \hookrightarrow \PPer_{0}(X/Y). 
\end{align}
\end{lem}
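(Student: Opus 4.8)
The plan is to first prove the surjectivity of \eqref{mor:rest}, and then deduce the statement about $i_{\ast}$ by transporting everything to the module categories through the equivalences $\widehat{\Phi}$ and $\Phi_D$.

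First I would record that, by the constructions in Corollary~\ref{cor:cor1} and Corollary~\ref{cor:cor2}, the restriction $\widehat{\eE}|_D$ is canonically identified with $\eE_D$, so that \eqref{mor:rest} is the map $\phi \mapsto \phi|_D$ induced by $\widehat{\eE}\to \widehat{\eE}|_D$. Tensoring $0\to \oO_{\widehat{X}}(-D)\to \oO_{\widehat{X}}\to \oO_D\to 0$ with the locally free sheaf $\widehat{\eE}$ and applying $\dR\Hom_{\widehat{X}}(\widehat{\eE},\ast)$ yields a long exact sequence whose relevant portion is
\[
\End(\widehat{\eE}) \xrightarrow{\ \mathrm{res}\ } \Hom_{\widehat{X}}(\widehat{\eE}, i_{\ast}\eE_D) \to \Ext^1_{\widehat{X}}(\widehat{\eE}, \widehat{\eE}(-D)) \to \Ext^1_{\widehat{X}}(\widehat{\eE}, \widehat{\eE}).
\]
By the adjunction $\dL i^{\ast}\dashv i_{\ast}$ one has $\Hom_{\widehat{X}}(\widehat{\eE}, i_{\ast}\eE_D)=\Hom_D(\widehat{\eE}|_D,\eE_D)=\End(\eE_D)$, and this carries $\mathrm{res}$ to \eqref{mor:rest}. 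Since $\widehat{\Phi}$ sends $\widehat{\eE}$ to the free module $\widehat{A}$, we have $\Ext^{>0}_{\widehat{X}}(\widehat{\eE},\widehat{\eE})=0$, so surjectivity of \eqref{mor:rest} is equivalent to the vanishing $\Ext^1_{\widehat{X}}(\widehat{\eE}, \widehat{\eE}(-D))=0$.

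This vanishing is the heart of the matter and the step I expect to be the main obstacle. The route I would take is to show that $\widehat{\eE}(-D)=\oO_{\widehat{X}}(-D)\oplus \oO_{\widehat{X}}\oplus \widehat{\vV}(-D)$ lies in $\PPer(\widehat{X}/\widehat{Y})$. Indeed $\widehat{\eE}$ corresponds to the free module $\widehat{A}$ under $\widehat{\Phi}$, hence is a projective object of $\PPer(\widehat{X}/\widehat{Y})$, so $\Ext^{>0}_{\widehat{X}}(\widehat{\eE},G)=0$ for every $G\in\PPer(\widehat{X}/\widehat{Y})$, which would finish the vanishing. As $\PPer$ is abelian and closed under summands, and $\oO_{\widehat{X}},\oO_{\widehat{X}}(D)\in\PPer$, it suffices to check $\oO_{\widehat{X}}(-D)\in\PPer(\widehat{X}/\widehat{Y})$; the case of $\widehat{\vV}(-D)$ then follows by splitting the twist of \eqref{desire2} into short exact sequences in $\PPer$. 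The membership $\oO_{\widehat{X}}(-D)\in\PPer(\widehat{X}/\widehat{Y})$ I would verify from the definition of the $p=0$ heart: the requirement that $\dR f_{\ast}$ of $\oO_{\widehat{X}}(-D)$ and $\oO_{\widehat{X}}(-2D)$ be sheaves reduces to $R^{>0}f_{\ast}\oO_{\widehat{X}}(-nD)=0$ for $n\ge 1$, which holds because $\oO_{\widehat{X}}(-D)$ is $f$-ample by Lemma~\ref{lem:note}(i) and $\oO_D(-D)$ is ample on each exceptional divisor in the classification of Subsection~\ref{subsec:class}, while the orthogonality against $\cC$ follows from the same vanishing and the local description of $\cC$ in~\cite{TU}. (Alternatively one can avoid perversity altogether and prove $\Ext^1_{\widehat{X}}(\widehat{\eE},\widehat{\eE}(-D))=0$ blockwise, using $R^{>0}f_{\ast}\oO_{\widehat{X}}=0$ for these rational contractions, the $f$-ampleness of $-D$, and the explicit resolution \eqref{desire2} of $\widehat{\vV}$.)

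Finally I would deduce the functor statement. For $G\in D^b\Coh(D)$ the adjunction $\dL i^{\ast}\dashv i_{\ast}$ gives
\[
\widehat{\Phi}(i_{\ast}G)=\dR\Hom_{\widehat{X}}(\widehat{\eE},i_{\ast}G)=\dR\Hom_D(\eE_D,G)=\Phi_D(G),
\]
where the $\widehat{A}$-action on the left is induced from the $A_D$-action on the right by restriction of scalars along $\rho\colon \widehat{A}=\End(\widehat{\eE})\to \End(\eE_D)=A_D$, since endomorphisms of $\widehat{\eE}$ act through their restrictions $\widehat{\eE}|_D=\eE_D$; that is, $\widehat{\Phi}\circ i_{\ast}\cong \rho^{\ast}\circ\Phi_D$. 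As $\rho^{\ast}$ is exact and sends $\modu(A_D)$ into $\modu(\widehat{A})$, the functor $i_{\ast}$ sends $\PPer(D)$ into $\PPer(\widehat{X}/\widehat{Y})$; and since $\Supp(i_{\ast}G)\subseteq D$ is contained in the single fiber $f^{-1}(0)$, Remark~\ref{rmk:support} places $i_{\ast}G$ in $\PPer_{0}(X/Y)$ (objects supported on the fiber over $0$ are unchanged under the completion $\widehat{X}\to X$, so the reading over $\widehat{X}$ computes the one over $X$). Full faithfulness is then formal: by the surjectivity of $\rho$ just proved, $A_D=\widehat{A}/\ker\rho$, so any $\widehat{A}$-linear map between modules pulled back along $\rho$ is automatically $A_D$-linear, giving $\Hom_{\widehat{A}}(\rho^{\ast}M,\rho^{\ast}N)=\Hom_{A_D}(M,N)$. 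Hence $\Hom_X(i_{\ast}G,i_{\ast}G')=\Hom_D(G,G')$ for $G,G'\in\PPer(D)$, which is the asserted full faithfulness of \eqref{FF}.
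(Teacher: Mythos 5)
Your reduction of the surjectivity of (\ref{mor:rest}) to the vanishing of $\Ext^1_{\widehat{X}}(\widehat{\eE},\widehat{\eE}(-D))$ is correct, and your derivation of the fully faithful functor (\ref{FF}) from that surjectivity (restriction of scalars along $\rho\colon\widehat{A}\twoheadrightarrow A_D$, then passing from $\widehat{X}$ to $X$) is essentially the paper's own argument. Where you diverge is in how the $\Ext^1$-vanishing is proved. The paper applies the formal function theorem to express the obstruction group as an inverse limit over the thickenings $D_n=nD$, reduces via the filtrations $0\to\eE_D(-nD)\to\widehat{\eE}|_{D_{n+1}}\to\widehat{\eE}|_{D_n}\to 0$ to the vanishing of $\Ext^1_D(\eE_D,\eE_D(-nD))$ for $n\ge 0$, and proves that by showing $\eE_D(-nD)\in\PPer(D)$ and invoking projectivity of $\eE_D$ there; the membership check is then an explicit computation on the surface $D$ using (\ref{give:ext}). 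You instead stay on $\widehat{X}$, aiming to show $\widehat{\eE}(-D)\in\PPer(\widehat{X}/\widehat{Y})$ and to invoke projectivity of $\widehat{\eE}$. That architecture is sound, and for the line bundle summands the membership does reduce to $R^{>0}f_{\ast}\oO_{\widehat{X}}(-nD)=0$ as you say.

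The gap is the summand $\widehat{\vV}(-D)$, which you dispose of with ``follows by splitting the twist of (\ref{desire2}) into short exact sequences in $\PPer$''. That is exactly the hard part of the lemma, and the assertion conceals two issues. First, for the pieces of the twisted sequence to be exact in $\PPer(\widehat{X}/\widehat{Y})$ --- equivalently, for $R^{>0}f_{\ast}\widehat{\vV}(-D)$ and $R^{>0}f_{\ast}\widehat{\vV}(-2D)$ to vanish --- a chase through (\ref{desire2}) identifies $R^1f_{\ast}\widehat{\vV}(-2D)$ with the cokernel of $\left(f_{\ast}\oO_{\widehat{X}}(-D)\right)^{\oplus kl}\to \left(f_{\ast}\oO_{\widehat{X}}(-2D)\right)^{\oplus l}$, so its vanishing is the statement $m_0\cdot f_{\ast}\oO_{\widehat{X}}(-D)=f_{\ast}\oO_{\widehat{X}}(-2D)$, i.e.\ $f_{\ast}\oO_{\widehat{X}}(-2D)=m_0^2$. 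This is a nontrivial assertion about the four point-contraction singularities (integral closedness of $m_0^2$) that is established nowhere in the paper and is not needed on the paper's route. Second, your claim is strictly stronger than what is required: membership of $\widehat{\vV}(-D)$ in $\PPer$ forces $\Ext^2$ and $\Ext^3$ vanishing against $\widehat{\eE}$ as well, and it is precisely the $\oO(2D)$-twists inside $\widehat{\vV}$ where the obstruction $H^2(D,\oO_D(2D))\neq 0$ that forced the introduction of $\vV$ in the first place threatens to reappear. None of this looks unfixable, but as written the decisive cohomological verification has not been performed; you should either carry out the computation of $R^{>0}f_{\ast}\widehat{\vV}(-D)$ and $R^{>0}f_{\ast}\widehat{\vV}(-2D)$ in each type (including the identification $f_{\ast}\oO_{\widehat{X}}(-2D)=m_0^2$), or adopt the paper's descent to $D$, where the universality of (\ref{give:ext}) visibly kills the offending $H^2$ and everything is a finite computation on a surface.
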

\begin{proof}
As for the first statement, 
it is enough to show 
$\Ext_{\widehat{X}}^1(\widehat{\eE}(-D), \widehat{\eE})=0$. 
By the formal function theorem, we have 
\begin{align*}
\Ext_{\widehat{X}}^1(\widehat{\eE}(-D), \widehat{\eE})
\cong \lim_{\longleftarrow } \Ext_{D_n}^1(\widehat{\eE}(-D)|_{D_n}, 
\widehat{\eE}|_{D_n}). 
\end{align*}
Here $D_n$ is the divisor $nD$. 
Applying $\Hom_{\widehat{X}}(\widehat{\eE}(-D), \ast)$
to the exact sequence
\begin{align*}
0 \to \eE_{D}(-nD) \to \widehat{\eE}|_{D_{n+1}} \to \widehat{\eE}|_{D_n} \to 0,
\end{align*}
it is enough to show 
\begin{align*}
\Ext_{D}^1(\eE_{D}, \eE_{D}(-nD))=0, \quad n\ge 0. 
\end{align*}
The above vanishing holds if $\eE_{D}(-nD) \in \PPer(D)$, 
since $\eE_{D}$ is a projective object of $\PPer(D)$. 
Using the exact sequence (\ref{give:ext}), it is easy to check that 
\begin{align*}
\dR \Hom_{D}(\oO_D \oplus \oO_D(D), \eE_{D}(-nD)) \in \modu (A_{D})
\end{align*}
for $n\ge 0$. 
Noting $\eE_{D}(-nD) \in \Coh(D)$, it follows that 
$\eE_{D}(-nD)$ is an object in $\PPer(D)$. 

As for the latter statement, the morphism (\ref{mor:rest}) 
induces a functor
\begin{align*}
\modu (A_{D}) \to \modu (\widehat{A})
\end{align*}
which is fully faithful since (\ref{mor:rest}) is surjective. 
Applying $\Phi_D$ and $\widehat{\Phi}$, 
we see that $i_{\ast} \colon D^b \Coh(D) \to D^b \Coh(\widehat{X})$
restricts to a fully faithful functor
\begin{align}\label{com1}
i_{\ast} \colon \PPer(D) \hookrightarrow \PPer_{0}(\widehat{X}/\widehat{Y}). 
\end{align}
On the other hand, 
the morphism of schemes $\widehat{X} \to X$ 
obviously induces a fully faithful functor 
\begin{align}\label{com2}
\PPer_{0}(\widehat{X}/\widehat{Y}) \hookrightarrow \PPer_{0}(X/Y). 
\end{align}
By (\ref{com1}) and (\ref{com2}), it follows that 
$i_{\ast}$ induces the 
fully faithful functor (\ref{FF}). 
\end{proof}
\begin{rmk}\label{rmk:subca}
By the surjection (\ref{mor:rest}), it follows that the 
subcategory 
\begin{align}\label{subca:P}
\PPer(D) \subset \PPer_{0}(\widehat{X}/\widehat{Y})
\end{align}
via (\ref{com1}) 
is closed under subobjects and quotients. 
\end{rmk}
We also prepare another lemma. 
\begin{lem}\label{ODmo}
Let $E \in \PPer_{0}(\widehat{X}/\widehat{Y})$ be a simple 
object. Then $\hH^i(E)$ is $\oO_D$-module 
for all $i\in \mathbb{Z}$. 
\end{lem}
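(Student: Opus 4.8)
The plan is to transport the statement to the module side via the equivalence $\widehat{\Phi}$ of Corollary~\ref{cor:cor1}, under which $\PPer(\widehat{X}/\widehat{Y})$ becomes $\modu(\widehat{A})$ and the subcategory $\PPer_{0}(\widehat{X}/\widehat{Y})$ corresponds to the finite length $\widehat{A}$-modules supported at the closed point $0 \in \widehat{Y}$. Under this correspondence a simple object $E \in \PPer_{0}(\widehat{X}/\widehat{Y})$ is sent to a simple $\widehat{A}$-module $M \cneq \widehat{\Phi}(E)$ with $\dim_{\mathbb{C}} M < \infty$. The key claim I would establish is that the maximal ideal $m_0 \subset R \cneq \widehat{\oO}_{Y, 0}$ annihilates $M$; once this is known, the desired statement on cohomology sheaves follows by pulling the annihilation back to $\widehat{X}$.

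For the key claim, note that $\widehat{A}=\End(\widehat{\eE})$ is module-finite over $R$, and that $R$ maps into the center of $\widehat{A}$ because scalar multiplication by $f^{\ast}g$, for $g \in R = H^0(\widehat{Y}, \oO_{\widehat{Y}})$, commutes with every endomorphism of $\widehat{\eE}$. Since $M$ is a finite-dimensional simple module over the $\mathbb{C}$-algebra $\widehat{A}$, Schur's lemma gives $\End_{\widehat{A}}(M) = \mathbb{C}$, so each central element $g \in R$ acts on $M$ by a scalar $\lambda_g$. The assignment $g \mapsto \lambda_g$ is then a $\mathbb{C}$-algebra character $R \to \mathbb{C}$ whose kernel is the maximal ideal at which $M$ is supported; as $M$ is supported at $0$, this kernel is $m_0$, and hence $m_0 \cdot M = 0$.

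It remains to translate this back through $\widehat{\Phi}$. For $g \in R$, multiplication by $g$ on $M$ corresponds, by the $R$-linearity of $\widehat{\Phi} = \dR \Hom(\widehat{\eE}, \ast)$, to multiplication by $f^{\ast}g$ on $E$ in $D^b \Coh(\widehat{X})$. Thus for every $g \in m_0$ the morphism $f^{\ast}g \colon E \to E$ vanishes, and taking cohomology sheaves shows that $f^{\ast}m_0$ annihilates each $\hH^i(E)$. Finally, by Lemma~\ref{lem:note} (i) the line bundle $\oO_{\widehat{X}}(-D)$ is $f$-globally generated with $f_{\ast}\oO_{\widehat{X}}(-D)$ the completion of $m_0$, so the sections $f^{\ast}m_0$ generate the ideal sheaf $\oO_{\widehat{X}}(-D)$ of the Cartier divisor $D$. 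Therefore $\oO_{\widehat{X}}(-D) \cdot \hH^i(E) = 0$, which is exactly the assertion that $\hH^i(E)$ is an $\oO_D$-module.

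The routine inputs are the equivalence and the identification of $\PPer_{0}(\widehat{X}/\widehat{Y})$ with finite length modules; the main obstacle I anticipate is the bookkeeping in the last step, namely verifying that the central $R$-action on $M$ really matches multiplication by $f^{\ast}g$ on $E$ under $\widehat{\Phi}$, and that $f$-global generation of $\oO_{\widehat{X}}(-D)$ upgrades the vanishing of each $f^{\ast}g$ into annihilation of $\hH^i(E)$ by the full ideal sheaf. Both are formal but must be phrased with the correct completed sheaves on $\widehat{X}$.
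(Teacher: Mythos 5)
Your proof is correct and is essentially the paper's argument: the paper also shows that multiplication by $x\in m_0$ on $E$ is zero (there directly in $D^b\Coh(\widehat{X})$, using that $E$ is simple and supported on $D$, which is your Schur-lemma step transported through $\widehat{\Phi}$), and then concludes exactly as you do from $\Cok(f^{\ast}m_0 \to \oO_{\widehat{X}})=\oO_D$. The detour through $\modu(\widehat{A})$ is harmless but not needed.
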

\begin{proof}
For $x \in m_0 \subset \oO_{\widehat{Y}}$, we have 
the morphism 
\begin{align*}
E \stackrel{x}{\to} E. 
\end{align*}
Since $E$ is simple and supported on $D$, 
the above morphism must be a zero map. 
In particular, the induced morphism 
on $\hH^{i}(E)$ is also a zero map. 
Since $\Cok(f^{\ast}m_0 \to \oO_{\widehat{X}})=\oO_D$, 
this implies that each $\hH^i(E)$ is 
$\oO_D$-module. 
\end{proof}
Using the above lemmas, we show the following: 
\begin{prop}\label{prop:Persim}
Suppose that $f(D)$ is a point. 
Then 
$E \in \PPer_{0}(X/Y)$ is a simple object
if and only if $E \cong \oO_x$ for $x\notin D$ or 
$E \cong i_{\ast}F$ for a simple object
$F \in \PPer(D)$, where $i_{\ast}$ is 
given in (\ref{FF}). 
In particular, we have 
\begin{align*}
\PPer_{0}(X/Y) = \langle \oO_x, i_{\ast} \PPer(D) :
x \in X \setminus D \rangle.
\end{align*}
\end{prop}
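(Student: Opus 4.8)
The plan is to transport the whole problem, via the equivalence $\Phi$ of Theorem~\ref{thm:tilting}, into the module category $\Coh(\mathrsfs{A})$, where simple objects are controlled by the algebra structure. Under $\Phi$ the subcategory $\PPer_{0}(X/Y)$ corresponds to $\Coh_{\le 0}(\mathrsfs{A})$, the finite length right $\mathrsfs{A}$-modules, and a simple object of $\PPer_{0}(X/Y)$ goes to a simple $\mathrsfs{A}$-module. Any finite length $\mathrsfs{A}$-module splits as a direct sum according to its finite set-theoretic support in $Y$, so a simple object is supported at a single closed point $y\in Y$; in particular $\Supp(E)$ lies in one fibre of $f$. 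First I would dispose of the case $y\neq 0$: there $f$ is an isomorphism and $\eE$ is a vector bundle, so $\mathrsfs{A}$ restricted to $Y\setminus 0$ is Morita equivalent to $\oO_{Y\setminus 0}$ and the perverse $t$-structure restricts to the standard one; hence the only simple objects supported at $y\neq 0$ are the skyscrapers $\oO_x$ with $f(x)=y$, i.e.\ $x\notin D$.

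The substantial case is $y=0$, i.e.\ objects supported on $D$. Here I would pass to the formal completion: completing along $0\in Y$ is exact and identifies finite length $\mathrsfs{A}$-modules supported at $0$ with finite length $\widehat{A}$-modules, so by Corollary~\ref{cor:cor1} the simple objects of $\PPer_{0}(X/Y)$ supported on $D$ are exactly the simple $\widehat{A}$-modules, the identification being realised geometrically by the functor $i_{\ast}$ of Lemma~\ref{lem:rest} together with the embedding $\PPer_{0}(\widehat{X}/\widehat{Y})\hookrightarrow \PPer_{0}(X/Y)$. It then remains to match simple $\widehat{A}$-modules with simple $A_D$-modules, the latter being, via $\Phi_D$ of Corollary~\ref{cor:cor2}, precisely the simple objects of $\PPer(D)$ listed in Proposition~\ref{prop:PDsim}.

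The key algebraic step, and the main obstacle, is to show that the restriction surjection $\widehat{A}=\End(\widehat{\eE})\twoheadrightarrow \End(\eE_D)=A_D$ of Lemma~\ref{lem:rest} has kernel $I$ contained in the Jacobson radical of $\widehat{A}$. Granting this, every simple $\widehat{A}$-module is killed by $I$, hence is a simple $A_D$-module; combined with Remark~\ref{rmk:subca} (which gives the reverse fact that each $i_{\ast}F$ with $F$ simple is again simple) this produces the desired bijection and shows $E\cong i_{\ast}F$. To prove $I\subseteq \mathrm{rad}(\widehat{A})$ I would describe $I$ as the two-sided ideal of endomorphisms of $\widehat{\eE}$ that factor through $\widehat{\eE}(-D)\hookrightarrow \widehat{\eE}$; a composite of $n$ such maps factors through $\widehat{\eE}(-nD)$, and since $\oO_X(-D)$ is $f$-ample by Lemma~\ref{lem:note} one has $f_{\ast}\oO_X(-nD)\subseteq m_0^{g(n)}$ with $g(n)\to\infty$, whence $I^n\subseteq m_0^{g(n)}\widehat{A}$. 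As $\widehat{A}$ is a finite module over the complete local ring $\widehat{\oO}_{Y,0}$, Krull's intersection theorem makes $I$ topologically nilpotent, so its image in the finite dimensional algebra $\widehat{A}/m_0\widehat{A}$ is nilpotent and $I\subseteq\mathrm{rad}(\widehat{A})$. Unwinding the identifications, the simple $\widehat{A}$-module attached to a simple $F\in\PPer(D)$ pulls back to $i_{\ast}F$ (Proposition~\ref{prop:property} and Lemma~\ref{ODmo} confirm that these simple objects are genuinely $\oO_D$-module complexes of the shape found in Proposition~\ref{prop:PDsim}), which completes the ``only if'' direction. The displayed generation statement for $\PPer_{0}(X/Y)$ is then immediate, since every object of this finite length abelian category admits a finite filtration by simples.
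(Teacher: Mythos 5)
Your strategy is sound and reaches the right conclusion, but it is genuinely different from the paper's argument at the decisive step. The paper also reduces to objects supported on $D$, viewed in $\PPer_{0}(\widehat{X}/\widehat{Y})$, and also relies on Lemma~\ref{lem:rest} and Remark~\ref{rmk:subca}; but instead of proving that the kernel of $\widehat{A}\to A_D$ lies in the Jacobson radical, it argues geometrically: for a simple $E$ it uses Proposition~\ref{prop:property} to split off $\hH^{0}(E)$ (resp.\ $\hH^{-2}(E)[2]$) as a quotient (resp.\ subobject) in $\PPer_{0}(\widehat{X}/\widehat{Y})$, notes that simplicity forces the corresponding map to be injective (resp.\ surjective), invokes Lemma~\ref{ODmo} (each $\hH^{i}(E)$ is an $\oO_D$-module, i.e.\ precisely the statement that $m_0$ annihilates a simple) to place that cohomology object in $\PPer(D)$, and then uses the closedness of $\PPer(D)$ under subobjects and quotients. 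Your module-theoretic route buys a cleaner conceptual statement --- all simples of $\widehat{A}$ factor through $A_D$ because $\ker(\widehat{A}\to A_D)\subseteq\mathrm{rad}(\widehat{A})$ --- and avoids the three-term cohomology case analysis entirely; the paper's route avoids any completion/nilpotence analysis and only uses the already-established structural facts about $\PPer(X/Y)$.

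One step of your argument needs shoring up: the deduction that $I$ is topologically nilpotent. Krull's intersection theorem gives $\bigcap_k m_0^k\widehat{A}=0$, which is not what you need; and the containment $f_{\ast}\oO_X(-nD)\subseteq m_0^{g(n)}$ does not by itself yield $I^{n}\subseteq m_0^{g(n)}\widehat{A}$, because $I^{n}$ sits inside $f_{\ast}\bigl(\eE nd(\widehat{\eE})(-nD)\bigr)$ and there is no immediate reason why a global endomorphism vanishing to order $n$ along $D$ should lie in $m_0^{g(n)}\cdot\End(\widehat{\eE})$. What saves you is Chevalley's lemma: the submodules $J_n=f_{\ast}\bigl(\eE nd(\widehat{\eE})(-nD)\bigr)$ of the finite $\widehat{\oO}_{Y,0}$-module $\widehat{A}$ form a decreasing chain with $\bigcap_n J_n=0$, hence $J_n\subseteq m_0\widehat{A}$ for $n\gg 0$, so the image of $I$ in the finite-dimensional algebra $\widehat{A}/m_0\widehat{A}$ is nilpotent and $I\subseteq\mathrm{rad}(\widehat{A})$ follows. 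With that substitution (and a remark that $\widehat{A}=\End(\widehat{\eE})$ of Corollary~\ref{cor:cor1} is only Morita equivalent to, not literally equal to, the completion of $\mathrsfs{A}$ at $0$, both being local projective generators of $\PPer(\widehat{X}/\widehat{Y})$), your proof is complete.
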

\begin{proof}
It is obvious that 
an object $\oO_x$ for $x\notin D$ is a simple object in $\PPer_{0}(X/Y)$. 
Let $E \in \PPer_{0}(X/Y)$ be a simple 
object which is not isomorphic to $\oO_x$ for any $x \notin D$. 
Then $E$ is supported on $D$, hence 
$E \in \PPer_{0}(\widehat{X}/\widehat{Y})$. 
By Lemma~\ref{lem:rest} and Remark~\ref{rmk:subca}, it is enough
to show that $E$ is contained in the subcategory (\ref{subca:P}).  

First suppose that $\hH^0(E)$ is non-zero. 
By Proposition~\ref{prop:property},
$\hH^0(E) \in \PPer_{0}(\widehat{X}/\widehat{Y})$
and 
we have the non-zero morphism 
\begin{align*}
E \to \hH^0(E).
\end{align*}
The above morphism
 must be injective in $\PPer_{0}(\widehat{X}/\widehat{Y})$, 
since $E$ is a simple object in $\PPer_{0}(\widehat{X}/\widehat{Y})$.  
By Lemma~\ref{ODmo}, the sheaf $\hH^0(E)$ is $\oO_D$-module, 
hence $\hH^0(E) \in \PPer(D)$. 
Then we have $E \in \PPer(D)$ by Remark~\ref{rmk:subca}.

Next suppose that $\hH^0(E)=0$ and $\hH^{-2}(E) \neq 0$. 
By the same argument as above, we have the non-zero morphism
\begin{align*}
\hH^{-2}(E)[2] \to E
\end{align*} 
which must be surjective in $\PPer_{0}(\widehat{X}/\widehat{Y})$. 
Hence we have
$E \in \PPer(D)$ by Lemma~\ref{ODmo} and Remark~\ref{rmk:subca}. 
Also if $\hH^0(E)=\hH^{-2}(E)=0$, then 
$E=\hH^{-1}(E)[1]$ and $E \in \PPer(D)$ follows from 
Lemma~\ref{ODmo}. 

Finally $\PPer_{0}(X/Y)$ is a finite length abelian category 
by Theorem~\ref{thm:tilting}, so it is generated by 
its simple objects. The last statement follows from this fact. 
\end{proof}

\section{Conjectural Bridgeland stability conditions}
In this section, 
we prove Theorem~\ref{thm:intro2} 
using the results in 
the previous section. 
Throughout this section, 
$X$ is a smooth projective 3-fold, 
$f \colon X \to Y$ is an extremal contraction
and $D \subset X$ is the exceptional divisor.  

\subsection{First tilting of $\PPer(X/Y)$}\label{sec:conj}
For an ample divisor $\omega$ on $Y$, 
we consider $f^{\ast}\omega$-slope function on 
$\PPer(X/Y)$ and apply a similar construction 
as in Subsection~\ref{subsec:const}. 

For $E\in \PPer(X/Y)$, the $f^{\ast}\omega$-slope is defined by 
\begin{align*}
\mu_{f^{\ast}\omega}(E)
=\frac{\ch_1^{}(E) \cdot (f^{\ast}\omega)^2}{\ch_0(E)}.
\end{align*}
Here $\mu_{f^{\ast}\omega}(E)=\infty$ when $\ch_0(E)=0$. 
Similarly to Definition~\ref{defi:slope}, 
we can define the $\mu_{f^{\ast}\omega}$-stability
on $\PPer(X/Y)$, 
which has the Harder-Narasimhan property: 
\begin{lem}
Any object in $\PPer(X/Y)$
admits a Harder-Narasimhan filtration with respect to 
$\mu_{f^{\ast}\omega}$-stability. 
\end{lem}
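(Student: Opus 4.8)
The plan is to adapt the proof of Lemma~\ref{lem:HN} from the surface case essentially verbatim, the only structural change being that noetherianity of $\PPer(X/Y)$ is now supplied by Theorem~\ref{thm:tilting} rather than Theorem~\ref{thm:nc}. Since $\mu_{f^{\ast}\omega}$ visibly satisfies the weak seesaw property, the standard criterion for the existence of Harder--Narasimhan filtrations in a noetherian abelian category (cf.~\cite[Proposition~2.12]{Tcurve1}) reduces the statement to excluding an infinite chain of strict subobjects
\begin{align*}
E_1 \supset E_2 \supset \cdots \supset E_i \supset \cdots
\end{align*}
in $\PPer(X/Y)$ with $\mu_{f^{\ast}\omega}(E_{i+1}) > \mu_{f^{\ast}\omega}(E_i/E_{i+1})$ for all $i$.

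First I would record noetherianity: by Theorem~\ref{thm:tilting} the category $\PPer(X/Y)$ is equivalent to $\Coh(\mathrsfs{A})$ for the coherent sheaf of $\oO_Y$-algebras $\mathrsfs{A} = f_{\ast}\eE nd(\eE)$, and coherent right modules over such an algebra on the noetherian scheme $Y$ form a noetherian abelian category. Next I would verify that $\ch_0$ is non-negative on $\PPer(X/Y)$. Over the open locus where $f$ is an isomorphism the algebra $\mathrsfs{A}$ is Morita equivalent to $\oO_Y$, so the equivalence $\Phi$ sends $E \in \PPer(X/Y)$ to a coherent right $\mathrsfs{A}$-module whose generic rank is a positive multiple of $\ch_0(E)$; since the rank of a coherent module is non-negative, so is $\ch_0(E)$.

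Granting these, the contradiction is formally identical to the surface case. Each quotient $E_i/E_{i+1}$ lies in $\PPer(X/Y)$, so $\ch_0(E_i/E_{i+1}) \ge 0$, and hence $\{\ch_0(E_i)\}$ is a non-increasing sequence of non-negative integers, which therefore stabilizes; discarding finitely many terms I may assume $\ch_0(E_i)$ is constant. But then $\ch_0(E_i/E_{i+1}) = 0$, forcing $\mu_{f^{\ast}\omega}(E_i/E_{i+1}) = \infty$ and contradicting $\mu_{f^{\ast}\omega}(E_{i+1}) > \mu_{f^{\ast}\omega}(E_i/E_{i+1})$. The only ingredient genuinely beyond the surface argument is the non-negativity of $\ch_0$ on $\PPer(X/Y)$, and tracing $\ch_0$ through the equivalence $\Phi$ of Theorem~\ref{thm:tilting} is the step I expect to require the most care; the combinatorial chain argument itself then goes through unchanged.
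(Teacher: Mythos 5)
Your proof is correct and follows essentially the same route as the paper: the paper's own argument simply invokes noetherianity of $\PPer(X/Y)$ from Theorem~\ref{thm:tilting} and then repeats the chain argument of Lemma~\ref{lem:HN} verbatim, using non-negativity of $\ch_0$ on the perverse heart exactly as you do. Your extra paragraph justifying $\ch_0 \ge 0$ via the generic rank of $\Phi(E)$ is a detail the paper leaves implicit, but it is sound and does not change the argument.
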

\begin{proof}
By Theorem~\ref{thm:tilting}, 
the abelian category $\PPer(X/Y)$ is noetherian, 
and the lemma is proved along with the same argument of Lemma~\ref{lem:HN}. 
\end{proof}
By the above lemma, 
the torsion pair $(\tT_{f^{\ast}\omega}, \fF_{f^{\ast}\omega})$
is defined exactly as in the same way of Subsection~\ref{subsec:const}. 
Its tilting is similarly defined,
\begin{align*}
\bB_{f^{\ast}\omega} =\langle \fF_{f^{\ast}\omega}[1], 
\tT_{f^{\ast}\omega} \rangle. 
\end{align*}
We have the following lemma:
\begin{lem}\label{noet:B}
The abelian category $\bB_{f^{\ast}\omega}$ is noetherian. 
\end{lem}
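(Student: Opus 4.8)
The plan is to deduce the noetherian property of $\bB_{f^{\ast}\omega}$ from that of $\PPer(X/Y)$ (Theorem~\ref{thm:tilting}) by means of the single additive function
\[
W(E) \cneq \ch_1(E)\cdot (f^{\ast}\omega)^2,
\]
which plays for the first tilt the role that the imaginary part of the central charge plays for a stability heart. First I would record the basic positivity $W\ge 0$ on $\bB_{f^{\ast}\omega}$. By Proposition~\ref{prop:property} the negative cohomology sheaves of any object of $\PPer(X/Y)$ are supported on $D$, hence of rank zero, so that $\ch_0\ge 0$ on $\PPer(X/Y)$; combined with the definition of the torsion pair $(\tT_{f^{\ast}\omega}, \fF_{f^{\ast}\omega})$ this gives $W(\hH_p^{0}(E))\ge 0$ and $W(\hH_p^{-1}(E))\le 0$, hence $W(E)\ge 0$ for every $E\in \bB_{f^{\ast}\omega}$, and moreover $W(T)=0$ for $T\in\tT_{f^{\ast}\omega}$ forces $\ch_0(T)=0$. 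Since $W$ takes values in a discrete subgroup of $\mathbb{R}$ (as in the surface case), along any ascending chain $E_1\subseteq E_2\subseteq\cdots\subseteq E$ in $\bB_{f^{\ast}\omega}$ the numbers $W(E_i)$ are nondecreasing and bounded by $W(E)$, hence eventually constant.

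After discarding finitely many terms I may therefore assume that every successive quotient $Q_i\cneq E_{i+1}/E_i$ satisfies $W(Q_i)=0$, i.e. lies in the Serre subcategory $\bB_{0}\cneq\{E\in\bB_{f^{\ast}\omega}:W(E)=0\}$. Replacing $E$ by $E/E_{i_0}$, the problem reduces to showing that an ascending chain $\bar E_1\subseteq\bar E_2\subseteq\cdots\subseteq \bar E$ all of whose successive quotients lie in $\bB_0$ stabilizes. Here two features of $\PPer(X/Y)$ do most of the work. On the one hand, $\hH_p^{-1}(\bar E_i)\hookrightarrow \hH_p^{-1}(\bar E)$ is an ascending chain of subobjects of the fixed object $\hH_p^{-1}(\bar E)\in\PPer(X/Y)$, which stabilizes by the noetherian property of $\PPer(X/Y)$. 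Once it is constant, the perverse kernels $\hH_p^{-1}(Q_i)\in\fF_{f^{\ast}\omega}$ inject into the rank-zero object $\hH_p^{0}(\bar E_i)\in\tT_{f^{\ast}\omega}$, and since a nonzero object of $\fF_{f^{\ast}\omega}$ has positive rank, the positivity $\ch_0\ge 0$ forces $\hH_p^{-1}(Q_i)=0$; thus $G_i\cneq \hH_p^0(\bar E_i)$ form an ascending chain of genuine $\PPer(X/Y)$-subobjects with $\ch_0(G_i)=0$.

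The main obstacle is the remaining control of this rank-zero part $\{G_i\}$. Because $W$ and $\ch_0$ both vanish on these objects, no further additive invariant is available to bound the chain, and one cannot simply embed the $G_i$ into a fixed object of $\PPer(X/Y)$: the perverse $\hH_p^0$ of a $\bB_{f^{\ast}\omega}$-subobject need not be a subobject of $\hH_p^0(\bar E)$, the obstruction being exactly a rank-zero sheaf supported on the contracted fibers. This is where the structural results of Section~\ref{sec:3fold} are essential. Each $G_i$ lies in $\PPer_{0}(X/Y)$, and when $f(D)$ is a point this category is of finite length through the equivalence with $\modu(A_D)$ of Corollary~\ref{cor:cor2} and the description of simple objects in Proposition~\ref{prop:Persim}, while when $f(D)$ is a curve it is described by Proposition~\ref{fib:one}. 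In either case the fiber-supported category carries a length function, and the chain $\{G_i\}$ of its subobjects is controlled through this finite-length structure, yielding stabilization.

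Combining the two stabilizations: for $i$ large both $\hH_p^{-1}(\bar E_i)$ and $\hH_p^0(\bar E_i)$ are constant, so the inclusion $\bar E_i\hookrightarrow \bar E_{i+1}$ induces isomorphisms on all perverse cohomology objects, whence its cokernel $Q_i$ has $\hH_p^{-1}(Q_i)=\hH_p^0(Q_i)=0$ and is therefore zero. Thus the original chain stabilizes and $\bB_{f^{\ast}\omega}$ is noetherian. I expect the fiber-supported case of the previous paragraph to be the only genuinely delicate point; the remaining steps — positivity and discreteness of $W$, and the two invocations of the noetherian property of $\PPer(X/Y)$ — are routine.
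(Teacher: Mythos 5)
Your overall strategy -- reduce along the additive function $W(E)=\ch_1(E)\cdot(f^{\ast}\omega)^2$, stabilize the $\hH_p^{-1}$-data using noetherianity of $\PPer(X/Y)$, and then control a residual rank-zero piece -- parallels the paper's proof, which runs the argument of \cite[Proposition~7.1]{Brs2} on chains of quotients. But the final step, which you yourself flag as the delicate one, contains a genuine gap. An object $G\in\PPer(X/Y)$ with $\ch_0(G)=0$ and $W(G)=0$ need \emph{not} lie in $\PPer_{0}(X/Y)$: the vanishing of $\ch_1(G)\cdot(f^{\ast}\omega)^2$ only forces the two-dimensional part of $\Supp(G)$ to be contracted by $f$, and places no restriction on one-dimensional components dominating curves in $Y$. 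For instance $G=\oO_{C'}$ for a curve $C'\subset X$ disjoint from $D$ with $f(C')$ a curve has $\ch_0=W=0$ but is not fiber-supported. The correct statement is only $G\in\PPer_{\le 1}(X/Y)$ -- this is exactly the discrepancy between the surface case, where the analogous category in (\ref{Imm=0}) involves $\PPer_{0}(X/Y)$, and the threefold case, where (\ref{mu=0}) involves $\PPer_{\le 1}(X/Y)$. Since $\PPer_{\le 1}(X/Y)$ is not of finite length, your appeal to ``the finite-length structure of the fiber-supported category'' does not apply to the chain $\{G_i\}$, and these objects are not subobjects of any fixed object of $\PPer(X/Y)$, so no intrinsic bound is available.

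The paper closes precisely this gap with the double-dual mechanism of \cite[Proposition~7.1]{Brs2}: the troublesome one-dimensional piece is realized as the quotient $\Phi(\hH_p^{-1}(E_i))/\Phi(Q)$ of a torsion-free sheaf on $Y$ by a \emph{fixed} torsion-free subsheaf $\Phi(Q)$ (torsion-freeness coming from $\Hom(\PPer_{\le 2}(X/Y),Q)=0$ for $Q\in\fF_{f^{\ast}\omega}$, via the equivalence of Theorem~\ref{thm:tilting}), so that the whole chain is trapped between $\Phi(Q)$ and the fixed coherent sheaf $\Phi(Q)^{\vee\vee}$ and stabilizes by noetherianity of $\Coh(Y)$. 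To repair your argument you must import this external bound: the residual chain is controlled not by the size of the category it lives in, but by its relation, through the connecting maps of the perverse cohomology long exact sequence, to the fixed torsion-free data on $Y$.
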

\begin{proof}
The statement 
follows from the same argument of~\cite[Proposition~7.1]{Brs2}, 
with a minor modification. 
Suppose that there is an infinite sequence of surjections 
in $\bB_{f^{\ast}\omega}$,
\begin{align}\label{seq:ter}
E=E_1 \twoheadrightarrow E_2 \twoheadrightarrow \cdots
\twoheadrightarrow E_{i} \twoheadrightarrow E_{i+1} \twoheadrightarrow \cdots. 
\end{align}
Let us take the exact sequence in $\bB_{f^{\ast}\omega}$
\begin{align*}
0 \to L_i \to E \to E_{i} \to 0. 
\end{align*}
Since $\ch_1(\ast) \cdot (f^{\ast}\omega)^2 \ge 0$
on $\bB_{f^{\ast}\omega}$, we may assume that 
$\ch_1(E_i) \cdot (f^{\ast}\omega)^2$ is constant. 
Hence $\ch_1(L_i) \cdot (f^{\ast}\omega)^2=0$, 
which implies that 
\begin{align}\label{mu=0}
L_i \in \left\langle F[1], T : 
\begin{array}{c}  
F\in \PPer(X/Y) \mbox{ is } \mu_{f^{\ast}\omega} \mbox{-semistable with } \\
\mu_{f^{\ast}\omega}(F)=0, \
T \in \PPer_{\le 1}(X/Y).
\end{array} \right\rangle. 
\end{align}
 Applying the same argument 
of~\cite[Proposition~7.1]{Brs2},
replacing $\Coh(X)$ by $\PPer(X/Y)$, 
we arrive at the exact sequences in $\PPer(X/Y)$
\begin{align*}
0 \to Q \to \hH_p^{-1}(E_i) \to \hH_p^{0}(L_i) \to 0,
\end{align*}
where $Q \in \fF_{f^{\ast}\omega}$ is independent of $i$
and inclusions in $\PPer(X/Y)$
\begin{align*}
\hH_p^{-1}(E_1) \subset \hH_p^{-1}(E_{2}) \subset \cdots 
\subset \hH_p^{-1}(E_i) \subset \hH_p^{-1}(E_{i+1}) \subset \cdots.
\end{align*}
We apply the equivalence $\Phi$ in Theorem~\ref{thm:tilting}, 
and forget the $\mathrsfs{A}$-module structures. 
We obtain the exact sequence in $\Coh(Y)$
\begin{align*}
0 \to \Phi(Q) \to \Phi(\hH_p^{-1}(E_i)) \to \Phi(\hH_p^0(L_i)) \to 0. 
\end{align*}
Since $\hH_p^0(L_i) \in \PPer_{\le 1}(X/Y)$, 
the sheaf $\Phi(\hH_p^0(L_i))$ is at most one dimensional. 
On the other hand, since $Q \in \fF_{f^{\ast}\omega}$
satisfies 
\begin{align*}
\Hom(\PPer_{\le 2}(X/Y), Q)=0,
\end{align*} 
the sheaf $\Phi(Q)$ is a torsion free sheaf on $Y$. 
Similarly $\Phi(\hH_p^{-1}(E_i))$ is also torsion free, 
so we have the sequence in $\Coh(Y)$
\begin{align*}
\Phi(Q) \subset \Phi(\hH_p^{-1}(E_1)) \subset \Phi(\hH_p^{-1}(E_2))
 \subset \cdots \subset \Phi(Q)^{\vee \vee}. 
\end{align*}
The above sequence terminates since $\Coh(Y)$ is noetherian. 
Therefore the sequence (\ref{seq:ter}) also terminates. 
\end{proof}

\subsection{Second tilting of $\PPer(X/Y)$} 
In this subsection, we
tilt $\bB_{f^{\ast}\omega}$ again. 
This is an analogy of the
double tilting of $\Coh(X)$ in~\cite{BMT}
for $\PPer(X/Y)$.

We first show the following lemma, 
whose proof relies on the results in the 
previous section. 
\begin{lem}\label{lem:key} There is $b\in \mathbb{Q}$ such that
we have 
\begin{align}\label{ineq:chB}
\ch_3^{bD}(E)>0
\end{align}
for any $0\neq E \in \PPer_{0}(X/Y)$. 
\end{lem}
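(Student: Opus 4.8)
The plan is to reduce the inequality $\ch_3^{bD}(E) > 0$ to a statement about the finitely many simple objects of the finite length abelian category $\PPer_0(X/Y)$, which were completely classified in Proposition~\ref{prop:Persim} (and, for the case $f(D)$ a point, built from the simple objects of $\PPer(D)$ described in Proposition~\ref{prop:PDsim}). First I would observe that $\ch_3^{bD}$ is additive on short exact sequences, so for a fixed $b$ the inequality is \emph{not} automatically inherited by extensions. However, since $\PPer_0(X/Y)$ is finite length with a finite set of simple objects $\{S_j\}$, any nonzero $E$ has a Jordan--H\"older filtration whose subquotients are among the $S_j$, and $\ch_3^{bD}(E) = \sum_j m_j \ch_3^{bD}(S_j)$ where $m_j \ge 0$ is the multiplicity of $S_j$ and $\sum_j m_j \ge 1$. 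Thus it suffices to arrange that $\ch_3^{bD}(S_j) > 0$ \emph{simultaneously} for every simple object $S_j$.

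Next I would compute $\ch_3^{bD}(S_j)$ for each simple object as an explicit function of $b$, using $\ch^{bD}(E) = e^{-bD}\ch(E)$, so that
\begin{align*}
\ch_3^{bD}(E) = \ch_3(E) - bD\cdot\ch_2(E) + \tfrac{b^2}{2}D^2\cdot\ch_1(E) - \tfrac{b^3}{6}D^3\ch_0(E).
\end{align*}
The simple objects fall into two families: the skyscrapers $\oO_x$ for $x \notin D$, which have $\ch = (0,0,0,1)$ and hence $\ch_3^{bD}(\oO_x) = 1 > 0$ independently of $b$; and the pushforwards $i_\ast F$ of the simple objects $F$ of $\PPer(D)$, whose Chern characters I can read off from the explicit descriptions in Proposition~\ref{prop:PDsim} together with the known data $(D, \oO_D(D))$ and $K_X$ from the classification in Subsection~\ref{subsec:class}. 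Because these $i_\ast F$ are supported on $D$, their $\ch_0$ vanishes, so $\ch_3^{bD}(i_\ast F)$ is a \emph{quadratic} polynomial in $b$, and I would check type by type (II, III, IV, V) that the finitely many resulting polynomials can all be made positive on a common nonempty open interval of rational $b$.

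The key point making this possible is a choice of sign: I expect $\ch_3^{bD}$ of each $i_\ast F$ to be dominated, for suitable $b$, by the leading behavior coming from the divisorial twist, and the main obstacle will be verifying that a \emph{single} rational value of $b$ works for all simple objects at once, rather than needing different $b$ for different simples. Since there are only finitely many simple objects, each yielding either a positive constant or a quadratic whose positivity set is a nonempty (half-)interval, the intersection of these finitely many positivity conditions is a nonempty open set of real $b$; as it is defined by polynomial inequalities with rational coefficients it meets $\mathbb{Q}$, and I would pick $b \in \mathbb{Q}$ there. The hardest case is type~V, where the relevant simple objects involve the rank-three bundle $S_5$ and the nonsplit bundle $\uU$ from Proposition~\ref{prop:PDsim}; here I would carefully compute the Chern character of $i_\ast S_5(-1)[1]$ on the singular quadric $D \subset \mathbb{P}^3$ using the defining sequence (\ref{SUO}) and (\ref{non-split}), and confirm the quadratic in $b$ is positive for the chosen $b$. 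Once all simple objects satisfy $\ch_3^{bD} > 0$ for this common $b$, additivity over the Jordan--H\"older filtration yields $\ch_3^{bD}(E) > 0$ for every nonzero $E \in \PPer_0(X/Y)$, completing the proof.
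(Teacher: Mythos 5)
Your proposal is correct and follows essentially the same route as the paper: reduce, via the finite length property of $\PPer_{0}(X/Y)$ and additivity of $\ch_3^{bD}$ on Jordan--H\"older factors, to the simple objects (skyscrapers off $D$ plus the pushforwards of the simples classified in Proposition~\ref{prop:PDsim} via Proposition~\ref{prop:Persim}), compute $\ch_3^{bD}$ of each as a low-degree polynomial in $b$, and intersect the finitely many positivity regions to find a common rational $b$. Two small points: you should also cover type~I, where $f(D)$ is a curve and the relevant simples are $\oO_{L_y}(-2)[1]$ and $\oO_{L_y}(-1)$ from Proposition~\ref{fib:one} (the paper finds $1/2<b<3/2$ there), and your aside that positivity is ``not automatically inherited by extensions'' is backwards --- additivity of $\ch_3^{bD}$ makes strict positivity stable under extensions, which is exactly why the reduction to simples works.
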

\begin{proof}
Suppose that $f(D)$ is a curve. 
Then by calculating Chern characters 
of simple objects
in the RHS of (\ref{PL}), 
the desired inequality (\ref{ineq:chB}) is checked
to hold 
when 
\begin{align*}
\frac{1}{2}< b < \frac{3}{2}. 
\end{align*}

Next suppose that $f(D)$ is a point. 
 By Proposition~\ref{prop:Persim}, 
it is enough to find $b\in \mathbb{Q}$ so 
that the inequality (\ref{ineq:chB}) holds for 
any object $E=i_{\ast}F$, where
$F \in \PPer(D)$ is a simple object. 
Similarly to the case that $f(D)$ is a curve, 
such $b$ can be found by computing 
Chern characters of simple objects in $\PPer(D)$
appearing in Proposition~\ref{prop:PDsim}. 

We give some more detail on the computation for 
the type V case. We have
\begin{align*}
&\ch(i_{\ast}\oO_D(-1))= \left( 0, D, -C, 1/3 \right), \\
&\ch(i_{\ast}S_5)=(0, 3D, -C, -1), \\
&\ch(i_{\ast}\oO_D(-C))=(0, D, 0, -1/6). 
\end{align*}
Therefore, setting $c=1-b$, we have
\begin{align*}
&\ch^{bD}_{3}(i_{\ast}\oO_D(-2)[2])=
c^2 + c+ 1/3, \\
&\ch^{bD}_{3}(i_{\ast}S_5(-1)[1])=-3c^2 - c+1, \\
& \ch^{bD}_{3}(i_{\ast}\oO_D(-3C))=c^2 -1/6. 
\end{align*}
We want to find $b$ so that all the 
above values are positive. The 
solution of the inequalities is 
\begin{align*}
1+\frac{\sqrt{6}}{6}< b < \frac{7}{6} +\frac{\sqrt{13}}{6} \quad \mbox{ or }
\quad
\frac{7}{6}-
\frac{\sqrt{13}}{6}<b< 1- \frac{\sqrt{6}}{6}.
\end{align*}
The other cases are similarly calculated. 
The result is as follows: 
in type III case, the result is the same as in 
type V case, and 
\begin{align*}
&2-\frac{\sqrt{6}}{3}< b< 2+\frac{\sqrt{6}}{3} \quad \mbox{ in type II, } \\
&\frac{3}{4}+ \frac{\sqrt{15}}{12}< b< 
\frac{4}{5}+ \frac{\sqrt{6}}{6} \quad \mbox{ in type IV. }
\end{align*}
\end{proof}

Let us take $b\in \mathbb{Q}$
as in Lemma~\ref{lem:key},
and set $B=bD$. 
Note that we have 
\begin{align}\label{note:B}
\ch_i^{B}(\ast) (f^{\ast}\omega)^{3-i} = \ch_i(\ast)(f^{\ast}\omega)^{3-i}, 
\ 0 \le i \le 1.
\end{align}
Next we generalize Bogomolov-Gieseker inequality
for perverse coherent sheaves 
to our 3-fold situation. 
\begin{prop}\label{prop:BG3fold}
For any $\mu_{f^{\ast}\omega}$-semistable 
object $E \in \PPer(X/Y)$ with $\ch_0(E)>0$,
we have the inequality
\begin{align}\label{BG:3}
\left\{\ch_1(E) \cdot  (f^{\ast}\omega)^2 \right\}^2
\ge 2\omega^3 \ch_0(E) (\ch_2^{B}(E) \cdot f^{\ast}\omega).
\end{align}
\end{prop}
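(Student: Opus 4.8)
The plan is to mirror the surface argument of Proposition~\ref{prop:BGper}, or rather its weaker slope version in Remark~\ref{rmk:weak}: reduce $E$ to an honest torsion-free $\mu_{f^{\ast}\omega}$-slope-semistable sheaf, apply the classical Bogomolov--Gieseker inequality (Theorem~\ref{thm:BoGi}) with the two nef divisors both equal to $f^{\ast}\omega$, and then pass to the stated slope form (\ref{BG:3}) using the Hodge index inequality for the nef and big class $f^{\ast}\omega$. The guiding observation is that both sides of (\ref{BG:3}) are intersection numbers against powers of $f^{\ast}\omega$, and that $f$ contracts $D$, so $D\cdot (f^{\ast}\omega)^2=0$ and $D^2\cdot f^{\ast}\omega$, $D\cdot\ch_1(\ast)\cdot f^{\ast}\omega$ vanish (at least when $f(D)$ is a point). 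In particular any object supported on $D$ has $\ch_0=0$ and $\ch_1\cdot(f^{\ast}\omega)^2=0$, hence $\mu_{f^{\ast}\omega}$-slope $\infty$.

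First I would show that such an $E$ is a coherent sheaf. By Proposition~\ref{prop:property} the standard cohomology sheaves of $E\in\PPer(X/Y)$ lie in degrees $-2,-1,0$, with $\hH^{-1}(E)$ and $\hH^{-2}(E)$ supported on $D$ and with $\hH^{-2}(E)[2]\in\PPer(X/Y)$. Using the truncation triangle I would realize $\hH^{-2}(E)[2]$, and then $\hH^{-1}(E)[1]$, as a subobject of $E$ in $\PPer(X/Y)$ supported on $D$; since such a subobject has slope $\infty>\mu_{f^{\ast}\omega}(E)$, $\mu_{f^{\ast}\omega}$-semistability forces $\hH^{-2}(E)=\hH^{-1}(E)=0$. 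This is exactly the step in Proposition~\ref{prop:BGper} where $\hH^{-1}(E)$ was killed, now carried out one degree further using Proposition~\ref{prop:property} in place of Proposition~\ref{prop:collect}(ii). Thus $E$ is a sheaf, a priori with torsion.

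Next I would take the torsion filtration $0\to T\to E\to F\to 0$ with $T$ the maximal torsion subsheaf. As in the surface case, semistability of $E$ together with the fact that sheaves supported on the contracted locus do not change $\mu_{f^{\ast}\omega}$ shows that $T$ is supported on $D$, that $F\in\PPer(X/Y)$, and that $F$ is a torsion-free $\mu_{f^{\ast}\omega}$-slope-semistable coherent sheaf of rank $\ch_0(F)=\ch_0(E)>0$. Then $\ch_1(E)\cdot(f^{\ast}\omega)^2=\ch_1(F)\cdot(f^{\ast}\omega)^2$, while the $D$-supported sheaf $T$ contributes non-positively to $\ch_2^{B}(\ast)\cdot f^{\ast}\omega$; this is the 3-fold counterpart of the bound $\ch_2(T)\le -a/2$ of Lemma~\ref{lem:per0}, and I would check it by computing the Chern characters of $T$ with $\ch_1(T)$ a multiple of $[D]$, invoking the analogues of Lemmas~\ref{lem:Cch} and~\ref{lem:per0}.

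Finally, Theorem~\ref{thm:BoGi} applied to $F$ with $H_1=H_2=f^{\ast}\omega$ gives $\bigl(\ch_1(F)^2-2\ch_0(F)\ch_2(F)\bigr)\cdot f^{\ast}\omega\ge 0$, and since the discriminant $\ch_1^2-2\ch_0\ch_2$ is invariant under the $B$-twist this equals $\bigl(\ch_1^{B}(F)^2-2\ch_0(F)\ch_2^{B}(F)\bigr)\cdot f^{\ast}\omega$. Combining with the Hodge index inequality $\bigl(\ch_1^{B}(F)\cdot(f^{\ast}\omega)^2\bigr)^2\ge\bigl(\ch_1^{B}(F)^2\cdot f^{\ast}\omega\bigr)\,\omega^3$ for $f^{\ast}\omega$ (nef and big, obtained by approximating by ample classes), and identifying $\ch_1^{B}(F)\cdot(f^{\ast}\omega)^2=\ch_1(F)\cdot(f^{\ast}\omega)^2$ via (\ref{note:B}), yields (\ref{BG:3}) for $F$; absorbing the non-positive torsion term then gives it for $E$. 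The main obstacle is the perverse reduction in the second and third steps: one must justify that the negative cohomology sheaves and the torsion genuinely occur as $D$-supported sub- or quotient objects inside $\PPer(X/Y)$, which is precisely where Proposition~\ref{prop:property} and the geometry of the contracted locus enter. By contrast the concluding Bogomolov--Gieseker and Hodge index computation is routine.
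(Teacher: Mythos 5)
Your overall skeleton is the paper's: reduce to a torsion-free $\mu_{f^{\ast}\omega}$-semistable sheaf $F$, apply Theorem~\ref{thm:BoGi} with $H_1=H_2=f^{\ast}\omega$ together with the Hodge index inequality and (\ref{note:B}), and absorb the torsion part. When $f(D)$ is a point the paper is even more economical than you are: since every class supported on $D$ is numerically invisible to all three quantities in (\ref{BG:3}) (including $\ch_2^{B}\cdot f^{\ast}\omega$, because curve classes on $D$ are $f$-contracted), it does not bother proving $E$ is a sheaf, but passes directly to the free part of $\hH^{0}(E)$ via Proposition~\ref{prop:property}. That is a cosmetic difference.

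The genuine gap is the step you describe as "the $D$-supported sheaf $T$ contributes non-positively to $\ch_2^{B}(\ast)\cdot f^{\ast}\omega$," which only matters when $f(D)$ is a curve and which you propose to "check by computing Chern characters, invoking the analogues of Lemmas~\ref{lem:Cch} and~\ref{lem:per0}." This is not a routine verification: the inequality $\ch_2^{B}(T)\cdot f^{\ast}\omega\le 0$ is false for a general $\mathbb{Q}$-multiple $B$ of $D$ (one has $\ch_2^{B}(T)\cdot f^{\ast}\omega=\ch_2(T)\cdot f^{\ast}\omega-ab\,D^2\cdot f^{\ast}\omega$ with $D^2\cdot f^{\ast}\omega<0$, so the sign depends on $b$), and it is exactly here that the specific choice of $b$ from Lemma~\ref{lem:key} enters the proof of the proposition. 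The paper's mechanism is: $T[1]\in\PPer(X/Y)$ is equivalent to $f_{\ast}T=f_{\ast}(T(-D))=0$; restricting to $Z=f^{-1}(W)$ for a general smooth $W\in\lvert m\omega\rvert$, the map $f|_{Z}\colon Z\to W$ is a blow-up at finitely many points, $T|_{Z}[1]$ lies in $\PPer_{0}$, and Lemma~\ref{lem:key} (whose proof is the Chern character computation on the simple objects $\oO_{L_y}(-2)[1]$, $\oO_{L_y}(-1)$ of Proposition~\ref{fib:one}) gives $\ch_3^{B}(T|_{Z}[1])>0$, hence $\ch_2^{B}(T)\cdot f^{\ast}\omega\le 0$. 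Without this restriction-to-a-hyperplane argument (or an equivalent classification of the possible classes $\ch(T)$ and the constraint on $b$), your third step does not close.
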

\begin{proof}
First suppose that $f(D)$ is a point.
Note that the equality (\ref{note:B}) also holds for $i=2$
in this case. 
By Proposition~\ref{prop:property}, 
$\hH^0(E)$ is torsion free outside $D$.
Since sheaves supported on $D$ do not affect
$\mu_{f^{\ast}\omega}$, the free part of $\hH^0(E)$
is a $\mu_{f^{\ast}\omega}$-semistable sheaf. 
Then the desired inequality (\ref{BG:3}) follows from 
the Bogomolov-Gieseker inequality for the 
free part of $\hH^0(E)$. 

Next suppose that $f(D)$ is a curve.
Similarly to the proof of Proposition~\ref{prop:BGper}, 
$E$ is a coherent sheaf. Also there is an 
exact sequence
\begin{align*}
0 \to T \to E \to F \to 0
\end{align*}
such that
$F$ is a torsion free $\mu_{f^{\ast}\omega}$-semistable 
sheaf, $T$ is a torsion sheaf supported on $D$ 
with $T[1] \in \PPer(X/Y)$.
By Theorem~\ref{thm:BoGi}, the Hodge index theorem
and noting (\ref{note:B}), we have 
\begin{align*}
\left\{ \ch_1(F) \cdot (f^{\ast}\omega)^2 \right\}^2 /\omega^3
&\ge \ch_1^{B}(F)^2 f^{\ast}\omega \\
&\ge 2\ch_0(F) \ch_2^{B}(F)f^{\ast}\omega. 
\end{align*} 
 By the argument as in Remark~\ref{rmk:weak}, 
it is enough to show that 
$\ch_2^{B}(T) \cdot f^{\ast}\omega \le 0$. 

By Theorem~\ref{thm:tilting}, the condition 
$T[1] \in \PPer(X/Y)$ is equivalent to 
\begin{align*}
f_{\ast}T =f_{\ast} \left(T(-D) \right) =0. 
\end{align*}
Let $W \subset Y$ be a divisor which is linearly equivalent 
to some multiple of $\omega$, and $Z \cneq f^{-1}(W)$. 
If we take $W$ to be smooth and general, then 
$f|_{Z} \colon Z \to W$ is a blow up at a finite number of points, 
and 
\begin{align*}
f|_{Z\ast}(T|_{Z}) =f|_{Z\ast} \left(T|_{Z}(-D) \right) =0.
\end{align*}
 Then 
$T|_{Z}[1] \in \PPer_{0}(X/Y)$
by Theorem~\ref{thm:tilting}, 
so Lemma~\ref{lem:key} implies
\begin{align*}
\ch_2^{B}(T[1]) \cdot Z = 
\ch_3^{B}(T|_{Z}[1])>0. 
\end{align*} 
Therefore $\ch_2^{B}(T) \cdot f^{\ast}\omega \le 0$ follows. 
\end{proof}
We consider the central charge $Z_{B, f^{\ast}\omega}$
given by (\ref{integral}). 
Noting (\ref{note:B}), 
$Z_{B, f^{\ast}\omega}(E)$ is 
written as  
\begin{align*}
\left(-\ch_3^{B}(E)
 + \frac{(f^{\ast}\omega)^2}{2} \ch_1^{}(E) \right) + 
\sqrt{-1} \left(f^{\ast}\omega \ch_2^{B}(E)
- \frac{\omega^3}{6}\ch_0^{}(E)  \right).
\end{align*}
As an analogy of~\cite[Lemma~3.2.1]{BMT}, we have the following lemma:
\begin{lem}\label{triple}
For any non-zero object
$E\in \bB_{f^{\ast}\omega}$, one of 
the following conditions hold:

(a) $\ch_1(E) \cdot (f^{\ast}\omega)^2 >0$.

(b) $\ch_1(E) \cdot (f^{\ast}\omega)^2 =0$
and $\Imm Z_{B, f^{\ast}\omega}(E)>0$. 

(c) $\ch_1(E) \cdot (f^{\ast}\omega)^2 = 
\Imm Z_{B, f^{\ast}\omega}(E)=0$ and 
$\Ree Z_{B, f^{\ast}\omega}(E) <0$. 
\end{lem}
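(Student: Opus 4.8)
The plan is to reduce the trichotomy to a few numerical facts and then verify them on the two natural ``building blocks'' of an object with vanishing first slope. Recall that by construction of the tilt one has $\ch_1(E)\cdot(f^{\ast}\omega)^2\ge 0$ for every $E\in\bB_{f^{\ast}\omega}$ (this is exactly the inequality exploited in the proof of Lemma~\ref{noet:B}), which already excludes a negative value; so case (a) is the range $\ch_1(E)\cdot(f^{\ast}\omega)^2>0$, and it remains to treat $\ch_1(E)\cdot(f^{\ast}\omega)^2=0$. In that situation I would invoke the description (\ref{mu=0}) obtained in Lemma~\ref{noet:B}: $E$ lies in the extension closure of objects $F[1]$, with $F\in\PPer(X/Y)$ a $\mu_{f^{\ast}\omega}$-semistable object of slope $0$ (so $\ch_0(F)>0$), together with objects $T\in\PPer_{\le 1}(X/Y)$. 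Since $\Imm Z_{B, f^{\ast}\omega}$ and $\Ree Z_{B, f^{\ast}\omega}$ are additive on short exact sequences, the whole statement follows once I show that each block $F[1]$ has $\Imm Z_{B, f^{\ast}\omega}>0$, and each block $T$ has $\Imm Z_{B, f^{\ast}\omega}\ge 0$ with equality precisely when $T\in\PPer_{0}(X/Y)$, in which case $\Ree Z_{B, f^{\ast}\omega}(T)<0$.

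For the blocks $F[1]$ this is a direct consequence of Proposition~\ref{prop:BG3fold}. Since $\ch(F[1])=-\ch(F)$, the imaginary part is $\Imm Z_{B, f^{\ast}\omega}(F[1])=-\,f^{\ast}\omega\cdot\ch_2^{B}(F)+\tfrac{\omega^3}{6}\ch_0(F)$. Applying Proposition~\ref{prop:BG3fold} to the $\mu_{f^{\ast}\omega}$-semistable object $F$ of slope $0$ gives $0=\{\ch_1(F)\cdot(f^{\ast}\omega)^2\}^2\ge 2\omega^3\ch_0(F)(\ch_2^{B}(F)\cdot f^{\ast}\omega)$, hence $\ch_2^{B}(F)\cdot f^{\ast}\omega\le 0$; as $\omega^3>0$ and $\ch_0(F)>0$ this forces $\Imm Z_{B, f^{\ast}\omega}(F[1])\ge\tfrac{\omega^3}{6}\ch_0(F)>0$. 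Thus any block of type $F[1]$ puts $E$ in case (b), and in particular case (c) can only occur when no such block is present.

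The remaining and principal point is the rank-zero part $T\in\PPer_{\le 1}(X/Y)$, where $\ch_0(T)=0$ and $\Imm Z_{B, f^{\ast}\omega}(T)=f^{\ast}\omega\cdot\ch_2^{B}(T)$. Here I would reuse the hyperplane-section device from the proof of Proposition~\ref{prop:BG3fold}: pick a smooth general $W\in|m\omega|$, set $Z=f^{\ast}W$ (so $Z\sim mf^{\ast}\omega$), and observe that multiplication by the defining section of $\oO_X(Z)$ is an injection $T(-Z)\hookrightarrow T$ in $\PPer(X/Y)$ once $W$ avoids the finitely many associated points of $\Phi(T)$, with cokernel $T\dotimes\oO_Z\in\PPer_{0}(X/Y)$. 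A twisted Chern-character computation, using $\ch_0(T)=0$ and $\ch_1(T)\cdot(f^{\ast}\omega)^2=0$, yields $\ch_3^{B}(T\dotimes\oO_Z)=m\,f^{\ast}\omega\cdot\ch_2^{B}(T)=m\,\Imm Z_{B, f^{\ast}\omega}(T)$. Lemma~\ref{lem:key} guarantees $\ch_3^{B}>0$ on every nonzero object of $\PPer_{0}(X/Y)$, so $\Imm Z_{B, f^{\ast}\omega}(T)\ge 0$, with equality iff $T\dotimes\oO_Z=0$, i.e. iff $T$ has finite support over $Y$, i.e. $T\in\PPer_{0}(X/Y)$. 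Assembling by additivity: if $\Imm Z_{B, f^{\ast}\omega}(E)=0$ then no $F[1]$-block occurs and every $T$-block lies in $\PPer_{0}(X/Y)$, whence $E\in\PPer_{0}(X/Y)$; then $\ch_0(E)=\ch_1(E)=0$ and $\Ree Z_{B, f^{\ast}\omega}(E)=-\ch_3^{B}(E)<0$, once more by Lemma~\ref{lem:key}. I expect the delicate step to be precisely this Block~B analysis: verifying that $T\dotimes\oO_Z$ genuinely lands in $\PPer_{0}(X/Y)$ and that its degree-three twisted Chern character equals $\Imm Z_{B, f^{\ast}\omega}(T)$ exactly, since this is what ties both the positivity of $\Imm Z_{B, f^{\ast}\omega}$ and the borderline negativity of $\Ree Z_{B, f^{\ast}\omega}$ back to the single geometric input of Lemma~\ref{lem:key}.
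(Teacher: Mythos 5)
Your proof is correct and follows the same skeleton as the paper's: non-negativity of $\ch_1(\ast)\cdot(f^{\ast}\omega)^2$ from the construction of the tilt, reduction to the extension closure (\ref{mu=0}), positivity of $\Imm Z_{B,f^{\ast}\omega}$ on the blocks $F[1]$ via Proposition~\ref{prop:BG3fold}, and Lemma~\ref{lem:key} for the final sign of the real part. The one place you genuinely diverge is the treatment of the blocks $T\in\PPer_{\le 1}(X/Y)$: the paper simply asserts $\ch_2(T)\cdot f^{\ast}\omega\ge 0$ and hence $\Imm Z_{B,f^{\ast}\omega}(T)\ge 0$, and then says "the above argument shows $E\in\PPer_{0}(X/Y)$" without spelling out the equality case, whereas you derive both the inequality and the precise equality criterion ($\Imm Z_{B,f^{\ast}\omega}(T)=0$ iff $T\in\PPer_{0}(X/Y)$) by restricting to $Z=f^{-1}(W)$ for a general $W\in\lvert m\omega\rvert$ and feeding $T\dotimes\oO_Z\in\PPer_{0}(X/Y)$ into Lemma~\ref{lem:key}. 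This is exactly the hyperplane-section device the paper itself deploys inside the proof of Proposition~\ref{prop:BG3fold}, so you are not introducing new machinery; what your version buys is a self-contained justification of the step the paper leaves implicit, namely that vanishing of the imaginary part forces every rank-zero block into $\PPer_{0}(X/Y)$, which is precisely what is needed before invoking Lemma~\ref{lem:key} in case (c). The only point to be careful about, which you handle correctly, is that injectivity of $T(-Z)\hookrightarrow T$ in $\PPer(X/Y)$ should be checked on $\Phi(T)$ in $\Coh(\mathrsfs{A})$, where generality of $W$ with respect to the (finitely many) associated points of the at most one-dimensional sheaf $\Phi(T)$ does the job.
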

\begin{proof}
By the construction of $\bB_{f^{\ast}\omega}$, 
we always have $\ch_1(E) \cdot (f^{\ast}\omega)^2 \ge 0$. 
Suppose that $\ch_1(E) \cdot (f^{\ast}\omega)^2 =0$. 
Then $E$ is contained in the RHS of (\ref{mu=0}). 
If $F$ is a $\mu_{f^{\ast}\omega}$-semistable 
object in $\PPer(X/Y)$ with $\mu_{f^{\ast}\omega}(F)=0$, 
then $\ch_2^{B}(F) \cdot f^{\ast}\omega \le 0$
by Proposition~\ref{prop:BG3fold}. 
Hence $\Imm Z_{B, f^{\ast}\omega}(F[1]) >0$
holds. Also for an object $T \in \PPer_{\le 1}(X/Y)$, 
we have $\ch_2(T) \cdot f^{\ast}\omega \ge 0$, 
hence $\Imm Z_{B, f^{\ast}\omega}(T) \ge 0$. 
Then the inequality $\Imm Z_{B, f^{\ast}\omega}(E) \ge 0$
follows 
from these inequalities. 
Finally if $\ch_1(E) \cdot (f^{\ast}\omega)^2
= \Imm Z_{B, f^{\ast}\omega}(E)=0$, the above 
argument shows $E \in \PPer_{0}(X/Y)$. 
Hence $\mathrm{Re}Z_{B, f^{\ast}\omega}(E)<0$
follows from our choice of $B=bD$
 that the condition of Lemma~\ref{lem:key} 
is satisfied. 
\end{proof}
We define 
the slope function $\nu_{B, f^{\ast}\omega}$ on $\bB_{f^{\ast}\omega}$
 to be
\begin{align*}
\nu_{B, f^{\ast}\omega}(E)=
\frac{\Imm Z_{B, f^{\ast}\omega}(E)}{\ch_1(E) \cdot (f^{\ast}\omega)^2}.
\end{align*} 
Here $\nu_{B, f^{\ast}\omega}(E)=\infty$ 
if $\ch_1(E) \cdot (f^{\ast}\omega)^2=\infty$. 
By Lemma~\ref{triple},
the above slope function is an analogy of 
the slope function for torsion free sheaves on algebraic surfaces. 
Also a similar slope function was defined
on a tilting of $\Coh(X)$ in~\cite{BMT}. 
The following is the analogy of `tilt stability'
in~\cite{BMT} for the tilting of perverse t-structure. 
\begin{defi}
A non-zero object $E\in \bB_{f^{\ast}\omega}$ is 
$\nu_{B, f^{\ast}\omega}$-(semi)stable if 
for any exact sequence $0 \to F \to E \to G \to 0$
in $\bB_{f^{\ast}\omega}$, we have the inequality
\begin{align*}
\nu_{B, f^{\ast}\omega}(F) <(\le) \nu_{B, f^{\ast}\omega}(G). 
\end{align*}
\end{defi}
\begin{lem}\label{B:HN}
Any object in $\bB_{f^{\ast}\omega}$ admits 
a Harder-Narasimhan filtration with respect to 
$\nu_{B, f^{\ast}\omega}$-stability. 
\end{lem}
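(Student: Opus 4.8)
The plan is to mimic the argument used for the surface case in Lemma~\ref{lem:HN}, now one dimension higher: the role played there by $\ch_0$ on $\PPer(X/Y)$ is taken here by the functional $d(\ast) \cneq \ch_1(\ast) \cdot (f^{\ast}\omega)^2$ on $\bB_{f^{\ast}\omega}$, and the slope $\mu_{f^{\ast}\omega}$ is replaced by $\nu_{B, f^{\ast}\omega}$. By Lemma~\ref{noet:B} the category $\bB_{f^{\ast}\omega}$ is noetherian, while the weak seesaw property of $\nu_{B, f^{\ast}\omega}$ follows from the additivity of $\Imm Z_{B, f^{\ast}\omega}$ and of $d$. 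Hence, exactly as in Lemma~\ref{lem:HN} (cf.~\cite[Proposition~2.12]{Tcurve1}), it suffices to rule out an infinite strictly descending chain of subobjects in $\bB_{f^{\ast}\omega}$
\begin{align*}
E_1 \supset E_2 \supset \cdots \supset E_i \supset E_{i+1} \supset \cdots
\end{align*}
satisfying $\nu_{B, f^{\ast}\omega}(E_{i+1}) > \nu_{B, f^{\ast}\omega}(E_i/E_{i+1})$ for all $i$.

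Suppose such a chain existed. The key point is the behaviour of the denominator $d$. By Lemma~\ref{triple} we have $d(F) \ge 0$ for every nonzero $F \in \bB_{f^{\ast}\omega}$, and since $\ch_1$ is integral and $\omega$ is taken to be a rational ample class, the values of $d$ lie in a fixed discrete subgroup of $\mathbb{R}$. From the exact sequences $0 \to E_{i+1} \to E_i \to E_i/E_{i+1} \to 0$ and additivity we get $d(E_i) = d(E_{i+1}) + d(E_i/E_{i+1})$, so the sequence $\{d(E_i)\}_i$ is nonincreasing and bounded below by $0$; by discreteness it is eventually constant. Therefore $d(E_i/E_{i+1}) = 0$ for all $i \gg 0$.

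For such an index $i$, the nonzero quotient $E_i/E_{i+1}$ falls into case (b) or (c) of Lemma~\ref{triple}, so that $\ch_1(E_i/E_{i+1}) \cdot (f^{\ast}\omega)^2 = 0$ and hence $\nu_{B, f^{\ast}\omega}(E_i/E_{i+1}) = \infty$ by the definition of $\nu_{B, f^{\ast}\omega}$. This contradicts the defining inequality $\nu_{B, f^{\ast}\omega}(E_{i+1}) > \nu_{B, f^{\ast}\omega}(E_i/E_{i+1})$, since no slope can exceed $\infty$. Thus no such infinite chain can exist, and Harder--Narasimhan filtrations exist for $\nu_{B, f^{\ast}\omega}$-stability.

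The main obstacle is not in this final descending-chain argument, which is elementary once its inputs are in place, but rather in the two facts it rests upon: the noetherianity of $\bB_{f^{\ast}\omega}$ supplied by Lemma~\ref{noet:B}, and the trichotomy of Lemma~\ref{triple}, which itself depends on the Bogomolov--Gieseker inequality for perverse coherent sheaves (Proposition~\ref{prop:BG3fold}) and on the positivity of $\ch_3^{bD}$ on $\PPer_{0}(X/Y)$ (Lemma~\ref{lem:key}). The only genuinely new checkpoint here is confirming that the denominator $d$ takes discrete values, which uses the rationality of $\omega$; once that is granted, the proof is a verbatim transcription of the surface argument with $\ch_0$ replaced by $\ch_1 \cdot (f^{\ast}\omega)^2$.
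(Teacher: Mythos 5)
Your proof is correct and follows essentially the same route as the paper: the paper's own proof simply invokes the noetherianity of $\bB_{f^{\ast}\omega}$ from Lemma~\ref{noet:B} and defers to the argument of \cite[Proposition~7.1]{Brs2}, which is exactly the ``noetherian plus discrete, non-negative denominator'' chain-condition argument you spell out. Your transcription of the surface-case template of Lemma~\ref{lem:HN}, with $\ch_0$ replaced by $\ch_1(\ast)\cdot(f^{\ast}\omega)^2$ and the discreteness of its values noted explicitly, is a faithful and complete rendering of what the citation leaves implicit.
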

\begin{proof}
Since $\bB_{f^{\ast}\omega}$ is noetherian 
by Lemma~\ref{noet:B}, 
the same argument of~\cite[Proposition~7.1]{Brs2}
is applied.  
\end{proof}
We consider the following subcategories of $\bB_{f^{\ast}\omega}$:
\begin{align*}
\tT_{B, f^{\ast}\omega}' & \cneq
 \langle E : E \mbox{ is }\nu_{B, f^{\ast}\omega}
\mbox{-semistable with } \nu_{B, f^{\ast}\omega}(E)>0 \rangle, \\
\fF_{B, f^{\ast}\omega}' & \cneq
 \langle E : E \mbox{ is }\nu_{B, f^{\ast}\omega}
\mbox{-semistable with } \nu_{B, f^{\ast}\omega}(E) \le 0 \rangle. 
\end{align*}
By Lemma~\ref{B:HN}, 
the pair of subcategories 
$(\tT_{B, f^{\ast}\omega}', \fF_{B, f^{\ast}\omega}')$
forms a torsion pair on $\bB_{f^{\ast}\omega}$. 
By tilting, we have the following heart of a t-structure:
\begin{defi}
We define $\aA_{B, f^{\ast}\omega} 
\subset D^b \Coh(X)$ to be
\begin{align*}
\aA_{B, f^{\ast}\omega} \cneq \langle \fF_{B, f^{\ast}\omega}'[1], 
\tT_{B, f^{\ast}\omega}' \rangle. 
\end{align*}
\end{defi}
Note that, by 
the construction of $\aA_{B, f^{\ast}\omega}$, we have 
\begin{align}\label{IZg}
Z_{B, f^{\ast}\omega}(\aA_{B, f^{\ast}\omega} \setminus \{0\} )
\subset \{ z \in \mathbb{C} : \Imm z \ge 0\}. 
\end{align}
\subsection{Conjectures}
Let us consider the pair
\begin{align*}
\sigma_{B, f^{\ast}\omega}
=(Z_{B, f^{\ast}\omega}, \aA_{B, f^{\ast}\omega}). 
\end{align*}
Similarly to~\cite[Conjecture~3.2.6]{BMT}, we
propose the following conjecture: 
\begin{conj}\label{conj:stab}
We have 
$\sigma_{B, f^{\ast}\omega} \in \Stab(X)$. 
\end{conj}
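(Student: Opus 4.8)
The plan is to reduce Conjecture~\ref{conj:stab} to a single Bogomolov--Gieseker type inequality evaluating $\ch_3^{B}$ for tilt-semistable objects, exactly as the corresponding statement in~\cite{BMT} rests on~\cite[Conjecture~3.2.6]{BMT}. Concretely, one first isolates the conjectural inequality
\begin{align*}
\ch_3^{B}(E) < \frac{(f^{\ast}\omega)^2}{2} \ch_1(E)
\end{align*}
for every $\nu_{B, f^{\ast}\omega}$-semistable $E \in \bB_{f^{\ast}\omega}$ with $\nu_{B, f^{\ast}\omega}(E)=0$ (a strengthened quadratic version being needed for the support property). Granting this, I would verify the three defining conditions of $\Stab(X)$ in turn, and all of these verifications are unconditional.

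First I would check the positivity axiom (\ref{pro:1}). The inclusion (\ref{IZg}) already gives $\Imm Z_{B, f^{\ast}\omega} \ge 0$ on $\aA_{B, f^{\ast}\omega} \setminus \{0\}$, so it remains to show $\Ree Z_{B, f^{\ast}\omega}(E) < 0$ whenever $\Imm Z_{B, f^{\ast}\omega}(E) = 0$. Using the torsion pair defining $\aA_{B, f^{\ast}\omega}$ together with Lemma~\ref{triple}, such an $E$ is an extension of objects of two kinds: objects in $\PPer_{0}(X/Y)$, for which $\Ree Z_{B, f^{\ast}\omega} < 0$ holds by the choice $B=bD$ in Lemma~\ref{lem:key}; and shifts $E'[1]$ of $\nu_{B, f^{\ast}\omega}$-semistable objects $E' \in \bB_{f^{\ast}\omega}$ with $\nu_{B, f^{\ast}\omega}(E')=0$ and $\ch_1(E') \cdot (f^{\ast}\omega)^2 > 0$. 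Since $\Ree Z_{B, f^{\ast}\omega}(E'[1]) = \ch_3^{B}(E') - \tfrac12(f^{\ast}\omega)^2\ccdot\ch_1(E')$, the condition $\Ree Z_{B, f^{\ast}\omega}(E'[1]) < 0$ is precisely the displayed $\ch_3^{B}$-inequality, so this case follows from the conjectural input.

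Next is the Harder--Narasimhan property. Since $\bB_{f^{\ast}\omega}$ is noetherian by Lemma~\ref{noet:B} and $\aA_{B, f^{\ast}\omega}$ is obtained from it by a single tilt with respect to $\nu_{B, f^{\ast}\omega}$ (whose HN filtrations exist by Lemma~\ref{B:HN}), the existence of HN filtrations in $\aA_{B, f^{\ast}\omega}$ follows from the standard argument of~\cite[Proposition~7.1]{Brs2}, using that $\Imm Z_{B, f^{\ast}\omega}$ takes values in a discrete subgroup of $\mathbb{R}$ that is bounded below on $\aA_{B, f^{\ast}\omega}$. For the support property I would exhibit a quadratic form $Q$ on $N(X)_{\mathbb{R}}$ that is negative semidefinite on $\Ker Z_{B, f^{\ast}\omega}$ and nonnegative on all $\sigma_{B, f^{\ast}\omega}$-semistable objects. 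The surface-type part of $Q$ is controlled by the perverse Bogomolov--Gieseker inequality of Proposition~\ref{prop:BG3fold}, while the remaining coordinate (the $\ch_3^{B}$ direction) is bounded by the quadratic strengthening of the conjectural inequality; one then propagates the bound from tilt-semistable to all $\sigma_{B, f^{\ast}\omega}$-semistable objects by the wall-crossing induction on $\ch_1 \ccdot (f^{\ast}\omega)^2$ used in Theorem~\ref{thm:sBG} and~\cite[Section~7]{BMT}.

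The main obstacle is the $\ch_3^{B}$-inequality itself, which is the only genuinely open input above: it is the three-dimensional perverse analogue of~\cite[Conjecture~3.2.6]{BMT} and is not known in general. In the present setting the difficulty is compounded by the fact that one works inside the tilted hearts of $\PPer(X/Y)$ rather than of $\Coh(X)$, so even its formulation depends on the explicit classification of the simple objects of $\PPer_{0}(X/Y)$ in Proposition~\ref{prop:Persim} and on the admissibility of $B=bD$ from Lemma~\ref{lem:key}. A proof would presumably proceed by a restriction and specialization argument to the exceptional divisor $D$ and its infinitesimal neighborhoods, reducing $\ch_3$ estimates along the fibers of $f$ to the known surface inequalities established in Theorem~\ref{thm:sBG}; carrying this out uniformly across the five contraction types, and especially in the type V case, is where I expect the real work to lie.
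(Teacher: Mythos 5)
The statement you are asked to prove is labelled a \emph{Conjecture} in the paper, and the paper does not prove it: the author explicitly writes that he is ``not able to prove Conjecture~\ref{conj:stab} yet,'' and instead formulates Conjecture~\ref{conj:BG} (the $\ch_3^{B}$-inequality for $\nu_{B,f^{\ast}\omega}$-semistable objects with vanishing tilt-slope) and proves only the conditional statement that Conjecture~\ref{conj:BG} implies $\sigma_{B,f^{\ast}\omega} \in \Stab^{\dag}(X)$. Your proposal is therefore not a proof of the statement; it is a reduction to the same open inequality, and you are candid about this. To your credit, the reduction you outline for the positivity axiom is exactly the paper's: $\Imm Z_{B,f^{\ast}\omega}\ge 0$ comes from the construction of the double tilt, $\Ree Z_{B,f^{\ast}\omega}<0$ on $\PPer_{0}(X/Y)$ comes from the choice of $B=bD$ in Lemma~\ref{lem:key}, and the remaining case $F[1]$ with $F$ tilt-semistable of slope zero is precisely the content of Conjecture~\ref{conj:BG}. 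The Harder--Narasimhan property via the noetherian property of $\aA_{B,f^{\ast}\omega}$ (Proposition~\ref{prop:noether}) also matches the paper.

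The genuine gap, beyond the open inequality itself, is in your treatment of the support property. You assert that, granting the single $\ch_3^{B}$-inequality, ``all of these verifications are unconditional,'' but this is not so: Conjecture~\ref{conj:BG} only yields membership in $\Stab^{\dag}(X)$, whereas $\Stab(X)$ requires the support property, and the paper establishes no such bound, even conditionally. Your sketch of a quadratic form $Q$ negative semidefinite on $\Ker Z_{B,f^{\ast}\omega}$, with the $\ch_3^{B}$-direction controlled by a ``quadratic strengthening'' of the conjectural inequality and a wall-crossing induction as in Theorem~\ref{thm:sBG}, is a reasonable program, but it introduces a second, stronger conjectural input that is nowhere formulated or justified in the paper (the surface argument of Theorem~\ref{thm:sBG} and Proposition~\ref{prop:support} relied on the two-dimensional BG inequality of Proposition~\ref{prop:BGper}, and its three-dimensional analogue controlling $\ch_3$ quadratically is precisely what is missing). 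So even as a conditional argument your proposal needs more than the one inequality you isolate, and as an unconditional proof of the conjecture it does not exist --- which is consistent with the paper, where the statement remains open.
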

\begin{rmk}
For $0<\varepsilon \ll 1$
so that $f^{\ast}\omega -\varepsilon D$ is ample, 
a conjectural Bridgeland stability condition 
\begin{align}\label{BMT}
\sigma_{B, f^{\ast}\omega -\varepsilon D}
=(Z_{B, f^{\ast}\omega -\varepsilon D}, 
\aA_{B, f^{\ast}\omega -\varepsilon D})
\end{align}
is constructed in~\cite{BMT}. 
If both of~\cite[Conjecture~3.2.6]{BMT}
and Conjecture~\ref{conj:stab} are true, then
we conjecture that 
\begin{align}\label{BMT:lim}
\lim_{\varepsilon \to +0}
\sigma_{B, f^{\ast}\omega -\varepsilon D}
=\sigma_{B, f^{\ast}\omega}
\end{align} 
in $\Stab(X)$. 
The above equality should follow from the 
support property of $\sigma_{B, f^{\ast}\omega}$. 
Proving this would require 
further evaluations of 
Chern classes of $\sigma_{B, f^{\ast}\omega}$-semistable 
objects, as we discussed in Section~\ref{sec:surface}
for surfaces. 
\end{rmk}
If Conjecture~\ref{conj:stab} is true, 
we are interested in the subcategory of 
semistable objects of phase one. 
Although we are not able to prove Conjecture~\ref{conj:stab}
yet, such a subcategory is well-defined
as follows: 
\begin{defi}\label{def:P(1)}
We define $\pP_{B, f^{\ast}\omega}(1)$ to be
\begin{align*}
\pP_{B, f^{\ast}\omega}(1) \cneq 
\{ E \in \aA_{B, f^{\ast}\omega} :
\Imm Z_{B, f^{\ast}\omega}(E)=0\}. 
\end{align*}
\end{defi}
Note that 
$\pP_{B, f^{\ast}\omega}(1)$ is an abelian subcategory of 
$\aA_{B, f^{\ast}\omega}$.  
The construction of $\aA_{B, f^{\ast}\omega}$
immediately implies 
\begin{align}\label{IZ=0}
\pP_{B, f^{\ast}\omega}(1) = 
\left\langle F[1], T : 
\begin{array}{c}  
F\in \bB_{f^{\ast}\omega}
 \mbox{ is } \nu_{B, f^{\ast}\omega} \mbox{-semistable with } \\
\nu_{B, f^{\ast}\omega}(F)=0, \
T \in \PPer_{0}(X/Y).
\end{array} \right\rangle. 
\end{align}

\begin{rmk}
The abelian categories $\aA_{B, f^{\ast}\omega}$
and $\pP_{B, f^{\ast}\omega}(1)$ are independent of $B$
if $f(D)$ is a point.  
\end{rmk}

In order to show Conjecture~\ref{conj:stab}, we need to 
show the axiom (\ref{pro:1}). As in~\cite{BMT}, 
this condition is equivalent to the following 
Bogomolov-Gieseker type inequality evaluating $\ch_3$:
\begin{conj}\label{conj:BG}
Take $b\in \mathbb{Q}$ as in Lemma~\ref{lem:key}
and set $B=bD$. Then for any 
$\nu_{B, f^{\ast}\omega}$-semistable 
object $E \in \bB_{f^{\ast}\omega}$ with $\nu_{B, f^{\ast}\omega}(E)=0$, 
we have the inequality
\begin{align*}
\ch_3^{B}(E) < \frac{(f^{\ast}\omega)^2}{2}\ch_1(E). 
\end{align*}
\end{conj}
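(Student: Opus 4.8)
The plan is to recognize first that the stated inequality is exactly the positivity axiom (\ref{pro:1}) in disguise. Writing out the central charge,
\begin{align*}
\Ree Z_{B, f^{\ast}\omega}(E) = -\ch_3^{B}(E) + \frac{(f^{\ast}\omega)^2}{2}\ch_1(E),
\end{align*}
so for a $\nu_{B, f^{\ast}\omega}$-semistable $E$ with $\nu_{B, f^{\ast}\omega}(E)=0$, hence $\Imm Z_{B, f^{\ast}\omega}(E)=0$ and $\ch_1(E)\cdot(f^{\ast}\omega)^2>0$, the assertion is equivalent to $\Ree Z_{B, f^{\ast}\omega}(E)>0$, which says precisely that the shifted object $E[1]\in\aA_{B, f^{\ast}\omega}$ has $Z_{B, f^{\ast}\omega}(E[1])\in\mathbb{R}_{<0}$. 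Thus Conjecture~\ref{conj:BG} is literally the missing ingredient establishing the positivity axiom (\ref{pro:1}) for $\sigma_{B, f^{\ast}\omega}$, and the entire difficulty lies in controlling $\ch_3$. By the Harder--Narasimhan filtration of Lemma~\ref{B:HN} the inequality is additive on the graded factors, so I would first reduce to $\nu_{B, f^{\ast}\omega}$-stable $E$.

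The approach I would then take mirrors, one dimension higher, the surface argument of Section~\ref{sec:surface}: rescale $\omega \rightsquigarrow s\omega$ for $s\in\mathbb{R}_{>0}$ and run an induction on the discrete invariant $\ch_1(E)\cdot(f^{\ast}\omega)^2$. The Bogomolov--Gieseker inequality of Proposition~\ref{prop:BG3fold} plays the role that Proposition~\ref{prop:wBGin} played on surfaces, controlling $\ch_2^{B}\cdot f^{\ast}\omega$ and yielding, as in Proposition~\ref{prop:cont}, a locally finite wall-and-chamber structure for $\nu_{B, sf^{\ast}\omega}$-stability along the ray. The contributions supported on the fibres of $f$ lie in $\PPer_{0}(X/Y)$ and are settled outright by the positivity $\ch_3^{bD}(\ast)>0$ of Lemma~\ref{lem:key}, using the explicit Chern characters of the simple objects of Proposition~\ref{prop:PDsim} and Proposition~\ref{prop:Persim}; the difficulty is therefore concentrated in the ``horizontal'' objects, the shifts of $\mu_{f^{\ast}\omega}$-semistable perverse coherent sheaves, for which Proposition~\ref{prop:BG3fold} controls only $\ch_2$ and not $\ch_3$.

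For those, the inductive step crosses each wall: at $s=s_0$ the object sits in an exact sequence $0\to E_1\to E\to E_2\to 0$ in $\bB_{f^{\ast}\omega}$ with $0<\ch_1(E_i)\cdot(f^{\ast}\omega)^2<\ch_1(E)\cdot(f^{\ast}\omega)^2$ and $\nu_{B, s_0 f^{\ast}\omega}(E_1)=\nu_{B, s_0 f^{\ast}\omega}(E_2)$, so that the inductive hypothesis furnishes the $\ch_3$-bound for both factors. One then hopes to combine these two bounds with the equality of slopes at the wall into the bound for $E$, by the quadratic manipulation of~\cite[Theorem~7.3.1]{BMT}.

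The main obstacle is precisely this combination, and it is not formal. Unlike the $\ch_2$-inequality, the wall-crossing step for $\ch_3$ cannot be closed by the classical Bogomolov--Gieseker inequality of Theorem~\ref{thm:BoGi} alone: it requires a genuine quadratic inequality among $\ch_0,\ch_1,\ch_2^{B},\ch_3^{B}$ of the stable factors, and supplying this is exactly the content of~\cite[Conjecture~3.2.6]{BMT}. Because $f$ is an isomorphism away from $D$, the geometry of the contraction reduces the transverse part of the estimate to the three-dimensional Bogomolov--Gieseker conjecture on $Y$ itself, which is open in general; I therefore expect a complete proof only in those cases where that conjecture is known, Lemma~\ref{lem:key} having already disposed of everything supported on the exceptional divisor.
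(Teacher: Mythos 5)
This statement is labelled as a \textbf{Conjecture} in the paper, and the paper supplies no proof of it: it only records (in the proposition immediately following) that the inequality is equivalent to the positivity axiom (\ref{pro:1}) for the pair $(Z_{B,f^{\ast}\omega},\aA_{B,f^{\ast}\omega})$, and presents it as the three-dimensional analogue of~\cite[Conjecture~3.2.6]{BMT}, which is open. Your proposal is therefore not being measured against an existing proof, and indeed it is not a proof: you say so yourself in the final paragraph. What you do get right, and what matches the paper's own framing exactly, is the dictionary $\Ree Z_{B,f^{\ast}\omega}(E)=-\ch_3^{B}(E)+\tfrac{(f^{\ast}\omega)^2}{2}\ch_1(E)$ identifying the conjecture with the missing half of axiom (\ref{pro:1}); the observation that the contributions from $\PPer_{0}(X/Y)$ are already disposed of by the choice of $b$ in Lemma~\ref{lem:key} together with the explicit simple objects of Propositions~\ref{prop:PDsim} and~\ref{prop:Persim}; and the diagnosis that the genuinely hard part is a quadratic $\ch_3$-inequality for the ``horizontal'' stable factors, for which Proposition~\ref{prop:BG3fold} (controlling only $\ch_2^{B}\cdot f^{\ast}\omega$) is insufficient and the wall-crossing induction of~\cite[Theorem~7.3.1]{BMT} cannot be closed without the BMT conjecture itself. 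One small slip: the reduction from $\nu_{B,f^{\ast}\omega}$-semistable to stable objects should invoke a Jordan--H\"older filtration inside the semistable category (using the additivity of $\ch_3^{B}(\ast)-\tfrac{(f^{\ast}\omega)^2}{2}\ch_1(\ast)$ on short exact sequences), not the Harder--Narasimhan filtration of Lemma~\ref{B:HN}, which is trivial for an already semistable object. In summary, your write-up is an accurate account of why the statement is expected to hold and where the obstruction lies, consistent with the paper's decision to leave it as a conjecture, but it should not be presented as a proof.
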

Indeed we have the following: 
\begin{prop}
Suppose that Conjecture~\ref{conj:BG} is true. 
Then we have $(Z_{B, f^{\ast}\omega}, \aA_{B, f^{\ast}\omega}) 
\in \Stab^{\dag}(X)$.  
\end{prop}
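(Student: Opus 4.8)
The plan is to verify that the pair $(Z_{B, f^{\ast}\omega}, \aA_{B, f^{\ast}\omega})$ satisfies the two axioms of Definition~\ref{lem:pair2}, namely the positivity property (\ref{pro:1}) and the Harder--Narasimhan property, together with the local finiteness needed for membership in $\Stab^{\dag}(X)$. Since $\aA_{B, f^{\ast}\omega}$ is already the heart of a bounded t-structure, being a tilt of $\bB_{f^{\ast}\omega}$, only these properties remain. The single place where Conjecture~\ref{conj:BG} enters is in the verification of (\ref{pro:1}); the Harder--Narasimhan property and local finiteness are structural and use the conjecture only through (\ref{pro:1}) itself.

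First I would establish (\ref{pro:1}). By (\ref{IZg}) we already have $\Imm Z_{B, f^{\ast}\omega}(E) \ge 0$ for every non-zero $E \in \aA_{B, f^{\ast}\omega}$, so it suffices to treat the boundary case $\Imm Z_{B, f^{\ast}\omega}(E) = 0$ and show $\Ree Z_{B, f^{\ast}\omega}(E) < 0$ there. Such an $E$ lies in $\pP_{B, f^{\ast}\omega}(1)$, which by (\ref{IZ=0}) is the extension closure of objects $F[1]$, with $F \in \bB_{f^{\ast}\omega}$ being $\nu_{B, f^{\ast}\omega}$-semistable of slope zero, together with objects $T \in \PPer_{0}(X/Y)$. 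As $\Ree Z_{B, f^{\ast}\omega}$ is additive on short exact sequences, it is enough to check negativity on these two types of generators. For $T \in \PPer_{0}(X/Y)$ the support lies in the fibres of $f$, so by the projection formula $\ch_1(T) \cdot (f^{\ast}\omega)^2 = 0$ and hence $\Ree Z_{B, f^{\ast}\omega}(T) = -\ch_3^{B}(T)$, which is negative by Lemma~\ref{lem:key}. For $E = F[1]$ one computes
\begin{align*}
\Ree Z_{B, f^{\ast}\omega}(F[1]) = \ch_3^{B}(F) - \frac{(f^{\ast}\omega)^2}{2}\ch_1(F),
\end{align*}
and its negativity is exactly the inequality asserted by Conjecture~\ref{conj:BG}. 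This gives (\ref{pro:1}).

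For the Harder--Narasimhan property and local finiteness I would take the ample class $\omega$ on $Y$ to be rational, so that $\omega^3 \in \mathbb{Q}$ and all intersection numbers entering $Z_{B, f^{\ast}\omega}$ are rational; then $\Imm Z_{B, f^{\ast}\omega}$ takes values in a discrete subgroup of $\mathbb{R}$. Combining this discreteness with the noetherian property of $\bB_{f^{\ast}\omega}$ from Lemma~\ref{noet:B} and the finite length of $\pP_{B, f^{\ast}\omega}(1)$, the existence of Harder--Narasimhan filtrations and local finiteness follow by the standard arguments of~\cite{Brs1}, exactly as in the double-tilting construction of~\cite{BMT}. The main obstacle here is that $\aA_{B, f^{\ast}\omega}$ is a tilt of $\bB_{f^{\ast}\omega}$ and need not be noetherian a priori, so one must rule out infinite chains violating the Harder--Narasimhan condition directly; this is achieved by separating the contribution of objects of phase one (controlled by the finite length of $\pP_{B, f^{\ast}\omega}(1)$) from that of objects with $\Imm Z_{B, f^{\ast}\omega} > 0$ (controlled by the discreteness of $\Imm Z_{B, f^{\ast}\omega}$). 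Since no claim is made about the support property, the argument yields only membership in $\Stab^{\dag}(X)$, not in $\Stab(X)$.
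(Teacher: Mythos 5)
Your verification of the positivity axiom (\ref{pro:1}) is exactly the paper's argument: reduce via (\ref{IZg}) to objects of $\pP_{B, f^{\ast}\omega}(1)$, use the generators in (\ref{IZ=0}), and handle the $T\in \PPer_{0}(X/Y)$ pieces by Lemma~\ref{lem:key} and the $F[1]$ pieces by Conjecture~\ref{conj:BG}. Where you diverge is the Harder--Narasimhan property. The paper's route is to prove that the whole heart $\aA_{B, f^{\ast}\omega}$ is noetherian (Proposition~\ref{prop:noether}, a substantial self-contained argument) and then to run the argument of Lemma~\ref{lem:HN}; you instead propose to bypass noetherianity of $\aA_{B, f^{\ast}\omega}$ by combining discreteness of $\Imm Z_{B, f^{\ast}\omega}$ (for which your explicit rationality assumption on $\omega$ is a useful clarification the paper leaves implicit) with the finite length of $\pP_{B, f^{\ast}\omega}(1)$. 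That chain argument can indeed be completed: once $\Imm Z_{B, f^{\ast}\omega}$ stabilizes along a bad chain, the successive kernels or cokernels lie in $\pP_{B, f^{\ast}\omega}(1)$ and noetherianity of that subcategory terminates the chain. However, you should be aware that you have not actually avoided the paper's main technical input: the finite length of $\pP_{B, f^{\ast}\omega}(1)$ is Proposition~\ref{prop:Pfin}, whose proof of the noetherian half explicitly invokes Proposition~\ref{prop:noether}. So your route is logically sound only as a forward reference to results proved elsewhere in the paper, and the heavy lifting (the bounding arguments of Proposition~\ref{prop:noether}, or an independent proof that $\pP_{B, f^{\ast}\omega}(1)$ is noetherian) still has to be supplied; your one-sentence appeal to ``the standard arguments'' and to \cite{BMT} (which in fact proceeds via polynomial stability conditions, a third route the paper deliberately avoids) leaves that dependence unacknowledged. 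Lemma~\ref{noet:B} on $\bB_{f^{\ast}\omega}$, which you cite, does not by itself transfer to the tilt.
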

\begin{proof}
First we check (\ref{pro:1}).
By the condition (\ref{IZg}), it is enough to check 
that 
any non-zero object $E \in \pP_{B, f^{\ast}\omega}(1)$
satisfies $\Ree Z_{B, f^{\ast}\omega}(E)<0$.  
Since $E$ is contained in the RHS of (\ref{IZ=0}), 
the condition $\Ree Z_{B, f^{\ast}\omega}(E)<0$ follows from 
Conjecture~\ref{conj:BG} and our choice of $B=bD$ so that 
the condition of Lemma~\ref{lem:key} is satisfied. 

In Proposition~\ref{prop:noether} below, 
we show that the abelian category $\aA_{B, f^{\ast}\omega}$
is noetherian. The Harder-Narasimhan property 
follows from this fact and the same argument of Lemma~\ref{lem:HN}. 
The locally finiteness is obvious since 
the image of $Z_{B, f^{\ast}\omega}$ is a discrete subgroup
in $\mathbb{C}$. 
\end{proof}

\subsection{Finite length property}
This subsection
 is devoted to showing
some technical results:
noetherian property of 
$\aA_{B, f^{\ast}\omega}$ 
and finite length property of 
$\pP_{B, f^{\ast}\omega}(1)$.
Both of them are necessary conditions
for the Conjecture~\ref{conj:stab} to hold. 
\begin{prop}\label{prop:noether}
The abelian category $\aA_{B, f^{\ast}\omega}$ is noetherian. 
\end{prop}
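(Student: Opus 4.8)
The plan is to prove noetherianity by showing that every infinite chain of strict surjections in $\aA_{B, f^{\ast}\omega}$,
$$
E_1 \twoheadrightarrow E_2 \twoheadrightarrow \cdots \twoheadrightarrow E_i \twoheadrightarrow \cdots,
$$
stabilizes (equivalently, that subobjects satisfy the ascending chain condition). The first step uses the positivity (\ref{IZg}): $\Imm Z_{B, f^{\ast}\omega}$ is non-negative on $\aA_{B, f^{\ast}\omega}$ and takes values in a discrete subgroup of $\mathbb{R}$ (the image of $Z_{B, f^{\ast}\omega}$ being discrete, as already used for local finiteness). Along the chain, $\Imm Z_{B, f^{\ast}\omega}(E_i)$ is non-increasing, hence eventually constant; so after discarding finitely many terms every kernel $K_i=\ker(E_i\to E_{i+1})$ has $\Imm Z_{B, f^{\ast}\omega}(K_i)=0$, i.e. $K_i\in \pP_{B, f^{\ast}\omega}(1)$. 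Since $\pP_{B, f^{\ast}\omega}(1)$ is visibly closed under quotients and extensions in $\aA_{B, f^{\ast}\omega}$, it is a torsion class, and the cumulative kernels $\ker(E_1\to E_{i+1})$ all lie inside the torsion subobject $t(E_1)\in \pP_{B, f^{\ast}\omega}(1)$. This reduces the statement to showing that $\pP_{B, f^{\ast}\omega}(1)$ is a noetherian abelian category.

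For a chain $L_1\subseteq L_2\subseteq\cdots\subseteq M$ in $\pP_{B, f^{\ast}\omega}(1)$ I would pass to cohomology with respect to the heart of the first tilt $\bB_{f^{\ast}\omega}$, writing $\hH^i_{\bB}(\ast)$ for the corresponding cohomology functors. By the description (\ref{IZ=0}), each $L\in\pP_{B, f^{\ast}\omega}(1)$ fits into an exact sequence $0\to \hH^{-1}_{\bB}(L)[1]\to L\to \hH^0_{\bB}(L)\to 0$ with $\hH^{-1}_{\bB}(L)$ a $\nu_{B, f^{\ast}\omega}$-semistable object of slope zero and $\hH^0_{\bB}(L)\in \PPer_{0}(X/Y)$, the latter because $\PPer_{0}(X/Y)$ is extension closed. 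Applying $\hH^{\bullet}_{\bB}$ to $0\to L_i\to M\to M/L_i\to 0$ shows that the $\hH^{-1}_{\bB}(L_i)$ form an ascending chain of subobjects of the fixed object $\hH^{-1}_{\bB}(M)\in \bB_{f^{\ast}\omega}$; this stabilizes since $\bB_{f^{\ast}\omega}$ is noetherian by Lemma~\ref{noet:B}. Similarly the images of the natural maps $\hH^0_{\bB}(L_i)\to \hH^0_{\bB}(M)$ form an ascending chain of subobjects of $\hH^0_{\bB}(M)$, which stabilizes because $\PPer_{0}(X/Y)$ is a finite length category by Theorem~\ref{thm:tilting} (see Proposition~\ref{prop:Persim} for the explicit simple objects).

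Finally I would combine the two stabilizations: once $\hH^{-1}_{\bB}(L_i)$ and the image of $\hH^0_{\bB}(L_i)$ in $\hH^0_{\bB}(M)$ are both constant, the classes $\ch(L_i)$ are constant, and since $L_i\subseteq L_{i+1}$ have equal class inside $\pP_{B, f^{\ast}\omega}(1)$ the inclusions must be isomorphisms, so the chain terminates.

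The hard part will be exactly this last combination, together with the bookkeeping it requires: the objects $\hH^0_{\bB}(L_i)$ do \emph{not} themselves form an ascending chain, since the connecting homomorphisms in the long exact sequence carry nonzero kernels coming from the $\hH^{-1}_{\bB}$-parts. One therefore cannot apply the finite-length property of $\PPer_{0}(X/Y)$ to them directly; the remedy is to track only their images in the fixed object $\hH^0_{\bB}(M)$ and to use the already-established stabilization of the $\hH^{-1}_{\bB}$-parts to bound the kernels, so that the noetherian property of $\bB_{f^{\ast}\omega}$ and the finite-length property of $\PPer_{0}(X/Y)$ are made to interact. A secondary point to check carefully is that $\hH^0_{\bB}(L)$ indeed lands in $\PPer_{0}(X/Y)$ and that the relevant subobject relations are preserved by the cohomology functors; both follow from (\ref{IZ=0}) and the construction of the tilts.
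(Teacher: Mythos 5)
Your opening reduction coincides with the paper's: along the chain $\Imm Z_{B,f^{\ast}\omega}(E_i)$ is non-increasing and discrete, so the cumulative kernels $L_i$ eventually satisfy $\Imm Z_{B,f^{\ast}\omega}(L_i)=0$ and hence lie in the right-hand side of (\ref{IZ=0}); one then passes to $\hH^{\bullet}_{\bB}$-cohomology and uses Lemma~\ref{noet:B}. But there are two genuine gaps. First, the reduction to ``$\pP_{B,f^{\ast}\omega}(1)$ is noetherian'' via a torsion subobject $t(E_1)\in\pP_{B,f^{\ast}\omega}(1)$ is unjustified: the existence of a maximal subobject of $E_1$ lying in $\pP_{B,f^{\ast}\omega}(1)$ is essentially equivalent to the ascending chain condition you are trying to prove, and a subcategory closed under quotients and extensions does not automatically admit torsion subobjects in a category not yet known to be noetherian. (The ambient object of the chain $L_1\subset L_2\subset\cdots$ is $E_1\in\aA_{B,f^{\ast}\omega}$, not an object of $\pP_{B,f^{\ast}\omega}(1)$, so the finite-length property of $\PPer_0(X/Y)$ cannot be applied to a fixed $\hH^0_{\bB}(M)$ as in your second paragraph.)

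Second, and more seriously, the step you yourself flag as ``the hard part'' is precisely the content of the paper's proof, and your proposed remedy does not supply it. The long exact sequence exhibits $\hH^0_{\bB}(L_i)$ as an extension of its image in $\hH^0_{\bB}(E_1)$ (which stabilizes by Lemma~\ref{noet:B}) by a kernel $K_i$ which is a cokernel of a map out of the fixed object $\hH^{-1}_{\bB}(E_1)$ into $\hH^{-1}_{\bB}(E_1/L_i)$; the stabilization of the subobjects $\hH^{-1}_{\bB}(L_i)\subset\hH^{-1}_{\bB}(E_1)$ gives no control on these cokernels, since $\hH^{-1}_{\bB}(E_1/L_i)$ varies with $i$. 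The paper handles exactly this point by a separate argument: it writes $\hH^0_{\bB}(L_i)$ as $\hH^{-1}_{\bB}(E_i)/Q$ with $Q$ fixed, decomposes $Q$ and $\hH^{-1}_{\bB}(E_i)$ using the auxiliary torsion pair of Lemma~\ref{lem:torsion}, and bounds the length of the zero-dimensional sheaves $\Phi(\hH^0_{\bB}(L_i))$ piece by piece, using the derived-dual computation of Lemma~\ref{DPhi} for one piece, the inclusion into $\Phi(Q)^{\vee\vee}/\Phi(Q)$ for the torsion-free piece, and a finite projection to $\mathbb{P}^2$ for the pure two-dimensional piece. Without an argument of this kind your chain of Chern characters $\ch(L_i)$ is not known to stabilize, and the proof does not close.
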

\begin{proof}
We give a direct proof for this fact without 
passing to polynomial stability 
conditions as in~\cite[Proposition~5.2.2]{BMT}. 
Suppose that there is an infinite sequence of 
surjections in $\aA_{B, f^{\ast}\omega}$
\begin{align}\notag
E=E_1 \twoheadrightarrow E_2 \twoheadrightarrow \cdots
\twoheadrightarrow E_{i} \twoheadrightarrow E_{i+1} \twoheadrightarrow \cdots. 
\end{align}
Let us take the exact sequence in $\aA_{B, f^{\ast}\omega}$
\begin{align*}
0 \to L_i \to E \to E_i \to 0. 
\end{align*}
Since $\Imm Z_{B, f^{\ast}\omega}(\ast) \ge 0$
on $\aA_{B, f^{\ast}\omega}$, we may assume that 
$\Imm Z_{B, f^{\ast}\omega}(E_i)$ is 
constant. Hence $\Imm Z_{B, f^{\ast}\omega}(L_i)=0$, 
which implies that $L_i$ is contained in 
the RHS of (\ref{IZ=0}). 
Similarly to the proof of Lemma~\ref{B:HN}, 
if we apply the 
 same argument of~\cite[Proposition~7.1]{Brs2}, 
we arrive at the exact sequences in $\bB_{f^{\ast}\omega}$
\begin{align*}
0 \to Q \to \hH_{\bB}^{-1}(E_i) \to \hH_{\bB}^{0}(L_i) \to 0
\end{align*}
where $Q \in \fF_{B, f^{\ast}\omega}'$ is independent of $i$, 
and $\hH_{\bB}^{i}(\ast)$ is the $i$-th cohomology 
functor with respect to the t-structure with heart $\bB_{f^{\ast}\omega}$. 
We also have the inclusions in $\bB_{f^{\ast}\omega}$
\begin{align}\label{seq:ter2}
\hH_{\bB}^{0}(L_1) \subset \hH_{\bB}^{0}(L_2) \subset \cdots 
\subset \hH_{\bB}^{0}(L_i) \subset \hH_{\bB}^{0}(L_{i+1}) \subset \cdots.
\end{align}
Since $\hH_{\bB}^{0}(L_i) \in \PPer_{0}(X/Y)$, 
the object $\Phi(\hH_{\bB}^{0}(L_i))$ is a zero dimensional 
sheaf as $\oO_Y$-module. 
Hence it is enough to bound the length of $\Phi(\hH_{\bB}^{0}(L_i))$. 

In order to reduce the notation, we set 
\begin{align*}
V_i= \hH_{\bB}^{-1}(E_i), \quad T_i = \hH_{\bB}^{0}(L_i).
\end{align*}
By Lemma~\ref{lem:torsion} below, 
we have the exact sequences in $\bB_{f^{\ast}\omega}$
\begin{align*}
&0 \to Q^{(1)} \to Q \to Q^{(2)} \to 0 \\
&0 \to V_i^{(1)} \to V_i \to V_i^{(2)} \to 0
\end{align*}
which respect the torsion pair $(\tT^{\dag}, \fF^{\dag})$. 
Using the snake lemma, it is easy to see that 
there are exact sequences in $\bB_{f^{\ast}\omega}$
\begin{align}\label{seq:Ti}
&0 \to Q^{(j)} \to V_{i}^{(j)} \to T_i^{(j)} \to 0, \\
\notag
&0 \to T_{i}^{(1)} \to T_{i} \to T_{i}^{(2)} \to 0,
\end{align}
for some $T_i^{(j)} \in \PPer_{0}(X/Y)$ with
$j=1, 2$. 
Since $\Phi(T_i^{(j)})$ is a zero dimensional sheaf, 
it is enough to bound its length. 

First we bound the length of $\Phi(T_{i}^{(1)})$. 
By applying $\mathbb{D}\Phi$ given in Lemma~\ref{DPhi} below
to the sequence (\ref{seq:Ti}) for $j=1$, 
we obtain the distinguished triangle in $D^b \Coh(Y)$
\begin{align*}
\mathbb{D} \Phi(T_i^{(1)}) \to \mathbb{D} \Phi(V_{i}^{(1)}) \to 
\mathbb{D}\Phi(Q^{(1)}).
\end{align*}
Note that $Q^{(1)}, 
V_i^{(1)} \in \tT^{\dag} \cap \fF_{B, f^{\ast}\omega}'$. 
Therefore  
Lemma~\ref{DPhi}
implies that 
$\hH^2 (\mathbb{D}\Phi(Q^{(1)}))$
is zero dimensional, and 
 we obtain the exact
sequence in $\Coh(Y)$
\begin{align*}
\hH^2(\mathbb{D} \Phi(V_{i}^{(1)})) \to 
\hH^2(\mathbb{D}\Phi(Q^{(1)}))
 \to \hH^3 (\mathbb{D} \Phi(T_i^{(1)})) \to 0. 
\end{align*}
Since $\hH^3 (\mathbb{D} \Phi(T_i^{(1)}))$
is a zero dimensional sheaf whose length is equal 
to the length of $\Phi(T_i^{(1)})$, we obtain the 
bound of the length of $\Phi(T_i^{(1)})$. 

As for the bound of the length of $\Phi(T_i^{(2)})$, 
let $Q^{(3)}$, $V_i^{(3)}$ be the maximal subobjects of $Q^{(2)}$, 
$V_i^{(2)}$ in $\PPer(X/Y)$
contained in $\PPer_{\le 2}(X/Y)$, 
and set 
\begin{align*}
Q^{(4)}=Q^{(2)}/Q^{(3)}, \quad V_i^{(4)}=V_i^{(2)}/V_{i}^{(3)}.
\end{align*}
Similarly as above, we have the exact sequences in $\PPer(X/Y)$
\begin{align*}
&0 \to Q^{(j)} \to V_i^{(j)} \to T_i^{(j)} \to 0 \\
& 0 \to T_i^{(3)} \to T_i^{(2)} \to T_i^{(4)} \to 0,
\end{align*}
for some $T_{i}^{(j)} \in \PPer_{0}(X/Y)$
with $j=3, 4$. 
Since $\Phi(Q^{(4)})$ and $\Phi(V_i^{(4)})$
are torsion free sheaves on $Y$, 
the bound of the length of $\Phi(T_i^{(4)})$ is obtained since 
\begin{align*}
\Phi(T_i^{(4)}) \subset \Phi(Q^{(4)})^{\vee \vee}/\Phi(Q^{(4)}). 
\end{align*}
Also since $\Phi(Q^{(3)})$ and 
 $\Phi(V_i^{(3)})$ are pure two dimensional sheaves on $Y$, 
the bound of the length of $\Phi(T_i^{(3)})$ is obtained by 
taking the projection $Y \stackrel{\phi}{\dashrightarrow} \mathbb{P}^2$, 
which is defined and
 finite over the support of $\Phi(Q^{(3)})$, and noting 
that 
\begin{align*}
\phi_{\ast}\Phi(T_i^{(3)}) \subset \phi_{\ast} \Phi(Q^{(3)})^{\vee \vee}/ \phi_{\ast}\Phi(Q^{(3)}). 
\end{align*}

\end{proof}
We have used the following lemmas:
\begin{lem}\label{lem:torsion}
There is a torsion pair $(\tT^{\dag}, \fF^{\dag})$
on $\bB_{f^{\ast}\omega}$ such that 

\begin{itemize}
\item $E \in \tT^{\dag}$ if and only if 
$\hH_{p}^{-1}(E) \in \fF_{f^{\ast}\omega}$
and $\hH_p^{0}(E) \in \PPer_{\le 1}(X/Y)$. 

\item $E\in \fF^{\dag}$ if and only 
if $\hH_{p}^{-1}(E)=0$, $\hH_p^{0}(E) \in \tT_{f^{\ast}\omega}$
and 
\begin{align*}
\Hom(\PPer_{\le 1}(X/Y), \hH_p^{0}(E))=0.
\end{align*}
\end{itemize}
\end{lem}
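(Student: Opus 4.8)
The plan is to realize $(\tT^{\dag}, \fF^{\dag})$ as the torsion pair obtained by combining the slope torsion pair $(\tT_{f^{\ast}\omega}, \fF_{f^{\ast}\omega})$ used to construct $\bB_{f^{\ast}\omega}$ with the torsion pair on $\PPer(X/Y)$ whose torsion class is $\PPer_{\le 1}(X/Y)$. Recall that every $E \in \bB_{f^{\ast}\omega}$ fits in a canonical sequence $0 \to \hH_p^{-1}(E)[1] \to E \to \hH_p^{0}(E) \to 0$ with $\hH_p^{-1}(E) \in \fF_{f^{\ast}\omega}$ and $\hH_p^{0}(E) \in \tT_{f^{\ast}\omega}$. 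First I would record two structural facts. Under $\Phi$ the subcategory $\PPer_{\le 1}(X/Y)$ corresponds to $\Coh_{\le 1}(\mathrsfs{A})$, so it is a Serre subcategory of $\PPer(X/Y)$, closed under subobjects, quotients and extensions. Moreover every object of $\PPer_{\le 1}(X/Y)$ has $\ch_0=0$, and since $\ch_0 \ge 0$ on $\PPer(X/Y)$ while a nonzero object of $\fF_{f^{\ast}\omega}$ necessarily has $\ch_0>0$, the slope decomposition of any $S \in \PPer_{\le 1}(X/Y)$ has trivial $\fF_{f^{\ast}\omega}$-part; hence $\PPer_{\le 1}(X/Y) \subset \tT_{f^{\ast}\omega} \subset \bB_{f^{\ast}\omega}$. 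In particular $(\PPer_{\le 1}(X/Y), \qQ)$, with $\qQ \cneq \{ G \in \PPer(X/Y) : \Hom(\PPer_{\le 1}(X/Y), G)=0 \}$, is a torsion pair on $\PPer(X/Y)$.

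With $\tT^{\dag}, \fF^{\dag}$ defined as in the statement, I would first verify the orthogonality $\Hom(\tT^{\dag}, \fF^{\dag})=0$. Given $T \in \tT^{\dag}$ and $F \in \fF^{\dag}$, apply $\Hom(-,F)$ to the canonical sequence of $T$. Since $\hH_p^{-1}(F)=0$ forces $F \in \tT_{f^{\ast}\omega}$ and $\hH_p^{-1}(T) \in \fF_{f^{\ast}\omega}$, we get $\Hom(\hH_p^{-1}(T)[1], F)=\Ext^{-1}_{\PPer(X/Y)}(\hH_p^{-1}(T), F)=0$, so $\Hom(T,F) \cong \Hom(\hH_p^{0}(T), F)$; this vanishes because $\hH_p^{0}(T) \in \PPer_{\le 1}(X/Y)$ and $\Hom(\PPer_{\le 1}(X/Y), F)=0$ by definition of $\fF^{\dag}$.

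Next I would produce, for each $E \in \bB_{f^{\ast}\omega}$, a sequence $0 \to T \to E \to F \to 0$ with $T \in \tT^{\dag}$ and $F \in \fF^{\dag}$. Applying the $\PPer_{\le 1}$-torsion pair to $\hH_p^{0}(E) \in \tT_{f^{\ast}\omega}$ gives $0 \to H_1 \to \hH_p^{0}(E) \to H_2 \to 0$ in $\PPer(X/Y)$ with $H_1 \in \PPer_{\le 1}(X/Y)$ and $H_2 \in \qQ$; here $H_1 \in \PPer_{\le 1}(X/Y) \subset \tT_{f^{\ast}\omega}$ and $H_2$ is a quotient of $\hH_p^{0}(E) \in \tT_{f^{\ast}\omega}$, so both lie in $\bB_{f^{\ast}\omega}$. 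Taking $T$ to be the preimage of $H_1$ under $E \twoheadrightarrow \hH_p^{0}(E)$ yields $0 \to \hH_p^{-1}(E)[1] \to T \to H_1 \to 0$, so $\hH_p^{-1}(T)=\hH_p^{-1}(E) \in \fF_{f^{\ast}\omega}$ and $\hH_p^{0}(T)=H_1 \in \PPer_{\le 1}(X/Y)$, i.e. $T \in \tT^{\dag}$, while $E/T \cong H_2 \in \fF^{\dag}$. Together with the orthogonality this establishes the torsion pair, and the characterizations of $\tT^{\dag}$ and $\fF^{\dag}$ are exactly the defining conditions used above.

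The step I expect to be the main obstacle is the inclusion $\PPer_{\le 1}(X/Y) \subset \tT_{f^{\ast}\omega}$: it is precisely what makes $\Hom(\PPer_{\le 1}(X/Y), \hH_p^{0}(E))=0$ (rather than a condition tested only against $\PPer_{\le 1}(X/Y) \cap \tT_{f^{\ast}\omega}$) the correct description of $\fF^{\dag}$, and it rests on the vanishing of $\ch_0$ on $\PPer_{\le 1}(X/Y)$ together with $\ch_0 \ge 0$ on $\PPer(X/Y)$. As an alternative to the explicit decomposition, one can check that $\tT^{\dag}$ is closed under quotients and extensions in $\bB_{f^{\ast}\omega}$ using the long exact sequence of the functors $\hH_p^{i}$ and the Serre property of $\PPer_{\le 1}(X/Y)$, and then invoke that $\bB_{f^{\ast}\omega}$ is noetherian by Lemma~\ref{noet:B}, so that such a $\tT^{\dag}$ automatically determines a torsion pair with $\fF^{\dag}=(\tT^{\dag})^{\perp}$.
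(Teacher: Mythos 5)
Your proposal is correct and follows essentially the same route as the paper: check orthogonality directly, and obtain the decomposition of $E$ by composing the canonical sequence $0 \to \hH_p^{-1}(E)[1] \to E \to \hH_p^{0}(E) \to 0$ with the torsion decomposition of $\hH_p^{0}(E)$ with respect to $\PPer_{\le 1}(X/Y)$ inside $\PPer(X/Y)$. You supply details the paper leaves implicit (notably the inclusion $\PPer_{\le 1}(X/Y) \subset \tT_{f^{\ast}\omega}$ via the vanishing of $\ch_0$, which guarantees the pieces stay in $\bB_{f^{\ast}\omega}$), and these are accurate.
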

\begin{proof}
If we define $(\tT^{\dag}, \fF^{\dag})$ in a required 
way, then it is obvious that $\Hom(\tT^{\dag}, \fF^{\dag})=0$. 
For any $E \in \bB_{f^{\ast}\omega}$, 
the decomposition of $E$ into objects in $\tT^{\dag}$ and $\fF^{\dag}$
is obtained by composing the exact sequence
in $\bB_{f^{\ast}\omega}$
\begin{align*}
0 \to \hH_p^{-1}(E)[1] \to E \to \hH_p^{0}(E) \to 0
\end{align*}
with the exact sequence
\begin{align*}
0 \to T_1 \to \hH_p^{0}(E) \to T_2 \to 0
\end{align*}
where $T_1$ is the maximal subobject of $\hH_p^{0}(E)$
in $\PPer(X/Y)$ contained in $\PPer_{\le 1}(X/Y)$. 
\end{proof}
\begin{lem}\label{DPhi}
For an object $E \in \tT^{\dag} \cap \fF_{B, f^{\ast}\omega}'$, 
the object
\begin{align*}
\mathbb{D}\Phi(E) \cneq \dR \hH om_{\oO_Y}(\Phi(E), \oO_Y) \in D^b \Coh(Y)
\end{align*}
satisfies that $\hH^1(\mathbb{D}\Phi(E))$ is a torsion 
free sheaf, $\hH^2(\mathbb{D}\Phi(E))$ is zero dimensional 
and $\hH^i(\mathbb{D}\Phi(E))=0$ for $i\neq 1, 2$. 
\end{lem}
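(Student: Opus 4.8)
The plan is to read off the cohomology of $\mathbb{D}\Phi(E)=\dR\hH om_{\oO_Y}(\Phi(E),\oO_Y)$ from the two--step filtration of $\Phi(E)$. Since $E\in\bB_{f^{\ast}\omega}$ has $\hH_p^{-1}(E),\hH_p^0(E)\in\PPer(X/Y)$ as its only nonzero perverse cohomologies, and $\Phi$ restricts to an equivalence $\PPer(X/Y)\simeq\Coh(\mathrsfs{A})$ (Theorem~\ref{thm:tilting}), forgetting the $\mathrsfs{A}$--module structure exhibits $\Phi(E)$ as a complex of $\oO_Y$--modules fitting into a triangle
\begin{align*}
\gG_1[1]\to\Phi(E)\to\gG_0\to\gG_1[2],\quad \gG_1\cneq\Phi(\hH_p^{-1}(E)),\ \gG_0\cneq\Phi(\hH_p^0(E)),
\end{align*}
so that $\hH^{-1}(\Phi(E))=\gG_1$ and $\hH^0(\Phi(E))=\gG_0$ as $\oO_Y$--modules. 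First I would prove the two structural facts that govern everything: (a) $\gG_1$ is a reflexive $\oO_Y$--module, and (b) $\gG_0$ is a zero dimensional $\oO_Y$--module.

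The engine for (a) and (b) is the dictionary, supplied by $\Phi$ together with Remark~\ref{rmk:support}, between sub- or quotient objects of a perverse sheaf lying in $\PPer_{\le i}(X/Y)$ and $\oO_Y$--submodules or quotients of dimension $\le i$. From $E\in\tT^{\dag}$ (Lemma~\ref{lem:torsion}) one has $\hH_p^{-1}(E)\in\fF_{f^{\ast}\omega}$ and $\hH_p^0(E)\in\PPer_{\le 1}(X/Y)$; the first already forces $\gG_1$ to be torsion free (a torsion subsheaf would be a subobject of $\hH_p^{-1}(E)$ in $\PPer_{\le 2}(X/Y)$, excluded by $\fF_{f^{\ast}\omega}$, exactly as in the torsion--freeness argument in the proof of Lemma~\ref{noet:B}), and the second gives $\dim\Supp\gG_0\le 1$. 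The sharpening to (a), (b) uses $E\in\fF_{B,f^{\ast}\omega}'$: a failure of the $S_2$ property of $\gG_1$, respectively a genuinely one dimensional part of $\gG_0$, would under $\Phi$ correspond to a subobject of $E$ in $\bB_{f^{\ast}\omega}$ lying in $\PPer_{\le 1}(X/Y)$ with positive $\Imm Z_{B,f^{\ast}\omega}$, hence to an object of $\tT_{B,f^{\ast}\omega}'$, contradicting $E\in\fF_{B,f^{\ast}\omega}'$. Carrying out this implication carefully --- matching the codimension of the offending $\oO_Y$--subquotient against the sign of $\Imm Z_{B,f^{\ast}\omega}$, and in particular establishing the reflexivity ($S_2$) of $\gG_1$ rather than mere torsion freeness --- is the technical core of the argument.

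Granting (a) and (b), I would apply $\mathbb{D}(-)=\dR\hH om_{\oO_Y}(-,\oO_Y)$ to the triangle, obtaining
\begin{align*}
\mathbb{D}\gG_0\to\mathbb{D}\Phi(E)\to(\mathbb{D}\gG_1)[-1]\to(\mathbb{D}\gG_0)[1].
\end{align*}
Reflexivity of $\gG_1$ places $\mathbb{D}\gG_1$ in degrees $[0,1]$ with $\hH^0(\mathbb{D}\gG_1)=\gG_1^{\vee}$ torsion free and $\hH^1(\mathbb{D}\gG_1)$ zero dimensional, while zero dimensionality of $\gG_0$ places $\mathbb{D}\gG_0$ in degree $3$ (at smooth points of $Y$). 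The long exact sequence then gives $\hH^0(\mathbb{D}\Phi(E))=\hH om_{\oO_Y}(\gG_0,\oO_Y)=0$, identifies $\hH^1(\mathbb{D}\Phi(E))$ with $\gG_1^{\vee}$, which is torsion free, and identifies $\hH^2(\mathbb{D}\Phi(E))$ with the kernel of the connecting map $\partial\colon\hH^1(\mathbb{D}\gG_1)\to\hH^3(\mathbb{D}\gG_0)$, a subsheaf of a zero dimensional sheaf, hence zero dimensional.

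The main obstacle is the vanishing in degrees $\ge 3$: one must show that the connecting map $\partial$ is surjective, so that $\hH^3(\mathbb{D}\Phi(E))=\operatorname{coker}\partial=0$, and that $\mathbb{D}\gG_0$ and $\mathbb{D}\gG_1$ carry no cohomology above the stated degrees even at a singular point. This is genuinely delicate in the type IV case, where $Y$ is not Gorenstein at $0$ and $\dR\hH om_{\oO_Y}(-,\oO_Y)$ could a priori spread into arbitrarily high degrees. I would handle all cases uniformly by transporting the computation to the smooth threefold $X$ via Grothendieck--Serre duality for $f$, writing
\begin{align*}
\mathbb{D}\Phi(E)\cong\dR f_{\ast}\dR\hH om_{\oO_X}(\dR\hH om_{\oO_X}(\eE,E),\,f^{!}\oO_Y),
\end{align*}
where $f^{!}\oO_Y$ is the line bundle $\omega_{X/Y}$ in the Gorenstein types I, II, III, V and, in the non-Gorenstein type IV, a complex whose higher cohomology is supported on $D$ (using that $Y$ has rational singularities, so that the correction contributes only the zero dimensional term in degree $2$). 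Since $X$ is smooth, $\dR\hH om_{\oO_X}(\eE,E)$ and its dual have the expected amplitude, and reducing the degree-$\ge 3$ vanishing to this amplitude estimate on $X$, together with $\dR f_{\ast}\oO_X=\oO_Y$, is where I expect the real work to lie.
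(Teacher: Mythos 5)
Your overall skeleton --- split $\Phi(E)$ by the triangle $\gG_1[1]\to\Phi(E)\to\gG_0$ with $\gG_1=\Phi(\hH_p^{-1}(E))$, $\gG_0=\Phi(\hH_p^{0}(E))$, establish structural properties of the two pieces, then dualize --- is the same as the paper's, but your structural claim (b) is wrong, and the argument you offer for it would not close the gap. The object $\hH_p^{0}(E)$ is a \emph{quotient} of $E$ in $\bB_{f^{\ast}\omega}$, not a subobject, whereas the hypothesis $E\in\fF_{B,f^{\ast}\omega}'$ only forbids nonzero maps \emph{into} $E$ from $\tT_{B,f^{\ast}\omega}'$. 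A purely one dimensional $T\in\PPer_{\le 1}(X/Y)$ has $\ch_1(T)\cdot(f^{\ast}\omega)^2=0$ and $\Imm Z_{B,f^{\ast}\omega}(T)\ge 0$, hence $\nu_{B,f^{\ast}\omega}(T)=+\infty$ and $T\in\tT_{B,f^{\ast}\omega}'$; but nothing prevents such a $T$ from arising as a quotient of an object of $\fF_{B,f^{\ast}\omega}'$ (compare $\oO_X\twoheadrightarrow\oO_C$ for a slope-semistable sheaf of slope zero). Accordingly the paper only asserts $\gG_0\in\Coh_{\le 1}(Y)$, i.e.\ support of dimension at most one. Once $\gG_0$ may be genuinely one dimensional, your degree bookkeeping collapses: $\hH^2(\dR\hH om_{\oO_Y}(\gG_0,\oO_Y))$ is a purely one dimensional sheaf mapping into $\hH^2(\mathbb{D}\Phi(E))$, so the zero dimensionality of $\hH^2(\mathbb{D}\Phi(E))$ does not follow from the long exact sequence alone.

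The missing ingredient --- and the actual content of the paper's proof --- is the vanishing $\Hom_Y(\Coh_{\le 1}(Y),\Phi(E))=0$, which you neither state nor prove. The paper obtains it by adjunction along $\oO_Y\to\mathrsfs{A}$: a nonzero map $F\to\Phi(E)$ with $F\in\Coh_{\le 1}(Y)$ yields a nonzero element of $\Hom_{\mathrsfs{A}}(F\dotimes_{\oO_Y}\mathrsfs{A},\Phi(E))$, while $\Phi^{-1}(F\dotimes_{\oO_Y}\mathrsfs{A})$ has perverse cohomologies lying in $\PPer_{\le 1}(X/Y)$ and concentrated in nonpositive degrees; this contradicts $\Hom(\PPer_{\le 1}(X/Y)[i],E)=0$ for $i\ge 0$, which is exactly what $E\in\tT^{\dag}\cap\fF_{B,f^{\ast}\omega}'$ provides. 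This one vanishing does both jobs at once: applied to $U^{\vee\vee}/U$ it gives $\Ext^1(\Coh_{\le 1}(Y),U)=0$ and hence the reflexivity of $U=\hH^{-1}(\Phi(E))$ (your ``$S_2$ sharpening,'' which you also leave unjustified), and after dualizing the triangle $U[1]\to\Phi(E)\to F$ it is what kills the one dimensional part of $\hH^2(\dR\hH om_{\oO_Y}(F,\oO_Y))$ inside $\hH^2(\mathbb{D}\Phi(E))$ and forces the vanishing in degrees $\ge 3$; the paper delegates this last dualization to the proof of~\cite[Lemma~3.8]{TodBG}. Your proposed detour through Grothendieck duality for $f$ to handle the non-Gorenstein type IV point is not what the paper does and is left entirely unexecuted, so it cannot substitute for the missing vanishing.
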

\begin{proof}
We first show that 
\begin{align}\label{Coh1}
\Hom_{Y}(\Coh_{\le 1}(Y), \Phi(E))=0.
\end{align}
Suppose that there is $F \in \Coh_{\le 1}(Y)$
and a non-trivial morphism $F \to \Phi(E)$. 
By taking the adjunction, we have 
\begin{align}\label{adju:A}
\Hom_{\mathrsfs{A}}(F \dotimes_{\oO_Y} \mathrsfs{A}, \Phi(E)) \neq 0. 
\end{align}
Let us consider the object $\Phi^{-1}(F\dotimes_{\oO_Y}\mathrsfs{A})$. 
We have 
$\hH_{p}^{i}\Phi^{-1}(F\dotimes_{\oO_Y}\mathrsfs{A})=0$
for $i \ge 1$ and it is an object in $\PPer_{\le 1}(X/Y)$ for 
$i \le 0$. 
On the other hand, since $E\in \tT^{\dag} \cap \fF_{B, f^{\ast}\omega}'$, 
we have 
\begin{align*}
\Hom(\PPer_{\le 1}(X/Y)[i], E)=0, \quad i\ge 0. 
\end{align*}
This contradicts to (\ref{adju:A}), so 
 (\ref{Coh1}) holds. 

By the assumption and (\ref{Coh1}), $\Phi(E)$ fits into 
the distinguished triangle in $D^b \Coh(Y)$
\begin{align*}
U[1] \to \Phi(E) \to F
\end{align*}
such that $U$ is a reflexive sheaf on $Y$
and $F \in \Coh_{\le 1}(Y)$. 
Then the required property for $\mathbb{D}\Phi(E)$ is proved by 
dualizing the above sequence and using (\ref{Coh1}), 
 the property of
the derived dual of reflexive sheaves and
that of one dimensional sheaves. 
The detail is found in the proof of~\cite[Lemma~3.8]{TodBG}. 
\end{proof}
We also show the following: 
\begin{prop}\label{prop:Pfin}
The abelian category $\pP_{B, f^{\ast}\omega}(1)$ is 
of finite length. 
\end{prop}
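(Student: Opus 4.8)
The plan is to establish both the noetherian and the artinian property of $\pP_{B, f^{\ast}\omega}(1)$; together these give the finite length property. The noetherian half is immediate. By (\ref{IZg}) the function $\Imm Z_{B, f^{\ast}\omega}$ is additive and nonnegative on $\aA_{B, f^{\ast}\omega}$, so for any exact sequence $0 \to A \to E \to B \to 0$ in $\aA_{B, f^{\ast}\omega}$ with $E \in \pP_{B, f^{\ast}\omega}(1)$ we get $\Imm Z_{B, f^{\ast}\omega}(A) = \Imm Z_{B, f^{\ast}\omega}(B) = 0$, whence $A, B \in \pP_{B, f^{\ast}\omega}(1)$. Thus $\pP_{B, f^{\ast}\omega}(1)$ is a Serre subcategory of $\aA_{B, f^{\ast}\omega}$, and the ascending chain condition is inherited from the noetherian property of $\aA_{B, f^{\ast}\omega}$ proved in Proposition~\ref{prop:noether}.

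For the artinian property I would first record a torsion pair on $\pP_{B, f^{\ast}\omega}(1)$. Restricting the torsion pair $(\fF_{B, f^{\ast}\omega}'[1], \tT_{B, f^{\ast}\omega}')$ defining $\aA_{B, f^{\ast}\omega}$ to the Serre subcategory $\pP_{B, f^{\ast}\omega}(1)$, and using Lemma~\ref{triple} to identify $\tT_{B, f^{\ast}\omega}' \cap \pP_{B, f^{\ast}\omega}(1) = \PPer_{0}(X/Y)$ together with the description (\ref{IZ=0}) of the complementary piece as shifts of $\nu_{B, f^{\ast}\omega}$-semistable objects of slope zero, I obtain a torsion pair $(\tau, \PPer_{0}(X/Y))$ on $\pP_{B, f^{\ast}\omega}(1)$ whose torsion-free class is $\PPer_{0}(X/Y)$. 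By Theorem~\ref{thm:tilting} the equivalence $\Phi$ identifies $\PPer_{0}(X/Y)$ with $\Coh_{\le 0}(\mathrsfs{A})$, the coherent $\mathrsfs{A}$-modules of zero-dimensional support, which is of finite length; and being a torsion-free class, $\PPer_{0}(X/Y)$ is closed under subobjects and extensions inside $\pP_{B, f^{\ast}\omega}(1)$.

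I would then run a descending chain argument with the additive invariant $a(E) \cneq \ch_1(E) \cdot (f^{\ast}\omega)^2$, which takes values in a discrete subset of $\mathbb{R}$, satisfies $a(E) \le 0$ on $\pP_{B, f^{\ast}\omega}(1)$, and vanishes precisely on $\PPer_{0}(X/Y)$. Given a strictly descending chain $E_1 \supsetneq E_2 \supsetneq \cdots$, additivity gives $a(E_{i+1}) \ge a(E_i)$, so $a(E_i)$ is nondecreasing, bounded above by $0$, and discrete, hence constant for $i \ge N$; then $a(E_i/E_{i+1}) = 0$, so $E_i/E_{i+1} \in \PPer_{0}(X/Y)$ for $i \ge N$. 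Since $\PPer_{0}(X/Y)$ is closed under extensions, the quotients $Q_k \cneq E_N/E_{N+k}$ all lie in $\PPer_{0}(X/Y)$, and as each step has nonzero kernel $E_{N+k}/E_{N+k+1}$ their lengths satisfy $\length(Q_k) \ge k$. On the other hand the composite $t(E_N) \hookrightarrow E_N \twoheadrightarrow Q_k$ vanishes because $Q_k$ is torsion-free, so every $Q_k$ is a quotient of the finite length object $E_N/t(E_N) \in \PPer_{0}(X/Y)$, forcing $\length(Q_k) \le \length(E_N/t(E_N))$. This contradiction rules out infinite descending chains and proves the artinian property.

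The load-bearing inputs are all results already in place: the noetherian property of $\aA_{B, f^{\ast}\omega}$ (Proposition~\ref{prop:noether}), the computation of Lemma~\ref{triple} pinning down $\PPer_{0}(X/Y)$ as the slope-zero, torsion-free part, and Theorem~\ref{thm:tilting} making $\PPer_{0}(X/Y)$ of finite length. The main point requiring care is the discreteness of the invariant $a$, for which I would invoke the rationality of $\omega$ (equivalently, the discreteness of the image of $Z_{B, f^{\ast}\omega}$ already noted for local finiteness), together with checking that the ambient torsion pair restricts cleanly to the Serre subcategory $\pP_{B, f^{\ast}\omega}(1)$.
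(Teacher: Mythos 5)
Your proposal is correct and follows essentially the same route as the paper: reduce to the artinian property via Proposition~\ref{prop:noether}, stabilize the invariant $\ch_1(\ast)\cdot(f^{\ast}\omega)^2$ along the descending chain so that the successive quotients land in $\PPer_{0}(X/Y)$, and bound their lengths by the finite-length object $E_N/t(E_N)$ — which is exactly the paper's $\hH^{0}_{\bB}(E)$, the torsion-free part of $E$ with respect to the torsion pair defining the tilt. The extra details you supply (the Serre subcategory observation, the restricted torsion pair, the discreteness of the invariant) are all implicit in the paper's argument.
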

\begin{proof}
By Proposition~\ref{prop:noether}, $\pP_{B, f^{\ast}\omega}(1)$
is noetherian, so it is enough to show that 
$\pP_{B, f^{\ast}\omega}(1)$ is artinian. 
Suppose that there is an infinite sequence in $\pP_{B, f^{\ast}\omega}(1)$
\begin{align}\label{seq:ar}
E=E_1 \supset E_2 \supset \cdots. 
\end{align}
We take the exact sequence in $\pP_{B, f^{\ast}\omega}(1)$
\begin{align*}
0 \to E_{i} \to E \to F_i \to 0. 
\end{align*}
Since $\ch_1(\ast) (f^{\ast}\omega)^2$ is non-positive
on $\pP_{B, f^{\ast}\omega}(1)$ by (\ref{IZ=0}), we may assume 
that 
$\ch_1(E_i) (f^{\ast}\omega)^2$
is independent of $i$. 
Then $\ch_1(F_i) (f^{\ast}\omega)^2=0$, 
hence $F_i \in \PPer_{0}(X/Y)$ by (\ref{IZ=0}).
In the notation of the proof of Proposition~\ref{prop:noether}, 
we have the surjection in $\PPer_{0}(X/Y)$:
\begin{align*}
\hH_{\bB}^{0}(E) \twoheadrightarrow F_i. 
\end{align*} 
Applying $\Phi$ in Theorem~\ref{thm:tilting}, 
we see that $\Phi(F_i)$ is a zero dimensional sheaf 
whose length is bounded above by the length of 
$\Phi(\hH_{\bB}^{0}(E))$. 
Hence the sequence (\ref{seq:ar}) terminates. 
\end{proof}

\subsection{Proof of Theorem~\ref{thm:intro2}}
Let $M^{\sigma_{B, f^{\ast}\omega}}([\oO_x])$
be the set of isomorphism classes of objects
$E\in \pP_{B, f^{\ast}\omega}(1)$
satisfying $\ch(E)=\ch(\oO_x)$ for $x\in X$. 
Note that by Proposition~\ref{prop:Pfin}, we can 
define $S$-equivalence classes of objects
in $M^{\sigma_{B, f^{\ast}\omega}}([\oO_x])$. 
Also by (\ref{IZ=0}), it follows that 
\begin{align*}
M^{\sigma_{B, f^{\ast}\omega}}([\oO_x])
=\{ E \in \PPer_{0}(X/Y) : \ch(E)=\ch(\oO_x)\}. 
\end{align*}
In particular 
we have $\oO_x \in M^{\sigma_{B, f^{\ast}\omega}}([\oO_x])$
for any $x\in X$. 
We investigate $S$-equivalence classes of 
objects 
$\oO_x$ in the following proposition: 
\begin{prop}\label{prop:key}

(i) For
$x, x' \in X$, 
the objects $\oO_x$ and $\oO_{x'}$
are $S$-equivalent 
in $M^{\sigma_{B, f^{\ast}\omega}}([\oO_x])$
if and only if $f(x)=f(x')$. 

(ii) An object $E\in M^{\sigma_{B, f^{\ast}\omega}}([\oO_x])$
is isomorphic to $\oO_x$ for $x\notin D$
or supported on $D$. In the latter case, 
suppose moreover
that $\Supp(E)$ is connected when $f(D)$ is a curve.
Then $E$ is $S$-equivalent to 
$\oO_x$ for $x\in D$. 
\end{prop}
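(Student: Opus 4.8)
The plan is to work entirely inside the finite length abelian category $\PPer_{0}(X/Y)$, using the identification $M^{\sigma_{B, f^{\ast}\omega}}([\oO_x])=\{E\in \PPer_{0}(X/Y): \ch(E)=\ch(\oO_x)\}$ recorded above, and to compute the associated graded objects $\mathrm{gr}(\ast)$ through the classification of simple objects in Proposition~\ref{fib:one} (when $f(D)$ is a curve) and Propositions~\ref{prop:Persim}, \ref{prop:PDsim} (when $f(D)$ is a point). First I would dispose of points off the exceptional locus: for $x\notin D$ the object $\oO_x$ is simple in $\PPer_{0}(X/Y)$, so $\mathrm{gr}(\oO_x)=\oO_x$, and since $f$ is an isomorphism over $Y\setminus f(D)$ one has $\oO_x\cong\oO_{x'}$ iff $f(x)=f(x')$ for such points.

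The main computation is $\mathrm{gr}(\oO_x)$ for $x\in D$. When $f(D)$ is a curve and $x\in L_y=f^{-1}(y)$, I would reinterpret the tautological sequence $0\to \oO_{L_y}(-1)\to\oO_{L_y}\to\oO_x\to 0$ inside the perverse heart to exhibit a Jordan--H\"older filtration of $\oO_x$ with factors the two simple objects $\oO_{L_y}(-1)$ and $\oO_{L_y}(-2)[1]$; a Riemann--Roch check confirms $\ch(\oO_{L_y}(-1))+\ch(\oO_{L_y}(-2)[1])=\ch(\oO_x)$, so $\mathrm{gr}(\oO_x)\cong \oO_{L_y}(-1)\oplus\oO_{L_y}(-2)[1]$, which depends only on $y=f(x)$. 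When $f(D)$ is a point, every $x\in D$ maps to $0$, and I would compute, via the equivalence $\Phi$ of Theorem~\ref{thm:tilting}, that $\Phi(\oO_x)$ has the same class for all $x\in D$ (forced by $\ch(\oO_x)$), so its Jordan--H\"older factors, and therefore $\mathrm{gr}(\oO_x)$, are independent of $x\in D$. Together with the previous paragraph this yields (i): $\oO_x$ and $\oO_{x'}$ are $S$-equivalent iff their graded objects agree iff $f(x)=f(x')$, since simple objects attached to different fibres (resp. to $D$ versus a point off $D$) are non-isomorphic.

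For (ii) I would take an arbitrary $E\in\PPer_{0}(X/Y)$ with $\ch(E)=\ch(\oO_x)=(0,0,0,1)$ and analyse $\mathrm{gr}(E)$ into simple factors. Separating the factors of the form $\oO_{x'}$ with $x'\notin D$ from the part $E'$ supported on $D$, write $\ch(E)=k\,(0,0,0,1)+\ch(E')$ with $k\ge 0$. If $E'\neq 0$ then $\ch_1(E')=\ch_2(E')=0$ forces $\ch_3^{bD}(E')=\ch_3(E')=1-k$, which by Lemma~\ref{lem:key} must be strictly positive, so $k=0$ and $E=E'$ is entirely supported on $D$; otherwise $k=1$ and $E\cong\oO_{x'}$ for a single point $x'\notin D$. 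In the $D$-supported case the numerical identity $\ch(E)=(0,0,0,1)$ together with the positivity of $\ch_3^{bD}$ on each simple factor (Lemma~\ref{lem:key}) pins down the multiplicities of the simple objects occurring in $\mathrm{gr}(E)$; when $f(D)$ is a curve the connectedness of $\Supp(E)$ further forces all these factors to lie over a single $y\in f(D)$, so that $\mathrm{gr}(E)\cong \oO_{L_y}(-1)\oplus\oO_{L_y}(-2)[1]=\mathrm{gr}(\oO_x)$ for $x\in L_y$, while when $f(D)$ is a point the forced multiplicities reproduce $\mathrm{gr}(\oO_x)$ for $x\in D$. Hence $E$ is $S$-equivalent to such an $\oO_x$.

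I expect the main obstacle to be the numerical bookkeeping in (ii): showing that $\ch(E)=(0,0,0,1)$ admits, among effective combinations of the classes of the simple perverse objects, only the multiset realised by $\mathrm{gr}(\oO_x)$. The clean separation of the off-$D$ and on-$D$ parts via $\ch_3^{bD}>0$ is the key device; the delicate point is that without the connectedness hypothesis in the curve case one genuinely has extra objects such as $\oO_{L_y}(-1)\oplus\oO_{L_{y'}}(-2)[1]$ with $y\neq y'$ but $[L_y]=[L_{y'}]$, whose Chern character still equals $\ch(\oO_x)$, so the argument must use connectedness exactly to exclude factors spread over distinct fibres.
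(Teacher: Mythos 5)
Your proposal is correct and follows the paper's own strategy very closely: the same reduction to the finite length category $\PPer_{0}(X/Y)$ via (\ref{IZ=0}), the same use of Lemma~\ref{lem:key} to separate the part of $E$ off $D$ from the part on $D$ (the paper phrases this with direct summands rather than Jordan--H\"older factors, but the positivity of $\ch_3^{bD}$ does the identical work), the same linear system on the multiplicities of the simple objects from Propositions~\ref{fib:one}, \ref{prop:PDsim} and \ref{prop:Persim}, and the same appeal to connectedness of $\Supp(E)$ to rule out $E_{y,y'}$ with $y\neq y'$ in the curve case. The one step where you genuinely deviate is the ``if'' direction of (i) when $f(D)$ is a point: the paper shows $\oO_x \sim_S \oO_{x'}$ for $x,x'\in D$ either by the connectedness argument of \cite[Lemma~3.7]{BaMa2} or by exhibiting explicit resolutions of $\oO_x$ terminating in the fixed object (\ref{S-eq2}), whereas you want to deduce it from the fact that $\ch(\oO_x)$ admits a unique non-negative decomposition into classes of simples supported on $D$. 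That does work, but note it silently assumes those finitely many simple classes are numerically independent --- exactly the bookkeeping you flag, and exactly what the paper's own computation in (ii) verifies (in type V the system $a-3b+c=0$, $-a+b=0$, $a/3+b-c/6=1$ has the unique solution $a=b=1$, $c=2$); the paper's resolution/connectedness argument is the more robust route since it does not depend on this independence.
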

\begin{proof}

(i) Suppose that $\oO_x$ and $\oO_{x'}$ are $S$-equivalent. 
If $x\notin D$, then $\oO_x$ is a simple 
object in $\PPer_{0}(X/Y)$. Hence $\oO_{x'}$ should 
be isomorphic to $\oO_x$, which implies $x=x'$.
If $x\in D$, then $x' \in D$ by the above argument. 
This implies that $f(x)=f(x')$ when $f(D)$ is a point. 
When $f(D)$ is a curve
and $x, x' \in D$,
we have 
\begin{align}\label{grOx}
\mathrm{gr} (\oO_x) \cong \oO_{L_{f(x)}}(-2)[1] \oplus \oO_{L_{f(x)}}(-1),
\end{align}
by Proposition~\ref{fib:one}.
Therefore we must have $L_{f(x)}=L_{f(x')}$, which 
is equivalent to $f(x)=f(x')$. 

Conversely suppose that $f(x)=f(x')$. 
If $x \notin D$, we have $x=x'$, 
so $\oO_x$ and $\oO_{x'}$ are isomorphic. 
Suppose that $x, x' \in D$. 
When $f(D)$ is a curve, the isomorphism 
(\ref{grOx}) shows that $\oO_x$ and $\oO_{x'}$
are $S$-equivalent. 
When $f(D)$ is a point, 
then 
$\oO_x, \oO_{x'}$ are 
objects in $\PPer_{0}(\widehat{X}/\widehat{Y})$, 
which has only a finite number of simple objects. 
Since $D$ is connected, the argument
 of~\cite[Lemma~3.7]{BaMa2}
shows that
$\oO_x$ and $\oO_{x'}$ are $S$-equivalent. 
This fact can be also checked by using simple objects 
in Proposition~\ref{prop:PDsim} directly. 
For instance if $f$ is type V, we have the resolutions
\begin{align*}
& 0 \to \oO_D(-1) \to S_5 \to \oO_D(-C)^{\oplus 2} \to \oO_x \to 0, \ x 
\in \mathrm{Sing}(D), \\
&0 \to \oO_D(-1) \to S_5 \to \uU 
\to \oO_x \to 0, \ x \notin \mathrm{Sing}(D).
\end{align*}
Therefore for any $x\in D$, $\oO_x$ is $S$-equivalent to 
\begin{align}\label{S-eq2}
\oO_D(-2)[2] \oplus S_5(-1)[1] \oplus \oO_D(-3C)^{\oplus 2}. 
\end{align}

(ii) Let us take an object $E \in \PPer_{0}(X/Y)$. 
Then $E$ is a direct sum of zero dimensional sheaves
supported outside $D$ and an object in $\PPer_{0}(X/Y)$
supported on $D$. 
If we take $b\in \mathbb{Q}$ as in 
Lemma~\ref{lem:key}, then 
any direct summand of $E$ satisfies (\ref{ineq:chB}). 
Therefore if $E$ satisfies 
$\ch(E)=\ch(\oO_x)$ for $x\in X$, 
then $E$ is isomorphic to $\oO_x$ for $x\notin D$
or supported on $D$. 

In the latter case, 
we first discuss the case that $f(D)$ is a curve. 
By Proposition~\ref{fib:one}, 
$E$ is $S$-equivalent to 
\begin{align}\label{Eyy'}
E_{y, y'} \cneq \oO_{L_y}(-2)[1] \oplus \oO_{L_{y'}}(-1)
\end{align}
for some $y, y' \in f(D)$. 
If $\Supp(E)$ is connected, 
then $y=y'$, hence $E$ is $S$-equivalent
to $\oO_x$ for $x \in f^{-1}(y)$ by 
(\ref{grOx}). 

When $f(D)$ is a point, 
we only discuss the type V
case. The other cases are similarly discussed. 
By Proposition~\ref{prop:PDsim} and Proposition~\ref{prop:Persim}, 
$E$ is $S$-equivalent to 
\begin{align*}
\oO_D(-2)[2]^{\oplus a} \oplus S_5(-1)[1]^{\oplus b}
 \oplus \oO_D(-3C)^{\oplus c}
\end{align*}
for some $a, b, c \in \mathbb{Z}_{\ge 0}$. 
By the Chern character computation in the 
proof of Lemma~\ref{lem:key}, 
the condition $\ch(E)=\ch(\oO_x)$ becomes
$a-3b+c=0$, $-a+b=0$ and 
$a/3+b-c/6=1$. 
The solution is $a=b=1$, $c=2$, 
hence $E$ is $S$-equivalent to 
$\oO_x$ for $x\in D$ by (\ref{S-eq2}). 
\end{proof}
\begin{rmk}\label{rmk:Eyy'}
In Proposition~\ref{prop:key} (ii), 
suppose that $f(D)$ is a curve and $\Supp(E)$ is not connected. 
Then the above proof shows that 
$E$ is isomorphic to $E_{y, y'}$ for $y\neq y'$
given 
by (\ref{Eyy'}). 
\end{rmk}
The following is the main 
result in this section: 
\begin{thm}\label{thm:3fold}
We have the following:
\begin{itemize}
\item If $f(D)$ is a curve, then $Y$ is 
one of the irreducible 
components of the coarse moduli space of 
$S$-equivalence classes of objects in 
$M^{\sigma_{B, f^{\ast}\omega}}([\oO_x])$. 
\item If $f(D)$ is a point, then 
$Y$ is the coarse moduli space of 
$S$-equivalence classes of objects in 
$M^{\sigma_{B, f^{\ast}\omega}}([\oO_x])$. 
\end{itemize}
\end{thm}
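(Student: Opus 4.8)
The plan is to show directly that $Y$ corepresents the moduli functor $\mathcal{M}$ of $S$-flat families of objects in $M^{\sigma_{B, f^{\ast}\omega}}([\oO_x])$, taken modulo fiberwise $S$-equivalence (well-defined since $\pP_{B, f^{\ast}\omega}(1)$ is of finite length by Proposition~\ref{prop:Pfin}). The set-theoretic input is already in place: by the identification $M^{\sigma_{B, f^{\ast}\omega}}([\oO_x]) = \{E \in \PPer_{0}(X/Y) : \ch(E) = \ch(\oO_x)\}$ together with Proposition~\ref{prop:key}, every object is $S$-equivalent to some $\oO_x$, and $\oO_x, \oO_{x'}$ are $S$-equivalent precisely when $f(x) = f(x')$. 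Hence, when $f(D)$ is a point the $S$-equivalence classes are in natural bijection with the closed points of $Y$; when $f(D)$ is a curve the same holds after discarding the disconnected-support objects $E_{y,y'}$ with $y \neq y'$ of Remark~\ref{rmk:Eyy'}, which sweep out a second component isomorphic to $f(D) \times f(D)$, so that $Y$ appears only as one irreducible component.

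First I would construct a natural transformation $\eta \colon \mathcal{M} \to \Hom(-, Y)$. Given an $S$-flat family $\mathcal{E}$ with fibers in $M^{\sigma_{B, f^{\ast}\omega}}([\oO_x])$, applying the relative version of the equivalence $\Phi$ of Theorem~\ref{thm:tilting} produces an $S$-flat family of zero-dimensional coherent $\mathrsfs{A}$-modules on $Y$; forgetting the $\mathrsfs{A}$-structure gives an $S$-flat family of length-$\ell$ zero-dimensional $\oO_Y$-modules, with $\ell$ the common length forced by $\ch(\oO_x)$. This defines a morphism $S \to \Hilb^{\ell}(Y)$, and composing with the Hilbert--Chow morphism while using that each fiber is supported at a single point of $Y$ (Proposition~\ref{prop:key}, where connectedness is used in the curve case), the image lands in the small diagonal $Y \hookrightarrow \Sym^{\ell}(Y)$, yielding the desired map $S \to Y$. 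By construction $\eta$ descends to $S$-equivalence classes and is bijective on closed points by the previous paragraph.

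The heart of the argument is the universal property: given any scheme $W$ and a natural transformation $\zeta \colon \mathcal{M} \to \Hom(-, W)$ factoring through $S$-equivalence, I must produce a unique $g \colon Y \to W$ with $\zeta = \Hom(-, g) \circ \eta$. Here one cannot pull $\zeta$ back along a universal family, because for $y \in \mathrm{Sing}(Y)$ the object $\dL f^{\ast}\oO_y$ does not lie in $D^b \Coh(X)$, so no such family exists over $Y$ itself. Instead I would use the tautological family $\oO_{\Delta}$ over $X$, whose fiber over $x$ is $\oO_x$: applying $\eta$ recovers $f \colon X \to Y$, while applying $\zeta$ yields a morphism $h \colon X \to W$. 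Since $\zeta$ depends only on fiberwise $S$-equivalence classes and $\oO_x \sim_S \oO_{x'}$ exactly when $f(x) = f(x')$, the morphism $h$ is constant on the fibers of $f$; as $Y$ is normal and $f$ is a proper birational contraction with $f_{\ast}\oO_X = \oO_Y$, rigidity of $f$ forces a unique factorization $h = g \circ f$, producing $g \colon Y \to W$.

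Finally I would verify $\zeta = \Hom(-, g) \circ \eta$ as natural transformations. For a fixed family $\mathcal{E}$ over $S$, both $\zeta(\mathcal{E})$ and $g \circ \eta(\mathcal{E})$ are morphisms $S \to W$ agreeing on closed points (each sends $s$ to the image under $g$ of the $Y$-point attached to $[\mathcal{E}_s]$), so they coincide for $S$ reduced and $W$ separated, the general case following by an infinitesimal argument from the fact that $\eta$ already records the fiberwise class. The main obstacle I anticipate is exactly the construction of $g$ in the absence of a universal object at the singular points: the factorization-through-$f$ device must stand in for the missing $\dL f^{\ast}\oO_y$, and confirming that it interacts correctly with arbitrary (possibly non-reduced) base schemes—rather than the elementary point-counting—is the delicate part, most visibly in the type V case, where the simple objects of $\PPer_{0}(X/Y)$ described via Proposition~\ref{prop:PDsim} and Proposition~\ref{prop:Persim} are the most intricate.
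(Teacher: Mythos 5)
Your treatment of the case where $f(D)$ is a point is essentially the paper's argument: the paper likewise builds the transformation to $\Hom(-,Y)$ by pushing the family forward to $Y$ (it uses $\dR (f\times \id_S)_{\ast}\qQ$ directly, which is already a flat family of length-one skyscrapers since $\chi(E)=\chi(\oO_x)=1$, so your detour through $\Hilb^{\ell}(Y)$ and Hilbert--Chow is unnecessary but harmless), and it obtains the comparison morphism $Y\to W$ exactly as you do, by evaluating the competing transformation on the tautological family $\{\oO_x\}_{x\in X}$, invoking Proposition~\ref{prop:key}~(i) to see the resulting map contracts $D$, and descending via $f_{\ast}\oO_X=\oO_Y$; the final compatibility is reduced to Proposition~\ref{prop:key}~(ii) in both treatments.

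The genuine gap is in the curve case. There your transformation $\eta\colon \mathcal{M}\to \Hom(-,Y)$ is not even defined: a family may have fibers $E_{y,y'}=\oO_{L_y}(-2)[1]\oplus\oO_{L_{y'}}(-1)$ with $y\neq y'$, whose pushforward to $Y$ is supported at two distinct points, so the image in $\Sym^{\ell}(Y)$ does not land in the small diagonal and no map $S\to Y$ results. One cannot simply ``discard'' these objects, since the moduli functor includes them; and the statement that $Y$ is \emph{one of the irreducible components of the coarse moduli space} requires you to identify the coarse moduli space itself, not just $Y$. The paper does this by constructing the scheme $\widetilde{Y}=Y\cup (f(D)\times f(D))$, glued along $f(D)\subset Y$ and the diagonal of $f(D)\times f(D)$ (with the fiber-product scheme structure $\oO_{\widetilde{Y}}=\oO_Y\times_{\oO_{f(D)}}\oO_{f(D)\times f(D)}$), defining the map to $f(D)\times f(D)$ on the components of $S$ parametrizing disconnected-support fibers by the \emph{pair} of pushforwards $\dR(f\times\id)_{\ast}\qQ$ and $\dR(f\times\id)_{\ast}(\qQ(-D\times S))$, checking these glue, and then proving $\widetilde{Y}$ corepresents the functor; only then is $Y$ exhibited as an irreducible component. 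This construction is absent from your proposal and is the substantive content of the first bullet of the theorem. (A smaller issue, shared with the paper's own terseness: deducing $\zeta=\Hom(-,g)\circ\eta$ from agreement on closed points handles only reduced $S$, and your appeal to ``an infinitesimal argument'' is not actually supplied.)
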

\begin{proof}
We first discuss the case that $f(D)$ is a point. 
The statement is equivalent to that $Y$ 
corepresents the functor (cf.~\cite[Definition~2.2.1]{Hu})
\begin{align}\label{Mfun}
\mM^{\sigma_{B, f^{\ast}\omega}}([\oO_x]) \colon 
\mathrm{Sch}/\mathbb{C} \to \mathrm{Set}
\end{align}
which assigns a $\mathbb{C}$-scheme $S$ to 
isomorphism classes of objects
\begin{align}\label{famQ}
\qQ \in D^b \Coh(X \times S)
\end{align}
such that for each $s\in S$, 
its derived restriction $\qQ_s$ to 
$X \times \{s\}$ is an object in 
$\PPer_{0}(X/Y)$
with $\ch(\qQ_s)=\ch(\oO_x)$. 
Let us consider the object
\begin{align*}
\dR (f\times \id_{S})_{\ast}\qQ \in \Coh(Y\times S). 
\end{align*}
The above object is a flat family of 
skyscraper sheaves of points in $Y$ over $S$. 
Hence it induces a morphism 
$S \to Y$, giving a natural transformation 
\begin{align*}
F_Y \colon \mM^{\sigma_{B, f^{\ast}\omega}}([\oO_x]) \to \Hom(\ast, Y). 
\end{align*}
Suppose that there is another $\mathbb{C}$-scheme $Z$
and a natural transformation 
\begin{align}\label{nat:Fz}
F_Z \colon \mM^{\sigma_{B, f^{\ast}\omega}}([\oO_x]) \to \Hom(\ast, Z). 
\end{align}
By applying $F_Z$ to the family 
$\{\oO_x\}_{x\in X}$, we obtain a morphism 
\begin{align*}
g \colon X \to Z.
\end{align*}
Note that any two $S$-equivalent objects in 
$\PPer_{0}(X/Y)$ are mapped to the same point 
by $F_Z(\Spec \mathbb{C})$. 
(cf.~\cite[Lemma~4.1.2]{Hu}.)
Therefore by Proposition~\ref{prop:key} (i), 
$g(D)$ must be a point in $Z$. 
Since $f_{\ast}\oO_X=\oO_Y$, 
the morphism $g$ descends to the morphism from $Y$, 
\begin{align*}
h \colon Y \to Z. 
\end{align*}
We need to show that
\begin{align*}
h_{\ast} \circ F_{Y} = F_{Z}.
\end{align*}
The above relationship 
follows from Proposition~\ref{prop:key} (ii). 
Hence $Y$ corepresents the functor (\ref{Mfun}). 

In the case that $f(D)$ is a curve, 
we consider the scheme
\begin{align*}
\widetilde{Y} \cneq Y \cup (f(D) \times f(D)),
\end{align*}
where $Y$ and $f(D) \times f(D)$ are glued 
along $f(D) \subset Y$ and the diagonal 
$f(D) \subset f(D) \times f(D)$. 
The scheme structure of $\widetilde{Y}$ along 
the intersection is given by the fiber product
\begin{align*}
\oO_{\widetilde{Y}}=\oO_{Y} \times_{\oO_{f(D)}} \oO_{f(D) \times f(D)}. 
\end{align*}
Note that for $p \in f(D)$, the ring 
$\widehat{\oO}_{\widetilde{Y}, p}$ is written as 
\begin{align*}
\widehat{\oO}_{\widetilde{Y}, p}
\cong \mathbb{C} \db [x, y, z, w \db] /(xz, yz). 
\end{align*}
We show that $\widetilde{Y}$ corepresents the functor 
(\ref{Mfun}).
Let $S$, $\qQ$ be as before, 
and $S_1, \cdots, S_N$ be the irreducible components of $S$. 
If $\Supp \qQ_s$ is connected for general (hence for any)
point $s\in S_i$, then 
we have the morphism $S_i \to Y$ 
induced by the flat family of skyscraper sheaves on $Y$ over $S_i$, 
\begin{align}\label{RQ1}
\dR (f\times \id_{S_i})_{\ast}(\qQ|_{X\times S_i})
\in \Coh(Y\times S_i). 
\end{align}
If $\Supp \qQ_s$ is not connected for 
general $s\in S_i$, we consider another object
\begin{align}\label{RQ2}
\dR (f\times \id_{S_i})_{\ast}(\qQ|_{X\times S_i}(-D \times S_i)) \in 
\Coh(Y\times S_i).
\end{align}
The objects (\ref{RQ1}), (\ref{RQ2}) 
are flat families of skyscraper
sheaves of points in $f(D)$ over $S_i$, 
hence induce a morphism 
\begin{align*}
S_i \to f(D) \times f(D).
\end{align*}
The above morphisms on irreducible components 
glue on the intersections, so
induce a (unique)
 morphism 
$S \to \widetilde{Y}$. 
In this way, we obtain a natural transformation
\begin{align*}
F_{\widetilde{Y}} 
\colon \mM^{\sigma_{B, f^{\ast}\omega}}([\oO_x]) 
\to \Hom(\ast, \widetilde{Y}). 
\end{align*}
Suppose that there is another natural transformation $F_Z$
as in (\ref{nat:Fz}). 
Similarly to the case that $f(D)$ is point, 
$F_Z \left(\{\oO_x\}_{x\in X}\right)$
induces the
morphism 
$Y \to Z$. Also 
applying $F_Z$ to the family 
\begin{align*}
\{E_{y, y'}\}_{(y, y') \in f(D) \times f(D)}
\end{align*}
given by (\ref{Eyy'}),  
we obtain a morphism
\begin{align*}
f(D) \times f(D) \to Z. 
\end{align*}
These morphisms glue along 
the intersection, giving a morphism 
$\widetilde{h} \colon \widetilde{Y} \to Z$. 
Similarly to the case that $f(D)$ is a point, the 
relationship 
\begin{align*}
\widetilde{h}_{\ast} \circ F_{\widetilde{Y}} = F_{Z}
\end{align*}
follows from Proposition~\ref{prop:key} (ii) and Remark~\ref{rmk:Eyy'}. 
Therefore $\widetilde{Y}$ corepresents the functor (\ref{Mfun}), 
and $Y$ is the desired irreducible component. 
\end{proof}
\begin{rmk}\label{rmk:glue}
Since 
Proposition~\ref{prop:key} (ii)
is not true when $f(D)$ is a curve and 
$\Supp(E)$ is not connected, the 
statement of Theorem~\ref{thm:3fold}
is weaker in this case. Indeed 
the objects $E_{y, y'}$ in (\ref{Eyy'})
provide another component $f(D) \times f(D)$. 
\end{rmk}

\begin{rmk}
Let us consider a 
conjectural Bridgeland stability condition in 
(\ref{BMT}), 
and a category 
$\pP_{B, f^{\ast}\omega -\varepsilon D}(1)$ defined 
similarly to Definition~\ref{def:P(1)}. 
Then $X$ is shown to be the fine 
moduli space of objects $E\in \pP_{B, f^{\ast}\omega-\varepsilon D}(1)$
with $\ch(E)=\ch(\oO_x)$. 
If the relation (\ref{BMT:lim}) holds, 
then we can consider a one parameter family 
\begin{align*}
\sigma_t =(Z_{B, f^{\ast}\omega +\varepsilon tD}, 
 \pP_t), \quad t\in (-1, 1),
\end{align*}
similarly to the surface case. 
The behavior of moduli spaces of 
$E \in \pP_t(1)$ with $\ch(E)=\ch(\oO_x)$
under change of $t\in (-1, 0]$ 
is similar to the surface 
case in Theorem~\ref{thm:main1}, by Theorem~\ref{thm:3fold}.
On the other hand, 
it would be interesting to
study such a moduli space for $t>0$. 
The moduli space itself may be
studied by investigating 
the space of stability conditions on $D^b (\PPer_{0}(X/Y))$, 
which is much easier than $\Stab(X)$. 
\end{rmk}

Todai Institute for Advanced Studies (TODIAS), 

Kavri Institute for the Physics and 
Mathematics of the Universe, 

University of Tokyo, 
5-1-5 Kashiwanoha, Kashiwa, 277-8583, Japan.

\textit{E-mail address}: yukinobu.toda@ipmu.jp

\end{document}